\theoremstyle{plain}
\newtheorem{lemma}{Lemma}[section]
\newtheorem{proposition}[lemma]{Proposition}
\newtheorem{theorem}[lemma]{Theorem}
\theoremstyle{definition}
\theoremstyle{remark}
\newtheorem{remark}[lemma]{Remark}
\renewenvironment{proof}[1][\proofname]{\par
  \pushQED{\qed}%
  \normalfont \topsep6\p@\@plus6\p@\relax
  \trivlist
  \item[\hskip\labelsep
        \itshape
    #1\ \ref{#1}.]\ignorespaces
}{%
  \popQED\endtrivlist\@endpefalse
}
\numberwithin{equation}{section}
\def\to{\rightarrow}
\newcommand{\I}{\mathbb{I}}
\newcommand{\K}{\mathbb{K}}
\renewcommand{\P}{\mathbb{P}}
\author{Marius Bargo}
 \author{Yacouba Simporé}
\title[Analysis and stabilization]{Stabilization by Harvesting in Age-Structured Trophic Networks}
\date{}
\address[Marius BARGO]{Laboratoire LANIBIO, Université Joseph Ki-Zerbo (UJKZ), 01 BP 7021, Ouaga 01, Ouagadougou, Burkina Faso}
\email{bargomarius@gmail.com}
\address[Yacouba SIMPORE]{Chair for Dynamics, Control, Machine Learning and Numerics, Alexander Von Humboldt- Professorship, Department of Mathematics, Friedrich-Alexander-Universit\"{a}t at Erlangen-N\"{u}rnberg, Cauerstrasse 11, 91058 Erlangen, Germany, Université Yembila Abdoulaye TOGUYENI, Burkina Faso, Laboratoire LaST,  Laboratoire LANIBIO (UJKZ)}
\email{yacouba.simpore@fau.de}
\pgfplotsset{compat=1.18}
\begin{document}
\maketitle

{\bf Abstract.} We propose and analyze a nonlinear age-structured multi-species model that serves as a unifying framework for ecological and biotechnological systems in complex environments (microbial communities, bioreactors, etc.). The formulation incorporates nonlocal intra- and interspecific interactions modulated by environmental covariates; under general assumptions on mortality, reproduction rates and interaction kernels, we establish existence, uniqueness and positivity of solutions. We illustrate the model’s practical relevance along two lines:  multi-species examples, notably a non-transitive (cyclic) competition model, for which we show that, under the model assumptions, a control applied to a single species can achieve global stabilization of the system; 
and  the population dynamics of malaria-vector mosquitoes, for which we develop two control strategies (biological and genetic) and, in the biological-control scenario, prove global asymptotic stability of the aquatic compartment by constructing an explicit Lyapunov function. Numerical simulations validate the theoretical results and compare the effectiveness of the proposed strategies in reducing vector density and malaria transmission.

{\bf Keywords} : Predator-prey ; non-transitive  competition ; stabilization ;  backstepping control, biological-genetic control.
\section{\bf Introduction}\label{s1}
\subsection{Introduction and  motivation}\label{s11}

Animal and plant species are organized into networks of intra- and interspecific interactions that shape ecological balance. Human activities can disrupt these interactions (mutual, competitive, or asymmetric) and alter community structure. Trophic networks (food webs) formalize these feeding relationships (“who eats whom”) and provide a framework for analysing energy transfer, population regulation, and ecosystem stability \cite{ref55,ref56,ref30}.

The first systematic mathematical formulation of predator–prey dynamics dates back to the independent works of Lotka and Volterra (1920s). Combining the Malthusian idea of exponential growth for the prey with the intraspecific competition concept later introduced by Verhulst, the Lotka–Volterra model reproduces cyclic oscillations and characteristic phase shifts between prey and predators. This elementary model provided the basis for countless subsequent generalizations \cite{ref3,ref5}. The standard system is written

$$
\begin{cases}
\dfrac{dX}{dt}=\;l\,X-p\,X\,Y,\\[6pt]
\dfrac{dY}{dt}=\;q\,X\,Y-m\,Y,
\end{cases}
$$

with $X(t)$ (prey density), $Y(t)$ (predator density) and $l,p,q,m>0$ biologically interpretable parameters. Despite its simplicity, this system exhibits neutral limit cycles and serves as a pedagogical and heuristic tool.

There are several ways to generalize the models to make them more realistic; three principal examples are given below :
\begin{itemize}
\item  Prey self-limitation. The exponential growth term $lX$ is often replaced by logistic growth $rX(1-X/K)$ to account for a carrying capacity $K$ and intraspecific competition. This modification prevents unbounded prey growth and can stabilise the dynamics (stable equilibria, damped cycles) \cite{ref3}.

\item  Functional response (consumption per predator). The predation term $pXY$ assumes that the per-predator ingestion rate increases without bound with prey density, ignoring handling time and satiation. One therefore replaces $pX$ by a functional response $f(X)$ and writes

$\frac{dX}{dt}=l\,X - f(X)\,Y.$

Holling systematised these forms and identified three major types (Type I, II and III) which correspond respectively to a linear response, a hyperbolic saturating response (handling time limitation), and a sigmoidal curve related to predator search behaviour or prey switching. These developments are essential to capture satiation and the reduced predation pressure at low prey densities \cite{ref29,ref8,ref35}.

\item  Dependence on predator density and alternative formulations. More complex formulations include dependence on $Y$ (predator-dependent models, e.g. Beddington–DeAngelis) or on the ratio $XY$ (ratio-dependent models, e.g. Arditi–Ginzburg):

Beddington–DeAngelis: $f(X,Y)=\dfrac{aX}{1+ahX+cY}$ (the $cY$ term models interference among predators) \cite{ref39}.

Arditi–Ginzburg (ratio-dependent): $f(X,Y)=\dfrac{aX}{bY+X}$, where the capture rate depends on the prey-to-predator ratio.
\end{itemize}
These variants express different ecological assumptions (interference, territoriality, allocation of search time) and profoundly affect species stability and coexistence.
A general two-species formulation is

$\frac{\partial X}{\partial t}=M(X,Y)\,X,\qquad \frac{\partial Y}{\partial t}=N(X,Y)\,Y.$

The signs of $\partial_Y M$ and $\partial_X N$ encode the nature of the interaction: Competition ($\partial_Y M<0$ and $\partial_X N<0$), Predation (or parasitism) ($\partial_Y M<0$ and $\partial_X N>0$), Mutualism ($\partial_Y M>0$ and $\partial_X N>0$).

This local view of the growth functions clarifies how each species directly influences the other’s instantaneous growth rate and serves to classify mathematically the interactions observed in ecology.

Predator–prey models form a flexible conceptual toolbox: the Lotka–Volterra model provides an analytic foundation, while generalizations (logistic growth, Holling functional responses, predator-dependent or ratio formulations) bring models closer to empirical observations and qualitatively alter dynamics (stability, cycles, coexistence). The explicit choice of a functional form is not neutral, it encodes testable ecological hypotheses and guides data interpretation and management decisions \cite{ref5,ref55,ref56,ref39,ref29,ref3,ref30,ref8,ref9,ref35}.

Competitive interactions, for their part, represent another fundamental class of biotic relations. Unlike predation (an asymmetric effect), competition induces mutually negative impacts on instantaneous growth rates: each species reduces the amount of resources or space available to the other. Theoretical competition models thus make it possible to explore three typical outcomes (competitive exclusion, stable coexistence, or history-dependence (priority effects)) depending on the relative intensity of intra- versus interspecific interactions. Complementary mechanisms (niche partitioning, differences in resource use, spatial structure, environmental fluctuations, colonization–competition trade-offs) promote coexistence in empirical systems, even where simple models predict exclusion. In spatially homogeneous models, one often distinguishes transitive competition (a strict hierarchical ranking of competitive abilities) from non-transitive competition (cyclic dominance, e.g. rock–paper–scissors). This distinction can qualitatively modify coexistence outcomes and stability properties. For instance, biological and biotechnological systems clearly illustrate these phenomena, as they often exhibit competitive dynamics cyclic or non-cyclic)  where the age of individuals plays a crucial role in interspecific interactions, frequently modeled through the law of mass action. Observable at multiple scales, from microbial communities to complex ecosystems, these dynamics call for mathematical modeling that accounts simultaneously for the demographic structure of populations and the non-local interactions between species.

Beyond species pairs, trophic networks (developed since  Charles Elton-1927) describe multispecies communities in which each node can be both predator and prey depending on trophic scale. These networks account for the flow of energy and energy loss (thermal dissipation) that limits food-chain length and shapes community structure. Multispecies trophic models allow evaluation of cascade effects, robustness to perturbations, and the consequences of anthropogenic change for biodiversity \cite{ref55,ref56,ref30}.

\begin{figure}[H]
\centering \includegraphics[width=0.21\textwidth]{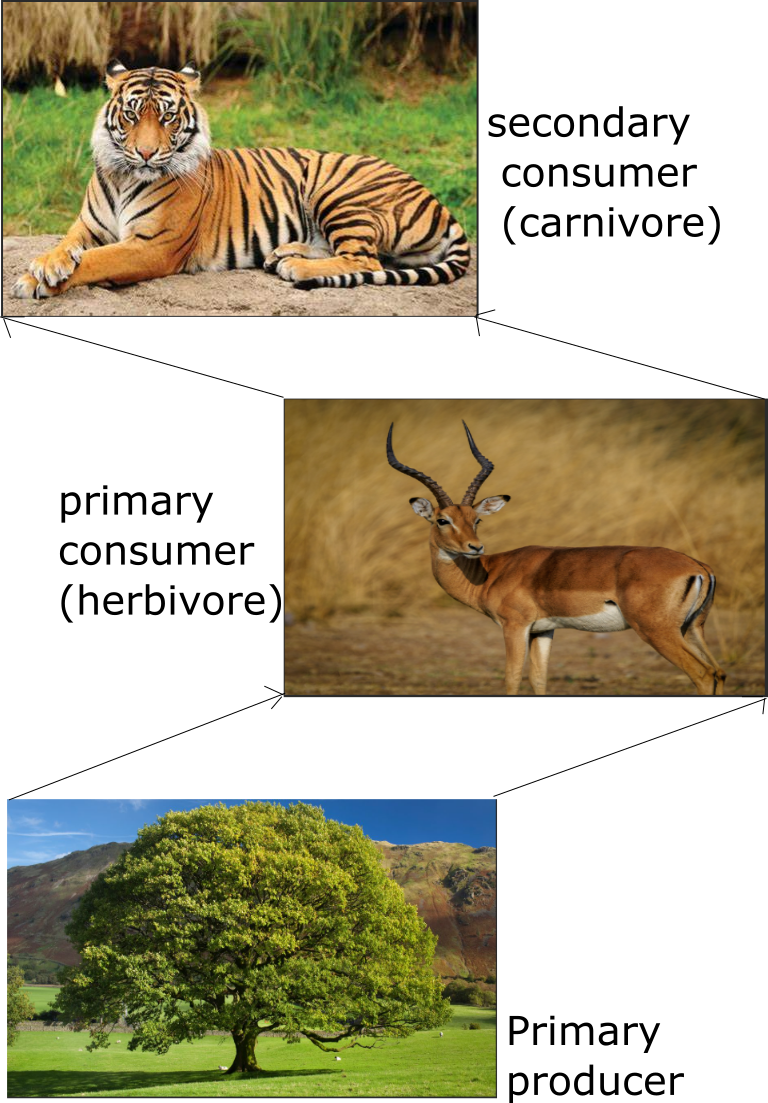}
\caption{A simple forest food chain illustrates the basic flow of energy: primary producers are consumed by herbivores, which in turn are eaten by carnivores. This represents a simplified version of nature’s food cycle, focusing on the main roles of producers and consumers.}
\end{figure}


A general framework for interacting populations is provided by Kolmogorov systems of the form
\begin{equation}\label{eq1.1}
\partial_t y_i= y_i f_i(y_1,\dots,y_N), \qquad i=1,\dots,N,
\end{equation}
so that each component $y_i f_i(Y)$ represents the net growth rate of population $i$ given the state $Y$ of all populations. This Kolmogorov framework generalizes the classical Lotka–Volterra predator–prey equations by allowing for more realistic interaction structures. Within this setting, several well-known models arise as particular cases, including the Rosenzweig–MacArthur predator–prey system with saturation, Gause’s competition models, the logistic (Verhulst) equation, and the exponential (Malthus) model \cite{ref3}.

To capture how species affect each other, we may express the influence of all other populations on population $i$ by a term of the form

\begin{equation}\label{eq1.2}
\partial_t y_i(a,t) + \partial_a y_i(a,t)
= -\mu_i(a)y_i(a,t) - \sum_{j=1}^N \int_0^{A_j} g_{ij}(a,\alpha)\,y_j(\alpha,t)\,d\alpha \; y_i(a,t),
\end{equation}
with renewal boundary conditions
\begin{equation}\label{}
y_i(0,t) = \int_0^{A_i} \beta_i(a)\,y_i(a,t)\,da, \qquad i=1,\dots,N,
\end{equation}
and initial data $x_i(a,0)=x_{i,0}(a)$. Here $\mu_i(a)$ is the age-dependent mortality of species $i$, $\beta_i(a)$ is its fertility kernel, and $g_{ij}(a,\alpha)$ encodes the effect of individuals of age $\alpha$ in species $j$ on those of age $a$ in species $i$. This formulation naturally extends both the classical McKendrick single-species model and Kolmogorov multispecies ODEs \eqref{eq1.1}.

To capture not only predation but also competition, parasitism, mutualism, and related interaction types, we consider model (\ref{eq1.3}), which represents a multispecies trophic network with diverse interaction structures and intensities. Unlike most classical formulations, this model explicitly incorporates age (or stage) structure, thereby linking demographic processes to interspecific interactions. Age-structured approaches are crucial, as survival, fecundity, and vulnerability often vary strongly across life stages, juveniles typically suffer higher mortality and lower reproduction, while adults contribute disproportionately to recruitment. For further details, the reader is invited to consult \cite{ref79} as an example.

In a multispecies context, these demographic details strongly shape community dynamics. Predators may preferentially consume specific age classes, and competition for resources can differ across stages (e.g. seedlings vs. adult trees). By embedding such effects, the framework generalizes classical models (such as Lotka–Volterra, Rosenzweig–MacArthur, or Gause’s competition systems) and recovers scenarios like non-transitive competition, shared predation, or multiple prey–predator interactions.

Structured multispecies models not only improve realism but also inform conservation and management. They help assess persistence, extinction risks, or outbreaks, and suggest targeted strategies, for example by focusing control on vulnerable life stages or strengthening key demographic groups in endangered species. Overall, the age-structured multispecies framework provides a versatile tool to analyze coexistence, resilience, and long-term ecosystem stability.

\subsection{Problem setting}\label{s12}

Let \(Y(a,t) = (y_1(a,t), \dots, y_N(a,t))^T\) be a nonnegative solution of the following age‐structured trophic network model:
\begin{equation}\label{eq1.3}
\begin{cases}
\partial_t Y(a,t) + \partial_a Y(a,t) + D(a)\,Y(a,t) + f\bigl(Y(a,t)\bigr)=0,
& \text{in}\;Q=(0,A)\times(0,T),\\
Y(0,t)=B\,Y(\cdot,t),   & \text{in}\;Q_T=(0,T),\\
Y(a,0)=Y_0(a),         & \text{in}\;Q_A=(0,A),
\end{cases}
\end{equation}
where
\begin{align}\label{eq1.4}
    Y_0(a) = \bigl(y_{01}(a),\dots,y_{0N}(a)\bigr)^T
\in
\mathcal{H} :=
\begin{cases}
\mathcal{H}_2=\bigl(L^2(0,A_i)\bigr)_{i=1}^N,\\
\mathcal{H}_1=\bigl(L^1(0,A_i)\bigr)_{i=1}^N,
\end{cases}
\quad Y_0(a)\ge0\ \text{a.e.}
\end{align}
The choice of the Banach space \(\mathcal H\) in \(L^1\) or \(L^2\) is standard in age‐structured population dynamics.

Notation and components :
\begin{itemize} 
\item \(y_i(a,t)\) denotes the density of species \(i\) at age \(a\) and time \(t\).  
\item \(B\) is the  renewal operator defining the boundary condition at \(a=0\).  
\item \(D(a)=\mathrm{diag}\bigl(\mu_1(a),\dots,\mu_N(a)\bigr)\) is the mortality matrix, where \(\mu_i(a)\) is the mortality rate of species \(i\).  
\item Each \(\beta_i(a)\) (implicitly contained in \(B\)) represents the fertility rate of species \(i\).  
\item \(A=\max_{1\le i\le N}A_i\) is the maximal life span among the \(N\) species.  
\item \(f:\mathcal H\to\mathcal H\) is a (possibly nonlinear) interaction vector field modeling trophic effects.
\end{itemize}

An interest in the non‑linear term $f$ in this model is that it allows the emergence of chaotic dynamics in ecology. Such chaos hampers long‑term forecasting, complicating resource management and species conservation. Hence, understanding these behaviors is essential to design robust strategies that account for unpredictable fluctuations and enhance ecosystem resilience.

In this paper, we adopt the following standing hypotheses (unless otherwise stated) :

$$
\textbf{(H1)}\begin{cases}
\mu_i(a)\geq 0\ \text{a.e.\ on }(0,A_i),\\
\displaystyle\mu_i\in L^1_{\rm loc}(0,A_i),\int_0^{A_i}\mu_i(a)\,da=+\infty,\quad \quad\textbf{(H2)}\begin{cases}
\beta_i\in L^\infty(0,A_i),\quad \beta_i(a)\ge0\ \text{a.e.\ on } (0,A_i),\\
\beta_i(a)=0\ \text{a.e.\ on }(A_i,A).
\end{cases}
\end{cases}
$$

that is, the mortality rates $\mu_i$ are positive, locally integrable, and strong enough that nobody lives past age $A_i$.

$$
\textbf{(H3)}\begin{cases}
f:\mathcal H\to\mathcal H\ \text{is globally Lipschitz and satisfies }f(0)=0,
\\
\exists\,L>0:\quad
\|f(Y_1)-f(Y_2)\|\le L\,\|Y_1-Y_2\|
\quad\forall\,Y_1,Y_2\in\mathcal H.\quad\textbf{(H4)}\begin{cases}
f
\text{ is Fréchet‐differentiable, and there exists}\\
M>0\text{ such that} \displaystyle
\sup_{Y\in\mathcal{H}_2}\,\bigl\|Df(Y)\bigr\|_{\mathcal{L}(\mathcal{H}_2,\mathcal{H}_2)}
\;\le\;M.
\end{cases}
\end{cases}
$$

This Lipschitz condition models saturation effects and ensures well‑posedness.

$$
\textbf{(H5)}\begin{cases}
B:\mathcal H\to \mathcal H\ \text{is (non)linear and globally Lipschitz, with the two typical cases:}
\\(i)\;  \text{Linear renewal}\; 
    BY=\int_0^A \beta(a)\,Y(a,t)\,da,
   \quad
   \beta(a)=\mathrm{diag}(\beta_1(a),\dots,\beta_N(a)).
\\(ii)\; \text{Nonlinear renewal}\;
     BY=\displaystyle\int_0^A \bar\beta\bigl(a,P(t)\bigr)\,Y(a,t)\,da,
   \quad
   P(t)=\Big(\!\!\int_0^{A_i}y_i(a,t)\,da\Big)_{i=1}^N
   \\\text{where}\; \bar\beta(a,p)\ge0 \;\text{is bounded and Lipschitz in}\; p:
   \|\bar\beta(a,p_1)-\bar\beta(a,p_2)\|\le K\,\|p_1-p_2\|.
\end{cases}
$$

\begin{remark}
Assumption (H5) provides a general formulation of the non-local birth term in age‑structured models, without resorting to a specific explicit form, and includes the particular cases treated in \cite{ref71,ref58,ref57, ref53}.
\end{remark}

The age-specific fertility function $\beta_i(a)$ encodes intrinsic biological factors such as maturation and senescence. In the absence of external constraints, $\beta_i(a)$ suffices to describe reproduction. When social interactions, environmental variability, or density-dependent effects are significant, one introduces an adjusted fertility $\bar\beta_i(a,P_1(t))$, where $P_1(t)$ measures population density. This extension captures phenomena such as resource competition, reproductive interference, and Allee effects at low densities. Accordingly, $\beta_i(a)$ represents the baseline physiological fertility, while $\bar\beta_i(a,P_1)$ incorporates external regulatory influences, providing a realistic, nonlinear description of reproductive dynamics.

For each of the $N$ independently evolving populations, define the  survival operator
\begin{align}\label{equation1.6}
\Pi(a) = \operatorname{diag}\bigl(\pi_1(a),\dots,\pi_N(a)\bigr),
\quad
\pi_i(a) = \exp\!\Bigl(-\!\int_0^a\!\mu_i(s)\,ds\Bigr),
\end{align}
and the net reproductive output
\begin{align}
    R = \operatorname{diag}\bigl(R_1,\dots,R_N\bigr),
\quad
R_i = \int_0^{A_i}\!\beta_i(a)\,\pi_i(a)\,da,
\end{align}
where \(A_i\) is the maximal age of population \(i\). Here, \(\pi_i(a)\) is the probability of surviving to age \(a\), and \(R_i\) is the expected number of offspring an individual of species \(i\) produces over its lifetime.

\subsection*{\bf Interpretation of the model}
System \eqref{eq1.3} captures the full spectrum of canonical biotic interactions by encoding species’ roles and pairwise (or higher-order) effects in the nonlinearity $f$: consumer–resource (prey–predator), interspecific competition, mutualism and non-transitive dynamics. To increase realism the model also includes intraspecific density-dependence (logistic-type regulation of fertility and mortality to prevent unbounded growth),  specialist versus generalist predation (specialists collapse with prey loss; generalists persist on alternative resources), and  collective effects such as pack hunting and group defence (per-capita predation rates depend on predator and prey group structure and size). By choosing appropriate functional forms for $f$ (mass-action, Holling II-III, saturating responses, etc.) one recovers standard modules: Lotka–Volterra pairs, two predators-one prey or one predator-two prey configurations, food-chain and food-web motifs (including competition and mutualism), and models incorporating prey defence.

\begin{remark}
The age-structured multi-population framework, via appropriate choices of $D(a)$ and $f$, provides a unified and flexible formalism to model and analyze stability, persistence, and control strategies in systems where demographic structure and non-local interactions are crucial. Although $\eqref{eq1.3}$ was originally devised for animal ecology, it admits a broader interpretation when the notions of “birth,” “death,” and “interaction” are redefined: “species” can then represent agents, firms, or states, and the same framework captures complex dynamics in epidemiology, forest ecosystems, economics, geopolitics, or chemistry \cite{ref74,ref65,ref66, ref67,ref53}.

\end{remark}
\section*{\bf Main results}
In this section we briefly describe our strategy and the principal results : 
\subsubsection*{\bf Global stability result - general $N$-species case}
We establish a global asymptotic stability result for a class of non-transitive age-structured competition models. We first establish the result for three and four species, and then generalize it, by construction, to any $N$-species system via a constructive backstepping design.

\subsubsection*{\bf Global stabilization result – mosquito control}
We prove global stabilization of a non-autonomous logistic mosquito population model using a reduced control strategy acting exclusively on the aquatic stage.

\section*{\bf Related works and novelty}
\subsection*{Let us mention some related works from the literature}
To introduce this literature review, note that although predator–prey models have a long history, age‐structured and especially multispecies formulations remain underexplored.  The classical Lotka–Volterra model dates back to the 1920s, and surveys of non‐age‐structured predator–prey dynamics can be found in \cite{ref5,ref3}.  An apparently novel age‐structured predator–prey model was proposed in \cite{ref45}, combining a mass‐action functional response with a Leslie‐type numerical response (cf.\ Leslie 1948 in \cite{ref3}).  However, that model assumes a specialist predator, an unrealistic assumption in most ecosystems, where generalist feeding strategies prevail.  Accordingly, Holling’s functional responses (and their generalizations) provide more biologically realistic formulations.

Many studies employ Lyapunov functions to demonstrate the stability of predator–prey models. Yet the complexity of the equations often forces one to work with simplified (and sometimes unrealistic) versions of the model in order to construct these functions analytically. Despite these simplifications, these works shed light on the so‑called “species harvesting” technique, which guarantees the positivity of solutions and their global asymptotic stability around a nonzero trajectory. This approach offers promising avenues for the development of conservation strategies. Structured models of competitive or predator-prey type have been extensively studied in the literature; for example, see the recent works of Carina Veil et al. \cite{ref45,ref77}. In \cite{ref45}, stabilization of the two-species predator--prey model is achieved by introducing an additional control term that specifically strengthens the predator dynamics. In \cite{ref77}, which addresses a two-species competition model, stabilization via the backstepping method is obtained without recourse to fictitious controls, owing to the small number of species considered and the particular structure of the model. Moreover, the issues of well‑posedness and stability analysis for an age‑structured predator–prey model are examined in detail in \cite{ref51, ref50}.

In addition to work on age-structured predator–prey systems, the literature includes contributions on single-species age-structured models, focusing on the analysis of asymptotic stability and the design of robust control laws, as in \cite{ref70,ref69,ref78}.

In the context of vector control, several age‐structured mosquito models incorporate biological control methods, such as the sterile insect technique and cytoplasmic incompatibility, to account for larval and adult stages \cite{ref38, ref42, ref44, ref43, ref41}.  A limited number of works (e.g.\ \cite{ref52}) also include predator–mosquito interactions, though it remains challenging to model a single predator that effectively regulates both aquatic and adult mosquito populations.
\subsection*{Contributions}
\begin{itemize}
\item  Under suitable assumptions on fertility, mortality, and the interaction function, we establish well‑posedness results for multiple system configurations. In this section, we have established a threshold for resilience and/or the maintenance of biodiversity. Under {\bf assumption (H5-i)}, we prove existence, uniqueness, continuity and positivity of solutions for $f\le0$ (Theorems \ref{th2.5}–\ref{th2.6}).  Moreover, if $f\ge0$ but the initial datum satisfies

\begin{align}
Y_0(a)\;\ge\;\Gamma(a+t;a)\;>\;0
      \quad\text{a.e. }(a,t)\in(0,A)\times(0,T),
\end{align}
then the same results hold (see Remark \ref{re2.8} for the definition of $\Gamma$).
\item We study asymptotic and exponential stability under hypotheses on a reproduction number that includes local interactions and age structure, and we analyze symmetric versus asymmetric cases in multi-species models, accounting for pressures (competition, predation) tied to $R_0$.
\item We examine stability under symmetric versus asymmetric reproduction numbers in multi-species models and show that stability depends more on the arrangement and strength of interspecific interactions and external drivers than on species richness. Thus, representing interaction topology is essential for realistic prediction and resilience assessment. 
\item  Non-transitive competition, analogous to the “rock–paper–scissors” game, refers to a cyclic interaction network in which no species is globally dominant. Such dynamics tend to promote cyclic coexistence and complicate the natural stabilization of the system. Our work demonstrates that a single multiplicative control applied to one species is sufficient to globally stabilize the multi-species model under non-transitive competition. This control acts directly on the targeted species and indirectly influences the others through the cyclic network, ensuring convergence of the entire system toward a stable equilibrium.

The methodology relies on a constructive design: fictitious controls are successively defined to partially stabilize subsystems of the model until the final global control is synthesized. Each fictitious control targets a specific state or subsystem while preserving the cyclic structure of the system. This procedure allows for the explicit construction of a common Lyapunov function, guaranteeing the asymptotic convergence of all species while preserving essential biological properties (positivity and boundedness of solutions).

In this work, we go beyond the two-species framework to address multi-species stabilization, focusing specifically on non-transitive competition models with $N$ species. Existing methods for two species exploit low dimensionality and particular structural features that do not generalize easily. Increasing the number of species and introducing non-transitive interactions lead to cross-couplings and nonlinear terms, whose control requires a design that exploits the topological properties of the model and careful estimation of the interaction terms.

{\bf The novelty of this work lies in several aspects:}

\begin{enumerate}
\item[*] Single control for $N$ species: we show that a multiplicative control applied to one species ensures the global stabilization of the system for any number of non-transitive species. The validity is first established for three and then four species, and extended to the general case through a constructive backstepping design.

\item[*]  Incremental backstepping procedure: the design relies on fictitious controls that gradually stabilize each subsystem while preserving the model’s topology. This backstepping scheme leads to the stepwise synthesis of a global Lyapunov function guaranteeing asymptotic stability.

\item[*] This strategy is novel in the literature. Indeed, our approach preserves the non-transitive structure of species interactions: the applied control does not alter the cyclic dominance relations, while it qualitatively modifies the system dynamics so that trajectories converge to a globally stable equilibrium. Unlike existing studies that focus either on sustaining cycles or on directly stabilizing an equilibrium, our method achieves both simultaneously. Moreover, it guarantees positivity and boundedness of populations, thereby respecting ecological constraints and preserving the model’s cyclic topology.

\item[*] Novel extension beyond two species: to our knowledge, very few results in the literature address the global stabilization of an arbitrary number of age-structured non-transitive species. Our framework fills this gap by providing a minimalist and cost-effective solution  (a single control applied to one species, sufficient to stabilize the entire system) suitable for biological or management applications.
\end{enumerate}
The combination of these elements yields a robust global stabilization strategy, ensuring convergence of any multi-species non-transitive system to equilibrium without compromising its cyclic dynamics. This innovative backstepping approach offers a minimalist and feasible control, opening new perspectives for the regulation of complex biological communities.

\item We establish global asymptotic stability for an age-structured, non-autonomous logistic model of mosquito populations (aquatic and adult stages) via a bounded multiplicative control acting on the aquatic compartment. Since the adult population arises from the emergence of individuals from the aquatic stage, the proposed control is shown to be necessary and sufficient to stabilize the vector dynamics. The control $P(t)$, referenced to the static total aquatic population $k_I$, combines feedforward and feedback components to compensate for parameter variability and ensure stability of the aquatic subsystem. The reference parameter $k_I$ governs the trade-off between responsiveness and the magnitude of the correction.

\item We perform numerical simulations that corroborate the theoretical results obtained in our work on the stabilization of multi-species models (in particular, non-transitive competition models), as well as on multi-phase transition models describing mosquito dynamics.
\end{itemize}
\section*{\bf Organization}
Our work is organized as follows. {\bf Section \ref{s2}} is devoted to the well-posedness of the model. In particular, we establish well-posedness in three settings: the demographic case ($f\equiv 0$), the linear nonlocal case, and the nonlinear nonlocal case. A special instance is discussed in Remark \ref{re2.8}, which further clarifies its relation to Theorems \ref{th2.5} and \ref{th2.6}. {\bf Section \ref{s3}}, devoted to applications, addresses design and stabilization. After a general introduction to multi-species models, we propose a multi-species backstepping control strategy for a non-transitive competition model. We then present a control strategy for a four-compartment mosquito dynamics model (aquatic stages, juvenile females, mature females and wild males) including the release of genetically modified individuals. Numerical simulations illustrate the evolution of each population with respect to age and time. Section \ref{s4} is devoted to the conclusion and outlines directions for future research.

\section{\bf Mathematical Analysis}\label{s2}
Studying the well‑posedness of model \eqref{eq1.3} is essential to guarantee it faithfully reflects the inherent properties of the underlying phenomena. Following Hadamard’s definition for Cauchy problems, we require existence, uniqueness, and continuous dependence on the initial data. In our age‑structured, multi‑species framework with nonlinear functional response $f$, we further demand that solutions remain nonnegative and extend globally in time.

To cast system \eqref{eq1.3} into an abstract Cauchy problem, we work in the Hilbert space $\mathcal H_2$. Under hypotheses $(H1)$–$(H4)$ and for any fixed initial datum $Y_0\in\mathcal H_2$ (or $\mathcal H_1$), define

\begin{align}\label{op1}
 \mathcal A : D(\mathcal A)\subset\mathcal H_2\longrightarrow\mathcal H_2,
   \quad \mathcal A\varphi=-\partial_a\varphi - D(a)\varphi,
\end{align}

with

\begin{align}
D(\mathcal A)
   = \Bigl\{\varphi\in\mathcal H_2 :
     \varphi\text{ a.c. on }[0,A),\;
    \varphi(0)=\!\int_0^A\beta(a)\varphi(a)\,da,\;
     -\partial_a\varphi - D(a)\varphi\in\mathcal H_2
   \Bigr\}.
\end{align}

In block‑diagonal form,

\begin{align}
 \mathcal A
   = \mathrm{diag}\bigl(-\partial_a-\mu_1(a),\dots,-\partial_a-\mu_N(a)\bigr),
   \quad a\in(0,A).
\end{align}

We recall a fundamental notion concerning the semigroups associated with this operator.
\begin{lemma}\label{le2.1}
The operator $(\mathcal{A}, D(\mathcal{A}))$ is the infinitesimal generator of a strongly continuous semigroup $\mathcal{T}= (\mathcal{T}_t)_{t\geq 0}$ on $\mathcal{H}_2.$
\end{lemma}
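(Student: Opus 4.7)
The plan is to exploit the block-diagonal structure of $\mathcal{A}$ and reduce the statement to $N$ scalar McKendrick--von Foerster problems, to each of which I would apply the Hille--Yosida theorem. Since $D(a)=\mathrm{diag}(\mu_1(a),\dots,\mu_N(a))$ and, in the linear renewal setting of \textbf{(H5-i)}, the boundary operator $B$ is diagonal through $\beta(a)=\mathrm{diag}(\beta_1(a),\dots,\beta_N(a))$, the operator $\mathcal{A}$ splits into scalar operators $\mathcal{A}_i\varphi=-\partial_a\varphi-\mu_i(a)\varphi$ on $L^2(0,A_i)$ with non-local boundary condition $\varphi(0)=\int_0^{A_i}\beta_i(a)\varphi(a)\,da$. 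If each $\mathcal{A}_i$ generates a $C_0$-semigroup $T^i$ on $L^2(0,A_i)$, then $\mathcal{T}_t=\mathrm{diag}(T^1_t,\dots,T^N_t)$ is the desired semigroup on $\mathcal{H}_2$, and it is enough to focus on one component.

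For a fixed $i$, I would first check that $\mathcal{A}_i$ is closed and densely defined. Closedness follows because the distributional derivative is closed and the trace $\varphi\mapsto\varphi(0)$ is continuous on $D(\mathcal{A}_i)$ equipped with the graph norm, so the non-local boundary constraint is preserved under graph-norm limits. Density is obtained by approximating any $g\in L^2(0,A_i)$ by smooth functions with compact support in $(\varepsilon,A_i-\varepsilon)$ and correcting the value at $a=0$ with a localized bump to satisfy the integral constraint. The core step is the resolvent analysis: for $g\in L^2(0,A_i)$ and $\lambda>0$ I would solve $(\lambda-\mathcal{A}_i)\varphi=g$, i.e.\ $\varphi'(a)+(\lambda+\mu_i(a))\varphi(a)=g(a)$, by integrating factors. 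Using the survival function $\pi_i$ from \eqref{equation1.6}, the unique absolutely continuous solution is
\begin{equation*}
\varphi(a)=e^{-\lambda a}\pi_i(a)\,\varphi(0)+\int_0^a e^{-\lambda(a-s)}\,\frac{\pi_i(a)}{\pi_i(s)}\,g(s)\,ds,
\end{equation*}
and substitution into the boundary condition yields a scalar equation $(1-\Phi_i(\lambda))\varphi(0)=\Psi_i(\lambda,g)$ with $\Phi_i(\lambda)=\int_0^{A_i}\beta_i(a)e^{-\lambda a}\pi_i(a)\,da$. Since $\beta_i\in L^\infty$ and $\pi_i\le 1$, dominated convergence gives $\Phi_i(\lambda)\to 0$ as $\lambda\to\infty$, so $\varphi(0)$ is uniquely determined for $\lambda>\omega_i$ sufficiently large. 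Combining the explicit formula with a Young/H\"older estimate on the Volterra kernel then produces the Hille--Yosida bound $\|(\lambda-\mathcal{A}_i)^{-1}g\|_{L^2}\le C_i\|g\|_{L^2}/(\lambda-\omega_i)$.

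The hard part will be controlling the interplay between the unbounded mortality and the non-local renewal term. By \textbf{(H1)}, $\mu_i\in L^1_{\mathrm{loc}}$ with $\int_0^{A_i}\mu_i=+\infty$, so $\pi_i(a)\to 0$ as $a\to A_i^-$ and the kernel $\pi_i(a)/\pi_i(s)$ can be unbounded near $a=A_i$; the saving features are that $\beta_i$ vanishes on $(A_i,A)$ by \textbf{(H2)} and that the factor $e^{-\lambda(a-s)}$ supplies sufficient decay, for $\lambda$ large, to absorb the singularity after a careful Fubini swap. Once the resolvent estimate is established uniformly in $\lambda$, the Hille--Yosida theorem delivers a $C_0$-semigroup generated by each $\mathcal{A}_i$ on $L^2(0,A_i)$, and the diagonal assembly produces the strongly continuous semigroup $\mathcal{T}$ on $\mathcal{H}_2$ generated by $\mathcal{A}$, which is the desired conclusion.
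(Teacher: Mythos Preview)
Your reduction to the scalar components via the block-diagonal structure is exactly the paper's approach: the paper simply cites known results to assert that each $\mathcal{A}_i$ generates a $C_0$-semigroup on $L^2(0,A_i)$ and then assembles the diagonal semigroup on $\mathcal{H}_2$. You go further by sketching a direct Hille--Yosida argument for the scalar case (closedness, density, resolvent formula, and the Lotka characteristic function $\Phi_i(\lambda)$), which is a valid and standard route to the cited result; the paper treats this as background.

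One minor correction to your discussion: the Volterra kernel $\pi_i(a)/\pi_i(s)=\exp\bigl(-\int_s^a\mu_i\bigr)$ appears only for $0\le s\le a$, and since $\mu_i\ge 0$ by \textbf{(H1)} this ratio is bounded above by $1$, not unbounded near $a=A_i$. The singularity you worry about does not occur in the resolvent integral; what vanishes at $A_i$ is $\pi_i(a)$ itself, which only helps. So your ``hard part'' is easier than you anticipate, and the resolvent estimate follows from a straightforward Young-type bound with kernel $e^{-\lambda(a-s)}\mathbf{1}_{\{s\le a\}}$ whose $L^1$-norm is $1/\lambda$.
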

\begin{proof}[of Lemma \ref{le2.1}]
It is well known (e.g., \cite{ref60, ref59}) that $\mathcal{A}_i$ is the infinitesimal generator of a strongly continuous semigroup on $L^2(0,A_i).$ Since each of its elements is an infinitesimal generator of a semigroup, $\mathcal{A}$ is itself an infinitesimal generator of a semigroup. 
\end{proof}

For similar operators, the reader may refer to the examples in \cite{ref28, ref48}. We now introduce the adjoint operator $\mathcal A^*$ of $\mathcal A$.  Define

$$
   \mathcal A^*: D(\mathcal A^*)\subset\mathcal H_2\longrightarrow\mathcal H_2,
   \qquad
   \mathcal A^*\eta = \partial_a\eta - D(a)\,\eta + \beta(a)\,\eta(0),
 $$

 with

$$
   D(\mathcal A^*) = \Bigl\{\eta\in\mathcal H_2 : 
     \eta\text{ is a.c. on }[0,A),\;
     \lim_{a\to A}\eta(a)=0,\;
     \partial_a\eta - D(a)\eta\in\mathcal H_2
   \Bigr\}.
 $$

By integration by parts one checks that

$$
   \langle \mathcal A Y,\eta\rangle 
   = \langle Y,\mathcal A^*\eta\rangle
   \quad\text{for all }Y\in D(\mathcal A),\;\eta\in D(\mathcal A^*).
$$
This semigroup framework thus provides a natural dynamical‐systems interpretation of the population model in the state space $\mathcal H_2$.

\subsection{Model without species interaction}
Consider the following age‑structured system with $f\equiv0$:
 \begin{equation}\label{eq2.4}
\left\lbrace 
\begin{array}{ll}
\partial_tY(a,t)+\partial_aY(a,t)+D(a)Y(a,t)=0 &\text{ in }Q,\\  
Y(0,t)=BY, &\text{ in } Q_T,\\
Y(a,0)=Y_0 &\text{ in } Q_A,
\end{array}
\right.
\end{equation} 
 where $Y_0\in\mathcal H_2$. This is the classical Lotka–McKendrick (or Lotka–von Foerster) model for $N$ non‑interacting species.  Although it omits ecological feedbacks, it is mathematically well‑posed and serves as the prototype for age‑structured dynamics (see \cite{ref25, ref28, ref26, ref69, ref30}).  In particular, Inaba \cite[Proposition 2.4]{ref28} introduced exactly this “multi‑state” version as a stable‑population process.
\begin{remark}
In this stable population model, interactions can be accounted for by introducing couplings either in the operator $D(a)$ (which therefore ceases to be diagonal) or in the nonlocal term $B$, thus endowing system \eqref{eq2.4} with a nonlinear character.    
\end{remark}
Equivalently, one rewrites \eqref{eq2.4} as the abstract Cauchy problem
\begin{equation}\label{eq2.5}
\left\lbrace 
\begin{array}{ll}
\partial_tY(a,t)=\mathcal{A}Y(a,t) &\text{ in }Q,\\  
Y(a,0)=Y_0 &\text{ in } Q_A.
\end{array}
\right.
\end{equation}  
 in the state space $\mathcal H_2$.  

 This vector‑valued Lotka–von Foerster system appears in many applications, including multi‑regional demography, two‑sex linear population models \cite{ref54}, and budding‑yeast population dynamics \cite{ref28}.  It provides the basic framework for describing the evolution of an age‐structured population under prescribed boundary and initial conditions.

 We now state the following fundamental proposition on existence and uniqueness for system \eqref{eq2.4}:
\begin{proposition}\label{pr2.2}
Let us $Y_0\in D(\mathcal{A}), Y_0\geq 0,$ assumptions  $(H1)-(H2)$ satisfied, the system \eqref{eq2.4} is well-posedness (with $\mathcal{A}$ linear). Moreover, the unique solution  is given by $Y(.,t)=\mathcal{T}(t)Y_0(.),$ for all $Y_0\in D(\mathcal{A}).$
\end{proposition}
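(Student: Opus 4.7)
The plan is to recast \eqref{eq2.4} as the abstract Cauchy problem \eqref{eq2.5} in the Hilbert space $\mathcal{H}_2$ and then invoke standard $C_0$-semigroup theory, using Lemma \ref{le2.1} as the main input. The assumptions (H1)--(H2) are what make the domain $D(\mathcal{A})$ meaningful: (H1) ensures that the transport equation with absorption $\mu_i$ forces $\pi_i(A_i)=0$ (hence the boundary condition at $a=A_i$ is automatic), while (H2) guarantees that the nonlocal renewal map $\varphi\mapsto\int_0^A\beta(a)\varphi(a)\,da$ is a bounded linear operator on $\mathcal{H}_2$, so that the boundary condition $\varphi(0)=B\varphi$ encoded in $D(\mathcal{A})$ is well defined.

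The first step is to observe that the PDE system \eqref{eq2.4} (with $f\equiv 0$ and $B$ linear) is, by construction, equivalent to the abstract Cauchy problem
\begin{equation*}
\partial_t Y(t) = \mathcal{A}\,Y(t),\qquad Y(0)=Y_0,
\end{equation*}
in $\mathcal{H}_2$, the renewal boundary condition and the transport-with-mortality dynamics having been absorbed into $D(\mathcal{A})$ and the action of $\mathcal{A}$ respectively. The second step is then to apply Lemma \ref{le2.1}: since $\mathcal{A}$ generates a $C_0$-semigroup $\mathcal{T}=(\mathcal{T}_t)_{t\ge 0}$, the classical Hille--Yosida / Lumer--Phillips theory (see e.g.\ \cite{ref60,ref59}) gives, for every $Y_0\in D(\mathcal{A})$, a unique classical solution $Y\in C^1([0,T];\mathcal{H}_2)\cap C([0,T];D(\mathcal{A}))$ of the Cauchy problem, represented by $Y(\cdot,t)=\mathcal{T}_t Y_0(\cdot)$, together with the continuous dependence estimate $\|Y(\cdot,t)\|_{\mathcal{H}_2}\le \|\mathcal{T}_t\|\,\|Y_0\|_{\mathcal{H}_2}$, yielding Hadamard well-posedness.

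The third step concerns positivity, i.e.\ that $Y_0\ge 0$ implies $Y(\cdot,t)\ge 0$ for all $t\ge 0$. Since the dynamics decouples across species (the matrix $D(a)$ is diagonal and the linear renewal $B$ acts componentwise through $\beta_i$), it is enough to treat one component. For each species $i$, the method of characteristics along the lines $t-a=\mathrm{const}$ gives the explicit Volterra representation
\begin{equation*}
y_i(a,t)=
\begin{cases}
\pi_i(a)\,\pi_i(a-t)^{-1}\,y_{0i}(a-t), & a\ge t,\\[2pt]
\pi_i(a)\,b_i(t-a), & a<t,
\end{cases}
\end{equation*}
where $b_i(s):=y_i(0,s)=\int_0^{A_i}\beta_i(\alpha)y_i(\alpha,s)\,d\alpha$ satisfies a linear Volterra integral equation with nonnegative kernel and nonnegative forcing term coming from $y_{0i}\ge 0$. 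Positivity of the unique fixed point of this Volterra equation follows by a standard Picard iteration in $L^1_{\mathrm{loc}}$, starting from the nonnegative forcing term; propagating this back through the representation formula yields $y_i\ge 0$ a.e., hence $Y(\cdot,t)\ge 0$. Equivalently, one may argue that the semigroup $\mathcal{T}_t$ leaves the positive cone of $\mathcal{H}_2$ invariant, which follows from the same characteristic formula.

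The only delicate point, and the one I would call the main obstacle, is the interplay between the nonlocal boundary condition $\varphi(0)=B\varphi$ and generation of the semigroup: this is precisely what Lemma \ref{le2.1} encapsulates, and one has to be careful that the a.c.\ representative used in the trace $\varphi(0)$ is the one making $\mathcal{A}\varphi\in\mathcal{H}_2$. Once Lemma \ref{le2.1} is available, the remaining steps (solution formula, uniqueness, continuous dependence, positivity) are all routine consequences of $C_0$-semigroup theory plus the explicit characteristics.
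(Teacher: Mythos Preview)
Your proposal is correct and follows essentially the same route as the paper: recast \eqref{eq2.4} as the abstract Cauchy problem \eqref{eq2.5}, invoke the fact that $\mathcal{A}$ generates a $C_0$-semigroup on $\mathcal{H}_2$ (the paper cites \cite{ref28} directly for the contraction property, you use Lemma~\ref{le2.1}), and then appeal to standard semigroup theory for existence, uniqueness and the representation $Y(\cdot,t)=\mathcal{T}(t)Y_0$. Your write-up is considerably more detailed than the paper's two-line proof---in particular you spell out the positivity argument via the characteristic/Volterra representation, which the paper does not address here but only later (Remark~\ref{re2.8} and the surrounding discussion); this extra care is welcome but not a different strategy.
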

\begin{proof}[of Proposition \ref{pr2.2}]
   The operator $\mathcal{A}$ is the infinitesimal generator of a $C_0$ semigroup of contraction on $\mathcal{H}_2$ \cite{ref28}. Then, from \cite[Theorem I]{ref32}, \cite{ref33} the proof is immediate.
\end{proof}
 Through the characteristic lines, the solution $Y$ to \eqref{eq2.4} satisfies
 \begin{equation}\label{eq2.6}
Y(a,t)=\left\lbrace
\begin{array}{ll}
\dfrac{\Pi(a)}{\Pi(a-t)}Y_{0}(a-t)\quad t\leq a,\\
\\\Pi(a)b(t-a)\quad t> a,
\end{array}
\right.
\end{equation}
and
 \begin{equation}\label{eq2.7}
b(t)=\displaystyle\int_0^t\beta(a)\Pi(a)b(t-a)da+\displaystyle\int_t^A\beta(a)\dfrac{\Pi(a)}{\Pi(a-t)}Y_{0}(a-t)da
\end{equation}
is a linear Volterra integral equation.
\begin{proposition}\label{pr2.3}
    Let us $Y_0\in \mathcal{H}_1, Y_0\geq 0,$ assumptions $(H1)-(H2)$ satisfied, the system \eqref{eq2.4} admits a unique solution in $C((0,T) ;\mathcal{H}_1)$ such that
    \begin{itemize}
        \item $\Vert Y(.,t)\Vert_{\mathcal{H}_1}\leq Me^{(\Vert\beta\Vert_{L^\infty}(M+w)t}\Vert Y_0\Vert_{\mathcal{H}_1},\qquad t \in (0,T).$
    \end{itemize}
 \end{proposition}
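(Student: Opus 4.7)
The strategy is to reduce the PDE to the linear Volterra renewal equation \eqref{eq2.7} via the characteristic representation \eqref{eq2.6}, solve that Volterra equation by a contraction argument in $C([0,T];\mathbb{R}^N)$, and then obtain the $\mathcal{H}_1$ estimate by Gronwall applied first to the trace $b(t):=Y(0,t)$ and then to $\|Y(\cdot,t)\|_{\mathcal{H}_1}$.

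First, I would upgrade Lemma \ref{le2.1} from $\mathcal{H}_2$ to $\mathcal{H}_1$: each component $-\partial_a-\mu_i(a)$ with renewal trace is known to generate a positive $C_0$-semigroup on $L^1(0,A_i)$ under $(H1)$--$(H2)$ (cf.\ \cite{ref28,ref60,ref59}), hence $\mathcal{A}$ generates a $C_0$-semigroup $\mathcal{T}$ on $\mathcal{H}_1$ with growth estimate $\|\mathcal{T}(t)\|_{\mathcal{L}(\mathcal{H}_1)}\le M e^{wt}$ for some $M\ge 1$, $w\in\mathbb{R}$; these are precisely the constants appearing in the target bound. Uniqueness of the mild solution then follows from the standard semigroup theory applied to \eqref{eq2.5}, and the candidate is $Y(\cdot,t)=\mathcal{T}(t)Y_0$.

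Next, I would give an independent constructive proof of existence based on \eqref{eq2.6}--\eqref{eq2.7}. Viewing \eqref{eq2.7} as a fixed-point problem $b=\Phi b$ on $C([0,T];\mathbb{R}^N)$ with the sup norm, a standard contraction/Picard iteration (using $\|\beta\|_{L^\infty}<\infty$ from $(H2)$ and $\|\Pi(a)\|\le 1$) shows that $\Phi^n$ becomes a strict contraction for $n$ large, so there exists a unique $b\in C([0,T];\mathbb{R}^N)$. Reconstructing $Y$ by \eqref{eq2.6} and using Fubini/absolute continuity of the Lebesgue integral along characteristics, one verifies $Y\in C([0,T];\mathcal{H}_1)$.

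Finally, I would derive the quantitative bound. Taking the $\mathcal{H}_1$-norm in \eqref{eq2.6} and using $\Pi_i(a)/\Pi_i(a-t)\le 1$ together with the change of variable $s=a-t$ on the $t\le a$ piece gives
\[
\|Y(\cdot,t)\|_{\mathcal{H}_1}\;\le\;\|Y_0\|_{\mathcal{H}_1}\;+\;\int_0^t |b(s)|\,ds.
\]
Plugging \eqref{eq2.6} into the renewal equation \eqref{eq2.7} and using the semigroup estimate together with $\|\beta\|_{L^\infty}<\infty$ yields
\[
|b(t)|\;\le\;\|\beta\|_{L^\infty}M e^{wt}\|Y_0\|_{\mathcal{H}_1}\;+\;\|\beta\|_{L^\infty}\int_0^t |b(s)|\,ds,
\]
to which Gronwall's inequality applies, giving $|b(t)|\le \|\beta\|_{L^\infty}M\|Y_0\|_{\mathcal{H}_1}\,e^{\|\beta\|_{L^\infty}(M+w)t}$ after a straightforward (but slightly tedious) estimate combining the two exponential factors. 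Substituting back into the norm inequality produces the stated bound. The only mildly delicate point is the bookkeeping that merges the semigroup growth constants $(M,w)$ with the Volterra kernel bound $\|\beta\|_{L^\infty}$ into the exponent $\|\beta\|_{L^\infty}(M+w)$; everything else (positivity, continuity, uniqueness) is a direct consequence of the characteristic formula and the generator property.
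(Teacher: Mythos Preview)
Your approach is correct and is essentially the classical route that the cited references follow; the paper's own proof is nothing more than a pointer to \cite{ref48} (Webb) and \cite{ref31}, so your sketch is in fact more detailed than what appears in the paper. The characteristic representation \eqref{eq2.6}--\eqref{eq2.7}, the Picard/contraction argument for the Volterra renewal equation, and the Gronwall step are exactly the ingredients those references use, so there is no substantive difference in strategy.

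One small remark on your bookkeeping: the direct characteristic estimate you write, using $\Pi(a)/\Pi(a-t)\le 1$, actually yields the sharper bound $\|Y(\cdot,t)\|_{\mathcal H_1}\le e^{\|\beta\|_{L^\infty}t}\|Y_0\|_{\mathcal H_1}$, i.e.\ $M=1$ and no $w$. The specific exponent $\|\beta\|_{L^\infty}(M+w)$ in the stated proposition arises instead when one treats the renewal trace as a bounded perturbation of the semigroup generated by transport-plus-mortality alone (this is the semigroup whose growth constants $(M,w)$ are referenced in the remark following the proposition). Your mixing of the two viewpoints is harmless for the qualitative result, but if you want to land exactly on the stated constants you should commit to the perturbation route rather than the direct characteristic bound.
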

 \begin{proof}[of Proposition \ref{pr2.3}]
  As demonstrated in \cite[Theorem 4]{ref48},  \cite[Theorem 4.3]{ref31}, the same technique can be effectively utilized to establish this proposition.
 \end{proof}
 \begin{remark}
     The constants \(M\) and \(w\) arise from the estimation of the semi group in \cite[Lemma p.19]{ref33}, \cite[Lemma B]{ref28}.
 \end{remark}

The linear case is valuable: one can fully describe its solutions, yet important questions remain—such as finite‑time blow‑up \cite{ref22}. If we cast model \eqref{eq1.3} in linear form (i.e.\ with a linear function $f$), it is certainly well‑posed mathematically, although it lacks biological realism. This simplification treats multiple species as non‑interacting, which contradicts the fundamental role of species interactions in population regulation, species distribution, and overall ecosystem structure. Ignoring these interdependencies can therefore yield inaccurate predictions, since real‑world dynamics are driven by numerous coupled factors.

Nevertheless, in a resource‑management context, such a linear model can serve as a prototype for harvesting a single species. It can be used to explore goals such as preventing extinction, maintaining population stability, or optimizing sustainable yields. In this setting, one would choose $f$ to capture harvesting effects, thereby enabling the analysis of different management or exploitation strategies.

\subsection{Case of a linear nonlocal term}
In this section, only the function $f$ is nonlinear and the renewal equation satisfies the condition (H5-i). The following  result  assures the existence and uniqueness of mild solutions of (\ref{eq1.3}) for Lipschitz continuous functions $f.$ An example of a typical age-structured model can be found in \cite{ref48}.
\begin{theorem}\label{th2.5}
Assume that hypotheses \((H1)-(H2)-(H3)-(H5-i)\) hold. The  operator $\mathcal{A}$  is the infinitesimal generator of a $C_0$ semigroup $\mathcal{T}(t),\quad t\geq 0,$ on $\mathcal{H}_2,$ then for every $Y_0\in \mathcal{H}_2,$ the system \eqref{eq1.3} has a unique mild solution $Y\in C((0,T) ;  \mathcal{H}_2)^N.$
    Moreover, the mapping $Y_0\longrightarrow Y$  is Lipschitz continuous from $ \mathcal{H}_2$ into $C((0,T) ; \mathcal{H}_2)^N,$ and if $Y_0$ satisfies condition \eqref{eq2.49}, then the solution remains non-negative for any  function $f.$
\end{theorem}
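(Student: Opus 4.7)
The plan is to recast system \eqref{eq1.3} as the semilinear abstract Cauchy problem
\[
Y'(t) = \mathcal{A}\, Y(t) - f(Y(t)), \qquad Y(0)=Y_0,
\]
on $\mathcal{H}_2$. By Lemma \ref{le2.1}, the operator $\mathcal{A}$ already generates a $C_0$-semigroup $\mathcal{T}(t)$, and since (H5-i) makes the renewal operator $B$ linear, the nonlocal boundary condition $Y(0,t)=BY$ is absorbed into $D(\mathcal{A})$, so the only genuinely nonlinear term is $f$. A mild solution is then defined through the variation-of-constants formula
\[
Y(t) = \mathcal{T}(t)\,Y_0 - \int_0^t \mathcal{T}(t-s)\, f(Y(s))\,ds.
\]

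Existence and uniqueness would be obtained by a standard Banach fixed-point argument applied to the map $\Phi$ defined by the right-hand side above, viewed as an operator on $C([0,T_0];\mathcal{H}_2)$. Using the semigroup bound $\|\mathcal{T}(t)\|\le M e^{\omega t}$ together with the global Lipschitz hypothesis (H3), one obtains
\[
\|\Phi(Y_1)(t)-\Phi(Y_2)(t)\|_{\mathcal{H}_2} \le M e^{\omega T_0}\, L\, T_0\, \|Y_1-Y_2\|_{C([0,T_0];\mathcal{H}_2)},
\]
which is a strict contraction for $T_0$ small. Because $L$ is a global Lipschitz constant, the local solution extends uniquely to any prescribed horizon $T>0$ by concatenating fixed-point iterations on successive intervals of length $T_0$, with no risk of blow-up. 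The Lipschitz dependence $Y_0\mapsto Y$ then follows by subtracting the mild formulations of two solutions with initial data $Y_{0,1},Y_{0,2}$, estimating the difference by
\[
\|Y_1(t)-Y_2(t)\|_{\mathcal{H}_2}\le M e^{\omega T}\,\|Y_{0,1}-Y_{0,2}\|_{\mathcal{H}_2} + M e^{\omega T}\,L\int_0^t \|Y_1(s)-Y_2(s)\|_{\mathcal{H}_2}\,ds,
\]
and applying Gr\"onwall's lemma to extract an explicit Lipschitz constant of the form $M e^{\omega T}\exp(M e^{\omega T} L T)$.

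The main obstacle is the positivity claim, because $f$ has no prescribed sign: with only (H3) at hand, the term $-f(Y)$ may act as a depletion, so an arbitrary non-negative initial datum need not guarantee $Y(\cdot,t)\ge 0$. The strategy is therefore to reason along the characteristics of \eqref{eq1.3}, exactly as in \eqref{eq2.6}--\eqref{eq2.7}, where the PDE collapses component-wise into ODEs of the form $\tfrac{d}{ds}y_i = -\mu_i y_i - f_i(Y)$. The Lipschitz inequality $|f_i(Y)|\le L\|Y\|$ combined with Gr\"onwall's lemma produces, along each characteristic, a pointwise lower envelope that decays at worst like $e^{-(\mu_i+L)s}$ times the value at the starting point; the hypothesis \eqref{eq2.49} is precisely the lower bound on $Y_0$ needed so that this worst-case envelope remains non-negative. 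Equivalently, one can verify that the closed cone bounded below by the reference profile $\Gamma$ introduced in Remark \ref{re2.8} is invariant under the fixed-point map $\Phi$ whenever $Y_0$ satisfies \eqref{eq2.49}, and then uniqueness transfers the cone invariance to the mild solution, yielding the asserted non-negativity for any function $f$ in the class (H3).
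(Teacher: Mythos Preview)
Your treatment of existence, uniqueness and Lipschitz dependence is correct and in the same spirit as the paper (mild formulation plus Banach fixed point, then Gr\"onwall). The only difference is the contraction device: the paper works directly on all of $[0,T]$ by showing that the $n$-th iterate $F^n$ satisfies $\|F^nY_1-F^nY_2\|\le \frac{(MLT)^n}{n!}\|Y_1-Y_2\|$, so some iterate is a contraction and no concatenation is needed. Your local-in-time contraction plus extension is an equally standard and equivalent route.

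The positivity part, however, has a genuine gap. Your claim that $|f_i(Y)|\le L\|Y\|$ together with Gr\"onwall gives a lower envelope of the form $e^{-(\mu_i+L)s}$ along characteristics is not correct: a global Lipschitz bound on $f$ does not control $f_i(Y)$ by $L y_i$, so nothing prevents the right-hand side $-\mu_i y_i - f_i(Y)$ from pushing $y_i$ through zero while the other components are large. What actually works, and what the paper does in Remark~\ref{re2.8}, is exact integration along characteristics: for $t\le a$ the implicit formula reads
\[
Y(a,t)=\frac{\Pi(a)}{\Pi(a-t)}\,Y_0(a-t)-\int_{a-t}^{a}\frac{\Pi(a)}{\Pi(z)}\,f(Y(z,z-a+t))\,dz,
\]
and condition~\eqref{eq2.49} is precisely the requirement that this quantity be nonnegative. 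For $t>a$ positivity is inherited from the birth term $b(t)=Y(0,t)$: the paper rewrites $b$ as a Volterra equation with forcing $F(t)$, shows $F(t)\ge 0$ under~\eqref{eq2.49}, and then the Picard iterates $b_n$ in \eqref{eq2.46} are nonnegative for all $n$. Your closing remark about cone invariance of the fixed-point map is in the right direction, but you should replace the Gr\"onwall-envelope step by this explicit characteristic computation and the renewal-equation iteration.
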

The map of mild solution to \eqref{eq1.3} is 
\begin{align}\label{eq2.8}
Y(.,t)=\mathcal{T}(t)\bigl(y_{01}(.),\dots,y_{0N}(.)\bigr)^T+\displaystyle\int_0^t\mathcal{T}(t-s)\lfloor -f(Y(.,s))\rfloor ds,
\end{align}
    where $\mathcal{T}$ is the semi group generates by $\mathcal{A},\; f$  the nonlinear function composed of the $f_i.$
    
    \begin{proof}[of Theorem \ref{th2.5}]
{\bf Existence} :    For every $Y\in \mathcal{X}$, the function  $F$ defined by 
\begin{align}\label{eq2.9}
(FY)(t)=\mathcal{T}(t)\bigl(y_{01}(.),\dots,y_{0N}(.)\bigr)^T+\displaystyle\int_0^t\mathcal{T}(t-s)\lfloor -f(Y(.,s))\rfloor ds,\qquad t\in (0,T),
\end{align}
belongs to $$\mathcal{X}=C((0,T);\mathcal{H}_2)$$ with the same norm as defined below in the proof of Theorem \ref{th2.6}. Let us $Y_1,Y_2$ solution to \eqref{eq1.3} we get from \eqref{eq2.9} the  following  
\begin{align}
(FY_1)(t)-(FY_2)(t)=\displaystyle\int_0^t\mathcal{T}(t-s)\lfloor -f(Y_1(.,s))+f(Y_2(.,s))\rfloor ds,\qquad t\in (0,T),
\end{align}
and thanks to $(H3),$
\begin{align}
\Vert (FY_1)-(FY_2)\Vert_{\mathcal{X}}\leq MLT\Vert Y_1-Y_2\Vert_{\mathcal{X}}
\end{align}
and induction on $n$ iterations  it follows easily that 
\begin{align}
\Vert (F^nY_1)-(F^nY_2)\Vert_{\mathcal{X}}\leq\dfrac{( MLT)^n}{n!}\Vert Y_1-Y_2\Vert_{\mathcal{X}}.
\end{align}
Hence, for $n$ large enough such that $\dfrac{( MLT)^n}{n!}<1,$ it follows that   $F$ is a contraction and  by the Banach fixed point, $F$ admits a fixed point $Y\in C((0,T) ; \mathcal{H}_2).$ 
\\{\bf Uniqueness and continuity} : let $Y$ and $\bar{Y}$ be two solutions of \eqref{eq1.3} and from \eqref{eq2.9} we deduce that
\begin{align}
    \Vert Y(t)-\bar{Y}(t)\Vert_{\mathcal{H}_2}\leq Me^{wt}\Vert Y_0-\bar{Y}_0\Vert_{\mathcal{H}_2}+MLe^{wt}\displaystyle\int_0^t e^{-ws}\Vert Y(s)-\bar{Y}(s)\Vert_{\mathcal{H}_2}ds\quad\text{for every}\quad t\in (0,T)
\end{align}
with $Me^{wt}$ the bound of $\Vert\mathcal{T}(t)\Vert$ and from Gronwall's inequality 
\begin{align}\label{eq2.14}
    \Vert Y-\bar{Y}\Vert_{\mathcal{X}}\leq Me^{MLT}\Vert Y_0-\bar{Y}_0\Vert_{\mathcal{H}_2}.
\end{align}
The uniqueness and continuity follows from \eqref{eq2.14}.
\end{proof}
In \cite[Theorem 2.1.1]{ref21}, the well-posedness of the Lotka-McKendrick model without diffusion has been established, ensuring the existence of a non-negative solution. This is due to the positivity of the initial data and the kernel. We have the following theorem. 
\begin{theorem}\label{th2.6}
Under assumptions $(H1)-(H2)-(H3)-(H5-i)$, for every $Y_0\in \mathcal{H}_1,$ with $Y_0\geq 0$ and $f\leq 0$ globally Lipschitzian, system \eqref{eq1.3} admits a unique solution non-negative  $Y$ in $C((0, T), \mathcal{H}_1)^ N.$ Furthermore, if $Y_0$ satisfies condition \eqref{eq2.49}, then the solution remains non-negative for any  function $f.$
\end{theorem}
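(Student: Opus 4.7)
The plan is to mimic the fixed-point strategy of the proof of Theorem~\ref{th2.5}, transplanted from $\mathcal{H}_2$ to $\mathcal{H}_1$. By Proposition~\ref{pr2.3} the operator $\mathcal{A}$ generates a $C_0$-semigroup $\mathcal{T}(t)$ on $\mathcal{H}_1$ with the explicit exponential growth bound recalled there. Set $\mathcal{X}=C([0,T];\mathcal{H}_1)$ and introduce the map
\[
(FY)(t) \;=\; \mathcal{T}(t)\bigl(y_{01},\dots,y_{0N}\bigr)^T + \int_0^t \mathcal{T}(t-s)\bigl[-f(Y(\cdot,s))\bigr]\,ds.
\]
Using the global Lipschitz property (H3) of $f$ and the semigroup bound, one obtains the standard iterated estimate $\|F^nY_1-F^nY_2\|_{\mathcal{X}} \leq \tfrac{(MLT)^n}{n!}\|Y_1-Y_2\|_{\mathcal{X}}$. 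For $n$ large enough $F^n$ is a strict contraction, and Banach's fixed-point theorem yields a unique mild solution $Y\in\mathcal{X}$. Continuous dependence on $Y_0$ is obtained from a Gronwall estimate, exactly as in~\eqref{eq2.14}.

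The genuinely new ingredient is positivity, which is the reason for imposing the sign condition $f\leq 0$. The linear demographic semigroup $\mathcal{T}(t)$ maps the nonnegative cone $\mathcal{H}_1^+$ into itself: this is transparent from the characteristic-line representation~\eqref{eq2.6}--\eqref{eq2.7}, since the scalar Volterra renewal equation for $b(t)$ is solved iteratively with kernels $\beta(a)\Pi(a)\geq 0$ and source term involving only nonnegative quantities when $Y_0\geq 0$. Assuming $f\leq 0$ in the sense that $f$ maps $\mathcal{H}_1^+$ into the nonpositive cone, the integrand $-f(Y(\cdot,s))$ is nonnegative whenever $Y(\cdot,s)\geq 0$. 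Running the Picard iteration from $Y^{(0)}\equiv Y_0\geq 0$ therefore produces a sequence $Y^{(n)}\in\mathcal{H}_1^+$; since $\mathcal{H}_1^+$ is closed under uniform convergence in $\mathcal{X}$, the fixed point $Y$ is nonnegative.

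For the case without the sign restriction on $f$, condition~\eqref{eq2.49} (as spelled out in Remark~\ref{re2.8}) provides a strictly positive function $\Gamma(a+t;a)$ bounding $Y_0$ from below. The plan is to interpret $\Gamma$ as a subsolution of~\eqref{eq1.3} and to set $Z := Y-\Gamma$: the Lipschitz property of $f$ together with the fact that $\Gamma$ absorbs the sign problem produces a mild equation for $Z$ with a right-hand side of favorable sign on the nonnegative cone, reducing the argument to the previous case via the same contraction and positivity scheme.

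The main obstacle is not the contraction step, which is a rerun of Theorem~\ref{th2.5}, but the positivity propagation. Concretely one must verify with care two points: that the pure-demographic semigroup $\mathcal{T}(t)$ preserves $\mathcal{H}_1^+$ (requiring a clean handling of the Volterra equation~\eqref{eq2.7} and its Neumann-series resolution), and that the hypothesis ``$f\leq 0$'' is formalized componentwise on $\mathcal{H}_1$ so that $-f$ indeed leaves the cone invariant along the Picard iterates. Once these two verifications are in place, the convergence in the closed cone is automatic, and the same estimates as in Theorem~\ref{th2.5} deliver uniqueness and continuity without modification.
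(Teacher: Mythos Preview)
Your argument is essentially correct, but it proceeds by a different route than the paper. The paper does \emph{not} rerun the semigroup/mild-solution machinery of Theorem~\ref{th2.5} in $\mathcal{H}_1$; instead it works directly with the characteristic-line representation of the solution (the two-region formula for $t\le a$ and $t>a$), introduces a weighted norm $\|Y\|_{\mathcal{X}}=\sup_{t\in(0,T)}e^{-\lambda t}\|Y(\cdot,t)\|_{\mathcal{H}_1}$ on $\mathcal{X}=C([0,T];\mathcal{H}_1)$, and shows that the solution map $\delta$ built from the characteristic formula is a contraction once $\lambda$ is chosen large enough (so $L/\lambda<1$). Positivity is then read off directly from the explicit integral formula: the kernels $\Pi(a)/\Pi(z)$ are positive, $-f\ge0$, $Y_0\ge0$, and nonnegativity of the renewal trace $b(t)$ follows from the Neumann-type iteration~\eqref{eq2.46} developed in Remark~\ref{re2.8}.

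Your semigroup approach buys you a cleaner parallel with Theorem~\ref{th2.5} and avoids splitting the age domain, at the cost of needing the positivity of $\mathcal{T}(t)$ on $\mathcal{H}_1^+$ as a separate ingredient (which, as you note, comes from the same Volterra argument anyway). The paper's characteristic approach makes positivity nearly transparent from the formula and avoids invoking a semigroup on $\mathcal{H}_1$ at all. Your treatment of the final clause (general $f$ under condition~\eqref{eq2.49}) via a subsolution $\Gamma$ and the shifted variable $Z=Y-\Gamma$ is more heuristic than what the paper does: there the claim is handled concretely through the renewal iteration in Remark~\ref{re2.8}, showing that~\eqref{eq2.49} forces $F(t)\ge0$ and hence $b_n(t)\ge0$ for every iterate.
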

Integrating the system \eqref{eq1.3} along the characteristic curves $a-t=t_0,$ we obtain implicit formulas for its solutions stated below
\begin{equation}
Y(a,t)=\left\lbrace
\begin{array}{ll}
\dfrac{\Pi(a)}{\Pi(a-t)}Y_{0}(a-t)+\displaystyle\int_{a-t}^{a}\dfrac{\Pi(a)}{\Pi(z)}\lfloor -f(Y(z,z-t_0))\rfloor dz\quad t\leq a,\\
\\\Pi(a)b(t-a)+\displaystyle\int_{0}^{a}\dfrac{\Pi(a)}{\Pi(z)}\lfloor -f(Y(z,z+t_0))\rfloor dz\quad t> a.
\end{array}
\right.
\end{equation}
Here, $b(t)=Y(0,t)$ plays the role of the renewal term, and by Remark 2.8(ii) the corresponding renewal equation admits a unique solution. The operator $\Pi(\cdot)$ and the initial datum $Y_{0}$ are defined in \eqref{eq1.4} and \eqref{equation1.6}, respectively. 
\begin{proof}[of Theorem \ref{th2.6}]
Let us fixed $\bar{Y}\in \mathcal{H}_1$ and define the mapping $\delta$ such that $\delta(\bar{Y})=Y(a,t).$ Consider in $\mathcal{X}=C((0;T); \mathcal{H}_1)$ the norm 
\begin{align*}
\Vert \bar{Y}\Vert_{\mathcal{X}}=\sup_{t\in(0,T)}e^{-\lambda t}\Vert \bar{Y}\Vert_{\mathcal{H}_1},\quad\;\text{for any}\;\bar{Y}(.,t)\in \mathcal{H}_1,
\end{align*}
which is equivalent to the usual norm in $\mathcal{H}_1$ with $\lambda,$ a positive constant that will
be made precise later. 

 On the one hand, for all $(a, t)\in(0,A)\times(0,T)$ such that $t\leq a,$ we have 
\begin{align}
    \displaystyle\int_t^A\delta(\bar{Y})(a,t)da=\displaystyle\int_0^{A-t}\dfrac{\Pi(s+t)}{\Pi(s)}Y_{0}(s)ds+\displaystyle\int_t^A\displaystyle\int_{a-t}^{a}\dfrac{\Pi(a)}{\Pi(z)}\lfloor -f(\bar{Y}(z,z-a+t))\rfloor dzda
\end{align}
thanks to $(H3),$ we obtain 
\begin{align}\label{eq2.17}
    e^{-\lambda t}\displaystyle\int_t^A\delta(\bar{Y})(a,t)da\leq\Vert Y_0\Vert_{\mathcal{H}_1}+\frac{L}{\lambda}\Vert\bar{Y}\Vert_{\mathcal{X}}
\end{align}

 On the other hand,  for $t>a,$
 \begin{align}
    \displaystyle\int_0^t\delta(\bar{Y})(a,t)da=\displaystyle\int_0^{t}\Pi(a)b(t-a)da+\displaystyle\int_0^t\displaystyle\int_{0}^{a}\dfrac{\Pi(a)}{\Pi(z)}\lfloor -f(\bar{Y}(z,z-a+t))\rfloor dzda,
\end{align}
and assumption $(H3)$ allows to obtain this estimate
\begin{align}\label{eq2.19}
    e^{-\lambda t}\displaystyle\int_0^t\delta(\bar{Y})(a,t)da\leq \frac{\Vert b\Vert_{L^{\infty}}}{\lambda}+\frac{L}{\lambda}\Vert\bar{Y}\Vert_{\mathcal{X}}.
\end{align}
The estimate \eqref{eq2.17}-\eqref{eq2.19} allows us to obtain
\begin{align}
\Vert\delta(\bar{Y})\Vert_{\mathcal{X}}\leq\dfrac{\Vert b\Vert_{L^{\infty}}+2\Vert\bar{Y}\Vert_{\mathcal{X}} +\lambda\Vert Y_0\Vert_{\mathcal{H}_1}}{\lambda}
\end{align}
Then, $\delta\in \mathcal{X}$ for $\lambda>0.$  For every $\bar{Y}_1,\; \bar{Y}_2\in \mathcal{X},$ and $t\leq a$
\begin{align}
e^{-\lambda t}\displaystyle\int_t^A\vert\delta(\bar{Y}_1)-\delta(\bar{Y}_1)\vert(a,t)da\leq\frac{L}{\lambda}\Vert\bar{Y}_1-\bar{Y}_2\Vert_\mathcal{X}
\end{align}
and for $t>a$ we similarly obtain as in \eqref{eq2.19} and 
\begin{align}
    \Vert\delta(\bar{Y}_1)-\delta(\bar{Y}_1)\Vert_{\mathcal{X}}\leq\frac{L}{\lambda}\Vert\bar{Y}_1-\bar{Y}_2\Vert_\mathcal{X}.
\end{align}
For $\lambda$ large enough, we clearly prove that $\delta$ is a contraction in $\mathcal{X}.$
\end{proof}
 We now consider the function $f$ and the non-local term $B$, nonlinear, and globally Lipschitzian.
\subsection{Case of a nonlinear nonlocal term}
\begin{theorem}\label{th2.7}
 Under assumptions $(H1)-(H2)-(H3)-(H5-ii)$ and for all $Y_0\in \mathcal{H}_2$ with $Y_0\geq 0,$ system \eqref{eq1.3} admits a unique nonnegative solution $Y.$ This solution belongs to $Y\in  C((0,T) ; \mathcal{H}_2)\cap L^2((0,A)\times(0,T))^N.$ 
\end{theorem}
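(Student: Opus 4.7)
The plan is to adapt the contraction-mapping scheme of Theorem~\ref{th2.6}, now absorbing the extra nonlinearity inside the renewal operator $B$. Since $B$ is no longer linear, the operator $\mathcal{A}$ with its boundary condition baked into the domain is no longer appropriate, and I cannot rely on its semigroup directly. Instead, I would work with the characteristic/integral representation of solutions, treating both the source $f(Y)$ and the nonlocal boundary datum $BY$ as given, and close a Banach fixed point on $\mathcal{X}=C([0,T];\mathcal{H}_2)$ equipped with the weighted norm $\|Y\|_{\mathcal{X}}=\sup_{t\in[0,T]}e^{-\lambda t}\|Y(\cdot,t)\|_{\mathcal{H}_2}$, with $\lambda>0$ chosen large at the end.

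\textbf{The fixed-point map.} For $\bar Y\in\mathcal{X}$, I would compute the nonlocal totals $P[\bar Y](t)=\bigl(\int_0^{A_i}\bar y_i(a,t)\,da\bigr)_{i=1}^N$, set $\tilde\beta[\bar Y](a,t):=\bar\beta(a,P[\bar Y](t))$, and define the auxiliary boundary datum
\[
b[\bar Y](t)=\int_0^{A}\tilde\beta[\bar Y](a,t)\,\bar Y(a,t)\,da,
\]
which inherits the $L^\infty$ bound of $\bar\beta$ times $\|\bar Y\|_{\mathcal{H}_2}$ and finite-age Cauchy--Schwarz. Then $\Psi(\bar Y)$ is defined as the unique solution of the linear non-autonomous problem with frozen boundary $b[\bar Y]$ and source $-f(\bar Y)$: along characteristics,
\[
(\Psi\bar Y)(a,t)=\begin{cases}\displaystyle\dfrac{\Pi(a)}{\Pi(a-t)}Y_0(a-t)-\int_{a-t}^{a}\dfrac{\Pi(a)}{\Pi(z)}f\bigl(\bar Y(z,z-(a-t))\bigr)\,dz,&t\le a,\\[4pt]\displaystyle\Pi(a)\,b[\bar Y](t-a)-\int_0^{a}\dfrac{\Pi(a)}{\Pi(z)}f\bigl(\bar Y(z,z+(t-a))\bigr)\,dz,&t>a.\end{cases}
\]

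\textbf{Contraction and main obstacle.} For $\bar Y_1,\bar Y_2\in\mathcal{X}$ one splits $\Psi(\bar Y_1)-\Psi(\bar Y_2)$ into a source part and a boundary part. The source part is controlled by (H3) exactly as in the proof of Theorem~\ref{th2.6}, yielding a bound of the form $(C_1 L/\lambda)\,\|\bar Y_1-\bar Y_2\|_{\mathcal{X}}$. The boundary part is the crux of the argument and the main obstacle: one must estimate $b[\bar Y_1]-b[\bar Y_2]$, which decomposes as
\[
b[\bar Y_1]-b[\bar Y_2]=\int_0^{A}\bigl(\tilde\beta[\bar Y_1]-\tilde\beta[\bar Y_2]\bigr)\bar Y_1\,da+\int_0^{A}\tilde\beta[\bar Y_2]\bigl(\bar Y_1-\bar Y_2\bigr)\,da.
\]
Using (H5-ii), the first integrand is dominated by $K\,|P[\bar Y_1]-P[\bar Y_2]|\,|\bar Y_1|$, and Cauchy--Schwarz on the finite age intervals gives $|P[\bar Y_1]-P[\bar Y_2]|\le C\,\|\bar Y_1-\bar Y_2\|_{\mathcal{H}_2}$; the second integrand is bounded by $\|\bar\beta\|_\infty\,\|\bar Y_1-\bar Y_2\|_{\mathcal{H}_2}$ up to a Cauchy--Schwarz constant. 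Propagating these bounds through the characteristic formula and integrating the exponential weight produces a global bound $(C_2/\lambda)\,\|\bar Y_1-\bar Y_2\|_{\mathcal{X}}$ with $C_2=C_2(K,\|\bar\beta\|_\infty,\|\bar Y_i\|_{\mathcal{X}})$. Combining, $\Psi$ is a strict contraction on $\mathcal{X}$ for $\lambda$ large, which delivers existence, uniqueness, and continuous dependence. Nonnegativity would then be obtained by restricting the iteration to the closed invariant cone $\{\bar Y\ge0\}$, invariance following from $\bar\beta\ge0$ and either the cone-preserving structure of $f$ on $\bar Y\ge0$ or the protection afforded by condition~\eqref{eq2.49} as in Theorems~\ref{th2.5}--\ref{th2.6}; finally, $Y\in L^2((0,A)\times(0,T))^N$ is immediate from $C([0,T];\mathcal{H}_2)$ by Fubini since $A,T<\infty$.
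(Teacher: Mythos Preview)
Your approach is sound and genuinely different from the paper's. The paper does \emph{not} run a fixed point on the full state: it first specializes to the Kolmogorov form \eqref{eq2.24}, so that the characteristic representation is purely exponential (no integrated source term), and then reduces the whole problem to a Volterra integral equation for the boundary datum $b(t)=Y(0,t)$ alone. The contraction is established on $L^\infty(0,T)^N$ with a weighted norm, using only the $L^\infty$ bound on $\bar\beta$ and the exponential decay of the kernel $e^{-a(\bar\mu+M)}$. Nonnegativity then comes for free from the exponential form.

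Your route---fixed point on all of $Y$ in $C([0,T];\mathcal H_2)$, with both the source $f(\bar Y)$ and the nonlinear boundary $b[\bar Y]$ frozen---is more general: it does not require the Kolmogorov structure and treats the $(H5\text{-}ii)$ Lipschitz condition on $\bar\beta$ explicitly, which the paper never really uses in its contraction estimate. The trade-off is that you pay for this generality with a contraction constant $C_2=C_2(\|\bar Y_i\|_{\mathcal X})$ that depends on the arguments. As written, ``$\Psi$ is a strict contraction for $\lambda$ large'' is not quite justified globally: you should either first show $\Psi$ maps a ball $B_R\subset\mathcal X$ into itself (the $Y_0$ and $1/\lambda$ terms make this routine) and then contract on $B_R$, or argue locally in time and extend via an a~priori bound. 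This is a standard repair, but it is the one place your sketch is genuinely incomplete. Your nonnegativity and $L^2(Q)$ remarks are fine; note that in the paper's Kolmogorov setting, positivity is automatic from the exponential representation once $b\ge 0$, which is why the paper does not labor over it.
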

Introduce the time‑dependent interaction operator

\begin{align}
M(t) =\operatorname{diag}\bigl(f_1(Y(\cdot,t)),\,\dots,\,f_N(Y(\cdot,t))\bigr),
\end{align}

 where each $f_i(Y(\cdot,t))$ is a given nonlinear functional of the age‑density $Y(\cdot,t)$.  If the system is of Kolmogorov type, then model \eqref{eq1.3} takes the form
\begin{equation}\label{eq2.24}
\left\lbrace 
\begin{array}{ll}
\partial_tY(a,t)+\partial_aY(a,t)+D(a)Y(a,t)+M(t).Y(a,t)=0 &\text{ in }Q,\\  
Y(0,t)=\displaystyle\int_0^A\bar{\beta}(a,P_1(t))Y(a,t)da, &\text{ in } Q_T,\\
Y(a,0)=Y_0 &\text{ in } Q_A.
\end{array}
\right.
\end{equation} 

\begin{proof}[of Theorem \ref{th2.7}]
Using the method of characteristics, the solution to system \eqref{eq2.24} can be expressed as follows:

\begin{equation}
Y=\left\lbrace\begin{array}{ll}
Y_{0}(a-t)e^{-\displaystyle\int_{a-t}^a(D(s)+M(t))ds}\qquad\text{for}\quad t\leq a,\\
b(t-a)e^{-\displaystyle\int_0^a(D(s)+M(t))ds}\qquad\text{for}\quad t> a,
\end{array}
\right.
\end{equation} 

where the function $b(t)$ satisfies the renewal condition
\begin{align}\label{eq2.26}
b(t)=\displaystyle\int_0^A\bar{\beta}(a,P_1(t))Y(a,t)da.
\end{align}
 Substituting the expression of $Y(a,t)$ into \eqref{eq2.26}, we obtain the following Volterra integral equation:

\begin{align}
b(t)=F(t)+\displaystyle\int_0^t k(t-s,t)b(s)ds  
\end{align}

where the terms are given by

\begin{align}
 F(t)=\displaystyle\int_t^A \bar{\beta}(a,P_1)Y_{0}(a-t)e^{-\displaystyle\int_{a-t}^a(D(s)+M(s))ds}da,\qquad k(a,t)=\bar{\beta}(a,P_1(t))e^{-\displaystyle\int_0^a(D(s)+M(s))ds}
\end{align}

Define
\begin{align}
    \bar{\mu}=\inf\lbrace D(s),\quad s\in (0,A)\rbrace.
\end{align}
 Then the following upper bounds hold:
 \begin{align}
    \vert F(t)\vert \leq \Vert\bar{\beta}\Vert_{L^{\infty}}e^{-t(\bar{\mu+M)}}\Vert Y_0\Vert_{\mathcal{H}_2},\quad\text{and}\;\vert k(a,t)\vert\leq \Vert\bar{\beta}\Vert e^{-a(\bar{\mu}+M)}.
\end{align}
 To establish the existence and uniqueness of a solution to the integral equation \eqref{eq2.26}, we apply Banach’s fixed point theorem. To this end, define a weighted norm on $L^\infty(0,T)^N$ by
 \begin{align}
\Vert b\Vert=\sup_{t\in(0,T)}\lbrace e^{-\lambda t}b(t)\rbrace
\end{align}
 for any $b \in L^\infty(0,T)^N$. Then consider the mapping $\mathcal{F} : L^\infty(0,T)^N \to L^\infty(0,T)^N$ defined by
 \begin{align}
\mathcal{F}(b)(t)=F(t)+\displaystyle\int_0^t k(t-s,t)b(s)ds
\end{align}
 For any $b_1, b_2 \in L^\infty(0,T)^N$, we estimate:
 \begin{align}
     |\mathcal{F}(b_1) - \mathcal{F}(b_2)|
 &\leq \sup_{t \in (0,T)} \left\lbrace e^{-\lambda t} \int_0^t k(t-s, t)|b_1(s) - b_2(s)|ds \right\rbrace 
 \end{align}
\begin{align*}
|\mathcal{F}(b_1) - \mathcal{F}(b_2)|
 &\leq |\bar{\beta}| |b_1 - b_2|  \sup_{t \in (0,T)} \left\lbrace e^{-\lambda t} \int_0^t e^{-(t-s)(\bar{\mu}+M)}e^{\lambda s}ds \right\rbrace \\
 \end{align*}
\begin{align}
   |\mathcal{F}(b_1) - \mathcal{F}(b_2)|
 & \leq  \frac{|\bar{\beta}|}{\bar{\mu}+M+\lambda} |b_1 - b_2|.
\end{align}
Choosing $\lambda$ sufficiently large such that  $\frac{\|\bar{\beta}\|}{\bar{\mu}+M+\lambda}<1$ ensures that $\mathcal{F}$ is a contraction on $L^\infty(0,T)^N$, and Banach's fixed point theorem yields the existence and uniqueness of the solution $b(t)$.
\end{proof}
\begin{remark}
We may allow a more general nonlocal term by assuming that the operator $B$ satisfies hypothesis (H5) without prescribing an explicit kernel. Moreover, if the nonlinearity $f$ has the required structure, then the solution can be expressed in the integral form as in \eqref{eq2.8}.
\end{remark}
\begin{remark} 
The operator $\mathcal{A}$ generates a $C_0$-semigroup, and the perturbation $M(Y(t))$ is bounded in $\mathcal{H}_2$. By the perturbation theorem, $\mathcal{A}-M(Y(t))$ is still the generator of a $C_0$-semigroup with $D(\mathcal{A}-M(Y(t)))=D(\mathcal{A})$. It follows that there exists a mild solution $Y(\cdot,t)=\mathcal{T}(t)Y_0\in\mathcal{H}_2$ for the first equation of \eqref{eq2.24}, and the system is well-posed as a nonlinear system describing the network dynamics. The transition matrix $M$ may depend on $t$ or $a$, may be non-diagonal, or may be replaced by a quadratic form $f(Y),$ in all cases the existence of a solution can be established analogously. Finally, existence and uniqueness for the full system \eqref{eq1.3} follow from showing that the mapping associated with \eqref{eq2.26} is a contraction in $C([0,T])^N$, even in the presence of the nonlinear nonlocal term.
\end{remark}
\begin{remark}\label{re2.8}
{\bf Non-Kolmogorov case.} Suppose the model is not of Kolmogorov type (as in G. F. Webb \cite{ref48}), i.e., the function $f$ does not take the form \eqref{eq1.2}, which corresponds to a particular case. Then, the existence of solutions can be established by reformulating the problem as a Volterra integral equation. In particular, the function $b(t) = Y(0,t)$ satisfies

  \begin{align}\label{eq2.41}
b(t)=F(t)+\displaystyle\int_0^tK(a,t)b(t-a)da\quad\text{a.e.}\quad t\in (0,T).
\end{align}
with
\begin{align}\label{eq2.42}
F(t)=\displaystyle\int_0^t\bar{\beta}(a,P_1)\int_{0}^a\dfrac{\Pi(a)}{\Pi(s)} [-f(Y(s,s+a-t))]dsda+\displaystyle\int_t^A\bar{\beta}(a,P_1)\dfrac{\Pi(a)}{\Pi(a-t)}Y_0(a-t)da
\end{align}
\begin{align*}
+\displaystyle\int_t^A\bar{\beta}(a,P_1)\int_{a-t}^a\dfrac{\Pi(a)}{\Pi(s)} [-f(Y(s,s-a+t))]dsda \quad\text{a.e.}\quad t\in(0,\min{\lbrace T,A\rbrace})
\end{align*}
\begin{align}
    F(t)=-\displaystyle\int_0^A\bar{\beta}(a,P_1)\int_0^a\dfrac{\Pi(a)}{\Pi(s)}f(Y(s,s+a-t))dsda\quad\text{a.e.}\quad \min{\lbrace T,A\rbrace}<t<T,
\end{align}
and  the maternity function
\begin{equation}
K(a,t)=\left\lbrace 
\begin{array}{ll}
\bar{\beta}(a,P_1(t))\Pi(a)\quad\text{a.e.}\quad (a,t)\in Q\quad a<t,\\
\\0 \qquad\text{elsewhere}. 
\end{array}
\right.
\end{equation} 
Equation \eqref{eq2.41}, known as the renewal equation and also as Volterra's equation, admits a solution. Let us therefore prove the uniqueness of the solution using Banach's fixed-point theorem. To this purpose, we define an operator on  $C(0,T)$ by
\begin{align}\label{eq2.45}
    \mathcal{F}(b)(t)=F(t)+\displaystyle\int_0^tK(t-s,t)b(s)ds\quad\text{a.e.}\quad t\in (0,T).
\end{align}
and we shown that the operator defined in \eqref{eq2.45} has a unique fixed point \cite{ref21} by applying the same technique in the Theorem \ref{th2.6}.  An addition, by constructing the renewal  equation as   \( b \) as a sequence, we obtain
\begin{equation}\label{eq2.46}
\left\lbrace 
\begin{array}{ll}
b_0(t)=F(t), & t\in (0,T),\\
\\b_{n+1}(t)= F(t)+\displaystyle\int_0^tK(t-s,t)b_n(s)ds,& t\in (0,T).
\end{array}
\right.
\end{equation}
If we take $f\leq 0,$ a.e. $(a,t)\in(0,A)\times(0,T)$  then, from \eqref{eq2.46}, we get $F\geq 0.$ And we deduice that $b_n(t)\geq 0,\quad t\in (0,T).$ 
\\However, if \( f \geq 0 \), it is necessary for \( Y_0 \) to take a sufficiently high value to compensate for the negative effects of the other terms. Thus, a sufficient condition for \( F(t) \geq 0 \; t\in (0,T)\;\text{a.e.}\) would be 
\begin{align}\label{eq2.49}
     Y_0(s) \geq \int_s^{s+t} \frac{\Pi(s)}{\Pi(z)} f(Y(z, z-s)) \, dz>0, \;\text{a.e.}\;\quad (s,t) \in (0, A)\times (0,T).
\end{align}

We may conclude that $b$, the solution of \eqref{eq2.46}, is nonnegative on $(0,T)$. However, the positivity of the solution may be compromised for $t \in (\min\{T,A\},T)$, particularly when $A < T$. To address this, we redefine $F(t)$ as in 
\begin{equation}
F(t)=\left\lbrace 
\begin{array}{ll}
\displaystyle\int_t^A\bar{\beta}(a,P_1)\dfrac{\Pi(a)}{\Pi(a-t)}Y_0(a-t)da+\displaystyle\int_t^A\bar{\beta}(a,P_1)\int_{a-t}^a\dfrac{\Pi(a)}{\Pi(s)} [-f(Y(s,s-a+t))]dsda \;\text{a.e.}\;(t,a)\in(0,\min{\lbrace T,A\rbrace})\times (t,A),\\
\\0 \; t\in\;(\min{\lbrace T,A\rbrace},T),
\end{array}
\right.
\end{equation} 
to ensure positivity. Consequently, for any function $f$ such that $Y_0$ satisfies condition \eqref{eq2.49}, the solution of system \eqref{eq1.3} remains positive.
\end{remark}
This Remark \ref{re2.8} examines the case where the solution of \eqref{eq1.3} is written in the form of \eqref{eq2.8}. This occurs when the interaction function groups together response functions of type II or III, or when it depends solely on the producers (prey). However, in the exponential case (the case of a Kolmogorov-type model) \eqref{eq2.24}, one may encounter, for example, Holling-Tanner functions or less realistic models, as mentioned in \cite{ref45}.  

The condition \eqref{eq2.49} states that the initial population density \( Y_0(s) \) must be greater than an integral involving the interaction function \( f \) and the survival probability \( \Pi \) of the species at different times. This condition is crucial to ensure the positivity of solutions in an interactive multi-species model.

\subsection*{\bf Discussion}
Condition $\eqref{eq2.49}$ is necessary and sufficient because it imposes a minimal initial density that ensures population persistence: it prevents extinction due to Allee effects and guarantees that ecological interactions (predation, resource availability, competition) remain sufficiently strong to sustain positive dynamics. Predator dietary flexibility and biodiversity enhance resilience by providing alternative food pathways and functional redundancy. Conversely, strict niche competition (Gause’s principle) may lead to exclusion. Importantly, ecosystem survival and stability (see \cite{ref55}) depend on a combination of environmental and anthropogenic factors (land-use changes, human pressures, population management), which must be considered when interpreting or applying condition $\eqref{eq2.49}$. Historically, Ronald Ross demonstrated the practical significance of such thresholds by showing that reducing mosquito density below a critical level can interrupt malaria transmission, an illustration of how threshold effects translate into actionable control strategies \cite{ref5}.

Mathematically, $\eqref{eq2.49}$ reads as a constraint on the initial datum $Y_0(s)$ that guarantees existence, positivity and persistence of solutions to integro-differential equations; empirical cases of species recovery illustrate that well-functioning and properly managed ecosystems enable population rebound.

\subsection{Steady states}\label{s2.4}
In this section, we determine the steady states of system. Any steady state of model \eqref{eq1.3} satisfies the
following system 
\begin{equation}\label{eq2.51}
\left\lbrace 
\begin{array}{ll}
\partial_aY(a)+D(a)Y(a)+f(Y(a))=0 &\text{ in }Q_A,\\  
Y(0)=BY.
\end{array}
\right.
\end{equation} 
The system \eqref{eq2.51} can be formulated either as a Kolmogorov-type system (when the functional responses used are of the Holling-Tanner type) or as another type of system (when using Holling Types II or III), and the analysis varies depending on this choice.  In the case of a Kolmogorov-type system, we obtain a solution of the form \( Y(a) = Y(0)\Pi_1(a) \) where $\Pi_1$ is defined in \eqref{eq2.58}. We simplify the calculation by assuming that the newborns satisfy $(H5-i)$.  

We analyze the local asymptotic and exponential stability of the equilibrium points using the linearization technique. Let us $Y$ and $\overline{Y}$ be, respectively, the solutions of the time‐dependent and the stationary systems.  Define the perturbation from the stationary state by $
\widetilde{y}(a,t) \;=\; Y(a,t)\;-\;\overline{Y}(a).$ The substitution of $\widetilde{y}$ into \eqref{eq1.3} yields, under {\bf assumption (H4)}, the following linearized problem:
\begin{equation}\label{eq2.52}
\left\lbrace 
\begin{array}{ll}
\partial_t\widetilde{y}+\partial_a\widetilde{y}+D(a)\,\widetilde{y}
\;+\;\Tilde{y}f'\bigl(\overline{Y}\bigr)
\;=\;0 
\quad&\text{in }Q,\\
\widetilde{y}(0,t) 
\;=\;\displaystyle\int_{0}^{A}\beta(a)\,\widetilde{y}(a,t)\,da\,\quad&\text{in }Q_T. 
\end{array}
\right.
\end{equation}  

To study the asymptotic behavior of \eqref{eq2.52}, we look for exponential in time solutions of the form $
\widetilde{y}(a,t) \;=\; e^{\lambda t}\,w(a),
$ where $w(a)$ satisfies
\begin{equation}\label{eq2.53}
\left\lbrace 
\begin{array}{ll}
\partial_a\,w \;+\;\bigl(\lambda I + D(a)\;+\;f'\bigl(\overline{Y}(a)\bigr)\bigr)\,w\;=\;0 
\quad&\text{in }\;(0,A),\\
w(0) 
\;=\;\displaystyle\int_{0}^{A}\beta(a)\,w(a)\,da\,.
\end{array}
\right.
\end{equation}

By solving \eqref{eq2.53}, we obtain for every $a\in[0,A]$ :

\begin{align}\label{eq2.54}
    w(a)
  \;=\;
  w(0)\,\exp\Bigl(-\int_{0}^{a}\bigl[\lambda I + D(s) + f'\bigl(\overline{Y}(s)\bigr)\bigr]\,ds\Bigr)
\end{align}

where the constant $w(0)$ satisfies the nonlocal boundary condition :

\begin{align}\label{eq2.55}
     w(0)
  \;=\;
  \int_{0}^{A}\beta(a)\,w(a)\,da
\end{align}

By substituting expression \eqref{eq2.54} into \eqref{eq2.55}, one obtains the following dispersion equation:

\begin{align}\label{eq2.56}
   \Lambda(\lambda)= \int_{0}^{A}
    \beta(a)\,
    \exp\Bigl(-\int_{0}^{a}\bigl[\lambda I + D(s) + f'\bigl(\overline{Y}(s)\bigr)\bigr]\,ds\Bigr)
  \;da
  \;=\;I_{\mathcal{H}^2}.
\end{align}
We call $\Lambda(\lambda)=I$ the characteristic equation, and its roots  are called characteristic roots. 
By convention, we then define the basic reproduction number $R_{0}$ as the value of the right-hand side of \eqref{eq2.56} when $\lambda = 0$. In other words,

\begin{align}\label{eq2.57}
    R_{0}
    \;=\;
    \int_{0}^{A}
      \beta(a)\,
      \exp\Bigl(-\int_{0}^{a}\bigl[D(s) + f'\bigl(\overline{Y}(s)\bigr)\bigr]\,ds\Bigr)
    \;da
\end{align}

The factor

\begin{align}\label{eq2.58}
   \Pi_1(a)= \exp\Bigl(-\int_{0}^{a}\bigl[D(s) + f'(\overline{Y}(s))\bigr]\,ds\Bigr)
\end{align}

represents the (linearized) probability of survival of an individual from birth up to age $a$.    The characteristic equation \eqref{eq2.56} allows one to impose conditions on the basic reproduction number $R_0$ that ensure stability, which differ from those derived in \cite{ref45}.

\begin{theorem}\label{th2.9}
Let us consider assumption (H4),  $\bar Y$  be the stationary solution of \eqref{eq1.3}. If $R_0<I_{\mathcal{H}^2},$ then the steady state of system\eqref{eq1.3} is locally asymptotically stable. Otherwise, if $R_0>I_{\mathcal{H}^2}$ then the steady state is unstable.
\end{theorem}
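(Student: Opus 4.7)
The plan is to deduce local stability from a spectral analysis of the characteristic function
\[
\Lambda(\lambda) \;=\; \int_{0}^{A} \beta(a)\exp\!\Bigl(-\!\int_{0}^{a}\bigl[\lambda I + D(s) + f'(\overline{Y}(s))\bigr]\,ds\Bigr)\,da,
\]
combined with the principle of linearized stability for the $C_0$-semigroup generated by the linearized operator in \eqref{eq2.52}. Since $\bigl(\mathcal{A} - f'(\overline{Y})\bigr)$ is a bounded perturbation of the generator $\mathcal{A}$ from Lemma \ref{le2.1}, it also generates a $C_0$-semigroup on $\mathcal{H}_2$, and the complex roots of the equation $\det\bigl(\Lambda(\lambda)-I_{\mathcal{H}_2}\bigr)=0$ coincide with its point spectrum. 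Local asymptotic stability (resp.\ instability) will follow once we localize these roots in the left (resp.\ right) half-plane according to the sign of $\rho(R_{0})-1$, where $\rho$ denotes the spectral radius and $R_{0}=\Lambda(0)$.

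The first step is to study $\Lambda$ on the real axis. For $\lambda\in\R$, the matrix $\Lambda(\lambda)$ has nonnegative entries (thanks to (H2) and the sign structure of $D$ and $f'(\overline{Y})$ along the stationary trajectory, whose positivity is guaranteed by the analysis leading to \eqref{eq2.49}). Shifting $\lambda$ acts as multiplication by $e^{-\lambda a}$ inside the integral, so $\lambda\mapsto\Lambda(\lambda)$ is entrywise strictly decreasing and continuous, with $\Lambda(\lambda)\to 0$ as $\lambda\to+\infty$ and $\|\Lambda(\lambda)\|\to\infty$ as $\lambda\to-\infty$. By Perron--Frobenius, $\rho(\Lambda(\lambda))$ inherits these properties, so there exists a unique real $\lambda_{0}$ with $\rho(\Lambda(\lambda_{0}))=1$. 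Comparing with $\rho(\Lambda(0))=\rho(R_{0})$ yields $\lambda_{0}<0$ when $\rho(R_{0})<1$ and $\lambda_{0}>0$ when $\rho(R_{0})>1$.

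Next I would show that $\lambda_{0}$ dominates every complex characteristic root. If $\lambda\in\C$ satisfies $\Lambda(\lambda)v=v$ for some eigenvector $v\neq 0$, then taking componentwise moduli and using the elementary bound
\[
\Bigl|\exp\!\Bigl(-\!\int_{0}^{a}[\lambda I + D(s)+f'(\overline{Y}(s))]\,ds\Bigr)\Bigr|_{ij} \;\le\; \Bigl[\exp\!\Bigl(-\!\int_{0}^{a}[\mathrm{Re}(\lambda) I + D(s)+f'(\overline{Y}(s))]\,ds\Bigr)\Bigr]_{ij}
\]
gives $|\Lambda(\lambda)|\le \Lambda(\mathrm{Re}(\lambda))$ entrywise. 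The Perron--Frobenius comparison principle then yields $1=\rho(\Lambda(\lambda))\le\rho(\Lambda(\mathrm{Re}(\lambda)))$, whence $\mathrm{Re}(\lambda)\le\lambda_{0}$ by the monotonicity of Step~2. Conversely, when $\rho(R_{0})>1$, the positivity of $\Lambda(0)-I$ in the Perron sense produces an actual real root $\lambda_{0}>0$ of the characteristic equation, providing an explicit unstable mode.

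Finally, to translate the spectral bound into a growth bound I would invoke the eventual compactness (or at least the eventual norm continuity) of the age-structured semigroup: after time $A$ trajectories are completely regularized by the characteristics, so the semigroup becomes compact for $t>A$, and the spectral-mapping theorem applies (as in Webb's framework \cite{ref48}). Hence $\omega_{0}=\lambda_{0}$, and local asymptotic stability follows from $\rho(R_{0})<1$ while $\rho(R_{0})>1$ yields instability through the unstable manifold associated with the positive Perron eigenvalue. The main technical obstacle will be the non-commutativity of $D(s)$ and $f'(\overline{Y}(s))$, which complicates both the entrywise monotonicity of $\Lambda$ and the comparison bound for complex $\lambda$; a careful Dyson/Magnus expansion, or a reduction to a Volterra renewal formulation of \eqref{eq2.52} via the characteristic lines (as in Remark \ref{re2.8}), will be required to make the matrix estimates above rigorous.
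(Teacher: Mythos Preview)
Your approach shares the paper's core idea: use the monotonicity of $\Lambda(\lambda)$ on the real axis to locate a unique real characteristic root whose sign is determined by comparing $R_0=\Lambda(0)$ with the identity. The paper's proof is far more terse: it simply asserts that $\Lambda$ is strictly decreasing, so $\Lambda(\lambda^*)=I_{\mathcal{H}^2}$ has a unique real solution with $\lambda^*<0$ when $R_0<I_{\mathcal{H}^2}$ and $\lambda^*>0$ when $R_0>I_{\mathcal{H}^2}$, and concludes stability or instability directly from that sign. Implicitly the paper treats the problem as scalar (consistent with its later remark that $R_0=\operatorname{diag}(R_{0_1},\dots,R_{0_N})$ and the diagonality of $D$ and $\beta$), so no matrix subtleties arise. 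Your version is considerably more complete: you invoke Perron--Frobenius to handle a genuinely matrix-valued $\Lambda$, you bound the real part of every complex root by the dominant real root $\lambda_0$, and you close the argument via eventual compactness of the age-structured semigroup and the spectral-mapping theorem to pass from spectral bound to growth bound. These are precisely the steps a rigorous proof requires and which the paper's ``classical methods'' sketch leaves implicit; you also rightly flag the non-commutativity of $D(s)$ and $f'(\overline{Y}(s))$ as the main obstacle, an issue the paper's diagonal framework avoids altogether.
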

\begin{proof}[of Theorem \ref{th2.9}]
The proof is based on classical methods : 
\begin{itemize}
\item  If $R_{0} < I_{\mathcal{H}^2}$, then $\Lambda(0) = R_{0} < I_{\mathcal{H}^2}$. Since $\Lambda(\lambda)$ is strictly decreasing, the only solution of $\Lambda(\lambda)=I_{\mathcal{H}^2}$ is $\lambda^{*}<0$. Consequently, $\overline{Y}$ is locally asymptotically stable.

\item  If $R_{0} > I_{\mathcal{H}^2}$, then $\Lambda(0) = R_{0} > I_{\mathcal{H}^2}$, so the equation $\Lambda(\lambda)=I_{\mathcal{H}^2}$ admits a root $\lambda^{*}>0$. Hence $\overline{Y}$ is unstable.

\item  If $R_{0} = I_{\mathcal{H}^2}$, we have $\Lambda(0) = I_{\mathcal{H}^2}$, hence $\lambda^{*}=0$. In this critical case, the linear perturbation remains constant in time, corresponding to the bifurcation threshold. One must then examine higher-order (nonlinear) terms to determine whether the equilibrium becomes stable or unstable.
\end{itemize}
\end{proof}

The analysis of (exponential) stability can be carried out using semigroups, as shown in  \cite{ref63}. The following theorem ensures the local exponential stability of system \eqref{eq1.3} around $\overline{Y}$.

\begin{theorem} \label{th2.10}
Let us consider assumption (H4), $R_0<I_{\mathcal{H}^2}.$ Then, the system \eqref{eq1.3} is locally exponentially stable.
\end{theorem}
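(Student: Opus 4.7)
The plan is to apply the principle of linearized stability to the semilinear abstract Cauchy problem obtained by recentering system \eqref{eq1.3} at the stationary solution $\overline{Y}$, combining it with the spectral information already extracted in Theorem \ref{th2.9}. Writing $\widetilde{y} = Y - \overline{Y}$ and using the Fréchet differentiability in (H4), the evolution of $\widetilde{y}$ takes the form
\begin{equation*}
\partial_t \widetilde{y} = \mathcal{A}\widetilde{y} - \mathcal{M}\widetilde{y} + \mathcal{R}(\widetilde{y}),
\end{equation*}
where $\mathcal{M}$ denotes multiplication by $f'(\overline{Y}(a))$ and $\mathcal{R}(\widetilde{y}) = o(\|\widetilde{y}\|_{\mathcal{H}_2})$ gathers the higher-order terms. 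Because $\|\mathcal{M}\|_{\mathcal{L}(\mathcal{H}_2)} \leq M$ by (H4), the bounded perturbation theorem (see \cite{ref33}) ensures that $\mathcal{A}-\mathcal{M}$ still generates a $C_0$-semigroup $\mathcal{S}(t)$ on $\mathcal{H}_2$ with domain $D(\mathcal{A})$, and Theorem \ref{th2.5} guarantees that the nonlinear problem for $\widetilde{y}$ is well-posed in a neighbourhood of $0$.

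\textbf{Key steps.} First, I would identify the characteristic equation \eqref{eq2.56} as the spectral equation of $\mathcal{A}-\mathcal{M}$: solving $(\lambda I - \mathcal{A} + \mathcal{M})w = 0$ along characteristics and imposing the renewal boundary condition yields exactly $\Lambda(\lambda) = I_{\mathcal{H}_2}$. Under $R_0 < I_{\mathcal{H}_2}$, the monotonicity of $\Lambda$ exploited in the proof of Theorem \ref{th2.9} places every characteristic root in the open left half-plane, and in fact gives a strictly negative spectral bound $s(\mathcal{A}-\mathcal{M}) = \lambda^{\ast} < 0$.

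Second, I would establish that the growth bound coincides with the spectral bound, i.e.\ $\omega_0(\mathcal{S}) = s(\mathcal{A}-\mathcal{M})$. This relies on the fact that the transport-plus-mortality operator $\mathcal{A}$ produces an eventually norm-continuous (indeed eventually compact, for $t > A$) semigroup by virtue of the finite maximal age and the assumptions (H1)-(H2), a property preserved under the bounded perturbation $-\mathcal{M}$ (cf.\ the Webb--Inaba framework in \cite{ref48,ref28}). For such semigroups the spectral mapping theorem holds, and therefore there exist $C \geq 1$ and $\omega > 0$ with
\begin{equation*}
\|\mathcal{S}(t)\|_{\mathcal{L}(\mathcal{H}_2)} \leq C\,e^{-\omega t}, \qquad t \geq 0.
\end{equation*}

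\textbf{Conclusion and main obstacle.} Finally, the mild formulation of \eqref{eq1.3} around $\overline{Y}$ reads
\begin{equation*}
\widetilde{y}(t) = \mathcal{S}(t)\widetilde{y}_0 + \int_0^t \mathcal{S}(t-s)\,\mathcal{R}(\widetilde{y}(s))\,ds,
\end{equation*}
and since $\mathcal{R}$ has arbitrarily small Lipschitz constant in a neighbourhood of $0$, a standard Gronwall-type fixed-point argument (as in \cite[Chap.~9]{ref33}) yields, for $\|\widetilde{y}_0\|_{\mathcal{H}_2}$ sufficiently small, an estimate $\|\widetilde{y}(t)\|_{\mathcal{H}_2} \leq C'e^{-\omega' t}\|\widetilde{y}_0\|_{\mathcal{H}_2}$ with $0 < \omega' < \omega$, which is exactly the claimed local exponential stability. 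The main technical obstacle is the spectral mapping step: for general $C_0$-semigroups one only has $\omega_0 \geq s$, and age-structured problems are known to exhibit pathologies when the renewal kernel is rough; here one must carefully invoke eventual norm-continuity of the transport semigroup together with the boundedness of $\mathcal{M}$ to legitimately pass from the negativity of the spectral bound, obtained via $R_0 < I_{\mathcal{H}_2}$, to an actual exponential decay of the semigroup.
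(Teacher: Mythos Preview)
Your argument is correct and considerably more elaborate than the paper's own proof. The paper dispatches Theorem~\ref{th2.10} in two lines: it asserts that $\mathcal{A}$ generates an exponentially stable $C_0$-semigroup on $\mathcal{H}_2$, observes that $f'(\overline{Y})$ is bounded by (H4), and then invokes Phillips' perturbation theorem \cite{ref37} to conclude that $M=\mathcal{A}-f'(\overline{Y})$ also generates an exponentially stable semigroup; the nonlinear remainder is not discussed at all. By contrast, you (i) extract the negative spectral bound of $\mathcal{A}-\mathcal{M}$ directly from the characteristic equation \eqref{eq2.56} under the hypothesis $R_0<I_{\mathcal{H}_2}$, (ii) upgrade this to a negative growth bound via eventual compactness of the age-structured semigroup and the spectral mapping theorem, and (iii) close the nonlinear problem with a variation-of-constants plus Gronwall argument. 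This is a genuinely different route. The spectral-mapping step you flag as the main obstacle is precisely what the paper's appeal to Phillips' theorem leaves implicit, since a bounded perturbation does not in general preserve exponential stability of a semigroup. What the paper's approach buys is brevity, taking the exponential stability of $\mathcal{A}$ and its survival under bounded perturbation as given; what your approach buys is a transparent and self-contained link between the assumption $R_0<I_{\mathcal{H}_2}$ and the actual decay rate of the linearized flow, together with an explicit treatment of the higher-order terms needed to pass from linear to local nonlinear exponential stability.
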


\begin{proof}[of Theorem \ref{th2.10}]
Since $\mathcal{A}$ generates an exponentially stable $C_{0}$‐semigroup on $\mathcal{H}_{2}$ and $f'(\overline{Y})$ is bounded on $\mathcal{H}_{2}$, Phillips’ theorem \cite{ref37} implies that the operator
 $$
 M \;=\; \mathcal{A} \;-\; f'\bigl(\overline{Y}\bigr),
 \quad D(M)=D(\mathcal{A}),
 $$
 also generates a (exponentially stable) $C_{0}$‐semigroup on $\mathcal{H}_{2}$.
\end{proof}
\subsection*{\bf Discussion}
The qualitative characterization of dynamical systems based on the basic reproduction number $R_0$ becomes substantially more complex in coupled multi-species models. Two cases arise.
\begin{itemize}
\item  Multi-phase transition models. When a population progresses through successive phases (e.g. aquatic stage then adult stage with male/female subpopulations), it is often possible, despite the coupling between distinct phase equations, to reduce the analysis to a single global $R_0$ for the entire model.

\item  Interacting multi-species models. If several species interact via nonlocal terms or coupled source terms, each species has its own $R_{0_i}$. One then sets the diagonal matrix

\begin{align}
R_{0}=\operatorname{diag}(R_{0_1},\dots,R_{0_N}),
\end{align}

and the matrix condition

\begin{align}\label{e2.54}
R_{0}<I_{\mathcal{H}^2} \ \Longleftrightarrow\ R_{0_i}<1\ \forall i,\qquad
R_{0}>I_{\mathcal{H}^2} \ \Longleftrightarrow\ R_{0_i}>1\ \forall i,
\end{align}

which provides a sufficient (but not necessary) criterion for joint extinction or joint persistence of all species.
\end{itemize}
{\bf Ecological interpretation and limits of the $R_0$ criterion}. Comparing each $R_{0_i}$ to 1 does not capture all possible dynamics: intraspecific regulation, asymmetric feedback loops (competition, mutualism, cross-predation) and network topology can compensate for heterogeneity in the $R_{0_i}$. Thus, systems with some $R_{0_i}<1$ may nonetheless allow partial persistence or stable coexistence thanks to compensatory interactions.

{\bf Role of interaction structure \cite{ref75}} : Theory and empirical evidence show that stability depends more on statistical and structural properties of the interaction matrix—its moments (mean, variance) and correlations of coefficients—than on the mere count of links. The dispersion of interaction strengths often matters more than raw connectance. Moreover, the presence of weak interactions and functional redundancy enhances resilience: many alternative pathways and species able to fulfil similar roles mitigate the impact of perturbations.

{\bf Historical and empirical nuances.} Although some authors have observed that greater trophic complexity tends to damp large oscillations seen in very simple systems, this is not a universal rule: simple systems can be stable and rich networks can collapse if crucial links (keystone species) are disrupted. Trophic cascades show that removing a keystone species can trigger the rapid collapse of an otherwise diverse network. MacArthur even proposed that community stability increases roughly with the logarithm of the number of trophic links, a heuristic observation based on an analogy to information theory rather than on a formal mathematical proof \cite{ref76}. 

{\bf For example}, in a two-species predator–prey system one may have $R_{0,1}<1$ and $R_{0,2}<1$ without guaranteeing predator extinction or unrestrained prey outbreaks: intraspecific regulatory mechanisms, community functional redundancy, or predator generalism (the ability to switch to alternative prey) can stabilize the dynamics. In networks of five or more species, feedback loops (asymmetric competition, mutualism, cross-predation), reinforced by redundancy and trophic plasticity, often play a compensatory role: they preserve global stability despite heterogeneity in the $R_{0,i}$, allowing partial persistence, stable coexistence, or selective extinction. This illustrates that it is the topology and nature of interactions, rather than strict compliance with $\eqref{e2.54}$, that underlie ecosystem stability.

{\bf In conclusion,} ecosystem stability depends less on species richness per se and more on the arrangement, intensity and variability of interspecific interactions, together with environmental and anthropogenic drivers. Therefore, accounting for multi-species interactions and their topology is essential for realistic modelling, prediction and the preservation of ecological resilience.

\begin{remark}
Theorem \ref{th2.9} provides a local stability result based on the basic reproduction number $R_0$. To extend beyond this local analysis and establish a more global type of stability, one must resort to more powerful tools, most notably the direct Lyapunov method. Rather than solving the differential equations explicitly, this approach constructs a scalar “energy” (Lyapunov) function whose time‑derivative along system trajectories is nonpositive. If that function is strictly positive everywhere except at the equilibrium and its derivative along the flow is strictly negative off the equilibrium, then the equilibrium is asymptotically stable. In other words, every trajectory starting sufficiently close will converge to the equilibrium.
\end{remark}
\section{\bf Applications : Examples of models and stability analysis}\label{s3}
Biological and biotechnological systems often exhibit competitive dynamics, cyclic or non-cyclic, where the age of individuals plays a crucial role in interspecific interactions, frequently modeled using the law of mass action. These phenomena, observable across scales from microbial communities to complex ecosystems, require mathematical modeling that captures both the demographic structure of populations and the non-local interactions between species. The mathematical study of such age-structured systems is therefore a fundamental challenge for biotechnological optimization, ecosystem management, and therapeutic interventions. Studying these models enables analysis of stability, optimization of control strategies, and prediction of ecological transitions.

We consider $N$ interacting age-structured populations distributed over the age interval $(0,A)$ and evolving over the time horizon $(0,T)$.  Let the state vector be:
\begin{align}
X(t) = \big( x_1(\cdot,t), x_2(\cdot,t), \dots, x_N(\cdot,t) \big)^\top,
\end{align}
where each $x_i(a,t)$ represents the density of individuals of species (or group) $i$ of age $a$ at time $t$.  

For $1 \leq i,j \leq N$ we define the nonlocal interaction coefficients:
\begin{align}
\gamma_{ij}(t) = \int_0^A g_{i,j}(a) \, x_j(a,t) \, da,
\end{align}
where $g_{i,j}(a) \geq 0$ represents the interaction kernel describing how individuals of group $j$ affect the mortality of group $i$ across all ages and $g_{ij}\in L^2(0,A)$.\paragraph{Definition of the interaction operator.}
Let 
\begin{align}\label{equation3.3}
A = (a_{ij})_{1 \le i,j \le N}, \quad a_{ij} \in \{0,1\},
\end{align}
be the \emph{adjacency matrix} describing the static structure of the interaction network 
(\(a_{ij} = 1\) if node \(i\) interacts with node \(j\), and \(a_{ij} = 0\) otherwise).

Let 
\begin{align}
\Gamma(t) = \big( \gamma_{ij}(t) \big)_{1 \le i,j \le N}
\end{align}
be the matrix of \emph{time-dependent nonlocal interaction intensities}.

The \emph{Hadamard product} of two matrices \(M=(m_{ij})\) and \(N=(n_{ij})\) is denoted by
\begin{align}
 M \circ N := \big( m_{ij} \, n_{ij} \big)_{1 \le i,j \le N}.
\end{align}

For a matrix \(M\) and a vector \(X = (x_1,\dots,x_N)^\top\), we define the \emph{row-wise scalar product} 
\begin{align}
[M \bullet_r X]_i := \sum_{j=1}^N M_{ij} \, x_i, \quad i=1,\dots,N.
\end{align}

We then define the interaction term
\begin{align}\label{a3.7}
\mathcal{B}(X(t)) := \big( A \circ \Gamma(t) \big) \bullet_r X(t),
\end{align}
Let:
\begin{align}
    M(a) = \big(\mu_1(a), \dots, \mu_N(a)\big)^\top,
\end{align}
where $\mu_i(a) \geq 0$ is the natural mortality rate of group $i$ at age $a$. The birth process is described by the \emph{birth kernel matrix}:
\begin{align}
    K(a) = \big(k_{ij}(a)\big)_{1 \leq i,j \leq N},
\end{align}
where $k_{ij}(a)$ represents the fertility rate at age $a$ of group $j$ producing newborns in group $i$.

We introduce an external control input vector:
\begin{align}
    U(t) = \big( u_1(t), u_2(t), \dots, u_N(t) \big)^\top,
\end{align}
where $u_i(t)$ represents the control effort applied to group $i$ at time $t$ (e.g., harvesting rate, medical intervention, removal effort). Let:
\begin{align}\label{a3.12}
    B = (a_1, a_2, \dots, a_N)^T,
\end{align}
where $a_i \in \lbrace 0,1\rbrace$ is the efficiency coefficient of the control on population $i$. The control term is then given by:
\begin{align}
    B\circ U(t) = \big( a_1 u_1(t), a_2 u_2(t), \dots, a_N u_N(t) \big)^\top,
\end{align}
and acts pointwise in age through:
\begin{align}
    B\circ U(t) \, \bullet_r  \, X(a,t).
\end{align}

Thus, the full age-structured model is given by :

\begin{align}\label{a3.9}
    \begin{cases}
\partial_t X(a,t) + \partial_a X(a,t) 
= -\mathcal{B}(X(t))-\big( M(a) + B\circ U(t)  \big)\bullet_r  \; X(a,t),
& \text{in}\; Q, \\[6pt]
X(0,t) = \displaystyle \int_0^A K(a) \, X(a,t) \, da,
& \text{in}\; Q_T, \\[6pt]
X(a,0) = X_0(a),
& \text{in}\; Q_A.
\end{cases}
\end{align}

Within this general model \eqref{a3.9} lies a particular case: non-transitive competition, which is an ecological interaction in an ecosystem where the relationships are cyclic. In this context, the matrix $ \mathcal{B}(X(t))$ in \eqref{a3.7} may take the following form
\begin{align}
A &= 
\begin{bmatrix}
0 & 1 & 0 & \cdots & 0 \\
\vdots & 0 & \ddots & \cdots & 0 \\
\vdots & \vdots & \ddots & \vdots & 1 \\
1 & 0 & \cdots & \cdots & 0
\end{bmatrix},
&
\mathcal{B}(X(t)) &=(A \circ \Gamma(t))\bullet_r X(t)
=
\begin{bmatrix}
0 & \gamma_{1,2}(t) & 0 & \cdots & 0 \\
\vdots & 0 & \ddots & \cdots & 0 \\
\vdots & \vdots & \ddots & \vdots & \gamma_{N-1,N}(t) \\
\gamma_{N,1}(t) & 0 & \cdots & \cdots & 0
\end{bmatrix}\bullet_r X(t),
\end{align}
and the system became 
\begin{align}\label{ee3.11}
\begin{cases}
\partial_t X(a,t) + \partial_a X(a,t) 
= -\mathcal{B}(X(t))-\big( M(a) + B\circ U(t)  \big)\;\bullet_r   \; X(a,t),
& \text{in}\; Q, \\[6pt]
X(0,t) = \displaystyle \int_0^A K(a) \, X(a,t) \, da,
& \text{in}\; Q_T, \\[6pt]
X(a,0) = X_0(a),
& \text{in}\; Q_A.
\end{cases}
\end{align}
We adopt the following standing hypotheses (unless otherwise stated):
\begin{align*}
\textbf{(H11)}\begin{cases}
\mu_i(a)\geq 0 \quad \text{a.e. on }(0,A), \\[0.3em]
\mu_i \in L^1_{\rm loc}(0,A), \quad \displaystyle\int_0^{A}\mu_i(a)\,da = +\infty,
\end{cases}
\qquad
\textbf{(H22)}\begin{cases}
k_i \in L^\infty(0,A),  \\[0.3em]
k_i(a)\ge 0 \ \text{a.e. on } (0,A).
\end{cases}
\end{align*}
\subsection*{\bf Well-posedness}
\begin{proposition}\label{p3.1}
Under {\bf assumptions (H11)-(H22)} and for all $X_0\in \mathcal{H}_2$ with $X_0\geq 0,$  the system  system \eqref{ee3.11} admits a unique nonnegative solution. 
\end{proposition}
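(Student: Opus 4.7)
The strategy is to bring (\ref{ee3.11}) into the Kolmogorov-type framework covered by Theorem \ref{th2.7}, treating the cyclic interaction as an additional time-dependent, nonlocal and quadratic mortality. Since $[\mathcal{B}(X(t))]_i = x_i(a,t)\sum_{j}a_{ij}\gamma_{ij}(t)$, the system can be recast as in (\ref{eq2.24}) with effective transition matrix
\[
\widetilde M(a,t;X) := \mathrm{diag}\Bigl(\mu_i(a) + a_i u_i(t) + \sum_{j=1}^N a_{ij}\gamma_{ij}(t)\Bigr)_{i=1}^N
\]
and a linear birth operator of type \textbf{(H5)}-(i) with kernel $K$. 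The external control $U$ is treated as a prescribed, bounded measurable function.

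For fixed $R>0$ and $T>0$, let $\mathcal{K}_R$ denote the closed convex subset of $C([0,T];\mathcal{H}_2)$ of nonnegative functions with norm at most $R$. For $\bar X \in \mathcal{K}_R$, the frozen coefficients $\bar\gamma_{ij}(t)=\int_0^A g_{ij}(a)\bar x_j(a,t)\,da$ are uniformly bounded by $\|g_{ij}\|_{L^2}\, R$, because $g_{ij}\in L^2(0,A)$; hence $\widetilde M(a,t;\bar X)$ has nonnegative, bounded, measurable coefficients. The associated linear age-structured system admits a unique mild solution $X=\Phi(\bar X)\in C([0,T];\mathcal{H}_2)$ via the characteristic formula and a Volterra argument on $b(t)=X(0,t)$, in the spirit of Propositions \ref{pr2.2}--\ref{pr2.3}. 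Positivity is inherited from the nonnegativity of the coefficients, of $K$, and of $X_0$, through the iterative scheme (\ref{eq2.46}) applied to the renewal equation.

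Showing that $\Phi$ contracts on $\mathcal{K}_R$ for small $T$ is the main technical step. Writing $Z=\Phi(\bar X_1)-\Phi(\bar X_2)$, the source term reduces componentwise to $(\bar\gamma^{(1)}_{ij}-\bar\gamma^{(2)}_{ij})\,x_{2,i}$; the $L^2$ bound on $g_{ij}$ together with $\|X_2\|_{\mathcal{H}_2}\le R$ controls this source by $C(R)\,\|\bar X_1-\bar X_2\|_{C([0,T];\mathcal{H}_2)}$. A Gronwall argument on the characteristic representation then yields $\|\Phi(\bar X_1)-\Phi(\bar X_2)\|\le C(R)\,T^{1/2}\,\|\bar X_1-\bar X_2\|$, which is strictly contractive for $T$ small enough, and Banach's fixed-point theorem produces a unique nonnegative local mild solution lying in $\mathcal{K}_R$.

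\textbf{Main obstacle.} The difficulty is the globalization step, since the map $X\mapsto \mathcal{B}(X)$ is only locally Lipschitz on $\mathcal{H}_2$. Here the dissipative sign of the interaction for nonnegative solutions is crucial: integrating (\ref{ee3.11}) over $(0,A)$ gives $\frac{d}{dt}\|X(\cdot,t)\|_{L^1}\le \|K\|_{L^\infty}\,\|X(\cdot,t)\|_{L^1}$, because the quadratic interaction contributes with the correct sign; Gronwall then provides a uniform $L^1$ bound on $[0,T]$. Plugging this bound into the renewal equation controls $b(t)=X(0,t)$, and the characteristic formula upgrades it to an $\mathcal{H}_2$ bound on $X$, ruling out blow-up. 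Iterating the local construction yields the unique nonnegative mild solution on $[0,T]$ for any $T>0$, completing the well-posedness statement.
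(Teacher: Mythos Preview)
Your argument is correct, but it takes a substantially different route from the paper. The paper's proof is only three lines: it observes that the linear transport-plus-mortality operator $\mathcal A_l\varphi=-\partial_a\varphi-M(a)\varphi$ (with the linear renewal condition built into the domain) generates a $C_0$-semigroup on $\mathcal H_2$ by Lemma~\ref{le2.1}, notes that the control $U$ is bounded, and then invokes Theorem~\ref{th2.7} directly, since system~(\ref{ee3.11}) is of the Kolmogorov form~(\ref{eq2.24}) with diagonal interaction matrix $M(t)=\mathrm{diag}\bigl(\sum_j a_{ij}\gamma_{ij}(t)+a_iu_i(t)\bigr)$.

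What you do instead is essentially re-derive Theorem~\ref{th2.7} in this particular setting: you freeze the nonlocal coefficients, solve the resulting linear problem by characteristics and a Volterra argument, and close a Banach fixed point on small time intervals. Your globalization step, based on the dissipative sign of the quadratic interaction for nonnegative solutions (the $L^1$ energy estimate followed by the upgrade to $\mathcal H_2$ via the renewal bound and the characteristic representation), is a genuine addition: the quadratic map $X\mapsto\mathcal B(X)$ is only locally Lipschitz on $\mathcal H_2$, and the paper itself only covers this case via a remark after Theorem~\ref{th2.7} rather than a detailed argument. So your approach costs more in length but buys transparency on exactly the point---ruling out blow-up for a quadratic nonlinearity---that the paper's citation of Theorem~\ref{th2.7} leaves implicit.
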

\begin{proof}[of Proposition \ref{p3.1}]
Thanks to Lemma \ref{le2.1}, the operator
\begin{align}
 \mathcal A_l : D(\mathcal A_l)\subset\mathcal H_2\longrightarrow\mathcal H_2,
   \quad \mathcal A_l\varphi=-\partial_a\varphi - M(a)\varphi,
\end{align}
with
\begin{align}
D(\mathcal A_l)
= \Bigl\{\varphi\in\mathcal H_2 :
\varphi\text{ a.c. on }[0,A),\;
\varphi(0)=\!\int_0^AK(a)\varphi(a)\,da,\;
-\partial_a\varphi - M(a)\varphi\in\mathcal H_2
   \Bigr\}.
\end{align}
is the infinitesimal generator of a strongly continuous semi-group on $\mathcal{H}_2.$ Since the control $U$ is bounded, the result follows from Theorem \ref{th2.7}.
\end{proof}
\begin{remark}
In our framework, the matrix A represents the interactions between species and should be interpreted as a connectivity matrix associated with a directed graph.
A control applied to a given species influences the whole system if, from the corresponding node, there exists a directed path to every other node.
This property, known as the strong connectivity of the interaction graph, ensures that the effect of the control propagates throughout the system.
Thus, rather than directly invoking the Kalman condition (which is primarily suited to linear systems), we adopt an interpretation in terms of graph connectivity, which is more appropriate for the nonlinear and nonlocal structure of our model.

\end{remark}

After presenting the general framework, we focus on a specific example of a non-transitive competition model involving {\bf three-species, four-species, up to a generalization}.\\
\subsubsection*{\bf A sketch of the proof}
In the study of stability for dynamic competition models, particularly in non-transitive settings, we establish a general stability result for non-transitive competition models by induction, using a reduced control localized on a single species. The approach treats first the three-species case, then the four-species case, to derive the inductive step that extends to the general $N$ species case.
The single control, applied to one species, acts indirectly on the other system components.
To synthesize the global feedback, we construct successive fictitious controls that partially stabilize each species; these controls are implemented step by step and recursively until the global control is obtained.
\subsection{\bf Three-species non-transitive competition}\label{se3.1}
 An age-structured system with three interacting populations can model cyclic competition observed in microbial communities, where each species produces a toxin that inhibits another. For example, Kerr et al. (2002) experimentally demonstrated a rock–paper–scissors dynamic using genetically modified Escherichia coli strains: species A kills B, B kills C, and C kills A. In the model, the nonlocal terms $\gamma_{ij}(t)$ describe the age-distributed inhibitory effects of one species on another. A control applied to species 1 may represent a targeted antibiotic treatment. This framework captures both experimentally validated and theoretically analyzed dynamics of cyclic dominance (Durrett and Levin, 1998; Frean et al., 2008; Nowak and Sigmund, 2004). A representative example of a non-transitive competition model is discussed in \cite{ref3}.

In this section, we focus on the non-transitive competition model \eqref{e3.1}. Non-transitive competition is an ecological relationship in which species interactions do not follow a linear hierarchy but rather form a cycle. Concretely, species $x_{1}$ dominates $x_{3}$, $x_{3}$ dominates $x_{2}$, and $x_{2}$ in turn dominates $x_{1}$. This organization tends to promote stable diversity in the ecosystem by preventing any single species from achieving exclusive dominance. Moreover, such cycles foster the emergence of ecological niches and complex dynamics (oscillations, cyclic coexistence, etc.), phenomena commonly observed in natural communities.
\begin{align}\label{e3.1}
 \begin{cases}
\displaystyle 
\partial_t x_1(a,t) + \partial_a x_1(a,t)
= -\Bigl(\mu_1(a) 
       + \int_0^A g_1(a)\,x_2(a,t)\,\mathrm{d}a
       + u(t)\Bigr)\;x_1(a,t),
& \text{ in } Q_1=(0,A)\times\mathbb{R}_+,\\[8pt]
\displaystyle 
\partial_t x_2(a,t) + \partial_a x_2(a,t)
= -\Bigl(\mu_2(a) 
       + \int_0^A g_2(a)\,x_3(a,t)da\Bigr)\;x_2(a,t),
& \text{ in } Q_1=(0,A)\times\mathbb{R}_+,\\[8pt]
\partial_t x_3(a,t) + \partial_a x_3(a,t)
= -\Bigl(\mu_3(a) 
       + \displaystyle\int_0^A g_3(a)\,x_1(a,t)\,\mathrm{d}a
       \Bigr)\;x_3(a,t),
& \text{ in } Q_1=(0,A)\times\mathbb{R}_+,\\[8pt]
\displaystyle 
x_i(0,t)
= \int_0^Ak_i(a)x_i(a,t)da,\quad i=1,2,3 & \text{ in } Q_+=\mathbb{R}_+,\\[8pt]
x_i(a,0) = x_{i,0}(a),\quad i=1,2,3
& \text{ in } Q_A=(0,A).
\end{cases}
\end{align}

Here:
\begin{itemize}
\item  $g_i(a)\in L^2(0,A)$ describes the interaction kernel (how individuals of age $a$ of one species affect the other).
\item  $u(t)$ (a bounded control law) is the common (harvesting or management) control applied to both populations.
\item  $A$ is the maximal age, and $T$ the final time horizon.
\end{itemize}
\begin{remark}

In the non-transitive competition model \eqref{e3.1}, the connectivity of the interaction graph ensures that an input applied to species $x_1$ influences the remaining species through the network of interactions, which makes state-feedback stabilization feasible. Nevertheless, proving global stability necessitates a more refined nonlinear analysis.
\end{remark}
\subsubsection{\bf Stability analysis} \label{s3.1.2.2}
The steady-state form of equation \eqref{e3.1} is as follows
\begin{align}\label{e3.2}
\begin{cases}
\partial_ax^*_1(a)=-\left(\mu_1(a)+\zeta_1\right)x^*_1(a), & \text{ in } Q_A,\\[8pt]
\\\partial_ax_2(a)=-\left(\mu_2(a)+\zeta_2\right)x^*_2(a), & \text{ in } Q_A,\\[8pt]
\\\partial_ax_3(a)=-\left(\mu_3(a)+\zeta_3\right)x^*_2(a),& \text{ in } Q_A,\\[8pt]
x^*_i(0)=\displaystyle\int_0^Ak_i(a)x_i^*(a)da,\quad i=1,2,3
\end{cases}\qquad\text{with}\; \begin{cases}
\zeta_1=\lambda_2+u^*,\\ \zeta_2=\lambda_3,\\
\zeta_3=\lambda_1,\\
\lambda_i=\displaystyle\int_0^Ag_j(a)x^*_i(a)da
\end{cases}
\end{align}

and  the solution takes the form
\begin{align}\label{e3.3}
x_i^*(a)=x^*_i(0)\underbrace{e^{-\int_0^a(\mu_i(s)+\zeta_i)ds}.}_{\Tilde{x}^*_i(a)}
\end{align}

$\zeta_i$ is the unique solution to the characteristic equation 
\begin{align}\label{e3.4}
\begin{cases}
\displaystyle\int_0^A\underbrace{k_1(a)e^{-\int_0^a(\mu_1(s)+\zeta_1)ds}}_{\tilde{k}_1(a)}da=1,\\
      \displaystyle\int_0^A\underbrace{k_2(a)e^{-\int_0^a(\mu_2(s)+\zeta_2)ds}}_{\tilde{k}_2(a)}da=1,\\
       \displaystyle\int_0^A\underbrace{k_3(a)e^{-\int_0^a(\mu_3(s)+\zeta_3)ds}}_{\tilde{k}_3(a)}da=1.
\end{cases}
\end{align}
It follows  that
\begin{align}\label{e3.5}
    u^*=\zeta_1-\lambda_2\in (0;\zeta_1).
\end{align}
The newborn population then takes the form
\begin{align}\label{e3.6}
 \begin{cases}
x_1^*(0)=\dfrac{\zeta_3}{\displaystyle\int_0^Ag_3(a)\Tilde{x}^*_1(a)da}>0,\\
x_2^*(0)=\dfrac{\zeta_1-u^*}{\displaystyle\int_0^Ag_1(a)\Tilde{x}^*_2(a)da}>0,\\
x_3^*(0)=\dfrac{\zeta_2}{\displaystyle\int_0^Ag_2(a)\Tilde{x}^*_3(a)da}>0.
 \end{cases}
\end{align}

\begin{lemma}\label{lee3.1}
Consider the following transformation
\begin{align}\label{e3.7}
 \left[\begin{array}{c}
\eta_i(t) \\ 
\psi_i(t-a)
\end{array}  \right]=\left[\begin{array}{c}
\ln[\Pi_i(x_i(t))] \\ 
\dfrac{x_i(a,t)}{x^*(a)\Pi_i(x_i(t))}-1
\end{array} \right],
\end{align}
where
\begin{align}\label{e3.8}
\Pi_i(x_i(t))=\dfrac{\langle \pi_{0,i}, x_i(t)\rangle_{L^2(0,A)}}{\langle\pi_{0,i}, x_i^*\rangle_{L^2(0,A)}},
\end{align}

with  $\pi_{0,i}$ is continuous functions of the form
\begin{align}\label{e3.9}
   \pi_{0,i}(a)= \displaystyle\int_a^{A}k_i(a)e^{\int_s^a(\zeta_i+\mu_i(l)dl}ds.
\end{align}
Moreover, the variables $\psi_i$ and $\eta_i$ satisfy:

\begin{align}\label{e3.10}
    \begin{cases}
\dot{\eta}_1(t)
= \zeta_1 - u(t)
  - e^{\eta_2} \displaystyle\int_0^A g_1(a)\,x^*_2(a)\,\bigl(1 + \psi_2(t-a)\bigr)\,\mathrm{d}a,\\
\dot{\eta}_2(t)
= \zeta_2 
  - e^{\eta_3} \displaystyle\int_0^A g_2(a)\,x^*_3(a)\,\bigl(1 + \psi_3(t-a)\bigr)\,\mathrm{d}a,\\
  \dot{\eta}_3(t)
= \zeta_3
  - e^{\eta_1} \displaystyle\int_0^A g_3(a)\,x^*_1(a)\,\bigl(1 + \psi_1(t-a)\bigr)\,\mathrm{d}a,\\
\psi_i(t)= \displaystyle\int_0^A \tilde k_i(a)\,\psi_i(t-a)\,\mathrm{d}a,\\
\eta_i(0)
= \ln\bigl(\Pi[x_{i,0}]\bigr) = \eta_{i,0},\qquad
\psi_i(-a)
= \frac{x_{i,0}(a)}{x_i^*(a)\,\Pi[x_{i,0}]} - 1 = \psi_{i,0}(a).
\end{cases}
\end{align}

The unique solutions are then given by :

\begin{align}\label{e3.11}
x_i(a,t)
={x}_i^{*}(a)\,\bigl(1+\psi_{i}(t-a)\bigr)\,e^{\eta_{i}}.
\end{align}
\end{lemma}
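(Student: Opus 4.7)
The strategy is to treat the pair $(\pi_{0,i},\Pi_i)$ as a positive left-eigenfunction/eigen-functional for the linearised transport--renewal dynamics around the equilibrium $x_i^\ast$: once $\pi_{0,i}$ satisfies the appropriate first-order ODE, the scalar $\eta_i=\ln\Pi_i(x_i(t))$ obeys a closed ODE driven only by the nonlocal interaction term, and the multiplicative remainder $w_i := x_i/(x_i^\ast e^{\eta_i})$ turns out to be transport-free, hence a function of $t-a$ alone. First I would establish the two structural identities for $\pi_{0,i}$: differentiating \eqref{e3.9} under the integral sign gives $\pi_{0,i}'(a) = -k_i(a) + (\zeta_i+\mu_i(a))\pi_{0,i}(a)$, the characteristic equation \eqref{e3.4} forces $\pi_{0,i}(0)=\int_0^A \tilde k_i(s)\,ds = 1$, and $\pi_{0,i}(A)=0$ is immediate from the definition.

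Next I would compute $\dot\eta_i = \langle\pi_{0,i},\partial_t x_i\rangle / \langle\pi_{0,i},x_i\rangle$. Substituting the PDE of \eqref{e3.1} and integrating by parts in $a$, the boundary contribution equals $\pi_{0,i}(0)\,x_i(0,t)=x_i(0,t)$, while the volume integrand becomes $(\pi_{0,i}'-\mu_i\pi_{0,i})x_i = (-k_i+\zeta_i\pi_{0,i})x_i$ by the identity above. The renewal condition $x_i(0,t)=\int_0^A k_i(a) x_i(a,t)\,da$ precisely cancels $-\int k_i x_i$, and the $a$-independent interaction factor (together with $u(t)$, the latter only for $i=1$) pulls out of $\langle\pi_{0,i},x_i\rangle$. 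Dividing by $\Pi_i(x_i(t))>0$ yields the three scalar ODEs for $\eta_i$ in \eqref{e3.10}, once the interaction integrals are rewritten through the representation \eqref{e3.11} proved below.

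For the transported variable $w_i$, I would substitute $\partial_a x_i^\ast = -(\mu_i+\zeta_i)x_i^\ast$, the PDE for $x_i$, and the expression for $\dot\eta_i$ just obtained into $\partial_t w_i + \partial_a w_i$: every linear-in-$x_i$ term cancels and one is left with $\partial_t w_i + \partial_a w_i = 0$. Hence $w_i$ is constant along the characteristics $a-t=\mathrm{const}$, so on the future cone it is a function of $t-a$; setting $\psi_i(s) := w_i(0,s)-1$ for $s\ge 0$ and $\psi_i(-a) := w_i(a,0)-1 = x_{i,0}(a)/(x_i^\ast(a)\Pi_i(x_{i,0}))-1$ for $a\in(0,A)$ recovers both initial data and the representation \eqref{e3.11}. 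The renewal equation for $\psi_i$ follows from evaluating $w_i$ at $a=0$: using $k_i(a)x_i^\ast(a) = x_i^\ast(0)\tilde k_i(a)$ (from \eqref{e3.3}) and $x_i^\ast(0) = \int_0^A k_i(a) x_i^\ast(a)\,da$, the boundary renewal reduces to $\psi_i(t) = \int_0^A \tilde k_i(a)\psi_i(t-a)\,da$.

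The main technical obstacle I anticipate is justifying the differentiation under the integral sign and the integration by parts with only mild/$L^2$ regularity of $x_i$; the cleanest route is to argue first for $x_{i,0}\in D(\mathcal A_l)$ (where $x_i$ is a strong solution by Proposition \ref{p3.1}) and then extend to $\mathcal H_2$ by the Lipschitz dependence of the mild solution on the initial datum. A secondary subtlety is keeping $\Pi_i(x_i(t))$ strictly positive so that $\ln\Pi_i$ is admissible, which follows from the positivity preservation of Proposition \ref{p3.1} combined with $\pi_{0,i}>0$ on $[0,A)$, itself a direct consequence of $k_i\ge 0$ and hypothesis \textbf{(H22)}.
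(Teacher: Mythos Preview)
Your proposal is correct and follows essentially the same route as the paper: both pair the PDE with the adjoint eigenfunction $\pi_{0,i}$, integrate by parts, and use the adjoint identity $\partial_a\pi_{0,i}-(\mu_i+\zeta_i)\pi_{0,i}+\pi_{0,i}(0)k_i=0$ to collapse $\dot\eta_i$ to the scalar ODE driven by the nonlocal term. Your treatment is in fact more explicit than the paper's---you carry out the transport calculation $\partial_t w_i+\partial_a w_i=0$ and the derivation of the $\psi_i$ renewal equation in full, whereas the paper defers these to the strategy of \cite{ref69} and the bare phrase ``applying transformation \eqref{e3.7} yields equation \eqref{e3.11}''; your attention to regularity (density of $D(\mathcal A_l)$) and to the strict positivity of $\Pi_i$ is likewise absent from the paper's argument.
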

\begin{proof}[of Lemma \ref{lee3.1}]  
Integrating by parts over $(0,A)$ yields the following expressions :

\begin{align}
\left\langle \pi_{0,1}(a), \partial_ tx_1(a,t)\right\rangle=\left\langle\partial_a\pi_{0,1}(a)+\pi_{0,1}(a)k_1(a)-\pi_{0,1}(a)(\mu_1(a)+\zeta_1), x_1(a,t)\right\rangle+\langle\pi_{0,1}(a),(\zeta_1-\int_{0}^A g_1(a)x_2(a,t)da-u(t))x_1(a,t)\rangle,
\end{align}

\begin{align}
     \left\langle \pi_{0,2}(a), \partial_ tx_2(a,t)\right\rangle=\left\langle\partial_a\pi_{0,2}(a)+\pi_{0,2}(a)k_2(a)-\pi_{0,2}(a)(\mu_2(a)+\zeta_2), x_2(a,t)\right\rangle+\langle\pi_{0,2}(a),(\zeta_2-\int_{0}^A g_2(a)x_3(a,t)da)x_2(a,t)\rangle,
\end{align}

\begin{align}
     \left\langle \pi_{0,3}(a), \partial_ tx_3(a,t)\right\rangle=\left\langle\partial_a\pi_{0,3}(a)+\pi_{0,3}(a)k_3(a)-\pi_{0,3}(a)(\mu_3(a)+\zeta_3), x_3(a,t)\right\rangle+\langle\pi_{0,3}(a),(\zeta_3-\int_{0}^A g_3(a)x_1(a,t)da)x_3(a,t)\rangle.
\end{align}

We obtain system \eqref{e3.10} together with 

\begin{align}\label{}
\begin{cases}
\mathcal{D}^*\pi_{0,1}(a)=\partial_a\pi_{0,1}(a)-\pi_{0,1}(a)(\mu_1(a)+\zeta_1)+\pi_{0,1}(0)k_1(a),\quad \pi_{0,1}(A)=0,\\
\mathcal{D}^*\pi_{0,2}(a)=\partial_a\pi_{0,2}(a)-\pi_{0,2}(a)(\mu_2(a)+\zeta_2)+\pi_{0,2}(0) k_2(a),\qquad    \pi_{0,2}(A)=0,\\
\mathcal{D}^*\pi_{0,3}(a)=\partial_a\pi_{0,3}(a)-\pi_{0,3}(a)(\mu_3(a)+\zeta_3)+\pi_{0,3}(0) k_3(a),\qquad \pi_{0,3}(A)=0,
\end{cases}
\end{align}
by following the strategy employed in \cite{ref69}. Applying transformation \eqref{e3.7} yields equation \eqref{e3.11}.
\end{proof}
\paragraph{\bf 3.1.1.1. Stability for $\psi_i\equiv 0$}
Then, from \eqref{e3.10}, where we set  
\begin{align}
\phi_i(\eta_i)=\lambda_i\bigl(e^{\eta_i}-1\bigr),\qquad i=1,2,3
\end{align}
we obtain the following system
 \begin{align}\label{ee3.27}
   \begin{cases}
       \dot\eta_1 
   &= u^* - u - \lambda_2\bigl(e^{\eta_2}-1\bigr) \\
   \dot\eta_2 
   &= -\lambda_3\bigl(e^{\eta_3}-1\bigr)\\
   \dot\eta_3 
   &= -\lambda_1\bigl(e^{\eta_1}-1\bigr)
   \end{cases}\;\Longleftrightarrow\begin{cases}
       \dot\eta_1 = u^* - u - \phi_2(\eta_2),\\
\dot\eta_2 =  - \phi_3(\eta_3),\\
\dot\eta_3 =  - \phi_1(\eta_1)
   \end{cases}
 \end{align}

By employing the Lyapunov function

\begin{align}\label{a3.27}
    \omega(\eta_i)
 = \lambda_i\bigl(e^{\eta_i} - 1 - \eta_i\bigr),
\end{align}

 we derive the following time derivative, 
\begin{align}\label{a3.28}
    \dot \omega(\eta_i)
 = \phi_i(\eta_i)\dot \eta_i.
\end{align}



With the static control $ u(t) = u^*,$ the Jacobian at the point $(\eta_1, \eta_2, \eta_3) = (0, 0, 0)$ is

 $$
 J(0) =
 \begin{pmatrix}
 0 & -\lambda_2 & 0 \\[6pt]
 0 & 0 & -\lambda_3 \\[6pt]
 -\lambda_1 & 0 & 0
 \end{pmatrix},
 \qquad \lambda_i > 0.
 $$
Its eigenvalues are
$$\mu_1 = -(\lambda_1\lambda_2\lambda_3)^{1/3}e^{j\frac{2\pi k}{3}},\quad k\in \lbrace0,1,2\rbrace.$$

With these eigenvalues, the equilibrium at $(0,0,0)$ is unstable: it is not asymptotically stable. Only initial trajectories belonging to the one-dimensional stable subspace will converge to the equilibrium; most perturbations will diverge exponentially along the two unstable directions. Therefore, the static control $u^*$ is not sufficient to achieve asymptotic stability. It is necessary to modify the control law (e.g., state feedback or backstepping) if one wishes to obtain asymptotic stabilization (local, and a fortiori global).
\begin{remark}
To prevent overharvesting of species $x_1$, we design a control law that can assume both negative and positive values. Thus, if species $x_1$ becomes depleted, a positive dilution would drive all populations to extinction. 
\end{remark}

In system \eqref{ee3.27}, $\eta_{2}$ is solely a function of $\eta_{3}$, $\eta_{3}$ depends on $\eta_{1}$, and the latter is the only variable directly controlled by $u$. We propose to apply the backstepping method to achieve stabilization of a system composed of three nested subsystems : 


\begin{align}\label{a3.32}
\begin{cases}
\eta_2,\\
 z_{1} = \eta_{3}-\alpha_1(\eta_2),\\  
 z_{2} = \eta_{1} - \alpha_2(z_1).
\end{cases}
\end{align}
\begin{remark}
In system \eqref{a3.32}, $\eta_3$ is employed as a fictitious control to stabilize $\eta_2$, and the error variable $z_1$ is introduced. Subsequently, $\eta_1$ is regarded as another fictitious control to stabilize $z_1$, which naturally leads to the definition of a second error variable, $z_2$. The purpose of the constructed fictitious controls is to partially stabilize the equations in a recursive manner, until the general control $u$ is obtained.
\end{remark}

\subsubsection*{\bf Notation} For clarity, we will denote by \(z_i^k\) the \(i\)-th error variable of the system with \(k\) species. Let us introduce the following notation:
\[
\phi_i(z^k_j)=\lambda_i\bigl(e^{z^k_j}-1\bigr),\qquad 
\alpha_i(\eta_j)=\eta_j,\qquad 
\alpha_i(z^k_j)=z^k_j,
\]
i.e. without loss of generality each $\alpha_i$ is the identity function. The design coefficients are
\[
c_i,\qquad c_{io}=c_i+1,\qquad \theta,
\]
where $i,j=1,\dots,N$  (replace indices as appropriate). Consider the following Lyapunov function : 
\begin{align}\label{equation3.44}
V_3(\eta_2, z^3_1, z^3_2)= \theta \lambda_{2}\bigl(e^{\eta_{2}}-1-\eta_{2}\bigr) + \theta\lambda_3\big(e^{z^3_1} - 1-z^3_1\big)+\lambda_1(e^{z^3_2}-1-z^3_2).
\end{align}
This function satisfies the Lyapunov conditions:
\begin{itemize}
 \item  $V_{3}(0,0,0)=0$,
 \item   for all $\alpha\neq0,\;V_{3}(\alpha)>0$ and $\lim_{\alpha\to\infty}V_{3}(\alpha,\alpha,\alpha)=+\infty$.
\end{itemize}
\begin{theorem}\label{the3.3}
Under the proposed feedback law the system \eqref{ee3.27} is globally asymptotically stable. The feedback control constructed is uniformly bounded, though not necessarily nonnegative. Moreover, the control satisfies $u(t)>0$ for every $t>0,$ for every $\eta_i(0)$ belonging to the largest
level set of $V_3(\eta_2, z^3_1, z^3_2)$ within the set 
\begin{align}\label{equation3.45}
\mathcal{K} = 
\left\{ \eta\in\mathbb{R}^3\ \middle|\ 
\begin{aligned}
&u^*+c_{3o}\phi_1(z^3_2)-\theta\frac{1}{\lambda_3}\phi^2_3(z^3_1)+(\dfrac{\lambda_1}{\lambda_3}-1-\theta)\phi_3(z^3_1)+\frac{1}{\lambda_3}\phi_3(z^3_1)\phi_1(z^3_2)-\frac{1}{\lambda_2}\phi_3(z^3_1)\phi_2(\eta_2)\\
&-(\dfrac{\lambda_3}{\lambda_2}+1)\phi_2(\eta_2)+\frac{\theta\phi_3(z^3_1)}{\phi_1(z^3_2)}\left(\frac{1}{\lambda_2}\phi_3(z^3_1)\phi_2(\eta_2)-\frac{1}{\lambda_2}\phi^2_2(\eta_2)+ (\dfrac{\lambda_3}{\lambda_2}-1)\phi_2(\eta_2)+\,\phi_3(z^3_1)\right)>0
\end{aligned}
\right\}.
\end{align}
\end{theorem}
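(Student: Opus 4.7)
The plan is to apply a three-step backstepping construction to the cascade \eqref{ee3.27} and to exploit the convex structure of the Lyapunov function $V_3$ in \eqref{equation3.44}. First, using the coordinates in \eqref{a3.32} with $\alpha_i \equiv \mathrm{id}$, I would rewrite the dynamics as
\begin{align*}
\dot\eta_2 &= -\phi_3(z_1^3 + \eta_2),\\
\dot z_1^3 &= -\phi_1(z_2^3 + z_1^3) + \phi_3(z_1^3 + \eta_2),\\
\dot z_2^3 &= u^* - u - \phi_2(\eta_2) + \phi_1(z_2^3 + z_1^3) - \phi_3(z_1^3 + \eta_2).
\end{align*}
Since each $\omega_i(x) = \lambda_i(e^x - 1 - x)$ has derivative $\phi_i(x)$ as in \eqref{a3.27}--\eqref{a3.28}, the chain rule gives
\[
\dot V_3 \;=\; \theta\,\phi_2(\eta_2)\,\dot\eta_2 + \theta\,\phi_3(z_1^3)\,\dot z_1^3 + \phi_1(z_2^3)\,\dot z_2^3,
\]
and I would expand every composite $\phi_i(x+y)$ via the identity $\phi_i(x+y) = \phi_i(x) + \phi_i(y) + \phi_i(x)\phi_i(y)/\lambda_i$, which follows directly from the product formula for the exponential. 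This separates the diagonal, sign-definite contributions $\phi_i^2(\cdot)$ from mixed cross terms.

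Second, I would design $u$ so that the factor multiplying $\phi_1(z_2^3)$ in $\dot V_3$ absorbs every cross term and leaves a decomposition of the form
\[
\dot V_3 \;\le\; -c_1\,\theta\,\phi_2^2(\eta_2) \;-\; c_2\,\theta\,\phi_3^2(z_1^3) \;-\; c_3\,\phi_1^2(z_2^3),
\]
with positive gains $c_i$ to be fixed together with the weight $\theta$ in \eqref{equation3.44}. Solving this equation for $u$ produces exactly the closed-form feedback whose positivity condition is rewritten in \eqref{equation3.45}. Because each $\omega_i$ is strictly convex and radially unbounded, $V_3$ is a proper Lyapunov function on $\mathbb{R}^3$; together with the above inequality this yields global asymptotic stability of $(\eta_2, z_1^3, z_2^3) = 0$ by the standard Lyapunov/LaSalle argument, and hence of $(\eta_1,\eta_2,\eta_3) = 0$ by the linear, invertible change of variables \eqref{a3.32}.

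Uniform boundedness of $u$ is then immediate: along any trajectory, $V_3$ is nonincreasing, so the state stays in the compact sublevel set $\{V_3 \le V_3(\eta(0))\}$, and the feedback, being a continuous function of the state, remains bounded on this compact set.

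The main obstacle, and the most delicate step, is the positivity claim. The explicit feedback contains non-sign-definite contributions such as $\phi_3(z_1^3)/\phi_1(z_2^3)$ and products $\phi_i\phi_j/\lambda_k$, so $u$ need not be positive everywhere. To guarantee $u(t)>0$ for all $t>0$, I would rely on forward invariance of sublevel sets: the largest sublevel set $\Omega_c = \{V_3 \le c\}$ contained in $\mathcal{K}$ is forward invariant because $\dot V_3 < 0$ on $\Omega_c \setminus \{0\}$; consequently, for every initial datum in $\Omega_c$ the trajectory never exits $\mathcal{K}$, and the closed-form expression for the feedback then gives $u(t)>0$ for all $t>0$. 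The bookkeeping that I expect to be the genuinely technical part is the algebraic identification showing that the particular combination of terms appearing inside the braces in \eqref{equation3.45} coincides, term by term, with the closed-form feedback produced by the backstepping synthesis outlined above.
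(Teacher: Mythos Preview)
Your proposal is correct and follows essentially the same route as the paper: the paper carries out the backstepping in three explicit steps (introducing the fictitious controls $\phi_3(\eta_2)=c_1\phi_2(\eta_2)$ and $\phi_1(z_1^3)=c_2\phi_3(z_1^3)$, then augmenting the Lyapunov function at each stage), and uses the very same additive identity $\phi_i(x+y)=\phi_i(x)e^{y}+\phi_i(y)$, which is equivalent to yours, to arrive at the closed-form control and the equality $\dot V_3 = -\theta c_1\phi_2^2(\eta_2)-\theta c_2\phi_3^2(z_1^3)-c_3\phi_1^2(z_2^3)$. Your invariance argument for positivity of $u$ via the largest sublevel set contained in $\mathcal K$ is also what the paper invokes.
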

\begin{proof}[of Theorem \ref{the3.3}] The proof is carried out in three steps. From \eqref{ee3.27}-\eqref{a3.32}, we have 
\begin{align}\label{a3.33}
\begin{cases}
\dot\eta_2= -\phi_3(\eta_3),\\
   \dot z^3_1=-\phi_1(\eta_1)-\dot\eta_2,\\
   \dot z^3_2=- \phi_2(\eta_2)-\dot z^3_1+u^* - u 
\end{cases}
\end{align}
\begin{itemize}
\item[\bf Step (i)] To stabilize $\eta_{2}$, consider the Lyapunov function

 \begin{align}
     \omega_{1}(\eta_{2})=\lambda_{2}\bigl(e^{\eta_{2}}-1-\eta_{2}\bigr).
 \end{align}

Its time derivative along system trajectories is

 \begin{align}
\dot \omega_{1}(\eta_2)=-\phi_2(\eta_2)\phi_3(\eta_3).
 \end{align}
To ensure decay, we choose the fictitious control $\eta_3$ with $c_1 > 0$ such that
\begin{align}\label{aa3.45}
    \phi_3\big( \eta_2 \big) = c_1\,\phi_2(\eta_2) 
\end{align}

\item[\bf Step (ii)] Thus, by considering the tracking error for $\eta_3$, we write

 \begin{align}\label{a3.37}
     \lambda_3\big(e^{z^3_1+\eta_2} - 1\big) 
 = \phi_3(z^3_1) e^{\eta_2} + \lambda_3\big(e^{\eta_2} - 1\big).
 \end{align}

Thanks to \eqref{aa3.45}, it follows that

\begin{align}
     \lambda_3\big(e^{z^3_1+\eta_2} - 1\big) 
 = \phi_3(z^3_1) e^{\eta_2} + c_1\,\phi_2(\eta_2).
\end{align}
 Substituting this into $\dot \omega_1(\eta_2)$ yields
\begin{align}
\dot \omega_{1}(\eta_2)
 = -c_1\,\phi_2^2(\eta_2) - \lambda_3\,\phi_2(\eta_2)\,e^{\eta_2}\big(e^{z^3_1} - 1\big).
\end{align}
From system \eqref{a3.33}, we have
\begin{align}
 \dot z^3_1 = -\phi_1(\eta_1)
 + \,\lambda_3\big(e^{z^3_1+\eta_2}-1\big).
\end{align}



 We define the composite Lyapunov function for $(\eta_2,z^3_1)$ as

\begin{align}
     \omega_2(\eta_2,z^3_1) =\omega_1(\eta_2) + \lambda_3\big(e^{z^3_1} - 1-z^3_1\big),
\end{align}

 and we have

 \begin{align}
 \dot \omega_2(\eta_2,z^3_1)= -c_1\,\phi_2^2(\eta_2) - \frac{c_1}{\lambda_3}\phi^2_2(\eta_2)\,\phi_3(z^3_1)-\phi_2(\eta_2)\,\phi_3(z^3_1)+ \phi_3(z^3_1)\,\dot z^3_1. 
\end{align}
Then, 
\begin{align}
\dot \omega_2(\eta_2,z^3_1)&= -c_1\,\phi_2^2(\eta_2)-\phi_3(z^3_1)\phi_1(\eta_1) - \frac{c_1}{\lambda_3}\phi^2_2(\eta_2)\,\phi_3(z^3_1)-\phi_2(\eta_2)\,\phi_3(z^3_1)+\frac{c_1}{\lambda_3}\phi^2_3(z^3_1)\phi_2(\eta_2)+\phi^2_3(z^3_1) + c_1\,\phi_3(z^3_1)\,\phi_2(\eta_2).  
\end{align}


\item[\bf Step (iii)] To stabilize the dynamics \(z^3_1\), we choose the constant \(c_2\) such that 

\begin{align}\label{aa3.53}
\phi_1\big(z^3_1\big) = c_2 \, \phi_3(z^3_1),\quad\text{for}\; c_2>0.
\end{align}
This fictitious control is designed to achieve partial stabilization of the state $\eta_3.$ Also, by computing,
\begin{align}\label{a3.46}
\phi_1(z^3_2+z^3_1) = \phi_1(z^3_2) e^{z^3_1}+\phi_1\big(z^3_1\big).
\end{align}
and thanks to \eqref{aa3.53}, we get
\begin{align}
\phi_1(\eta_1)=\phi_1(z_2) e^{z_1}+c_2\,\phi_3(z_1).
\end{align}
Hence,
\begin{align}
\dot \omega_2(\eta_2,z^3_1)&= -c_1\,\phi_2^2(\eta_2)-c_2\,\phi_3^2(z^3_1) -\phi_3(z^3_1)\phi_1(z^3_2)-\frac{c_2}{\lambda_1}\phi^2_3(z^3_1)\phi_1(z^3_2)- \frac{c_1}{\lambda_3}\phi^2_2(\eta_2)\,\phi_3(z^3_1)-\phi_2(\eta_2)\,\phi_3(z^3_1) +\frac{c_1}{\lambda_3}\phi^2_3(z^3_1)\phi_2(\eta_2)
\end{align}
\begin{align*}
+\phi^2_3(z^3_1) + c_1\phi_3(z^3_1)\,\phi_2(\eta_2).
\end{align*}

Let the global Lyapunov function \eqref{equation3.44} as
\begin{align}\label{a3.49}
V_3(\eta_2, z^3_1, z^3_2)= \theta \omega_2(\eta_2,z^3_1)+\lambda_1(e^{z^3_2}-1-z^3_2).
\end{align}
We have its derivative of the form
\begin{align}
\dot V_3(\eta_2, z^3_1, z^3_2)&= -\theta c_1\,\phi_2^2(\eta_2)-\theta c_{2}\,\phi_3^2(z^3_1)+\theta\,\phi_3^2(z^3_1) -\theta \phi_3(z_1)\phi_1(z^3_2)-\theta\frac{c_2}{\lambda_1}\phi^2_3(z^3_1)\phi_1(z^3_2)- \theta \frac{c_1}{\lambda_3}\phi^2_2(\eta_2)\,\phi_3(z^3_1)-\theta \phi_2(\eta_2)\,\phi_3(z^3_1) 
\end{align}
\begin{align*}
+\theta\frac{c_1}{\lambda_3}\phi^2_3(z^3_1)\phi_2(\eta_2)+\theta c_1\phi_3(z^3_1)\,\phi_2(\eta_2)+\phi_1(z^3_2)\left(- \phi_2(\eta_2)-\dot z^3_1+u^* - u \right).
\end{align*}
It follows that 


\begin{align}
 \dot V_3(\eta_2, z^3_1, z^3_2)&= -\theta c_1\,\phi_2^2(\eta_2)-\theta c_{2}\,\phi_3^2(z^3_1)+\theta\,\phi_3^2(z^3_1) -\theta\phi_3(z^3_1)\phi_1(z^3_2)-\theta\frac{c_2}{\lambda_1}\phi^2_3(z^3_1)\phi_1(z^3_2)- \theta\frac{c_1}{\lambda_3}\phi^2_2(\eta_2)\,\phi_3(z^3_1)-\theta\phi_2(\eta_2)\,\phi_3(z^3_1) 
\end{align}
\begin{align*}
+\theta\frac{c_1}{\lambda_3}\phi^2_3(z^3_1)\phi_2(\eta_2)+ \theta c_1\phi_3(z^3_1)\,\phi_2(\eta_2)+\phi_1(z^3_2)\left((c_2-1)\phi_3(z^3_1)+\phi_1(z^3_2)+\frac{c_2}{\lambda_1}\phi_3(z^3_1)\phi_1(z^3_2)-(c_1+1)\phi_2(\eta_2)-\frac{c_1}{\lambda_3}\phi_3(z^3_1)\phi_2(\eta_2)\right)
\end{align*}
\begin{align*}
+\phi_1(z^3_2)\left(u^* - u \right).
\end{align*}

Finally, with the control law of the form
\begin{align}\label{aa3.64}
\begin{aligned}
 u&=u^*+c_{3o}\phi_1(z^3_2)-\theta\frac{c_2}{\lambda_1}\phi^2_3(z^3_1)+(c_2-1-\theta)\phi_3(z^3_1)+\frac{c_2}{\lambda_1}\phi_3(z^3_1)\phi_1(z^3_2)-\frac{c_1}{\lambda_3}\phi_3(z^3_1)\phi_2(\eta_2)-(c_1+1)\phi_2(\eta_2)\\\
 &+\frac{\theta\phi_3(z^3_1)}{\phi_1(z^3_2)}\left(\frac{c_1}{\lambda_3}\phi_3(z^3_1)\phi_2(\eta_2)-\frac{c_1}{\lambda_3}\phi^2_2(\eta_2)+ (c_1-1)\phi_2(\eta_2)+\,\phi_3(z^3_1)\right),
 \end{aligned}%
\end{align}

we obtain the time derivative of the Lyapunov control function 
\begin{align}\label{ae3.63}
\dot V_3(\eta_2,z^3_1,z^3_2)=-\theta c_1\,\phi_2^2(\eta_2)-\theta c_{2}\,\phi_3^2(z^3_1)-c_{3}\phi^2_1(z^3_2).
\end{align}
\end{itemize} 
Therefore, global asymptotic stability follows.
\end{proof}
\begin{remark}
Using \eqref{aa3.45}–\eqref{aa3.53}, we obtain $c_1=\dfrac{\lambda_3}{\lambda_2}$ and $c_2=\dfrac{\lambda_1}{\lambda_3}$. Substituting these values into \eqref{aa3.64} and \eqref{ae3.63} yields 
\begin{align*}
\begin{aligned}
 u&=u^*+c_{3o}\phi_1(z^3_2)-\theta\frac{1}{\lambda_3}\phi^2_3(z^3_1)+(\dfrac{\lambda_1}{\lambda_3}-1-\theta)\phi_3(z^3_1)+\frac{1}{\lambda_3}\phi_3(z^3_1)\phi_1(z^3_2)-\frac{1}{\lambda_2}\phi_3(z^3_1)\phi_2(\eta_2)-(\dfrac{\lambda_3}{\lambda_2}+1)\phi_2(\eta_2)\\\
 &+\frac{\theta\phi_3(z^3_1)}{\phi_1(z^3_2)}\left(\frac{1}{\lambda_2}\phi_3(z^3_1)\phi_2(\eta_2)-\frac{1}{\lambda_2}\phi^2_2(\eta_2)+ (\dfrac{\lambda_3}{\lambda_2}-1)\phi_2(\eta_2)+\,\phi_3(z^3_1)\right)
 \end{aligned}%
\end{align*}

and 
\begin{align*}
\dot V_3(\eta_2,z^3_1,z^3_2)=-\theta \dfrac{\lambda_3}{\lambda_2}\,\phi_2^2(\eta_2)-\theta \dfrac{\lambda_1}{\lambda_3}\,\phi_3^2(z^3_1)-c_{3}\phi^2_1(z^3_2).
\end{align*}
\end{remark}
\begin{remark}
Since the Lyapunov control function $V_3(\eta_2,z^3_1,z^3_2)$ is radially unbounded, its level sets $\mathcal{L}_c=\left\{ \eta\in \mathbb{R}^3 \;\middle|\; V_3(\eta_2,z^3_1,z^3_2)\leq c \right\}$ are bounded for any $c>0$. In particular, one can choose $c$ such that these level sets are contained in the positively invariant set \eqref{equation3.45}. Moreover, this set $\mathcal{K}$ is well-defined.  Let $(\eta_1,\eta_2,\eta_3)=(\varepsilon,0,0)$ with $\varepsilon\neq0$ small enough. Then $z^3_1=0, \quad z^3_2=\varepsilon,$ so that $\phi_3(z^3_1)=0$, $\phi_2(\eta_2)=0$, and $\phi_1(z^3_2)=\phi_1(\epsilon)\neq0$. Consequently, the constraint defining $\mathcal K$ yields $u^*+c_{3o}\phi_1(\epsilon)>0$, hence $(\varepsilon,0,0)\in\mathcal K$ and thus $\mathcal K\neq\varnothing$. Furthermore, $V_3(0,0,\varepsilon)=\lambda_1(e^{\varepsilon}-1-\varepsilon)\xrightarrow[\varepsilon\to0]{}0.$ By continuity, for any $c>0$ there exists $\varepsilon\neq0$ small enough such that $V_3(0,0,\varepsilon)\leq c$. It follows that $(\varepsilon,0,0)\in\mathcal L_c\cap\mathcal K$, and thus $\mathcal L_c\cap\mathcal K\neq\varnothing$.
\end{remark}
\begin{remark}
The control in \eqref{aa3.64} may be rewritten as
\begin{align}
\begin{aligned}\label{equation3.61}
 u&=u^*+\frac{1}{\phi_1(z^3_2)}\left(c_{3}\phi^2_1(z^3_2)+\theta c_1\phi^2_2(\eta_2)+\theta c_2\phi^2_3(z^3_1)\right)\\
 &+\frac{\phi_1(z^3_2)-\theta\phi_3(z^3_1)}{\phi_1(z^3_2)}\phi_1(\eta_1)-\phi_2(\eta_2)+\frac{\theta\phi_3(z^3_1)-\phi_1(z^3_2)-\theta\phi_2(\eta_2)}{\phi_1(z^3_2)}\phi_3(\eta_3),
 \end{aligned}%
\end{align} 
\end{remark}
\begin{remark}
It was proved in \cite{ref69} that the state $\psi_i$ of the internal
dynamics are restricted to the sets 
\begin{align}\label{a3.61}
\mathcal{S}_i=\left\lbrace \psi_i\in\; C^0((-A,0);(-1,\infty)) : P(\psi_i)=0\wedge\psi_i(0)=\displaystyle\int_0^A\tilde{k}_i(a)\psi_i(-a)da\right\rbrace,
\end{align}
where
\begin{align*}
P(\psi_i)=\dfrac{\displaystyle\int_0^A\psi_i(-a)\displaystyle\int_a^A\tilde{k}_i(s)dsda}{\displaystyle\int_a^Aa\tilde{k}_i(a)da},
\end{align*}
and that the state $\psi_i$ is globally
exponentially stable in $L^{\infty}$ norm, which means that there exist $M_i>1,\sigma_i>0$ such that 
\begin{align}\label{aa3.65}
    \Vert\psi_i(t-a) \Vert \leq M_ie^{-\sigma_i t} \Vert\psi_{i,0} \Vert_{\infty}.
\end{align}
\end{remark}
\begin{figure}[H]
    \centering \includegraphics[width=0.7\textwidth]{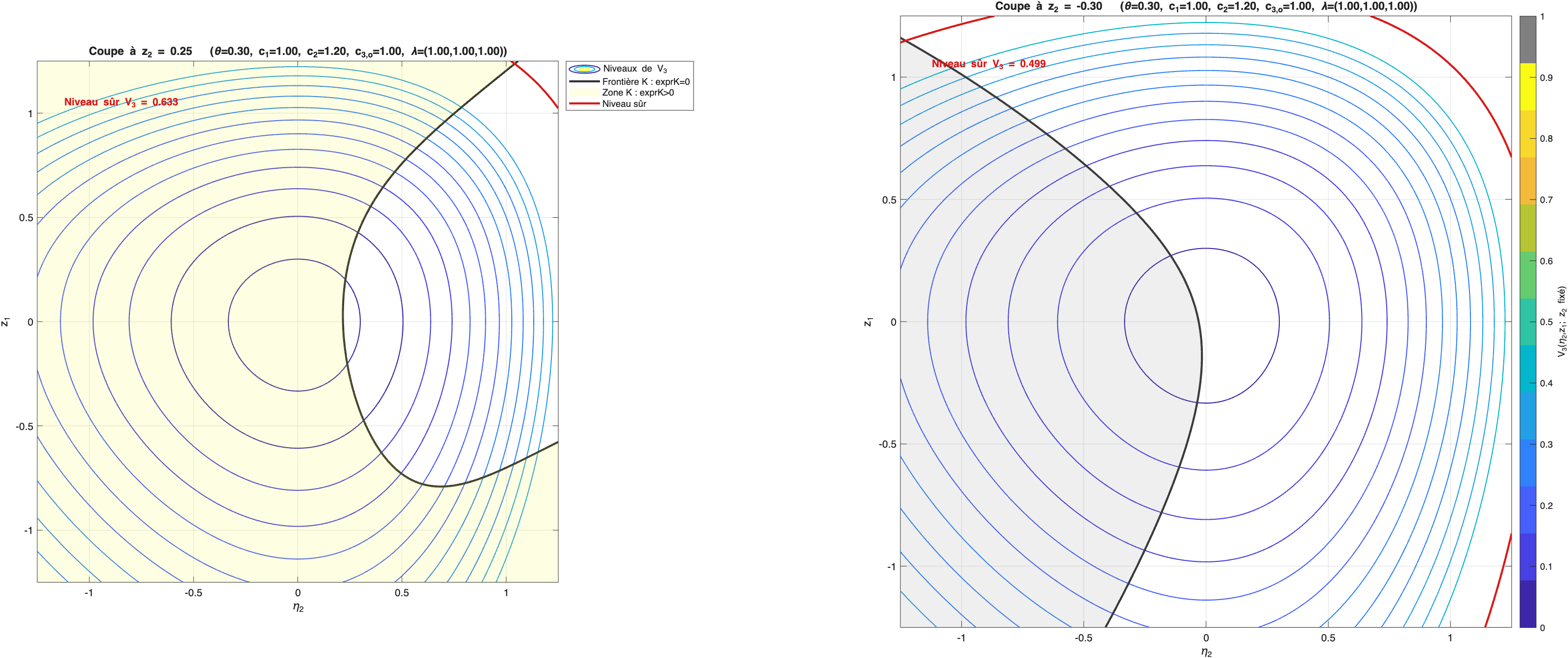}
        \caption{Fig. 2. Level sets of $V_3(\eta_2,z^3_1,z^3_2)$ for two fixed values of $z_2.$ Colored contours represent the values of the Lyapunov function $V_3,$ with hotter colors corresponding to larger values. The gray shaded region indicates the positively invariant set $\mathcal{K}$, defined by the system constraints. For each slice, the largest level set of $V_3$ entirely contained within $\mathcal{K}$ is highlighted in red. This contour provides a practical estimate of the region of attraction of the equilibrium under the imposed constraints (positivity of the control and admissible state bounds). The plots show that the boundary of $\mathcal{K}$ is strongly influenced by the choice of $z^3_2,$ while the geometry of $V_3$ remains convex due to its entropic structure. Increasing $\theta$ or $c_{3o}$ tends to enlarge the invariant domain (more dissipation), whereas increasing $c_2$ reduces it along the $z^3_1-$direction through the quadratic term in $\phi_3(z^3_1)$}
    \end{figure}
\begin{figure}[H]
    \centering \includegraphics[width=0.7\linewidth]{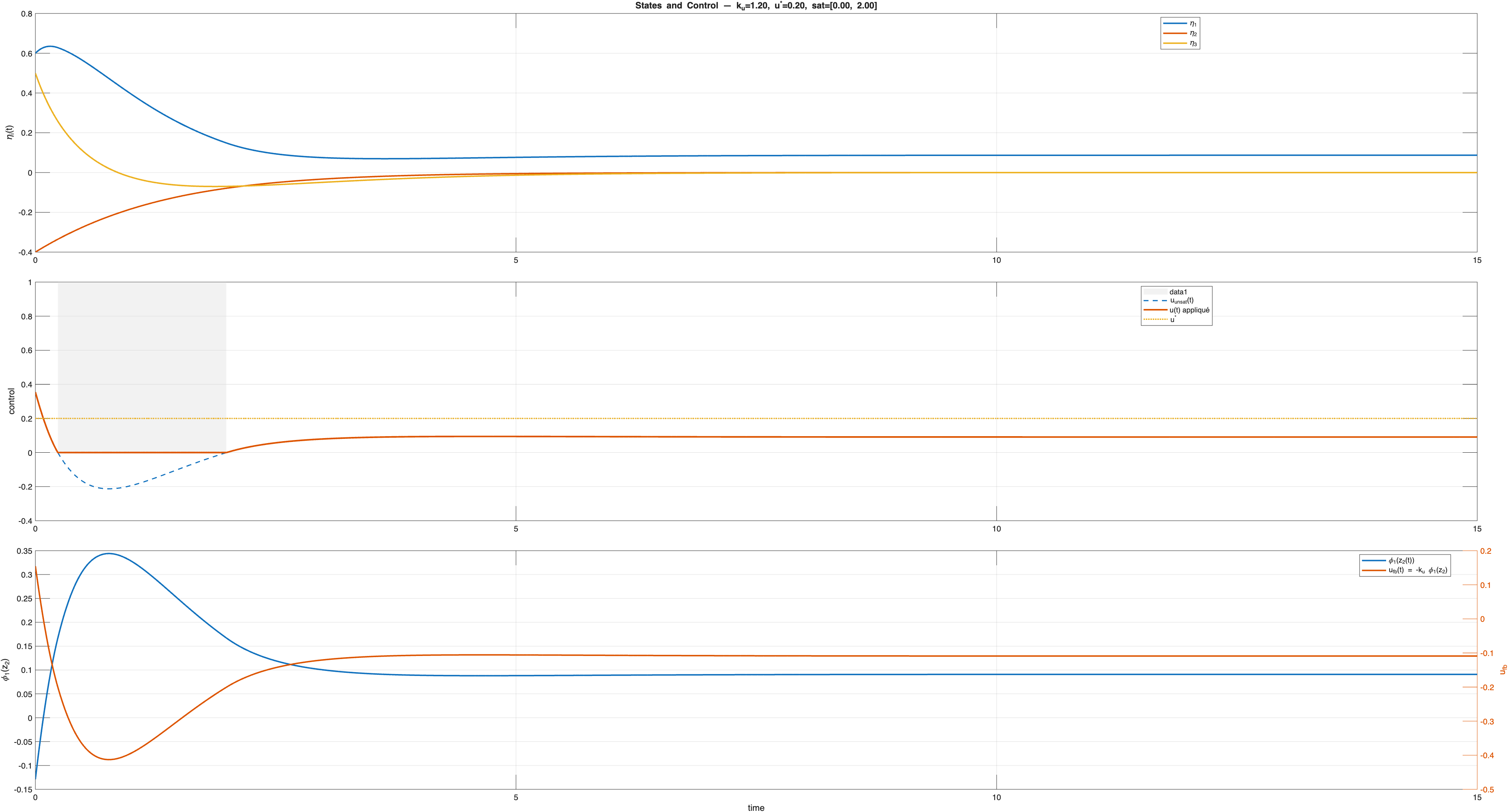}
        \caption{Figure 3. Time evolution of the closed-loop system under the Lyapunov-based feedback law. The top panel shows the states $\eta_1(t),$ $\eta_2(t),$ $\eta_3(t),$ all converging towards the origin. The middle panel compares the applied control $u(t)$ with the unsaturated control signal $u_{\text{unsat}}(t)$ and the baseline term $u^*.$ The gray shaded areas indicate intervals where the control signal reaches the saturation bounds. The bottom panel illustrates the nonlinear feedback component: the function $\phi_1(z^3_2(t))$ (left axis) and its contribution to the feedback $u_{\text{fb}}(t) = -k_u \phi_1(z^3_2(t))$ (right axis). This decomposition highlights the role of the nonlinear terms in shaping the control action. Overall, the plots confirm that the feedback ensures state convergence while maintaining the control within admissible bounds.}
    \label{fig:placeholder}
\end{figure}
\begin{lemma}\label{lem3.11}
Under the model \eqref{e3.1} assumptions, there exists $o_i>0$ such that for all $t\in \mathbb{R}_+$ and all $i$ 
\begin{align}
\vert\eta_i(t)\vert\leq o_i.
\end{align}
\end{lemma}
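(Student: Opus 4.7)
The plan is to bootstrap the uniform boundedness of the $\eta_i$ from the Lyapunov analysis of Theorem \ref{the3.3} combined with the exponential decay \eqref{aa3.65} of the internal states $\psi_i$. The key observation is that the Lyapunov function $V_3$ in \eqref{equation3.44} is built out of the convex entropy-type function $h(x):=e^x-1-x$, which is nonnegative, strictly convex, vanishes only at $0$, and is radially unbounded (it grows like $e^x$ at $+\infty$ and like $-x$ at $-\infty$). Hence, as soon as one can bound $V_3(t)$ from above uniformly in $t\ge0$, each of the scalar arguments $\eta_2(t)$, $z_1^3(t)$, $z_2^3(t)$ is automatically confined to a bounded interval.

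First, I would invoke Theorem \ref{the3.3}: along the closed-loop trajectories produced by the feedback \eqref{aa3.64}, one has $\dot V_3\le -\theta c_1\phi_2^2(\eta_2)-\theta c_2\phi_3^2(z_1^3)-c_3\phi_1^2(z_2^3)\le 0$ in the idealized setting $\psi_i\equiv 0$. In the full system \eqref{e3.10}, the same computation produces additional cross-terms of the form $e^{\eta_j}\int_0^A g_i(a)x_j^*(a)\psi_j(t-a)\,da$, which by \eqref{aa3.65} are bounded by $C e^{-\sigma t}$ times a polynomial in $e^{\eta_j}$ coming from the perturbation. Because the unperturbed dissipation dominates the polynomial-in-$e^{\eta_j}$ growth outside a bounded set and because the exponentially decaying factor is integrable on $\mathbb{R}_+$, a Gronwall-type estimate yields $V_3(t)\le V_3(0)+C_0$ for a constant $C_0$ depending only on $\|\psi_{i,0}\|_\infty$ and on $(M_i,\sigma_i)$. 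This uniform bound on $V_3(t)$ translates, via the radial unboundedness of $h$, into uniform bounds $|\eta_2(t)|\le B_2$, $|z_1^3(t)|\le B_1$, $|z_2^3(t)|\le B_0$.

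Second, I would invert the backstepping change of variables \eqref{a3.32}. Since the intermediate functions $\alpha_i$ are the identity, one obtains $\eta_3=z_1^3+\eta_2$ and $\eta_1=z_2^3+z_1^3$, so that $|\eta_3(t)|\le B_1+B_2$ and $|\eta_1(t)|\le B_0+B_1$. Setting $o_1:=B_0+B_1$, $o_2:=B_2$, $o_3:=B_1+B_2$ gives the desired constants and completes the proof.

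The principal technical point, and where I expect the main work to lie, is controlling the extra terms generated in $\dot V_3$ by the nonzero $\psi_j$: one must check that they can be absorbed by the negative definite part of $\dot V_3$ up to an $L^1(\mathbb{R}_+)$ remainder. This is plausible because \eqref{aa3.65} gives $\|\psi_j(t-\cdot)\|_\infty\le M_j e^{-\sigma_j t}\|\psi_{j,0}\|_\infty$, so the perturbation terms factor as (exponentially decaying)$\times$(locally bounded in $\eta$), and a comparison/Gronwall argument closes the loop. Once $V_3$ is bounded on $\mathbb{R}_+$, the remainder of the argument is purely algebraic through $h$ and the triangular change of variables, which causes no difficulty.
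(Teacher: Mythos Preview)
Your proposal is correct and follows essentially the same route as the paper: bound $V_3$ along trajectories by absorbing the $\psi_j$-induced cross terms (exponentially decaying via \eqref{aa3.65}) into the negative-definite dissipation, integrate to obtain $V_3(t)\le V_3(0)+C_0$, and then use radial unboundedness of $h(x)=e^x-1-x$ together with the triangular inversion $\eta_3=z_1^3+\eta_2$, $\eta_1=z_2^3+z_1^3$. The paper carries out the absorption step explicitly via Young's inequality and the elementary bound $e^{2\eta_j}\le 2\bigl(1+\phi_j(\eta_j)^2/\lambda_j^2\bigr)$, leading directly to $\dot V_3\le -\tfrac{\kappa}{2}\sum\phi^2+C\|r(t)\|^2$ and then integrating (no Gronwall needed), but this is exactly the mechanism your sketch anticipates.
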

\begin{proof}[of Lemma \ref{lem3.11}]
We rewrite the system \eqref{e3.10} as
\begin{equation}\label{ae3.67}
\begin{aligned}
\begin{cases}
\dot \eta_1(t) &= u^*-u(t) - \phi_2(\eta_2)\;-\; e^{\eta_2(t)}\,r_2(t),\\
\dot \eta_2(t) &= -\phi_3(\eta_3)\;-\; e^{\eta_3(t)}\,r_3(t),\\
\dot \eta_3(t) &= -\phi_1(\eta_1)\;-\; e^{\eta_1(t)}\,r_1(t),
\end{cases}\quad\text{where}\; \begin{cases}
r_2(t)&=\displaystyle\int_0^A g_1(a)\,x_2^*(a)\,\psi_2(t-a)\,da,\\
r_3(t)&=\displaystyle\int_0^A g_2(a)\,x_3^*(a)\,\psi_3(t-a)\,da,\\
r_1(t)&=\displaystyle\int_0^A g_3(a)\,x_1^*(a)\,\psi_1(t-a)\,da.
\end{cases}
\end{aligned}
\end{equation}
\begin{itemize}
\item When $\psi\equiv 0$ (i.e. $r\equiv 0$), there exists $\kappa>0$ and a Lyapunov function $V_3(\eta_2,z^3_1,z^3_2)$ in \eqref{a3.49} which is radially unbounded in $(\eta_2,z^3_1,z^3_2)$, such that along the closed-loop trajectories 
\begin{align}\label{ae3.68}
\dot V_3(\eta_2, z^3_1, z^3_2)\ \le\ -\,\kappa\Big(\phi_1(z^3_2)^2+\phi_2(\eta_2)^2+\phi_3(z^3_1)^2\Big).
\end{align}
where $\kappa=\min(\theta c_1,\theta c_2,c_3)>0.$

\item Case $\psi\neq 0.$ Since $g_i,\,x_j^*\in L^2(0,A)$, the product $g_i x_j^*\in L^1(0,A)$ by the Cauchy-Schwarz inequality. From \eqref{aa3.65} there exist constants $D_i>0$ and
$\sigma:=\min\{\sigma_1,\sigma_2,\sigma_3\}>0$ such that, for all $t\ge0$ and $i,j=1,2,3$,
\begin{align}\label{ae3.69}
|r_i(t)| \le D_i\,e^{-\sigma t}.
\end{align}
From \eqref{ae3.68}, introducing $\psi_i\neq 0$ produces cross terms of the form
\begin{align}
\mathcal{C}_{ij}(t):=\phi_i(\cdot)\,e^{\eta_j(t)}\,r_j(t),
\end{align}
which originate from the $-e^{\eta_j}r_j$ terms in \eqref{ae3.67} when computing $\dot V_3.$ Fix $\varepsilon\in(0,\kappa/3)$ and apply Young's inequality $|ab|\le \tfrac{\varepsilon}{2}a^2+\tfrac{1}{2\varepsilon}b^2$ with $a=\phi_i(\cdot)$ and $b=e^{\eta_j}r_j$. We get
\begin{align}
|\mathcal{C}_{ij}(t)|\le \frac{\varepsilon}{2}\,\phi_i^2(\cdot)\;+\;\frac{1}{2\varepsilon}\,e^{2\eta_j(t)}\,r_j(t)^2.
\end{align}
Since, 
\begin{align}
e^{2\eta_j}\le 2\bigl(1+\phi_j(\eta_j)^2/\lambda_j^2\bigr),
\end{align}
hence
\begin{align}
|\mathcal{C}_{ij}(t)|\le \frac{\varepsilon}{2}\,\phi_i^2(\cdot)\;+\;\frac{1}{\varepsilon}\,r_j(t)^2\;+\;\frac{1}{\varepsilon\,\lambda_j^2}\,\phi_j(\eta_j)^2\,r_j(t)^2.
\end{align}

Summing these contributions and using $r\in L^\infty$  yields some $C>0$ such that
\begin{align}\label{ae3.74}
\sum_{i,j}|\mathcal{C}_{ij}(t)|
\le \varepsilon\Big(\phi_1(z^3_2)^2+\phi_2(\eta_2)^2+\phi_3(z^3_1)^2\Big)\;+\;C\,\|r(t)\|^2.
\end{align}

Inserting \eqref{ae3.74} into  $\dot V_3$ of \eqref{ae3.68} and by choosing $\varepsilon<\kappa/2$, we obtain

\begin{align}\label{ae3.75}
\dot V_3(\eta_2, z^3_1, z^3_2) \le -\frac{\kappa}{2}\Big(\phi_1(z^3_2)^2+\phi_2(\eta_2)^2+\phi_3(z^3_1)^2\Big)
+ C\,\|r(t)\|^2,
\end{align}

From \eqref{ae3.75} and \eqref{ae3.69} we have

\begin{align}
\dot V_3(\eta_2, z^3_1, z^3_2)(t)\ \le\ -\tfrac{\kappa}{2}W(\eta,z)\ +\ C\,D^2 e^{-2\sigma t}
\ \le\ C\,D^2 e^{-2\sigma t},
\end{align}
where $W:=\phi_1(z^3_2)^2+\phi_2(\eta_2)^2+\phi_3(z^3_1)^2\ge0$.
Integrating on $[0,t]$ yields
\begin{align}
V_3(\eta_2, z^3_1, z^3_2)(t)\le V_3(\eta_2, z^3_1, z^3_2)(0)+\frac{C\,D^2}{2\sigma},
\end{align}

so $V_3(\eta_2, z^3_1, z^3_2)(t)$ is uniformly bounded on $[0,\infty)$.
Since $V_3(\eta_2, z^3_1, z^3_2)$ is radially unbounded in $(\eta_2,z^3_1,z^3_2)$, the trajectories $(\eta_2,z^3_1,z^3_2)$ remain in a compact level set of $V_3$; in particular $\eta_2,z^3_1,z^3_2$ are bounded.
Hence there exists $o_i>0$ such that $|\eta_i(t)|\le o_i$ for all $t\ge0$. 
\end{itemize}
\end{proof}
\paragraph{\bf 3.1.1.2. Stability for $\psi_i\neq 0$}
Firstly, we introduce for $i,j\in\{1,2,3\}$ (with $j\neq i$) the functions: 
$$v_i : \mathcal{S}_i\longrightarrow\mathbb{R}_+$$
by 
\begin{align}
    v_i(\psi_{i,t})
= \ln\Bigl(1 + \int_0^A \bar g_j(a)\,\psi_i(t-a)\,\mathrm{d}a\Bigr),
\end{align}

where

\begin{align}
    \bar g_j(a)
= \frac{g_i(a)\,x^*_j(a)}{\displaystyle\int_0^A g_i(s)\,x^*_j(s)\,\mathrm{d}s},
\quad
\int_0^A \bar g_j(a)\,\mathrm{d}a = 1.
\end{align}

From equations \eqref{e3.10}, it is straightforward to obtain
\begin{align*}
\dot\eta_1(t)
= u^* - u(t)-\lambda_2\bigl(e^{\eta_2(t)}\,e^{v_2(\psi_{2})}-1\bigr),
\end{align*}
after a few transformations, or alternatively
\begin{align*}
\dot{\eta}_1(t)= u^* - u(t) - \phi_2(\eta_2+v_2(\psi_2)).
\end{align*}

By analogy, one obtains for $\eta_2,\;\eta_3$:
\begin{align*}
\dot{\eta}_2(t)
= -\phi_3\bigl(\eta_3(t) + v_3(\psi_{3})\bigr),\\
\dot{\eta}_3(t)
= -\phi_1\bigl(\eta_1(t) + v_1(\psi_{1})\bigr)
\end{align*}
In conclusion,  the closed-loop system is given by

\begin{align}\label{ee3.58}
    \begin{cases}
\dot{\eta}_1(t)=  - \phi_2(\eta_2(t)+v_2(\psi_2))+u^* - u(t),\\[6pt]
\dot{\eta}_2(t)
= -\phi_3\bigl(\eta_3(t) + v_3(\psi_{3})\bigr),\\[6pt]
\dot{\eta}_3(t)
= -\phi_1\bigl(\eta_1(t) + v_1(\psi_{1})\bigr),\\[6pt]
\psi_i(t)= \displaystyle\int_0^A \tilde k_i(a)\,\psi_i(t-a)\,\mathrm{d}a,
\end{cases}
\end{align}
For the remainder of the calculations, we set 
\begin{align}\label{a3.60}
\hat{\phi}_i=\phi_i\bigl(\eta_i(t) + v_i(\psi_{i})\bigr),    
\end{align}
\textit{We make the following assumption (see \cite{ref69}}):
\begin{enumerate}
 \item[] \noindent\textbf{Assumption H6 :} There exist constants $\kappa_i$ such that $\int_{0}^{A}\Bigl|\,
 \title{k}_i(a)\,
 \;-\;z_i\kappa_i\!\int_a^A \title{k}_i(s)\,\,ds\Bigr|\;da
 \;<\;1, $ where; $ z_i = \Bigl(\int_{0}^{A}a\,\title{k}_i(a)\,da\Bigr)^{-1}.$  Let $\sigma_i>0$  be a sufficiently small constant that satisfies the inequality 
 $\int_{0}^{A}\Bigl|\,\title{k}_i(a)\,\;-\;z_i\kappa_i\!\int_a^A \title{k}_i(s)\,ds\Bigr|\;e^{\sigma_i a}da<\;1.$
 \end{enumerate}

Before stating the main result of this section, we define the following functions. Let the functional
\begin{align}\label{}
    G_i(\psi_i)
= \dfrac{\max_{a\in(0,A)}\bigl|\psi_i(t-a)\bigr|\,e^{-a\sigma_i}}{1+\max(0,\min_{a\in(0,A)}\psi_i(t-a))},
\end{align}
whose Dini derivative satisfies (see \cite{ref69})
\begin{align}\label{a3.63}
    D^+\bigl(G_i(\psi_{i,t})\bigr)
\;\le\;
-\sigma_i\,G_i(\psi_{i,t})
\end{align}

We then define the following Lyapunov function
\begin{align}\label{a3.64}
V_G(\eta,\psi)
= V_3(\eta_2,z^3_1,z^3_2)\;+\;\frac{\gamma_1}{\sigma_1}h(G_1(\psi_1))\;+\;\frac{\gamma_2}{\sigma_2}h(G_2(\psi_2))\;+\;\frac{\gamma_3}{\sigma_3}h(G_3(\psi_3))
\end{align}
with the function 
\begin{align}
h(p)=\displaystyle\int_0^p\frac{1}{z}(e^z-1)^2dz.
\end{align}
being positive definite and radially unbounded.  We denote  $\;S^{N}:=S_{1}\times\cdots\times S_{N},\; N\in\mathbb{N}^*.$ 
\begin{theorem}\label{the3.4}
Under {\bf Assumption H6},  system \eqref{ee3.58} is globally asymptotically stable and the control remains uniformly bounded. Moreover, the control satisfies $u(t)>0$ for every $t>0,$ for every $\eta_i(0)$ belonging to the largest
level set of $V_G(\eta,\psi)$ within the set
\begin{equation}
\mathcal{K}_3=
\left\{ (\eta,\psi)\in \mathbb{R}^3\times\mathcal{S}^3\ \middle|\ 
\begin{alignedat}{2}
&\eta_1 \le \ln\left(\frac{\gamma_1}{C_1\lambda_1}\right), \qquad && \gamma_1 > C_1\lambda_1,\\[4pt]
&\eta_2 \le \ln\left(\frac{\gamma_2}{C_2\lambda_2}\right), \qquad && \gamma_2 > C_2\lambda_2,\\[4pt]
&\eta_3 \le \ln\left(\frac{\gamma_3}{C_3\lambda_3}\right), \qquad && \gamma_3 > C_3\lambda_3,\\[6pt]
&u^*+c_{3o}\phi_1(z^3_2)-\theta\frac{c_2}{\lambda_1}\phi^2_3(z^3_1)+(c_2-1-\theta)\phi_3(z^3_1)\\
&+\frac{c_2}{\lambda_1}\phi_3(z^3_1)\phi_1(z^3_2)-\frac{c_1}{\lambda_3}\phi_3(z^3_1)\phi_2(\eta_2)-(c_1+1)\phi_2(\eta_2)\\
&+\frac{\theta\phi_3(z^3_1)}{\phi_1(z^3_2)}\left(\frac{c_1}{\lambda_3}\phi_3(z^3_1)\phi_2(\eta_2)-\frac{c_1}{\lambda_3}\phi^2_2(\eta_2)+ (c_1-1)\phi_2(\eta_2)+\,\phi_3(z^3_1)\right)>0.
\end{alignedat}
\right\}.
\end{equation}
\end{theorem}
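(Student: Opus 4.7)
The approach is to extend the proof of Theorem \ref{the3.3} by augmenting the finite-dimensional Lyapunov function $V_3$ with weighted functionals of the internal dynamics $\psi_i$, keeping the same backstepping feedback (\ref{aa3.64}). The closed-loop finite-dimensional dynamics (\ref{ee3.58}) differ from (\ref{ee3.27}) only through the shift $\eta_i \mapsto \eta_i + v_i(\psi_i)$ appearing in each $\hat\phi_i$ in (\ref{a3.60}); the strategy is therefore to treat the $v_i(\psi_i)$ as a vanishing disturbance and to dominate the resulting cross terms by the Dini-type dissipation (\ref{a3.63}) available for $G_i$.

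First, I would insert the feedback (\ref{aa3.64}) into the derivative of $V_3$ computed along (\ref{ee3.58}). Using the multiplicative-shift identity $\phi_i(\eta+v) = \phi_i(\eta) e^{v} + \lambda_i(e^{v}-1)$, analogous to (\ref{a3.37}) and (\ref{a3.46}), each $\hat\phi_i$ splits into its nominal part $\phi_i(\eta_i)$ plus a perturbation driven by $v_i(\psi_i)$. Because the kernel $\bar g_j$ in the definition of $v_i$ is a probability density on $(0,A)$, the pointwise bound $|v_i(\psi_i)| \le G_i(\psi_i)$ holds, yielding $|e^{v_i(\psi_i)}-1| \le C_i\bigl(e^{G_i(\psi_i)}-1\bigr)$ for an explicit $C_i > 0$. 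Substituting this decomposition into $\dot V_3$ along (\ref{ee3.58}) reproduces the nominal decay (\ref{ae3.63}) of Theorem \ref{the3.3} plus cross terms of the form $\phi_i(\cdot)\bigl(e^{G_j(\psi_j)}-1\bigr)$.

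Second, I would apply Young's inequality to each cross term, $|\phi_i(\cdot)(e^{G_j}-1)| \le \tfrac{\varepsilon}{2}\phi_i(\cdot)^2 + \tfrac{1}{2\varepsilon}(e^{G_j}-1)^2$, and combine with the Dini derivative (\ref{a3.63}) which, by the chain rule applied to the integrand of $h$, implies $D^+ h(G_i(\psi_i)) \le -\sigma_i (e^{G_i(\psi_i)}-1)^2$. Differentiating $V_G$ in (\ref{a3.64}) and choosing $\varepsilon$ sufficiently small together with $\gamma_i > C_i\lambda_i$ (precisely the first three inequalities defining $\mathcal{K}_3$) yields an estimate of the form
\begin{align*}
D^+ V_G(\eta,\psi) \;\le\; -\widetilde{\kappa}\Bigl(\phi_1(z^3_2)^2 + \phi_2(\eta_2)^2 + \phi_3(z^3_1)^2\Bigr) \;-\; \sum_{i=1}^{3} \widetilde{\gamma}_i\bigl(e^{G_i(\psi_i)}-1\bigr)^2,
\end{align*}
with $\widetilde{\kappa},\widetilde{\gamma}_i>0$. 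Since $V_G$ is positive definite and radially unbounded on the extended state space $\mathbb{R}^3\times\mathcal{S}^3$, a Barbashin--Krasovskii/LaSalle argument adapted to the delay-type component $\psi_i$ (as in \cite{ref69}) gives global asymptotic stability of the origin.

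Finally, uniform boundedness of $u$ follows because the trajectories remain in the sublevel set $\{V_G \le V_G(\eta(0),\psi(0))\}$, so all arguments of the $\phi_i$ appearing in (\ref{aa3.64}) are uniformly bounded. The positivity $u(t)>0$ is exactly the last inequality defining $\mathcal{K}_3$, which remains satisfied along trajectories when $(\eta(0),\psi(0))$ lies in the largest level set of $V_G$ contained in $\mathcal{K}_3$, thanks to the monotone decrease of $V_G$ proved above. I expect the main obstacle to be the sharp comparison $|e^{v_i(\psi_i)}-1|\lesssim e^{G_i(\psi_i)}-1$ uniformly in time and the subsequent balancing between the Young constants and the weights $\gamma_i/\sigma_i$: one must tune them so that the quadratic form in $(\phi_1,\phi_2,\phi_3,e^{G_1}-1,e^{G_2}-1,e^{G_3}-1)$ is negative definite while simultaneously preserving the positive invariance of $\mathcal{K}_3$ required for the positivity of $u$.
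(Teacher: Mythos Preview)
Your overall architecture is right and matches the paper: keep the backstepping feedback \eqref{aa3.64}, augment $V_3$ by the weighted $h(G_i)$ terms as in \eqref{a3.64}, and treat the shifts $v_i(\psi_i)$ as a vanishing disturbance controlled by the Dini estimate \eqref{a3.63}. The paper also first verifies, by a direct algebraic computation using the identities \eqref{ea3.93}, that along \eqref{ee3.58} the control \eqref{aa3.64} produces exactly
\[
\dot V_3=-\theta c_1\phi_2^2(\eta_2)-\theta c_2\phi_3^2(z^3_1)-c_3\phi_1^2(z^3_2)+\sum_{i=1}^3 A_i\bigl(\hat\phi_i-\phi_i(\eta_i)\bigr),
\]
with the residual $\mathcal R$ cancelling identically; you summarise this as ``reproduces the nominal decay plus cross terms,'' which is correct but hides the nontrivial cancellation.

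Where you diverge from the paper is in how the cross terms are absorbed. You apply Young's inequality and rely on the \emph{quadratic} damping $D^+h(G_i)\le-\sigma_i(e^{G_i}-1)^2$. The paper instead writes $\hat\phi_i-\phi_i=(\phi_i+\lambda_i)(e^{v_i}-1)=\lambda_ie^{\eta_i}(e^{v_i}-1)$, bounds $|A_i|\le C_i$ via Lemma~\ref{lem3.11}, and then dominates the cross term \emph{linearly}:
\[
|A_i|\,|\hat\phi_i-\phi_i|\le C_i\lambda_ie^{\eta_i}(e^{G_i}-1),
\]
which is killed by the $-\gamma_i(e^{G_i}-1)$ coming from the $h$ term precisely when $C_i\lambda_ie^{\eta_i}\le\gamma_i$, i.e.\ $\eta_i\le\ln\bigl(\gamma_i/(C_i\lambda_i)\bigr)$. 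This is the origin of the first three \emph{state} constraints in $\mathcal K_3$; they are not merely the parameter conditions $\gamma_i>C_i\lambda_i$ that you write, but pointwise bounds on $\eta_i(t)$ that must hold along the trajectory. Your Young-based argument, once you track the factor $\lambda_ie^{\eta_i}$ correctly (it is not $\phi_i$), would also close---using boundedness of $\eta_i$ from Lemma~\ref{lem3.11}---but it produces conditions of the type ``$\gamma_i$ large relative to $\varepsilon^{-1}$'' rather than the explicit level-set description $\eta_i\le\ln(\gamma_i/(C_i\lambda_i))$ that the theorem states. So your route proves a closely related stability statement, while the paper's direct-domination route is what actually generates the set $\mathcal K_3$ as written.
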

\begin{proof}[of Theorem \ref{the3.4}]
By applying the same strategy as in Theorem \ref{the3.3} to system \eqref{ee3.58} and using equation \eqref{a3.49}, we obtain the following  relation 
\begin{align}
V_3(\eta_2,z^3_1,z^3_2)=\theta \omega_2(\eta_2,z^3_1)+ \lambda_1(e^{z^3_2}-1-z^3_2)
\end{align}
The derivative of the Lyapunov function $V_3$ is given as follows

\begin{align}
\dot V_3(\eta_2, z^3_1, z^3_2)= -\theta\phi_2(\eta_2)\hat{\phi}_3 + \theta\phi_3(z^3_1)(-\hat{\phi}_1-\dot\eta_2)+\phi_1(z^3_2)(-\hat{\phi}_2+u^*-u-\dot z^3_1).
\end{align}



Thanks to control \eqref{aa3.64}, we obtain
\begin{align}
\dot V_3(\eta_2, z^3_1, z^3_2)=-\theta c_1\,\phi_2^2(\eta_2)-\theta c_{2}\,\phi_3^2(z^3_1)-c_{30}\phi^2_1(z^3_2)
\end{align}
\begin{align*}
+\phi_3(z^3_1)\left(\theta c_{2}\,\phi_3(z^3_1)+\theta\frac{c_1}{\lambda_3}\phi^2_2(\eta_2)-\theta\frac{c_1}{\lambda_3}\phi_3(z^3_1)\phi_2(\eta_2)-\theta (c_1-1)\phi_2(\eta_2)-\theta\phi_3(z^3_1)\right)+\phi_2(\eta_2)\left(\theta c_1\,\phi_2(\eta_2)+(c_1+1)\phi_1(z^3_2)-\phi_1(z^3_2)\right) 
\end{align*}
\begin{align*}
+\phi_1(z^3_2)\phi_3(z^3_1)\left(\theta\frac{c_2}{\lambda_1}\phi_3(z^3_1)-(c_2-1-\theta)-\frac{c_2}{\lambda_1}\phi_1(z^3_2)+\frac{c_1}{\lambda_3}\phi_2(\eta_2)\right)+\left(\theta\phi_3(z^3_1)-\phi_1(z^3_2)-\theta\phi_2(\eta_2)\right)\phi_3(\eta_3)+ \left(\phi_1(z^3_2)-\theta\phi_3(z^3_1)\right)\phi_1(\eta_1)
\end{align*}

\begin{align*}
+\underbrace{\left(\theta\phi_3(z^3_1)-\phi_1(z^3_2)-\theta\phi_2(\eta_2)\right)}_{A_3}\left(\hat{\phi}_3-\phi_3(\eta_3)\right)+\underbrace{\left(\phi_1(z^3_2)-\theta\phi_3(z^3_1)\right)}_{A_1}\left(\hat{\phi}_1-\phi_1(\eta_1)\right)\underbrace{-\phi_1(z^3_2)}_{A_2}\left(\hat{\phi}_2-\phi_2(\eta_2)\right).
\end{align*}

Furthermore,
\begin{align}
\dot V_3(\eta_2, z^3_1, z^3_2)=-\theta c_1\,\phi_2^2(\eta_2)-\theta c_{2}\,\phi_3^2(z^3_1)-c_{3}\phi^2_1(z^3_2)+A_3\left(\hat{\phi}_3-\phi_3(\eta_3)\right)+A_1\left(\hat{\phi}_1-\phi_1(\eta_1)\right)+A_2\left(\hat{\phi}_2-\phi_2(\eta_2)\right)+\mathcal{R},
\end{align}
with 
\begin{align}
\mathcal{R}=\phi_3(z^3_1)\left(\theta c_{2}\,\phi_3(z^3_1)+\theta\frac{c_1}{\lambda_3}\phi^2_2(\eta_2)-\theta\frac{c_1}{\lambda_3}\phi_3(z^3_1)\phi_2(\eta_2)-\theta (c_1-1)\phi_2(\eta_2)-\theta\phi_3(z^3_1)\right)+\phi_2(\eta_2)\left(\theta c_1\,\phi_2(\eta_2)+c_1\phi_1(z^3_2)\right) 
\end{align}
\begin{align*}
+\phi_1(z^3_2)\phi_3(z^3_1)\left(\theta\frac{c_2}{\lambda_1}\phi_3(z^3_1)-(c_2-1-\theta)-\frac{c_2}{\lambda_1}\phi_1(z^3_2)+\frac{c_1}{\lambda_3}\phi_2(\eta_2)\right)+\left(\theta\phi_3(z^3_1)-\phi_1(z^3_2)-\theta\phi_2(\eta_2)\right)\phi_3(\eta_3)+ \left(\phi_1(z^3_2)-\theta\phi_3(z^3_1)\right)\phi_1(\eta_1).
\end{align*}

We have from \eqref{a3.37}-\eqref{a3.46}
\begin{align}\label{ea3.93}
\begin{cases}\phi_1(\eta_1)=c_2\phi_3(z^3_1)+\phi_1(z^3_2)+\frac{c_2}{\lambda_1}\phi_1(z^3_2)\phi_3(z^3_1),\\
\phi_3(\eta_3)=c_1\phi_2(\eta_2)+\phi_3(z^3_1)+\frac{c_1}{\lambda_3}\phi_3(z^3_1)\phi_2(\eta_2),
\end{cases}
\end{align}
thus
\begin{align}
\mathcal{R}=0
\end{align}
Hence, the derivative of $V_3$ now yields
\begin{align}\label{aa3.81}
\dot V_3(\eta_2, z^3_1, z^3_2)=-\theta c_1\,\phi_2^2(\eta_2)-\theta c_{2}\,\phi_3^2(z^3_1)-c_{3}\phi^2_1(z^3_2)+A_3\left(\hat{\phi}_3-\phi_3(\eta_3)\right)+A_1\left(\hat{\phi}_1-\phi_1(\eta_1)\right)+A_2\left(\hat{\phi}_2-\phi_2(\eta_2)\right)
\end{align}

From \eqref{a3.64} and thanks to \eqref{a3.63}, we get   
\begin{align}\label{}
\dot V_G(\eta,\psi)
\leq  \dot V_3(\eta_2, z^3_1, z^3_2)\;-\;\gamma_1(e^{G_1}-1)\;-\;\gamma_2(e^{G_2}-1)\;-\;\gamma_3(e^{G_3}-1)
\end{align}

\begin{align}
\dot V_G(\eta,\psi)
\leq-\theta c_1\,\phi_2^2(\eta_2)-\theta c_{2}\,\phi_3^2(z^3_1)-c_{3}\phi^2_1(z^3_2)+\vert A_1\vert\vert\hat{\phi}_1-\phi_1\vert+\vert A_2\vert\vert\hat{\phi}_2-\phi_2\vert+\vert A_3\vert\vert\hat{\phi}_3-\phi_3\vert-\;\gamma_1(e^{G_1}-1)-\gamma_2(e^{G_2}-1)-\gamma_3(e^{G_3}-1).
\end{align}
We have also $\hat{\phi}_i-\phi_i=(\phi_i+\lambda_i)(e^{v_i}-1),$ then 
\begin{align}
\dot V_G(\eta,\psi)
\leq-\theta c_1\,\phi_2^2(\eta_2)-\theta c_{2}\,\phi_3^2(z^3_1)-c_{3}\phi^2_1(z^3_2)+\vert A_1\vert\vert\phi_1(\eta_1)+\lambda_1\vert(e^{v_1}-1)+\vert A_2\vert\vert\phi_2(\eta_2)+\lambda_2\vert(e^{v_2}-1)+\vert A_3\vert\vert\phi_3(\eta_3)+\lambda_3\vert(e^{v_3}-1)
\end{align}
\begin{align*}
-\gamma_1(e^{G_1}-1)-\gamma_2(e^{G_2}-1)-\gamma_3(e^{G_3}-1)
\end{align*}
From Lemma \ref{lem3.11}, there exists $C_i>0$ such that $\vert A_i\vert\leq C_i.$ Then, we get 
\begin{align}
\dot V_G(\eta,\psi)
\leq-\theta c_1\,\phi_2^2(\eta_2)-\theta c_{2}\,\phi_3^2(z^3_1)-c_{3}\phi^2_1(z^3_2)+\left(C_1\vert\phi_1(\eta_1)+\lambda_1\vert-\gamma_1\right)(e^{G_1}-1)+\left(C_2\vert\phi_2(\eta_2)+\lambda_2\vert-\gamma_2\right)(e^{G_2}-1)
\end{align}
\begin{align*}
+\left(C_3\vert\phi_3+\lambda_3\vert-\gamma_3\right)(e^{G_3}-1)
\end{align*}
By restricting $\eta$ as in 
\begin{align}
\begin{cases}
\eta_1\leq \ln\left(\frac{\gamma_1}{C_1\lambda_1}\right),\\
\\\eta_2\leq \ln\left(\frac{\gamma_2}{C_2\lambda_2}\right),\\
\\\eta_3\leq \ln\left(\frac{\gamma_3}{C_3\lambda_3}\right),
 \end{cases} \qquad\text{with}\; \begin{cases}
\gamma_1>C_1\lambda_1,\\
\\\gamma_2>C_2\lambda_2,\\
\\\gamma_3>C_3\lambda_3.
\end{cases}
 \end{align}
we get global asymptotic stability.
\end{proof}

    
\begin{remark}
The aim of the constructed fictitious controls is to stabilize, step by step and recursively, subsystems of model until the global control $u$ is synthesized. Concretely, each fictitious control is designed to partially stabilize a given state or subsystem. For instance, in transformation \eqref{a3.32}, the state \(\eta_2\), serving as a reference for system stabilization, is partially stabilized by relation \eqref{aa3.45}. Subsequently, \(\eta_3\) is in turn partially stabilized by \(\eta_1\) via the signal \(z^3_1\) (see Condition \eqref{aa3.53}). The procedure runs inductively: each step guarantees partial stability of the corresponding subsystem, and ultimately yields an appropriate global control law \(u\) from which stabilization of the closed-loop system follows.
\end{remark}

\begin{remark}\label{rem3.8}
The control $u$ offers a more natural approach to influence the dynamics without breaking the fundamental structure of the model  \eqref{e3.1}. Then, the control $u$ in \eqref{aa3.64} is well-defined and avoids any singularity at 
\begin{align}\label{aa3.62}
    z^3_2=0\Longleftrightarrow \eta_1=\eta_3-\eta_2.
\end{align}
Each population has its own biological parameters $(k_i, \mu_i, g_i)$. Non-transitivity requires autonomous interactions, not direct linear dependencies \eqref{aa3.62}. This relationship (i.e. \eqref{aa3.62}) can only occur through external artificial imposition, never through the natural dynamics of the model. If imposed, it would destroy the dynamic richness of cyclic competition by transforming the system of autonomous interactions into artificial constraints. Non-transitive competition models derive their value from the relative autonomy of populations. Any exact linear relationship compromises this fundamental philosophy.  

Consequently, the configuration \eqref{aa3.62} only arises in extreme circumstances. Typically, $x_3$ gains the upper hand over $x_1$, forcing $x_1$ into cannibalism. Subsequently, $x_2$ experiences pressures that foster strong intraspecific competition and eventually comes to dominate $x_1$. $x_2$ then becomes prey for $x_3$, which in turn develops intraspecific competition. These feedback effects  (cannibalism for $x_1$, intraspecific competition for $x_2$ and $x_3$, and cross-predation)  break the cycle and denature the model. In such a scenario the control in \eqref{aa3.64} is no longer appropriate, and stabilization with a single control remains questionable and/or delicate.

Regarding the mathematical formulation, we confirm that \(z^3_2\neq 0\) by Lemma \ref{lemB.1}.
\end{remark}

\begin{remark}
Beyond the classical three-species non-transitive competition models \eqref{e3.1}, further examples include two predators exploiting the same prey, a predator feeding on two prey species, or even non-transitive interactions where the renewal equation may explicitly depend on other species \cite{ref3}.
\end{remark}
\subsection{\bf Four-species non-transitive competition}

We extend the previous study to a four-species system arranged in cyclic dominance: $x_1$ dominates $x_4$, $x_4$ dominates $x_3$, $x_3$ dominates $x_2$, and $x_2$ dominates $x_1.$ As described by the following system 
\begin{align}
 \begin{cases}
\displaystyle 
\partial_t x_1(a,t) + \partial_a x_1(a,t)
= -\Bigl(\mu_1(a) 
       + \int_0^A g_1(a)\,x_2(a,t)\,\mathrm{d}a
       + u(t)\Bigr)\;x_1(a,t),
& \text{ in } Q_1,\\[8pt]
\displaystyle 
\partial_t x_2(a,t) + \partial_a x_2(a,t)
= -\Bigl(\mu_2(a) 
       + \int_0^A g_2(a)\,x_3(a,t)da\Bigr)\;x_2(a,t),
& \text{ in } Q_1,\\[8pt]
\partial_t x_3(a,t) + \partial_a x_3(a,t)
= -\Bigl(\mu_3(a) 
       + \displaystyle\int_0^A g_3(a)\,x_4(a,t)\,\mathrm{d}a
       \Bigr)\;x_3(a,t),
& \text{ in } Q_1,\\[8pt]
\partial_t x_4(a,t) + \partial_a x_4(a,t)
= -\Bigl(\mu_4(a) 
       + \displaystyle\int_0^A g_4(a)\,x_1(a,t)\,\mathrm{d}a
       \Bigr)\;x_4(a,t),
& \text{ in } Q_1,\\[8pt]
\displaystyle 
x_i(0,t)
= \int_0^Ak_i(a)x_i(a,t)da,& \text{ in } Q_+,\\[6pt]
x_i(a,0) = x_{i,0}(a),\quad i=1,...,4
& \text{ in } Q_A.
\end{cases}
\end{align}

\paragraph{\bf 3.2.1. Stability for $\psi_i\equiv 0$}
Using the same approach as in {\bf Section \ref{s3.1.2.2}} and thanks to Lemma \ref{lee3.1}, we derive the following system
\begin{align}\label{equation3.99}
 \begin{cases}
\dot\eta_1 = u^* - u - \phi_2(\eta_2),\\
\dot\eta_2 =  - \phi_3(\eta_3),\\
\dot\eta_3 =  - \phi_4(\eta_4),\\
\dot\eta_4 =  - \phi_1(\eta_1),
\end{cases}\,\text{with the following state variables :}\, \begin{cases}
\eta_2,\\
z^4_1=\eta_3-\alpha_1(\eta_2),\\
z^4_2=\eta_4-\alpha_2(z^4_1),\\
z^4_3=\eta_1-\alpha_3(z^4_2),
\end{cases}
\end{align} 
where $\eta_2$ retained as one of the original system components. For each component we use the Lyapunov functions
\begin{align}\label{equation3.100}
\omega(\eta_i)=\lambda_i\bigl(e^{\eta_i}-1-\eta_i\bigr),\quad (\lambda_i>0).
\end{align}

These convex Lyapunov functions, are well suited to study the equilibrium stability and to quantify how the cyclic interactions and control laws contribute to the system’s energy decay. 
\begin{proposition}\label{proposition3.17}
Under the proposed feedback law the system \eqref{equation3.99} is globally asymptotically stable. The feedback control constructed is uniformly bounded, though not necessarily nonnegative. Moreover, the control satisfies $u(t)>0$ for every $t>0,$ for every $\eta_i(0)$ belonging to the largest
level set of $V_4(\eta_2, z^4_1, z^4_2,z^4_3)$ within the set 
\begin{align}\label{}
\mathcal{K}'_4 = 
\left\{ \eta\in\mathbb{R}^4\ \middle|\ 
\begin{aligned}
&u^*+c_{40}\phi_1(z^4_3)-\theta\frac{c_3}{\lambda_1}\phi^2_4(z^4_2)+(c_3-1-\theta)\phi_4(z^4_2)+(c_1-1)\phi_2(\eta_2)+\frac{c_3}{\lambda_1}\phi_4(z^4_2)\phi_1(z^4_3)-\frac{c_2}{\lambda_4}\phi_3(z^4_1)\phi_4(z^4_2)\\
&-(c_2-1)\phi_3(z^4_1)+\frac{c_1}{\lambda_3}\phi_2(\eta_2)\phi_3(z^4_1)+\frac{\theta }{\phi_1(z^4_3)}\left(\phi^2_4(z^4_2)-c_1\phi_2(\eta_2)\phi_4(z^4_2)+\phi^2_3(z^4_1)\right)\\
&+\frac{\theta\phi_3(z^4_1)}{\phi_1(z^4_3)}(\frac{c_1}{\lambda_3}\phi_3(z^4_1)\phi_2(\eta_2)-\frac{c_1}{\lambda_3}\phi^2_2(\eta_2)-\frac{c_2}{\lambda_4}\phi_3(z^4_1)\phi_4(z^4_2)+(c_1-1)\phi_2(\eta_2)+ (c_2-2)\phi_4(z^4_2)+\frac{c_2}{\lambda_4}\phi^2_4(z^4_2)\\
&-\frac{c_1}{\lambda_3}\phi_2(\eta_2)\phi_4(z^4_2))>0
\end{aligned}
\right\}.
\end{align}
\end{proposition}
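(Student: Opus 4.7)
The plan is to extend the three-layer backstepping construction of Theorem \ref{the3.3} by one additional layer, exploiting the fact that the cyclic structure of \eqref{equation3.99} is identical to \eqref{ee3.27} up to the insertion of the extra node $\eta_4$ between $\eta_3$ and $\eta_1$. First I would rewrite the dynamics in the coordinates $(\eta_2,z^4_1,z^4_2,z^4_3)$ introduced in \eqref{equation3.99}: differentiating the definitions gives
\begin{align*}
\dot\eta_2 &= -\phi_3(\eta_3),\qquad
\dot z^4_1 = -\phi_4(\eta_4)+\phi_3(\eta_3),\\
\dot z^4_2 &= -\phi_1(\eta_1)+\phi_4(\eta_4),\qquad
\dot z^4_3 = u^*-u-\phi_2(\eta_2)+\phi_1(\eta_1),
\end{align*}
together with identities of the form $\phi_j(\eta_j)=\phi_j(z^4_{j-2})+\phi_j(\alpha)e^{z^4_{j-2}}$ analogous to \eqref{a3.37}-\eqref{a3.46}, that relate $\phi_3(\eta_3),\phi_4(\eta_4),\phi_1(\eta_1)$ to $\phi_3(z^4_1),\phi_4(z^4_2),\phi_1(z^4_3)$ up to bilinear remainders $\phi_i\phi_j/\lambda_k$.

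Next I would run the recursion in four stages. Stage (i) treats $\eta_3$ as a fictitious control for $\eta_2$ by imposing $\phi_3(\eta_2)=c_1\phi_2(\eta_2)$ with $c_1=\lambda_3/\lambda_2$, which makes $\dot\omega(\eta_2)$ dissipative modulo the $z^4_1$-error. Stage (ii) treats $\eta_4$ as a fictitious control for $z^4_1$ with the matching $\phi_4(z^4_1)=c_2\phi_3(z^4_1)$, $c_2=\lambda_4/\lambda_3$, and adds $\lambda_3\omega(z^4_1)$ to the Lyapunov function. Stage (iii) treats $\eta_1$ as a fictitious control for $z^4_2$ with $\phi_1(z^4_2)=c_3\phi_4(z^4_2)$, $c_3=\lambda_1/\lambda_4$, and adds $\lambda_4\omega(z^4_2)$. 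Stage (iv) designs the actual feedback $u$ so that all residual cross terms combine into a single multiple of $\phi_1(z^4_3)$ that can be absorbed by $-c_{4o}\phi_1^2(z^4_3)$, yielding a composite function
\begin{align*}
V_4(\eta_2,z^4_1,z^4_2,z^4_3)=\theta^2\lambda_2\omega(\eta_2)+\theta^2\lambda_3\omega(z^4_1)+\theta\lambda_4\omega(z^4_2)+\lambda_1\omega(z^4_3),
\end{align*}
whose derivative along the closed loop satisfies
\begin{align*}
\dot V_4=-\theta^2c_1\phi_2^2(\eta_2)-\theta^2c_2\phi_3^2(z^4_1)-\theta c_3\phi_4^2(z^4_2)-c_{4o}\phi_1^2(z^4_3),
\end{align*}
in analogy with \eqref{ae3.63}. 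Solving the cancellation condition at Stage (iv) for $u$ gives the explicit feedback law appearing inside the definition of $\mathcal K'_4$.

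The main obstacle is algebraic: each exponential nonlinearity $\phi_i(x)=\lambda_i(e^x-1)$ produces, through the identities $\phi_i(a+b)=\phi_i(a)+\phi_i(b)+\phi_i(a)\phi_i(b)/\lambda_i$, a cascade of bilinear and trilinear remainders at every stage, so the derivative $\dot V_4$ contains on the order of $4^2$ cross products that must be grouped and balanced. The introduction of the geometric weights $\theta^2,\theta^2,\theta,1$ is precisely meant to dominate the cross products by diagonal squares, and checking this requires a careful book-keeping (most easily done by systematically rewriting every $\phi_i(\eta_i)$ as $\phi_i(z^4_{i-2})e^{z^4_{j}}+\phi_i(\alpha)$ and inserting the design relations for $c_1,c_2,c_3$). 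Once this bookkeeping is complete, the identity $\mathcal R=0$ from the three-species proof extends verbatim to an identity $\mathcal R_4=0$.

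Finally, for the geometric statements about $\mathcal K'_4$ and $u>0$, I would follow the argument of Theorem \ref{the3.3}: radial unboundedness of $V_4$ (inherited from the convexity of each $\omega(\cdot)$) makes level sets $\{V_4\le c\}$ compact; evaluation at $(\varepsilon,0,0,0)$ with $\varepsilon$ small shows $\mathcal K'_4$ is nonempty and meets every level set $\{V_4\le c\}$ for $c$ small enough, so the largest level set contained in $\mathcal K'_4$ is a nontrivial positively invariant region on which $\dot V_4<0$ outside the origin. Global asymptotic stability then follows from LaSalle's invariance principle, and uniform boundedness of $u$ along trajectories follows from the uniform boundedness of $\eta_i$ obtained as in Lemma \ref{lem3.11}.
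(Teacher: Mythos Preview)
Your overall strategy---run the backstepping recursion one more layer, using $\eta_3,\eta_4,\eta_1$ as successive fictitious controls with the matching relations $\phi_3(\eta_2)=c_1\phi_2(\eta_2)$, $\phi_4(z^4_1)=c_2\phi_3(z^4_1)$, $\phi_1(z^4_2)=c_3\phi_4(z^4_2)$, and then choose $u$ at the last step---is exactly the paper's plan. But two concrete details of your proposal do not match the construction that actually produces the control law displayed in the statement.

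First, the Lyapunov weights are wrong. The paper takes the \emph{same} weight $\theta$ on all three inner pieces,
\[
V_4(\eta_2,z^4_1,z^4_2,z^4_3)=\theta\lambda_2(e^{\eta_2}-1-\eta_2)+\theta\lambda_3(e^{z^4_1}-1-z^4_1)+\theta\lambda_4(e^{z^4_2}-1-z^4_2)+\lambda_1(e^{z^4_3}-1-z^4_3),
\]
not the geometric ladder $\theta^2,\theta^2,\theta,1$ you propose. This matters because the explicit feedback in $\mathcal K'_4$ (e.g.\ the term $-\theta\tfrac{c_3}{\lambda_1}\phi_4^2(z^4_2)$, linear in $\theta$) is obtained by cancelling the cross terms of \emph{this} $\dot V_4$; with your weights the cancellation equations would produce a different $u$, and you would not recover the control law in the statement.

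Second, and relatedly, the mechanism is not that the weights ``dominate the cross products by diagonal squares.'' In the paper every single cross term in $\dot V_4$ is \emph{exactly cancelled} by a corresponding term in $u$ (including the terms divided by $\phi_1(z^4_3)$, which are there precisely to kill cross products that do not carry a $\phi_1(z^4_3)$ factor). After this cancellation one is left with
\[
\dot V_4=-\theta c_1\phi_2^2(\eta_2)-\theta c_2\phi_3^2(z^4_1)-\theta c_3\phi_4^2(z^4_2)-c_4\phi_1^2(z^4_3),
\]
with no residual to estimate. Your appeal to an identity ``$\mathcal R_4=0$'' is misplaced here: in the paper the residual $\mathcal R$ (resp.\ $\mathcal R_1$) appears only in the $\psi_i\neq 0$ analyses (Theorem \ref{the3.4}, Proposition \ref{proposition3.18}), where the \emph{same} control is applied to the perturbed system and one must check the extra terms vanish. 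For Proposition \ref{proposition3.17} there is no $\mathcal R$-step; the design of $u$ is the whole story. The positivity and level-set argument you sketch at the end is the right one.
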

\begin{proof}
By considering a Lyapunov function of the form 
\begin{align}
V_4(\eta_2,z^4_1,z^4_2,z^4_3)=\theta \lambda_2\bigl(e^{\eta_2}-1-\eta_2\bigr)+\theta\lambda_3(e^{z^4_1}-1-z^4_1)+\theta\lambda_4(e^{z^4_2}-1-z^4_2)+\lambda_1(e^{z^4_3}-1-z^4_3),
\end{align}
and applying the same strategy as in Theorem \ref{the3.3}, we obtain the control 
\begin{align}\label{equation3.103}
\begin{aligned}
u=&u^*+c_{40}\phi_1(z^4_3)-\theta\frac{c_3}{\lambda_1}\phi^2_4(z^4_2)+(c_3-1-\theta)\phi_4(z^4_2)+(c_1-1)\phi_2(\eta_2)+\frac{c_3}{\lambda_1}\phi_4(z^4_2)\phi_1(z^4_3)-\frac{c_2}{\lambda_4}\phi_3(z^4_1)\phi_4(z^4_2)\\
&-(c_2-1)\phi_3(z^4_1)+\frac{c_1}{\lambda_3}\phi_2(\eta_2)\phi_3(z^4_1)+\frac{\theta }{\phi_1(z^4_3)}\left(\phi^2_4(z^4_2)-c_1\phi_2(\eta_2)\phi_4(z^4_2)+\phi^2_3(z_1)\right)\\
&+\frac{\theta\phi_3(z^4_1)}{\phi_1(z^4_3)}(\frac{c_1}{\lambda_3}\phi_3(z^4_1)\phi_2(\eta_2)-\frac{c_1}{\lambda_3}\phi^2_2(\eta_2)-\frac{c_2}{\lambda_4}\phi_3(z^4_1)\phi_4(z^4_2)+(c_1-1)\phi_2(\eta_2)+ (c_2-2)\phi_4(z^4_2)+\frac{c_2}{\lambda_4}\phi^2_4(z^4_2)\\
&-\frac{c_1}{\lambda_3}\phi_2(\eta_2)\phi_4(z^4_2)),
\end{aligned}
\end{align}
and the Lyapunov derivative is given by 
\begin{align}
\dot V_4(\eta_2,z^4_1,z^4_2,z^4_3)= -\theta c_1\phi^2_2(\eta_2)-\theta c_{2}\phi^2_3(z^4_1)-\theta c_{3}\phi^2_4(z^4_2)-c_{4}\phi^2_1(z^4_3).
\end{align}
For further details, see the Appendix~\ref{annexe:A}.
\end{proof}
\begin{remark}
The control in \eqref{equation3.103} may be rewritten as 

\begin{align}\label{equation3.106}
\begin{aligned}
u=&u^*+\frac{1}{\phi_1(z^4_3)}(c_{4}\phi^2_1(z^4_3)+\theta c_1\phi^2_2(\eta_2)+\theta c_3\phi^2_4(z^4_2)+\theta c_2\phi^2_3(z^4_1))\\
&+\frac{\phi_1(z^4_3)-\theta\phi_4(z^4_2)}{\phi_1(z^4_3)}\phi_1(\eta_1)-\phi_2(\eta_2) +\frac{\phi_1(z^4_3)-\theta \phi_4(z^4_2)+\theta\phi_3(z^4_1)-\theta\phi_2(\eta_2)}{\phi_1(z^4_3)}\phi_3(\eta_3)+\frac{\theta \phi_4(z^4_2)-\phi_1(z^4_3)-\theta\phi_3(z^4_1)}{\phi_1(z^4_3)}\phi_4(\eta_4)
\end{aligned}
\end{align}
\end{remark}
\paragraph{\bf 3.2.2. Stability for $\psi_i\neq 0$}
With $\psi_i \neq 0$, using the data from the previous {\bf Section \ref{s3.1.2.2}} and under {\bf Assumption H6}, we obtain the following system:
\begin{align}\label{a3.115}
\begin{cases}
\dot\eta_1 = u^* - u - \hat \phi_2,\\
\dot\eta_2 = - \hat \phi_3,\\
\dot\eta_3 = - \hat \phi_4,\\
\dot\eta_4 = - \hat \phi_1.
\end{cases}
\end{align}
 and we redefine the following Lyapunov function 
\begin{align}\label{a3.114}
V_G(\eta,\psi)
= V_4(\eta_2,z^4_1,z^4_2,z^4_3)\;+\;\frac{\gamma_1}{\sigma_1}h(G_1(\psi_1))\;+\;\frac{\gamma_2}{\sigma_2}h(G_2(\psi_2))\;+\;\frac{\gamma_3}{\sigma_3}h(G_3(\psi_3))\;+\;\frac{\gamma_4}{\sigma_4}h(G_4(\psi_4)).
\end{align}

\begin{proposition}\label{proposition3.18}
Under {\bf Assumption H6},  system \eqref{a3.115} is globally asymptotically stable and the control remains uniformly bounded. Moreover, the control satisfies $u(t)>0$ for every $t>0,$ for every $\eta_i(0)$ belonging to the largest
level set of $V_G(\eta,\psi)$ in \eqref{a3.114} within the set
\begin{equation}
\mathcal{K}_4=
\left\{ (\eta,\psi)\in \mathbb{R}^4\times\mathcal{S}^4\ \middle|\ 
\begin{alignedat}{2}
&\eta_1 \le \ln\left(\frac{\gamma_1}{C_1\lambda_1}\right), \qquad && \gamma_1 > C_1\lambda_1,\\[4pt]
&\eta_2 \le \ln\left(\frac{\gamma_2}{C_2\lambda_2}\right), \qquad && \gamma_2 > C_2\lambda_2,\\[4pt]
&\eta_3 \le \ln\left(\frac{\gamma_3}{C_3\lambda_3}\right), \qquad && \gamma_3 > C_3\lambda_3,\\[4pt]
&\eta_4 \le \ln\left(\frac{\gamma_4}{C_4\lambda_4}\right), \qquad && \gamma_4 > C_4\lambda_4,\\[6pt]
&u^*+c_{40}\phi_1(z^4_3)-\theta\frac{c_3}{\lambda_1}\phi^2_4(z^4_2)+(c_3-1-\theta)\phi_4(z^4_2)+(c_1-1)\phi_2(\eta_2)\\
&+\frac{c_3}{\lambda_1}\phi_4(z^4_2)\phi_1(z^4_3)-\frac{c_2}{\lambda_4}\phi_3(z^4_1)\phi_4(z^4_2)-(c_2-1)\phi_3(z^4_1)\\
&+\frac{c_1}{\lambda_3}\phi_2(\eta_2)\phi_3(z^4_1)+\frac{\theta }{\phi_1(z^4_3)}\left(\phi^2_4(z^4_2)-c_1\phi_2(\eta_2)\phi_4(z^4_2)+\phi^2_3(z^4_1)\right)\\
&+\frac{\theta\phi_3(z^4_1)}{\phi_1(z^4_3)}\left(\frac{c_1}{\lambda_3}\phi_3(z^4_1)\phi_2(\eta_2)-\frac{c_1}{\lambda_3}\phi^2_2(\eta_2)-\frac{c_2}{\lambda_4}\phi_3(z^4_1)\phi_4(z^4_2)\right)\\
&+\frac{\theta\phi_3(z^4_1)}{\phi_1(z^4_3)}\left((c_1-1)\phi_2(\eta_2)+ (c_2-2)\phi_4(z^4_2)+\frac{c_2}{\lambda_4}\phi^2_4(z^4_2)-\frac{c_1}{\lambda_3}\phi_2(\eta_2)\phi_4(z^4_2)\right)>0.
\end{alignedat}
\right\}.
\end{equation}
\end{proposition}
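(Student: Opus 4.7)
The plan is to extend, step by step, the argument used in Theorem \ref{the3.4} to the four-species setting, reusing the control law \eqref{equation3.103} already synthesized in Proposition \ref{proposition3.17} for the unperturbed case $\psi_i\equiv 0$. First, I would rewrite the closed-loop dynamics \eqref{a3.115} in the backstepping coordinates $(\eta_2,z^4_1,z^4_2,z^4_3)$ defined in \eqref{equation3.99}, and compute $\dot V_4$ along these trajectories. Because the control $u$ in \eqref{equation3.103} was designed to cancel exactly the nominal cross terms built from $\phi_i(\eta_i)$, all the extra contributions now come from replacing $\phi_i(\eta_i)$ by $\hat\phi_i=\phi_i(\eta_i+v_i(\psi_i))$ in the dynamics. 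As in the three-species case, I would collect these perturbations into three coefficients $A_1,A_2,A_3$ multiplying the differences $\hat\phi_i-\phi_i(\eta_i)$, and use the identity $\hat\phi_i-\phi_i(\eta_i)=(\phi_i(\eta_i)+\lambda_i)(e^{v_i}-1)$ to express them in terms of the internal states $\psi_i$.

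Next, I would verify that the algebraic residual $\mathcal{R}$ generated by the backstepping cancellation vanishes identically, by invoking the four-species analogues of the identities \eqref{ea3.93}, namely
\begin{align*}
\phi_1(\eta_1)&=c_3\phi_4(z^4_2)+\phi_1(z^4_3)+\tfrac{c_3}{\lambda_1}\phi_4(z^4_2)\phi_1(z^4_3),\\
\phi_4(\eta_4)&=c_2\phi_3(z^4_1)+\phi_4(z^4_2)+\tfrac{c_2}{\lambda_4}\phi_3(z^4_1)\phi_4(z^4_2),\\
\phi_3(\eta_3)&=c_1\phi_2(\eta_2)+\phi_3(z^4_1)+\tfrac{c_1}{\lambda_3}\phi_2(\eta_2)\phi_3(z^4_1),
\end{align*}
which follow from the defining relations of the $z^4_j$ variables. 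Substituting these into the expression for $\dot V_4$ reproduces the clean estimate
$$
\dot V_4=-\theta c_1\phi_2^2(\eta_2)-\theta c_2\phi_3^2(z^4_1)-\theta c_3\phi_4^2(z^4_2)-c_4\phi_1^2(z^4_3)+A_1(\hat\phi_1-\phi_1)+A_2(\hat\phi_2-\phi_2)+A_3(\hat\phi_3-\phi_3)+A_4(\hat\phi_4-\phi_4),
$$
mirroring \eqref{aa3.81}. The coefficients $A_j$ are polynomial in the $\phi_i$, so a direct extension of Lemma \ref{lem3.11} (established by the same ISS-type argument with Young's inequality and the exponential decay of $r_i(t)$) furnishes constants $C_j>0$ such that $|A_j|\le C_j$ along trajectories.

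Now I would differentiate the composite Lyapunov function $V_G$ in \eqref{a3.114} and use the Dini derivative estimate \eqref{a3.63} for each $G_i(\psi_i)$, which produces the dissipative terms $-\gamma_i(e^{G_i}-1)$. Combined with the bound $|\hat\phi_i-\phi_i(\eta_i)|\le|\phi_i(\eta_i)+\lambda_i|(e^{G_i}-1)$ (up to a constant depending on the $\bar g_j$), this yields
$$
\dot V_G\le -\theta c_1\phi_2^2(\eta_2)-\theta c_2\phi_3^2(z^4_1)-\theta c_3\phi_4^2(z^4_2)-c_4\phi_1^2(z^4_3)+\sum_{i=1}^{4}\bigl(C_i|\phi_i(\eta_i)+\lambda_i|-\gamma_i\bigr)(e^{G_i}-1).
$$
Restricting the initial data to the largest sublevel set of $V_G$ contained in $\mathcal K_4$ guarantees the bounds $\eta_i\le\ln(\gamma_i/(C_i\lambda_i))$, hence $C_i|\phi_i(\eta_i)+\lambda_i|\le\gamma_i$, so that $\dot V_G\le 0$ with equality only at the origin. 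Since $V_G$ is radially unbounded in $(\eta_2,z^4_1,z^4_2,z^4_3)$ and positive definite in the $\psi_i$ via $h\circ G_i$, LaSalle's invariance principle in the infinite-dimensional setting (applied as in \cite{ref69}) yields global asymptotic stability of the origin in $\mathcal K_4$.

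The main obstacle, as in the three-species case, is to check that the algebraic residual $\mathcal{R}$ resulting from the backstepping construction indeed cancels exactly in the four-species setting; this requires carefully expanding $\phi_1(\eta_1),\phi_3(\eta_3),\phi_4(\eta_4)$ in terms of the error variables $z^4_j$ and tracking all cross products, a routine but lengthy computation that is deferred to Appendix \ref{annexe:A}. A secondary point is to verify that the positivity constraint on $u(t)$ remains compatible with the enlarged set $\mathcal K_4$; the same argument as in Remark \ref{rem3.8}, based on Lemma \ref{lemB.1} and the non-vanishing of $\phi_1(z^4_3)$ in a neighborhood of the equilibrium, settles this point.
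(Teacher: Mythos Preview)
Your proposal is correct and follows essentially the same route as the paper's proof in Appendix~\ref{annexe:A}: compute $\dot V_4$ under the control \eqref{equation3.103}, show the algebraic residual vanishes via the identities for $\phi_1(\eta_1),\phi_3(\eta_3),\phi_4(\eta_4)$ (these are exactly \eqref{ea3.130} in the paper), bound the $A_j$ via Lemma~\ref{lem3.11}, and absorb the perturbation terms $(\hat\phi_i-\phi_i)$ into the $-\gamma_i(e^{G_i}-1)$ dissipation using the constraints defining $\mathcal K_4$. One small slip: you write ``three coefficients $A_1,A_2,A_3$'' in the prose but then (correctly) display four terms $A_1,\dots,A_4$ in the estimate; the four-species case indeed requires all four, as you have in the displayed formula.
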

\begin{proof}
By applying the same strategy as in Theorem \ref{the3.4}, we derive the result with the Lyapunov function given in \eqref{a3.114} and the control in \eqref{equation3.103}. See Appendix~\ref{annexe:A} for full details.
\end{proof}
\begin{remark}
{\bf In summary, for a non-transitive competition model, the control law has, respectively for the three-species and four-species cases, the following forms:} 
\section*{\bf Three-species case}
\begin{align}
\underbrace{\begin{cases}
       \dot\eta_1 = u^* - u - \phi_2(\eta_2),\\
\dot\eta_2 =  - \phi_3(\eta_3),\\
\dot\eta_3 =  - \phi_1(\eta_1)
   \end{cases}}_{\psi_i\equiv 0},\; \underbrace{\begin{cases}
       \dot\eta_1 = u^* - u - \hat\phi_2,\\
\dot\eta_2 =  - \hat\phi_3,\\
\dot\eta_3 =  - \hat\phi_1
   \end{cases}}_{\psi_i\neq 0}\;\text{with the following state variables :}\;\begin{cases}
\eta_2,\\
 z^3_{1} = \eta_{3}-\alpha_1(\eta_2),\\  
 z^3_{2} = \eta_{1} - \alpha_2(z^3_1),
\end{cases}
\end{align} 

\begin{align}
\begin{aligned}
 u&=u^*+c_{3o}\phi_1(z^3_2)-\theta\frac{c_2}{\lambda_1}\phi^2_3(z^3_1)+(c_2-1-\theta)\phi_3(z^3_1)+\frac{c_2}{\lambda_1}\phi_3(z^3_1)\phi_1(z^3_2)-\frac{c_1}{\lambda_3}\phi_3(z^3_1)\phi_2(\eta_2)-(c_1+1)\phi_2(\eta_2)\\\
 &+\frac{\theta\phi_3(z^3_1)}{\phi_1(z^3_2)}\left(\frac{c_1}{\lambda_3}\phi_3(z^3_1)\phi_2(\eta_2)-\frac{c_1}{\lambda_3}\phi^2_2(\eta_2)+ (c_1-1)\phi_2(\eta_2)+\,\phi_3(z^3_1)\right).
 \end{aligned}%
\end{align}
\section*{\bf Reference Lyapunov function for \texorpdfstring{$N =3$}{N = 3}}
\begin{align}
\dot V_3(\eta_2,z^3_1,z^3_2)=-\theta c_1\,\phi_2^2(\eta_2)-\theta c_{2}\,\phi_3^2(z^3_1)-c_{3}\phi^2_1(z^3_2),\quad\psi_i\equiv 0.
\end{align} 

\begin{align}
 \dot V_3(\eta_2, z^3_1, z^3_2)
= -\theta c_1\,\phi_2^2(\eta_2)
  -\theta c_{2}\,\phi_3^2(z^3_1)
  - c_{3}\,\phi_1^2(z^3_2)
  + A_1\left(\hat{\phi}_1-\phi_1(\eta_1)\right)
 + A_2\,\left(\hat{\phi}_2-\phi_2(\eta_2)\right)+ A_3\left(\hat{\phi}_3-\phi_3(\eta_3)\right),\quad\psi_i\neq 0.
\end{align}

\section*{\bf Four-species case}
\begin{align}
 \underbrace{\begin{cases}
\dot\eta_1 = u^* - u - \phi_2(\eta_2),\\
\dot\eta_2 =  - \phi_3(\eta_3),\\
\dot\eta_3 =  - \phi_4(\eta_4),\\
\dot\eta_4 =  - \phi_1(\eta_1),
\end{cases}}_{\psi_i\equiv 0},\;\underbrace{\begin{cases}
\dot\eta_1 = u^* - u - \hat\phi_2,\\
\dot\eta_2 =  - \hat\phi_3,\\
\dot\eta_3 =  - \hat\phi_4,\\
\dot\eta_4 =  - \hat\phi_1,
\end{cases}}_{\psi_i\neq 0}\;\text{with the following state variables:}\; \begin{cases}
\eta_2,\\
z^4_1=\eta_3-\alpha_1(\eta_2),\\
z^4_2=\eta_4-\alpha_2(z^4_1),\\
z^4_3=\eta_1-\alpha_3(z^4_2),
\end{cases}
\end{align} 

\begin{align}
\begin{aligned}
u &=u^*+c_{40}\phi_1(z^4_3)-\theta\frac{c_3}{\lambda_1}\phi^2_4(z^4_2)+(c_3-1-\theta)\phi_4(z^4_2)+(c_1-1)\phi_2(\eta_2)+\frac{c_3}{\lambda_1}\phi_4(z^4_2)\phi_1(z^4_3)-\frac{c_2}{\lambda_4}\phi_3(z^4_1)\phi_4(z^4_2)-(c_2-1)\phi_3(z^4_1)\\
&+\frac{c_1}{\lambda_3}\phi_2(\eta_2)\phi_3(z^4_1)+\frac{\theta }{\phi_1(z^4_3)}\left(\phi^2_4(z^4_2)-c_1\phi_2(\eta_2)\phi_4(z^4_2)+\phi^2_3(z^4_1)\right)\\
&\frac{\theta\phi_3(z^4_1)}{\phi_1(z_3)}\left(\frac{c_1}{\lambda_3}\phi_3(z^4_1)\phi_2(\eta_2)-\frac{c_1}{\lambda_3}\phi^2_2(\eta_2)-\frac{c_2}{\lambda_4}\phi_3(z^4_1)\phi_4(z^4_2)+(c_1-1)\phi_2(\eta_2)+ (c_2-2)\phi_4(z^4_2)+\frac{c_2}{\lambda_4}\phi^2_4(z^4_2)-\frac{c_1}{\lambda_3}\phi_2(\eta_2)\phi_4(z^4_2)\right),
\end{aligned}%
\end{align}
\section*{\bf Reference Lyapunov function for \texorpdfstring{$N = 4$}{N = 4}}

\begin{align}
\dot V_4(\eta_2,z^4_1,z^4_2,z^4_3)= -\theta c_1\phi^2_2(\eta_2)-\theta c_{2}\phi^2_3(z^4_1)-\theta c_{3}\phi^2_4(z^4_2)-c_{4}\phi^2_1(z^4_3),\quad\psi_i\equiv 0.
\end{align}

\begin{align}
\begin{aligned}
\dot V_4(\eta_2, z^4_1, z^4_2, z^4_3)
&=-\theta c_1\,\phi_2^2(\eta_2)
     -\theta c_{2}\,\phi_3^2(z^4_1)
     -\theta c_{3}\,\phi_4^2(z^4_2)
     - c_{4}\,\phi_1^2(z^4_3)+ A_1\left(\hat{\phi}_1-\phi_1(\eta_1)\right)
       +A_2\,\left(\hat{\phi}_2-\phi_2(\eta_2)\right)
       + A_3\left(\hat{\phi}_3-\phi_3(\eta_3)\right)\\
       &+ A_4\left(\hat{\phi}_4-\phi_4(\eta_4)\right),\quad\psi_i\neq 0.
\end{aligned}
\end{align}


\end{remark}
\subsection{\bf Toward generalization}
We now extend the control design and Lyapunov based method to the general case  \texorpdfstring{$N \ge 3$}{N ≥ 3} species, specifying the recursive form of the control laws and Lyapunov functions adapted to the cyclic coupling.  Let us consider the following system, which describes the dynamics of non-transitive competition among $N$ species as described by the following system 
\begin{align}\label{aq3.145}
 \begin{cases}
\displaystyle 
\partial_t x_1(a,t) + \partial_a x_1(a,t)
= -\Bigl(\mu_1(a) 
       + \int_0^A g_1(a)\,x_2(a,t)\,\mathrm{d}a
       + u(t)\Bigr)\;x_1(a,t),
& \text{ in } Q_1,\\[8pt]
\displaystyle 
\partial_t x_2(a,t) + \partial_a x_2(a,t)
= -\Bigl(\mu_2(a) 
       + \int_0^A g_2(a)\,x_3(a,t)da\Bigr)\;x_2(a,t),
& \text{ in } Q_1,\\[8pt]
\vdots\qquad\vdots\\[8pt]
\partial_t x_{N}(a,t) + \partial_a x_{N}(a,t)
= -\Bigl(\mu_{N}(a) 
       + \displaystyle\int_0^A g_{N}(a)\,x_{N+1}(a,t)\,\mathrm{d}a
       \Bigr)\;x_{N}(a,t),
& \text{ in } Q_1,\\[8pt]
\partial_t x_{N+1}(a,t) + \partial_a x_{N+1}(a,t)
= -\Bigl(\mu_{N+1}(a) 
       + \displaystyle\int_0^A g_{N+1}(a)\,x_1(a,t)\,\mathrm{d}a
       \Bigr)\;x_{N+1}(a,t),
& \text{ in } Q_1,\\[8pt]
\displaystyle 
x_i(0,t)
= \int_0^Ak_i(a)x_i(a,t)da,\quad i=1,...,N& \text{ in } Q_+,\\[6pt]
x_i(a,0) = x_{i,0}(a),\quad i=1,...,N
& \text{ in } Q_A.
\end{cases}
\end{align}
Using the same approach as in {\bf Section \ref{s3.1.2.2}} and thanks to Lemma \ref{lee3.1}, we derive the following system

\begin{align}\label{a3.139}
 \underbrace{\begin{cases}
\dot\eta_1 = u^* - u - \phi_2(\eta_2),\\
\dot\eta_2 =  - \phi_3(\eta_3),\\
\dot\eta_3 =  - \phi_4(\eta_4),\\
\dot\eta_4 =  - \phi_5(\eta_5),\\
\dot\eta_5 =  - \phi_6(\eta_6),\\
\vdots\\
\vdots\\
\dot\eta_{N-1} =-\phi_{N}(\eta_{N}),\\
\dot\eta_{N} =  - \phi_1(\eta_1),
\end{cases}}_{\psi_i\equiv 0}\, \underbrace{\begin{cases}
\dot\eta_1 = u^* - u - \hat\phi_2,\\
\dot\eta_2 = - \hat\phi_3,\\
\dot\eta_3 = - \hat\phi_4,\\
\dot\eta_4 = - \hat\phi_5,\\
\dot\eta_5 =- \hat\phi_6,\\
\vdots\\
\vdots\\
\dot\eta_{N-1} =-\hat\phi_{N},\\
\dot\eta_{N} =  - \hat\phi_1,
\end{cases}}_{\psi_i\neq 0}\;\text{with the following state variables:}\; \begin{cases}
\eta_2,\\
z^N_1=\eta_3-\alpha_1(\eta_2),\\
z^N_2=\eta_4-\alpha_2(z^N_1),\\
z^N_3=\eta_5-\alpha_3(z^N_2),\\
z^N_4=\eta_6-\alpha_4(z^N_3),\\
\vdots\\
\vdots\\
z^N_{N-2}=\eta_{N}-\alpha_{N-2}(z^N_{N-3}),\\
z^N_{N-1}=\eta_1-\alpha_{N-1}(z^N_{N-2}).
\end{cases}
\end{align} 

\section*{\bf General formulation of the Lyapunov function and its derivative.}

\begin{lemma}\label{lemma3.19}
For every \(N\geq 3,\)  and $\theta>0,$ by construction, we obtain a Lyapunov function of the form

\begin{align}\label{ea3.146}
\begin{aligned}
V_N(\eta_2,z^N_1,\cdots,z^N_{N-1})&= \theta \lambda_2(e^{\eta_{2}}-1-\eta_{2})+\theta \sum_{i=1}^{N-2}\lambda_{i+2}(e^{z^N_{i}}-1-z^N_{i})+\lambda_1(e^{z^N_{N-1}}-1-z^N_{N-1}).
\end{aligned}
\end{align}
The time derivative \(\dot V_N\) of \(V_N\) takes the form 
\begin{align}\label{ea3.147}
\begin{aligned}
\dot V_N(\eta_2, z^N_1,\dots,z^N_{N-1})
={}&
-\theta c_1\,\phi_2^2(\eta_2)
-\theta\sum_{i=2}^{N-1}\,c_{\,i}\,\phi_{i+1}^2\bigl(z^N_{i-1}\bigr)
- c_{N}\,\phi_1^2\bigl(z^N_{N-1}\bigr) 
\end{aligned},\quad\psi_i\equiv 0,
\end{align}
and 
\begin{align}\label{ea3.148}
\begin{aligned}
\dot V_N(\eta_2, z^N_1,\dots,z^N_{N-1})
={}&
-\theta c_1\,\phi_2^2(\eta_2)
-\theta\sum_{i=2}^{N-1}\,c_{\,i}\,\phi_{i+1}^2\bigl(z^N_{i-1}\bigr)
- c_{N}\,\phi_1^2\bigl(z^N_{N-1}\bigr) +\sum_{i=1}^{N} A_i\,\left(\hat{\phi}_i-\phi_i(\eta_i)\right),
\end{aligned}\quad\psi_i\neq 0,
\end{align}
when the control has the form 
\begin{align}\label{equation3.123}
\begin{aligned}
u \;=&\; u^*
+\frac{1}{\phi_1(z^N_{N-1})}\Bigg(
c_{N}\,\phi_1^2(z^N_{N-1})
+\theta\,c_1\,\phi_2^2(\eta_2)
+\theta \sum_{i=1}^{N-2} c_{i+1}\,\phi_{i+2}^2(z^N_i)\Bigg) \\
&+\frac{\phi_1(z^N_{N-1})-\theta\,\phi_{N}(z^N_{N-2})}{\phi_1(z^N_{N-1})}\,\phi_1(\eta_1)
-\phi_2(\eta_2) \\
&+\sum_{k=3}^{N}
\frac{\displaystyle
\theta \sum_{i=\max(1,k-3)}^{N-2} [(-1)^{\,i+\varepsilon_k}\,\phi_{i+2}(z^N_i)
+(-1)^{\,N+1-\varepsilon_k}\,\phi_1(z^N_{N-1})-\delta_{k3}\theta\phi_2(\eta_2)]
}{\phi_1(z^N_{N-1})}\,\phi_k(\eta_k),
\end{aligned}
\end{align}
with
\begin{align*}
\varepsilon_k :=
\begin{cases}
1, & \text{$k$ odd},\\
0, & \text{$k$ even},
\end{cases}
\quad\delta_{k3}:=
\begin{cases}
1,& k=3,\\
0,& k\neq 3.
\end{cases}
\end{align*}
where $A_i$ are defined in Appendix~\ref{annexe:B}.
\end{lemma}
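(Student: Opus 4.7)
The plan is to proceed by induction on $N$, with $N=3$ (Theorem \ref{the3.3}) and $N=4$ (Proposition \ref{proposition3.17}) serving as the two base cases, and to construct the Lyapunov function and feedback law recursively by backstepping in exactly the manner illustrated there. The essential ingredient is the algebraic identity, already used in \eqref{a3.37} and \eqref{a3.46},
\begin{align*}
\phi_{k+1}\bigl(z^N_{k-1}+z^N_{k-2}\bigr)
=\phi_{k+1}(z^N_{k-1})\,e^{z^N_{k-2}}+\phi_{k+1}(z^N_{k-2}),
\end{align*}
which tells us that each fictitious control introduces only a linear-plus-quadratic perturbation that can be absorbed at the next backstepping step.

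At step $k$ of the induction, I assume that the first $k-1$ error variables $\eta_2, z^N_1,\dots,z^N_{k-2}$ have been (partially) stabilized by successive fictitious controls defined through $\phi_{j+1}(\alpha_j)=c_j\phi_j$, mirroring \eqref{aa3.45} and \eqref{aa3.53}. I introduce the next error variable $z^N_{k-1}=\eta_{k+1}-\alpha_{k-1}(z^N_{k-2})$ and augment the working Lyapunov function by the entropy-like term $\theta\lambda_{k+1}(e^{z^N_{k-1}}-1-z^N_{k-1})$. Differentiating along the cyclic dynamics \eqref{a3.139} and applying the identity above, the cross terms produced in $\dot V_N$ at step $k$ either cancel those generated at step $k-1$, thanks to the cyclic ordering, or can be absorbed by the coefficient $c_k$ of the next fictitious control. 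At the terminal step $k=N-1$, the variable $\eta_1$ becomes the true control channel, and the actual feedback $u$ is synthesized so as to cancel all remaining residual cross terms and yield the negative definite quadratic form \eqref{ea3.147}. Unfolding the recursion produces the closed-form expression \eqref{equation3.123}: each contribution $\phi_k(\eta_k)$ in $u$ is the accumulated coefficient generated by the telescoping expansions of $\phi_k\bigl(\sum_i z^N_i\bigr)$, which is what the alternating sign $(-1)^{i+\varepsilon_k}$ and the Kronecker correction $\delta_{k3}$ (that isolates the $\phi_2(\eta_2)$ term coming from the innermost $\eta_2$-subsystem) encode.

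For the case $\psi_i\not\equiv 0$, exactly the same backstepping choice and the same control $u$ are used; the only modification in $\dot V_N$ is that each $\phi_i(\eta_i)$ appearing in the dynamics \eqref{a3.139} is replaced by $\hat\phi_i=\phi_i(\eta_i+v_i(\psi_i))$. As in the derivation leading to \eqref{aa3.81}, the cancellations responsible for removing the cross terms in the $\psi\equiv 0$ case go through unchanged up to the additive discrepancy $\hat\phi_i-\phi_i(\eta_i)$, which gets multiplied by the same factor $A_i$ that appeared as the coefficient of $\phi_i(\eta_i)$ in the rearranged $\dot V_N$. Collecting these contributions yields \eqref{ea3.148}, the coefficients $A_i$ being precisely those tabulated in Appendix~\ref{annexe:B}.

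The main obstacle is the combinatorial bookkeeping of signs and indices in \eqref{equation3.123}: one has to verify rigorously that, when unrolling the recursion for general $N$, the alternating pattern $(-1)^{i+\varepsilon_k}$ together with the correction $\delta_{k3}$ captures exactly the cancellations that make all non-diagonal terms disappear from $\dot V_N$. I would handle this by strengthening the induction hypothesis to include simultaneously (i) the form of $V_N$ in \eqref{ea3.146}, (ii) the identity \eqref{ea3.147}, and (iii) the explicit form of the residual coefficients $A_i$ entering \eqref{ea3.148}, and by verifying that each step $k\to k+1$ preserves all three simultaneously via a single application of the telescoping identity above. The $N=3$ and $N=4$ derivations of Theorem \ref{the3.3} and Proposition \ref{proposition3.17} provide the pattern, so the induction step is a careful but essentially mechanical repetition of those calculations with indices shifted.
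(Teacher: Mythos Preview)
Your proposal is correct and follows essentially the same approach as the paper: both rely on the $N=3$ and $N=4$ cases as templates, use the telescoping identity for $\phi_k$ together with the fictitious-control relations $\phi_{k+1}(\cdot)=c_k\phi_k(\cdot)$, and then read off the general control law \eqref{equation3.123} that forces $\dot V_N$ into the negative quadratic form \eqref{ea3.147}. The paper's own proof is slightly more direct---it differentiates $V_N$ once, identifies the coefficient of each $\phi_k(\eta_k)$ in $\dot V_N$ (those are the $A_k$ of Appendix~\ref{annexe:B}), and then defines $u$ so as to cancel them---whereas you frame it as a formal induction in which the Lyapunov function is augmented one entropy term at a time; but the underlying computation and the role of the alternating signs $(-1)^{i+\varepsilon_k}$ and $\delta_{k3}$ are identical.
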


\begin{proof}[of Lemma \ref{lemma3.19}]

From \eqref{ea3.146}, we get 
 \begin{align}\label{E3.124}
\begin{aligned}
\dot V_N(\eta_2,z^N_1,\cdots,z^N_{N-1})&= \theta \phi_2(\eta_{2})\dot\eta_2+\theta \sum_{i=1}^{N-2}\phi_{i+2}({z^N_{i}})\dot z^N_i+\phi_1({z^N_{N-1}})\dot z^N_{N-1}.
\end{aligned}
\end{align}
Our objective is to design a control $u$ such that $\dot V_N$ satisfies \eqref{ea3.147} thereby guaranteeing dissipation (and hence decay) of the Lyapunov function and the associated closed-loop system.  We obtain the expressions for $\dot V_3(\eta_2, z^3_1,z^3_2)$ and $\dot V_4(\eta_2, z^4_1,\cdots,z^4_3)$ in the form
\begin{align}
\begin{aligned}
\dot V_3(\eta_2, z^3_1,z^3_2)=&\phi_1(\eta_1)[\phi_1(z^3_2)-\theta\phi_3(z^3_1)]-\phi_1(z^3_2)\phi_2(\eta_2)+\phi_3(\eta_3)[\theta\phi_3(z^3_1)-\theta\phi_2(\eta_2)-\phi_1(z^3_2)]+\phi_1(z^3_2)(u-u^*),
\end{aligned}
\end{align}
\begin{align}
\begin{aligned}
\dot V_4(\eta_2, z^4_1,\cdots,z^4_3)=&\phi_1(\eta_1)[\phi_1(z^4_3)-\theta\phi_4(z^4_2)]-\phi_1(z^4_3)\phi_2(\eta_2)+\phi_3(\eta_3)[\theta\phi_3(z^4_1)-\theta\phi_2(\eta_2)-\theta\phi_4(z^4_2)+\phi_1(z^4_3)]\\
&+\phi_4(\eta_4)[\theta\phi_4(z^4_2)-\theta\phi_3(z^4_1)-\phi_1(z^4_3)]+\phi_1(z^4_3)(u-u^*).
\end{aligned}
\end{align}
For $N=3$ and $N=4$, by introducing the fictitious controls $\eta_3,\eta_4,\eta_1$ and selecting suitable positive gains $c_i$ together with an appropriate control law, we obtain dissipation and guarantee closed-loop stability. Thus, substituting the controls specified in \eqref{equation3.61} and \eqref{equation3.106} into the expressions for $\dot V_3(\eta_2, z^3_1,z^3_2)$ and $\dot V_4(\eta_2, z^4_1,\cdots, z^4_3)$  and simplifying yields relation \eqref{ea3.147}.
The constants $c_i$ in \eqref{ea3.147} are chosen so that the following conditions are satisfied
\begin{align}
\phi_3(z^N_1)=c_1\phi_2(\eta_2),\; \phi_4(z^N_2)=c_2\phi_3(z^N_1),\; \phi_5(z^N_3)=c_3\phi_4(z^N_2),\cdots\cdots,\phi_{N}(z^N_{N-2})=c_{N-2}\phi_{N-1}(z^N_{N-3}),\; \phi_{1}(z^N_{N-1})=c_{N-1}\phi_{N}(z^N_{N-2}).
\end{align}
These constants $c_i>0$   are designed to preserve the model's mechanism and to ensure dissipation of the system along the coordinates $\eta_2, z^N_1, z^N_2,\dots,z^N_{N-1}$. 
Applying the transformations in \eqref{a3.139}, we obtain the general form of the derivative of the Lyapunov function  for any fixed $N\geq 3$ 
\begin{align}
\begin{aligned}
\dot V_N(\eta_2, z^N_1,\cdots,z^N_{N-1})=&\phi_1(\eta_1)\left(\phi_1(z^N_{N-1})-\theta\phi_N(z^N_{N-2})\right)-\phi_1(z^N_{N-1})\phi_2(\eta_2)\\
&+\sum_{k=3}^{N}\left(
\displaystyle\theta\sum_{i=\max(1,k-3)}^{N-2} [(-1)^{\,i+\varepsilon_k}\,\phi_{i+2}(z^N_i)
+(-1)^{\,N+1-\varepsilon_k}\,\phi_1(z^N_{N-1})-\delta_{k3}\theta\phi_2(\eta_2)]
\right)\,\phi_k(\eta_k)+\phi_1(z^N_{N-1})(u-u^*).
\end{aligned}
\end{align}
Hence, by choosing a  general control $u$ of the form \eqref{equation3.123}, we obtain relation \eqref{ea3.147}.

\end{proof}
.
\begin{remark}
In the case of three species, we have 
\begin{align}
c_1=\frac{\lambda_3}{\lambda_2},\; c_2=\frac{\lambda_1}{\lambda_3},
\end{align}
for the four-species case, 
\begin{align}
c_1=\frac{\lambda_3}{\lambda_2},\; c_2=\frac{\lambda_4}{\lambda_3},\; c_3=\frac{\lambda_1}{\lambda_4}.
\end{align}
By construction, for $N$ species we obtain \begin{align}
c_1=\frac{\lambda_3}{\lambda_2},\; c_2=\frac{\lambda_4}{\lambda_3},\;c_3=\frac{\lambda_5}{\lambda_4},\cdots,c_{N-2}=\frac{\lambda_{N}}{\lambda_{N-1}},\;c_{N-1}=\frac{\lambda_1}{\lambda_{N}}.
\end{align}
The choice of the coefficients $c_i$ plays a central role in preserving the structure and topology of the model while preventing the cancellation of cross terms in the context of non-transitive competition. Such cancellations remove essential information about interspecific interactions, driving the system toward a collection of decoupled subsystems, which distorts the dynamical analysis and conceals key model mechanisms (e.g., cyclic dominance). Furthermore, these cancellations suppress cross dissipation and restrict the propagation of the reduced control effect. The control $u$ defined in \eqref{equation3.123} provides a general control law that ensures global asymptotic stability of the system for any number of species $N\geq 3$ under non-transitive competition.
\end{remark}
\begin{lemma}\label{lemB.1}
Let $\lambda_1>0$ and $\lambda_{N}>0$ and consider the closed-loop dynamics obtained via the backstepping procedure. 
Assume the Lyapunov weights satisfy $c_{N-1}=\lambda_1/\lambda_{N}>0$. 
Then, for any solution $(\eta(t),\psi(t))$ starting in the invariant set $\mathcal K_N$, one has
\[
\lambda_1\big(e^{z^N_{N-1}(t)}-1\big)\neq 0, \qquad \forall t\ge 0 .
\]
\end{lemma}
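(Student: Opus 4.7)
The plan is to combine positive invariance of a sublevel set of $V_N$ with the singular structure of the closed-loop feedback. First, since $\lambda_1>0$, one has $\lambda_1(e^{z^N_{N-1}}-1)=0$ if and only if $z^N_{N-1}=0$, so it is enough to prove $z^N_{N-1}(t)\neq 0$ for every $t\ge 0$. The two ingredients I would rely on are: (a) from Lemma \ref{lemma3.19}, the Lyapunov inequality $\dot V_N\le -c_N\phi_1^2(z^N_{N-1})\le 0$ along closed-loop trajectories, which together with the radial unboundedness of $V_N$ confines the trajectory to the compact sublevel set $\mathcal L_c=\{V_N\le V_N(0)\}\subset \mathcal K_N$; (b) the explicit expression \eqref{equation3.123} of the feedback, which contains the singular factor $1/\phi_1(z^N_{N-1})$ and is therefore only defined on $\{z^N_{N-1}\neq 0\}$, while membership in $\mathcal K_N$ additionally enforces $u>0$.

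The core argument I would then carry out is by contradiction. Set $t^\star:=\inf\{t>0\colon z^N_{N-1}(t)=0\}$ and assume $t^\star<+\infty$. By continuity, $\phi_1(z^N_{N-1}(t))\to 0$ as $t\uparrow t^\star$. Isolating the singular part of the feedback,
\begin{equation*}
\frac{1}{\phi_1(z^N_{N-1})}\Bigl(c_N\phi_1^2(z^N_{N-1})+\theta c_1\phi_2^2(\eta_2)+\theta\sum_{i=1}^{N-2}c_{i+1}\phi_{i+2}^2(z^N_i)\Bigr),
\end{equation*}
two cases arise. If at least one of $\eta_2(t^\star),z^N_1(t^\star),\dots,z^N_{N-2}(t^\star)$ is nonzero, then the numerator tends to a strictly positive limit while the denominator vanishes, so $|u(t)|\to +\infty$ as $t\uparrow t^\star$; this contradicts the uniform bound on $u$ inherited from the compactness of $\mathcal L_c$ and the strict positivity constraint defining $\mathcal K_N$. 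Otherwise, all the listed state variables vanish at $t^\star$, meaning that the closed-loop trajectory of \eqref{a3.139} reaches the equilibrium in finite time, which contradicts uniqueness of ODE flows for the reduced vector field, locally Lipschitz on the open set $\{z^N_{N-1}\neq 0\}$.

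The hard part will be ruling out the degenerate scenario in which numerator and denominator of the singular feedback vanish simultaneously at $t^\star$. To treat it cleanly I foresee two possible routes: either a uniqueness argument showing that the limit of \eqref{equation3.123} extends continuously to a locally Lipschitz vector field at the origin, so that backward uniqueness forbids a non-equilibrium solution from reaching it in finite time; or an a priori lower estimate of the form $|\phi_1(z^N_{N-1}(t))|\ge \kappa_0\, e^{-\kappa t}\,|\phi_1(z^N_{N-1}(0))|$, obtained by bounding $\tfrac{d}{dt}\phi_1(z^N_{N-1})$ from below using the Lipschitz structure of the closed-loop dynamics on the compact set $\mathcal L_c$ and the hypothesis $c_{N-1}=\lambda_1/\lambda_N>0$. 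Either route yields the required contradiction and thereby establishes $\lambda_1(e^{z^N_{N-1}(t)}-1)\neq 0$ for every $t\ge 0$.
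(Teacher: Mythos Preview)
Your approach is genuinely different from the paper's, but it has real gaps that make it incomplete as written.

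The paper does not argue through the singularity of the feedback at all. Instead, it extracts from the backstepping cancellations a \emph{closed scalar ODE} for $z^N_{N-1}$ alone,
\[
\dot z^N_{N-1}(t)=-\frac{\lambda_1}{\lambda_N}\bigl(e^{z^N_{N-1}(t)}-1\bigr),
\]
obtained by isolating the contribution $\phi_1(z^N_{N-1})\dot z^N_{N-1}=-\tfrac{\lambda_1}{\lambda_N}(e^{z^N_{N-1}}-1)^2$ in $\dot V_N$. It then runs an Osgood-type argument: sign invariance of $z^N_{N-1}$ is immediate from the sign of the right-hand side, and separation of variables gives $\int_{z^N_{N-1}(0)}^{z^N_{N-1}(t)}\frac{d\xi}{e^\xi-1}=-\tfrac{\lambda_1}{\lambda_N}t$, whose left side diverges logarithmically as $z^N_{N-1}(t)\to 0$. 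Hence zero is never reached in finite time.

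In your proposal, Case~1 breaks at the step ``$|u(t)|\to\infty$ contradicts the uniform bound on $u$ inherited from compactness of $\mathcal L_c$.'' The feedback $u$ is singular precisely on $\{z^N_{N-1}=0\}$, so compactness of $\mathcal L_c$ does \emph{not} yield a bound on $u$; you cannot use boundedness of $u$ to rule out approaching the singular set. Appealing instead to the positivity constraint $u>0$ defining $\mathcal K_N$ also fails on one side: if $z^N_{N-1}(t)\to 0^+$, then $\phi_1(z^N_{N-1})\to 0^+$ and the dominant singular term tends to $+\infty$, so $u\to+\infty$ is perfectly compatible with $u>0$. Your Case~2 (simultaneous vanishing) you yourself flag as the hard part and only sketch two routes; the second route, the a priori lower estimate on $|\phi_1(z^N_{N-1}(t))|$, is essentially what the paper's scalar ODE delivers in one line. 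The cleanest fix is therefore to abandon the feedback-singularity strategy and derive the decoupled scalar dynamics for $z^N_{N-1}$ directly, after which the logarithmic divergence argument closes the proof without any case analysis.
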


\begin{proof}
\textbf{Step 1. Characterization of zeros.}  
Since $\lambda_1>0$, the condition 
\[
\lambda_1(e^{z^N_{N-1}}-1)=0
\]
is equivalent to $z^N_{N-1}=0$. Thus, it suffices to prove that $z^N_{N-1}(t)\neq 0$ for all $t\ge 0$.

\medskip
\textbf{Step 2. Lyapunov construction and explicit constant.}  
From the recursive backstepping design, the Lyapunov functional at stage $N$ is
\[
V_{N}(\cdot)=\theta \lambda_2(e^{\eta_{2}}-1-\eta_{2})+\theta \sum_{i=1}^{N-2}\lambda_{i+2}(e^{z^N_{i}}-1-z^N_{i})+\lambda_1\big(e^{z^N_{N-1}}-1-z^N_{N-1}\big),
\]
which is positive definite in $z^N_{N-1}$ and minimized at $z^N_{N-1}=0$.  
The cancellations in the design yield
\[
\dot V_{N} \;=\; -\,\frac{\lambda_1}{\lambda_{N}}\,(e^{z^N_{N-1}}-1)^2 
+ \text{(nonpositive terms in the other $z^N_i$)}.
\]
Equivalently,
\[
(e^{z^N_{N-1}}-1)\,\dot z^N_{N-1} \;=\; -\,\frac{\lambda_1}{\lambda_{N}}\,(e^{z^N_{N-1}}-1)^2.
\]
On the domain where $e^{z^N_{N-1}}-1\neq 0$, this gives the exact closed scalar dynamics
\begin{equation}\label{eq:zN-closed}
\dot z^N_{N-1}(t) \;=\; -\,\frac{\lambda_1}{\lambda_{N}}\,(e^{z^N_{N-1}(t)}-1).
\end{equation}

\medskip
\textbf{Step 3. Sign invariance.}  
Consider the comparison ODE
\[
\dot y(t) = -\,\frac{\lambda_1}{\lambda_{N}}\,(e^{y(t)}-1).
\]
If $y(0)>0$, then $\dot y<0$ and $y(t)$ decreases while remaining strictly positive.  
If $y(0)<0$, then $\dot y>0$ and $y(t)$ increases while remaining strictly negative.  
Hence the sign of $z^N_{N-1}(t)$ is preserved: it cannot cross zero.

\medskip
\textbf{Step 4. Non-attainment of zero in finite time.}  
Suppose, for contradiction, that $z^N_{N-1}(t^\ast)=0$ for some finite $t^\ast$.  
Separating variables in \eqref{eq:zN-closed}, we obtain
\[
\int_{z^N_{N-1}(0)}^{z^N_{N-1}(t)} \frac{1}{e^{\xi}-1}\,d\xi
= -\,\frac{\lambda_1}{\lambda_{N}}\,t.
\]
The left-hand side diverges logarithmically as $z^N_{N-1}(t)\to 0$, while the right-hand side remains finite for any finite $t$.  
This contradiction shows that $z^N_{N-1}(t)$ never reaches $0$ in finite time.

\medskip  
Starting from any $z^N_{N-1}(0)\neq 0$, the trajectory satisfies $z^N_{N-1}(t)\neq 0$ for all $t\ge 0$, and tends to $0$ only asymptotically.  
Therefore $\lambda_1(e^{z^N_{N-1}(t)}-1)\neq 0$ for all $t\ge 0$.
\end{proof}
Let us the following result.
\begin{theorem}\label{theorem3.20}
There exists a feedback control law under which the general non-transitive competition system \eqref{ee3.11} is globally asymptotically stable. Furthermore, the feedback control constructed remains uniformly bounded. In particular, the control $u$ defined in \eqref{equation3.123} and the Lyapunov function $V_{N}$ in \eqref{ea3.146} ensures the global asymptotic stability of the $N-$species system \eqref{a3.139}. Moreover, the control satisfies $u(t)>0$ for every $t>0,$ for every $\eta_i(0)$ belonging to the largest
level set of $V_{N}(\eta_2, z^N_1, \cdots, z^N_{N-1})$ within the set 
\begin{align}
\mathcal{K}'_{N}= 
\left\{ \eta\in\mathbb{R}^{N}\ \middle|\ 
\begin{aligned}
&u(t)>0\quad\text{in}\;\eqref{equation3.123}
\end{aligned}
\right\}.
\end{align}
\end{theorem}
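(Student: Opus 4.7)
The plan is to proceed by induction on $N$, using the cases $N=3$ (Theorems \ref{the3.3}--\ref{the3.4}) and $N=4$ (Propositions \ref{proposition3.17}--\ref{proposition3.18}) as the base, and exploiting the general construction already carried out in Lemma \ref{lemma3.19}. The inductive step is structural rather than computational: the backstepping coordinate change in \eqref{a3.139} introduces a new error variable $z^{N+1}_{N-1}$ that plays, at stage $N+1$, exactly the role previously played by $z^{N}_{N-1}$. Hence the Lyapunov candidate $V_{N+1}$ in \eqref{ea3.146} is obtained from $V_{N}$ by appending one entropy term of the form $\lambda_{j}\bigl(e^{z^{N+1}_{i}}-1-z^{N+1}_{i}\bigr)$ (with the newborn term $\lambda_1(e^{z^{N+1}_{N}}-1-z^{N+1}_{N})$ becoming the terminal one), and the control law is updated by appending the corresponding recursive term prescribed by \eqref{equation3.123}. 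Positive definiteness and radial unboundedness of $V_N$ are automatic since each entropy block $\lambda_j(e^{x}-1-x)\ge 0$ vanishes only at $x=0$ and grows super-linearly as $|x|\to\infty$.

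Next, I would verify the sign of $\dot V_N$. In the case $\psi_i\equiv 0$, Lemma \ref{lemma3.19} already yields
\[
\dot V_N = -\theta c_1\phi_2^2(\eta_2)-\theta\sum_{i=2}^{N-1}c_i\phi_{i+1}^2(z^N_{i-1})-c_N\phi_1^2(z^N_{N-1}),
\]
which is strictly negative outside the origin. Together with radial unboundedness of $V_N$, LaSalle's invariance principle yields global asymptotic stability of the origin in the $(\eta_2,z^N_1,\dots,z^N_{N-1})$ coordinates. Reversing the backstepping coordinate change \eqref{a3.139} transfers the result back to the $\eta$-coordinates, and finally the transformation \eqref{e3.7}--\eqref{e3.11} gives global asymptotic stability of the equilibrium $\bar Y$ of the original system \eqref{ee3.11}. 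For the $\psi_i\neq 0$ case, I would mimic the strategy of Theorem \ref{the3.4} and Proposition \ref{proposition3.18}: augment $V_N$ by the functionals $\tfrac{\gamma_i}{\sigma_i}h(G_i(\psi_i))$ for $i=1,\dots,N$, use Young's inequality to absorb the cross terms $A_i(\hat\phi_i-\phi_i(\eta_i))$ appearing in \eqref{ea3.148}, and invoke \textbf{Assumption H6} together with \eqref{a3.63} to dominate the contribution of the internal dynamics $\psi_i$ by a strictly negative quadratic form in the $\phi_i$'s.

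Well-posedness of the control \eqref{equation3.123} must also be addressed: the denominator $\phi_1(z^N_{N-1})$ vanishes precisely when $z^N_{N-1}=0$, and Lemma \ref{lemB.1} (which I would apply verbatim at stage $N$, since the isolated scalar dynamics \eqref{eq:zN-closed} it relies on is exactly the one produced by the backstepping design) guarantees that $z^N_{N-1}(t)\neq 0$ for all $t\ge 0$ whenever $z^N_{N-1}(0)\neq 0$. The uniform boundedness of $u$ follows from the bounds $|\eta_i(t)|\le o_i$ obtained by the argument of Lemma \ref{lem3.11} applied to the closed-loop $N$-dimensional system (the same Young/Cauchy--Schwarz splitting goes through, with the constant $\kappa$ replaced by $\min(\theta c_1,\dots,\theta c_{N-1},c_N)$), combined with the continuous dependence of each $\phi_k$ on its argument and the a-priori bound $|z^N_{N-1}(t)|\ge |z^N_{N-1}(0)|e^{-(\lambda_1/\lambda_N)t}$ coming from \eqref{eq:zN-closed}, which prevents the denominator from collapsing. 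Finally, the positivity assertion $u(t)>0$ is enforced by restricting initial data to the largest sublevel set of $V_N$ contained in $\mathcal K'_N$: since $V_N$ is radially unbounded and $\mathcal K'_N$ is open and non-empty (by the same $\varepsilon$-perturbation argument used in Remark~3.15 for $\mathcal K$), such a sublevel set exists, is positively invariant (because $\dot V_N\le 0$ along trajectories), and forces $u(t)>0$ for all $t>0$.

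The main obstacle is neither the induction nor the Lyapunov estimate in isolation, but the bookkeeping of the cross terms $A_i(\hat\phi_i-\phi_i)$ in \eqref{ea3.148}: the sign patterns $(-1)^{i+\varepsilon_k}$ and the indicator $\delta_{k3}$ entering the general control \eqref{equation3.123} arise from telescoping cancellations that must be tracked carefully so that $\dot V_N$ reduces to a negative sum of squares. I expect this to be the only delicate point; once the combinatorial identity behind \eqref{equation3.123} is verified (by a direct induction on $k$, matching each newly added coordinate $z^{N+1}_i$ against the corresponding $\eta_{i+2}$), the remaining stability, boundedness and positivity claims follow by the arguments above.
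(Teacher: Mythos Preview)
Your core argument coincides with the paper's: both simply invoke Lemma~\ref{lemma3.19}, which already delivers the identity $\dot V_N=-\theta c_1\phi_2^2(\eta_2)-\theta\sum_{i=2}^{N-1}c_i\phi_{i+1}^2(z^N_{i-1})-c_N\phi_1^2(z^N_{N-1})$ under the control \eqref{equation3.123}, and global asymptotic stability follows at once; the paper's proof is literally a two-line citation of that lemma, so your inductive framing is harmless but unnecessary.

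Two remarks. First, your treatment of the case $\psi_i\neq0$ is extraneous here: Theorem~\ref{theorem3.20} concerns only $V_N$ and the reduced system with $\psi_i\equiv0$; the full statement with $V_G$ and $\psi_i\neq0$ is Theorem~\ref{theorem3.23}. Second, your claimed lower bound $|z^N_{N-1}(t)|\ge|z^N_{N-1}(0)|e^{-(\lambda_1/\lambda_N)t}$ is false when $z^N_{N-1}(0)>0$: since $e^z-1>z$ for $z>0$, the scalar ODE \eqref{eq:zN-closed} forces super-exponential decay and the inequality reverses. This does not affect the non-vanishing conclusion of Lemma~\ref{lemB.1}, but it does mean your stated route to uniform boundedness of $u$ (preventing the denominator from ``collapsing'') does not work as written; the paper instead obtains boundedness of $u$ directly from the uniform bounds $|\eta_i|\le o_i$ of Lemma~\ref{lem3.19}.
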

\begin{proof}[of Theorem \ref{theorem3.20}]
From Lemma \ref{lem3.19}, we have 
\begin{align}
\begin{aligned}
V_N(\eta_2,z^N_1,\cdots,z^N_{N-1})&= \theta \lambda_2(e^{\eta_{2}}-1-\eta_{2})+\theta \sum_{i=1}^{N-2}\lambda_{i+2}(e^{z^N_{i}}-1-z^N_{i})+\lambda_1(e^{z^N_{N-1}}-1-z^N_{N-1}).
\end{aligned}
\end{align}
The time derivative $\dot V_{N}$ of the Lyapunov function $V_{N}$ satisfies
\begin{align}
\begin{aligned}
\dot V_N(\eta_2, z^N_1,\dots,z^N_{N-1})_{\mid \psi_i\equiv 0}
={}&
-\theta c_1\,\phi_2^2(\eta_2)
-\theta\sum_{i=2}^{N-1}\,c_{\,i}\,\phi_{i+1}^2\bigl(z^N_{i-1}\bigr)
- c_{N}\,\phi_1^2\bigl(z^N_{N-1}\bigr) 
\end{aligned},
\end{align}
with a control of the form \eqref{equation3.123}.
\end{proof}
\begin{lemma}\label{lem3.19}
Under the model \eqref{aq3.145} assumptions, there exists $o_i>0$ such that for all $t\in \mathbb{R}_+$ 
\begin{align}
\vert\eta_i(t)\vert\leq o_i\Longrightarrow\vert A_i\vert\leq C_i,\quad i\in\lbrace 1,...,N\rbrace.
\end{align}
\end{lemma}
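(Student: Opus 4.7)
The plan is to extend the reasoning of Lemma \ref{lem3.11} (the three-species case) to arbitrary $N\ge 3$ via a Lyapunov boundedness argument on $V_{N}$, and then exploit the explicit algebraic form of the coefficients $A_i$ obtained in the construction of Appendix~\ref{annexe:B}. The strategy has three stages: (i) write the closed-loop system in perturbation form, (ii) show that $V_{N}$ stays bounded despite the $\psi_i$-driven perturbations, and (iii) deduce pointwise bounds on the $\eta_i$ and hence on the $A_i$.

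First, I would rewrite each equation of the $\psi_i\neq 0$ system in \eqref{a3.139} analogously to \eqref{ae3.67}, splitting $\hat\phi_i = \phi_i(\eta_i) + (\phi_i(\eta_i)+\lambda_i)(e^{v_i(\psi_i)}-1)$, so that the residual forcing is $r_i(t)=\int_0^A g_{i-1}(a)x_i^*(a)\psi_i(t-a)\,da$. Since $g_i x_j^*\in L^1(0,A)$ by Cauchy--Schwarz (both factors lying in $L^2$) and $\psi_i$ is globally exponentially stable in $L^\infty$ by \eqref{aa3.65}, there exist constants $D_i>0$ and $\sigma:=\min_i\sigma_i>0$ with $|r_i(t)|\le D_i e^{-\sigma t}$.

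Next, I would combine the dissipation identity \eqref{ea3.148} with Young's inequality applied to each cross term $A_i(\hat\phi_i-\phi_i(\eta_i))$. Using the elementary bound $e^{2\eta_i}\le 2(1+\phi_i(\eta_i)^2/\lambda_i^2)$, each perturbation contribution splits as $\tfrac{\varepsilon}{2}\phi_{\cdot}^2+\tfrac{1}{2\varepsilon}e^{2\eta_j}r_j^2$. Choosing $\varepsilon<\kappa/(2N)$ with $\kappa:=\min(\theta c_1,\dots,\theta c_{N-1},c_N)>0$ allows absorption of all $N$ cross terms into the strictly negative quadratic part, giving
\begin{align*}
\dot V_N(t)\ \le\ -\tfrac{\kappa}{2}\Bigl(\phi_2^2(\eta_2)+\sum_{i=2}^{N-1}\phi_{i+1}^2(z^N_{i-1})+\phi_1^2(z^N_{N-1})\Bigr)+C\,\|r(t)\|^2\ \le\ C'e^{-2\sigma t}.
\end{align*}
Integration on $[0,t]$ yields $V_N(t)\le V_N(0)+C'/(2\sigma)$, so $V_N$ is uniformly bounded. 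Since $V_N$ is radially unbounded in $(\eta_2,z^N_1,\dots,z^N_{N-1})$, the trajectory remains inside a compact sublevel set, which bounds each $z^N_k$ and $\eta_2$; the triangular change of variables in \eqref{a3.139}, namely $z^N_k=\eta_{k+2}-\alpha_k(z^N_{k-1})$ with identity-like $\alpha_k$, propagates this into uniform bounds $|\eta_i(t)|\le o_i$.

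Finally, I would invoke the explicit form of $A_i$ from Appendix~\ref{annexe:B}: by construction, $A_i$ is a finite polynomial combination of the terms $\phi_j(\eta_j)$, $\phi_k(z^N_\ell)$, and the constants $\theta,c_j,\lambda_j$. Since each $\phi_j(x)=\lambda_j(e^{x}-1)$ is continuous and the arguments stay in a bounded set by the previous step, continuity yields $|A_i|\le C_i$ for some $C_i>0$ independent of $t$. The principal technical obstacle is the absorption step: the number of cross terms grows linearly with $N$, so one must verify that a single $\varepsilon$ can neutralize all of them simultaneously, which is precisely what motivates the uniform choice $\varepsilon<\kappa/(2N)$ and explains why the gains $c_i$ introduced in the backstepping design must be kept strictly positive rather than optimized case-by-case.
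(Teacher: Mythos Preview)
Your proposal is correct and follows precisely the route the paper takes for the three-species analogue (Lemma~\ref{lem3.11}): the same decomposition $\hat\phi_i=\phi_i(\eta_i)+(\phi_i(\eta_i)+\lambda_i)(e^{v_i}-1)$, the same exponential decay $|r_i(t)|\le D_i e^{-\sigma t}$ via Cauchy--Schwarz and \eqref{aa3.65}, the same Young-inequality absorption with $\kappa=\min(\theta c_1,\dots,\theta c_{N-1},c_N)$, and the same conclusion from radial unboundedness of $V_N$. The paper itself states Lemma~\ref{lem3.19} without a separate proof, so your extension of the $N=3$ argument---with the care you note about scaling $\varepsilon<\kappa/(2N)$ to handle the linearly growing number of cross terms---is exactly what is implicitly intended.
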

\begin{remark}
The control $u$ defined in \eqref{equation3.123} is uniformly bounded on $\mathbb{R}_+$. The claim follows directly from Lemma \ref{lem3.19}.
\end{remark}
Under {\bf Assumption H6}, by applying the same control of the form \eqref{equation3.123}, the $N-$ species system \eqref{a3.139} is globally asymptotically stabilizable for $\psi_i \neq 0$, with a Lyapunov function of the form 
\begin{align}\label{a3.160}
V_G(\eta,\psi)
= V_{{N}}(\eta_2,z^N_1,\cdots,z^N_{N-1})\;+\;\frac{\gamma_1}{\sigma_1}h(G_1(\psi_1))\;+\;\frac{\gamma_2}{\sigma_2}h(G_2(\psi_2))\;+\;\cdots+\;\frac{\gamma_{N}}{\sigma_{N}}h(G_{N}(\psi_{N})).
\end{align}

Thus, we obtain the general stabilization theorem for non-transitive  competition models, with the single control applied to one species synthesized by the backstepping method.
\begin{theorem}\label{theorem3.23}
Under {\bf Assumption H6}, there exists a feedback control law under which the general non-transitive competition system \eqref{ee3.11} is globally asymptotically stable. Furthermore, the feedback control constructed remains uniformly bounded.  In particular, the same control $u$  in \eqref{equation3.123} and the Lyapunov function $V_{G}$ in \eqref{a3.160} ensures the global asymptotic stability of the $N-$species system \eqref{a3.139}. Moreover, the control satisfies $u(t)>0$ for every $t>0,$ for every $\eta_i(0)$ belonging to the largest
level set of $V_G(\eta,\psi)$ within the set
\begin{equation}
\mathcal{K}_{N}=
\left\{ (\eta,\psi)\in \mathbb{R}^{N}\times\mathcal{S}^{N}\ \middle|\ 
\begin{alignedat}{2}
&\eta_1 \le \ln\left(\frac{\gamma_1}{C_1\lambda_1}\right), \qquad && \gamma_1 > C_1\lambda_1,\\
&\eta_2 \le \ln\left(\frac{\gamma_2}{C_2\lambda_2}\right), \qquad && \gamma_2 > C_2\lambda_2,\\
&\vdots && \vdots\\
&\eta_{N} \le \ln\left(\frac{\gamma_{N}}{C_{N}\lambda_{N}}\right), \qquad && \gamma_{N} > C_{N}\lambda_{N},\\[4pt]
&&& u(t)>0\quad\text{in }\eqref{equation3.123}
\end{alignedat}
\right\}.
\end{equation}
\end{theorem}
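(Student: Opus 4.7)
The plan is to build on Theorem \ref{theorem3.20} (which treats the case $\psi_i\equiv 0$) and lift it to $\psi_i\neq 0$ by the same strategy used in the three-species argument of Theorem \ref{the3.4}. The starting point is that the control law \eqref{equation3.123} was engineered, in the nominal case, so that all cross terms in $\dot V_N$ cancel exactly, leaving only the negative-definite dissipation displayed in \eqref{ea3.147}. When the internal dynamics $\psi_i$ are nonzero, the very same computation, now performed with $\hat\phi_i=\phi_i(\eta_i+v_i(\psi_i))$ in place of $\phi_i(\eta_i)$, produces an additional residual $\sum_{i=1}^{N} A_i(\hat\phi_i-\phi_i(\eta_i))$, exactly as in \eqref{ea3.148}. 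First I would verify this by reproducing the cancellation argument of Lemma \ref{lemma3.19} while tracking only the perturbation contributions; the structural identities behind the extended version of \eqref{ea3.93} do not change. Well-posedness of the feedback itself is guaranteed by Lemma \ref{lemB.1}, which ensures $\phi_1(z^N_{N-1}(t))\neq 0$ along closed-loop trajectories, so the division in \eqref{equation3.123} is not singular.

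Next, I would use the identity $\hat\phi_i-\phi_i(\eta_i)=(\phi_i(\eta_i)+\lambda_i)(e^{v_i(\psi_i)}-1)$ and invoke Lemma \ref{lem3.19} to bound $|A_i|\le C_i$ uniformly along trajectories. Differentiating the augmented Lyapunov function $V_G$ defined in \eqref{a3.160} and applying the Dini-derivative estimate \eqref{a3.63} to each $G_i(\psi_i)$ produces damping terms $-\gamma_i(e^{G_i}-1)$ for the internal dynamics. Because $G_i$ is constructed so that $|e^{v_i(\psi_i)}-1|\le e^{G_i(\psi_i)}-1$, the residual and the damping combine into coefficients of the form $(C_i|\phi_i(\eta_i)+\lambda_i|-\gamma_i)(e^{G_i}-1)$. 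The constraints $\eta_i\le\ln(\gamma_i/(C_i\lambda_i))$ with $\gamma_i>C_i\lambda_i$ that define $\mathcal{K}_N$ are precisely what is needed to render each such coefficient nonpositive. Global asymptotic stability then follows from the standard Lyapunov argument on the closed-loop semiflow in $\mathbb{R}^N\times\mathcal{S}^N$; boundedness of $u$ is inherited from Lemma \ref{lem3.19}, while positivity of $u$ on the prescribed level sets is the direct analog of the corresponding statements of Theorems \ref{the3.3} and \ref{the3.4}, namely confinement of trajectories to the largest level set of $V_G$ contained in $\mathcal{K}_N$.

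The hard part is combinatorial rather than analytic: one must check that the telescoping signs $(-1)^{i+\varepsilon_k}$ and the overlapping index ranges in \eqref{equation3.123} really do produce \eqref{ea3.148} with no hidden residual cross term, for every $N\ge 3$. In practice this reduces to an induction on $N$ exploiting the backstepping definitions $z^N_k=\eta_{k+2}-\alpha_k(z^N_{k-1})$, the specific gain choices $c_k=\lambda_{k+2}/\lambda_{k+1}$, and the chain of identities generalizing \eqref{ea3.93} that express each $\phi_k(\eta_k)$ as a finite sum of nested $\phi_{j+1}(z^N_j)$-terms with explicit coefficients. Once this algebraic verification (already performed in Appendix \ref{annexe:A} for $N=4$) is carried out in full generality, the remaining analytic work, Young-type absorption of the $\psi$-perturbations, propagation of invariance of $\mathcal{K}_N$, and strict decrease of $V_G$, proceeds verbatim along the three-species template and introduces no new ingredient.
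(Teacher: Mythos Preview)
Your proposal is correct and follows essentially the same route as the paper: invoke the Lyapunov-derivative identity \eqref{ea3.148} from Lemma~\ref{lemma3.19}, augment $V_N$ with the $\tfrac{\gamma_i}{\sigma_i}h(G_i(\psi_i))$ terms to form $V_G$, use the Dini-derivative bound \eqref{a3.63} together with $|A_i|\le C_i$ from Lemma~\ref{lem3.19} and the identity $\hat\phi_i-\phi_i(\eta_i)=(\phi_i(\eta_i)+\lambda_i)(e^{v_i}-1)$, and then impose the constraints defining $\mathcal{K}_N$ to render each residual coefficient nonpositive. The paper's proof is terser than yours and simply cites the relevant lemmas without spelling out the combinatorial cancellation or the $|e^{v_i}-1|\le e^{G_i}-1$ absorption, but the logical skeleton is identical.
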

\begin{proof}[of the Theorem \ref{theorem3.23}]
From Lemma \ref{lemma3.19}, we have 
\begin{align}\label{}
\begin{aligned}
\dot V_{N}(\eta_2, z^N_1,\dots,z^N_{N-1})_{\mid \psi_i\neq 0}
={}&
-\theta c_1\,\phi_2^2(\eta_2)
-\theta\sum_{i=2}^{N-1}\,c_{\,i}\,\phi_{i+1}^2\bigl(z^N_{i-1}\bigr)
- c_{N}\,\phi_1^2\bigl(z^N_{N-1}\bigr) + \sum_{i=1}^{N} A_{i}\,\left(\hat{\phi}_{i}-\phi_{i}(\eta_{i})\right);
\end{aligned}
\end{align}
applying \eqref{a3.160} subsequently gives 
\begin{align}
V_G(\eta,\psi)
\leq &{}
-\theta c_1\,\phi_2^2(\eta_2)
-\theta\sum_{i=2}^{N-1}\,c_{\,i}\,\phi_{i+1}^2\bigl(z^N_{i-1}\bigr)
- c_{N}\,\phi_1^2\bigl(z^N_{N-1}\bigr) + \sum_{i=1}^{N} \left(A_{i}\,\left(\phi_{i}(\eta_{i})+\lambda_i\right)-\gamma_i\right)\left(e^{G_i}-1\right).
\end{align}

From Lemma \ref{lem3.19}, we get 
\begin{align}
V_G(\eta,\psi)
\leq &\ -\theta c_1\,\phi_2^2(\eta_2)
-\theta\sum_{i=2}^{N-1}\,c_{\,i}\,\phi_{i+1}^2\bigl(z^N_{i-1}\bigr)
- c_{N}\,\phi_1^2\bigl(z^N_{N-1}\bigr)  -\theta\,c_{N}\,\phi_{N+1}^2\bigl(z^N_{N-1}\bigr)
   -c_{N+1}\phi_1^2\bigl(z^N_{N}\bigr)\\
&\  + \sum_{i=1}^{N} \left(C{i}\,\vert\phi_{i}(\eta_{i})+\lambda_i\vert-\gamma_i\right)\left(e^{G_i}-1\right).
\end{align}
establishing the result.
\end{proof}

\begin{proposition}\label{pro3.13}
There exists $c>0$ such that a connected component of the sublevel set
\[
\mathcal L_c := \{(\eta,\psi)\mid V_G(\eta,\psi)\le c\}
\]

is contained in $\mathcal{K}_{N}$.
\end{proposition}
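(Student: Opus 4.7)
The plan is to exploit the continuity and positive-definiteness of $V_G$, together with the fact that the origin $(\eta,\psi)=(0,0)$ lies in the interior of $\mathcal{K}_N$, to extract a small sublevel set that fits inside. The proof is essentially a soft continuity argument, with the only subtle point being the well-posedness of the constraint $u>0$ at the origin.

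First, I would verify that $V_G$ is continuous on $\mathbb{R}^N\times\mathcal{S}^N$, nonnegative, and vanishes only at the origin. Each convex building block $x\mapsto\lambda_i(e^{x}-1-x)$ is continuous, strictly convex, equal to zero exactly at $x=0$, and tends to $+\infty$ as $|x|\to\infty$; the functionals $h\circ G_i$ share the same properties on the admissible set $\mathcal{S}_i$, by the construction of $G_i$ in \eqref{a3.64}. Since the change of variables $(\eta_2,z^N_1,\dots,z^N_{N-1})\leftrightarrow(\eta_1,\dots,\eta_N)$ in \eqref{a3.139} is a continuous bijection fixing the origin, $V_G$ is a proper, positive-definite function whose sublevel sets are compact and contract to $\{0\}$ as $c\to 0^+$.

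Second, I would check that the origin lies in the interior of $\mathcal{K}_N$. The inequalities $\eta_i\le \ln\bigl(\gamma_i/(C_i\lambda_i)\bigr)$ are strictly satisfied at $\eta_i=0$, since $\gamma_i>C_i\lambda_i$ gives $\ln\bigl(\gamma_i/(C_i\lambda_i)\bigr)>0$. For the control positivity condition $u>0$, formula \eqref{equation3.123} must be analysed as $(\eta,\psi)\to(0,0)$: every $\phi_k$ vanishes at the origin, so the leading term reduces to the constant $u^*$, which is strictly positive by \eqref{e3.5}, while the rational pieces $\theta c_1\phi_2^2(\eta_2)/\phi_1(z^N_{N-1})$, etc., extend continuously to $0$ because the numerators vanish to higher order than the denominator (a L'Hôpital-type check using $\phi_k(x)\sim\lambda_k x$ as $x\to 0$). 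By continuity, there exists an open neighborhood $\mathcal{U}$ of the origin on which all inequalities defining $\mathcal{K}_N$ hold strictly, so $\mathcal{U}\subset\mathcal{K}_N$.

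Third, having fixed such an $\mathcal{U}$, set
\begin{equation*}
c := \tfrac12\,\inf\bigl\{V_G(\eta,\psi):(\eta,\psi)\in\partial\mathcal{U}\bigr\}.
\end{equation*}
This infimum is strictly positive: $\partial\mathcal{U}$ is closed and does not contain the origin, so $V_G>0$ on $\partial\mathcal{U}$, and on the compact piece of $\partial\mathcal{U}$ obtained by intersecting with any large sublevel set one applies the minimum principle; radial unboundedness of $V_G$ takes care of the unbounded part. The connected component of $\mathcal{L}_c$ through the origin cannot cross $\partial\mathcal{U}$ (else $V_G$ would reach a value $\ge 2c$ on it, contradicting $V_G\le c$), so this component lies in $\mathcal{U}\subset\mathcal{K}_N$, which is the claim.

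The main obstacle will be the removable singularity of $u$ at the origin: expression \eqref{equation3.123} is not a priori defined where $\phi_1(z^N_{N-1})=0$, so one must either carry out the limit analysis above to show that the apparent singularities cancel and $u$ extends continuously with $u(0)=u^*$, or work along trajectories and invoke Lemma \ref{lemB.1}, which guarantees $z^N_{N-1}(t)\neq 0$ for every $t\ge 0$ and hence that $u$ is well-defined and depends continuously on the initial state in a neighborhood of the origin.
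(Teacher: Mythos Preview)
Your overall strategy---show that a chosen base point lies in the interior of $\mathcal K_N$, then shrink the sublevel set until the component through that point is trapped---is the right shape, and in fact carries the argument further than the paper's own proof, which only verifies $\mathcal L_c\cap\mathcal K_N\neq\varnothing$. However, Step~2 contains a genuine gap: the control $u$ in \eqref{equation3.123} does \emph{not} extend continuously to the origin. The rational pieces such as $\theta c_1\phi_2^2(\eta_2)/\phi_1(z^N_{N-1})$ involve \emph{independent} coordinates in numerator and denominator; approaching the origin along, say, $\eta_2=t$, $z^N_{N-1}=t^3$, all other $z^N_i=0$, yields a ratio $\sim\lambda_2^2 t^2/(\lambda_1 t^3)\to\infty$. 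No L'H\^opital-type cancellation is available in this multivariable setting, so the origin is not in the interior of $\mathcal K_N$---indeed $u$ is not even defined on the hypersurface $\{z^N_{N-1}=0\}$. Your fallback on Lemma~\ref{lemB.1} does not repair this either, since that lemma concerns trajectories with $z^N_{N-1}(0)\neq0$, not the static geometry of level sets near the singular set.

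The paper avoids the singularity by taking the base point $\eta^0=(\varepsilon,0,\dots,0)$, $\psi^0=0$, with $\varepsilon\neq0$ small. In the backstepping coordinates this is $(\eta_2,z^N_1,\dots,z^N_{N-1})=(0,\dots,0,\varepsilon)$, so $\phi_1(z^N_{N-1})=\phi_1(\varepsilon)\neq0$ while every other $\phi$-factor vanishes \emph{exactly}; the control collapses to $u=u^*+c_{No}\phi_1(\varepsilon)>0$ for $\varepsilon$ small. Your Step~3 can then be run verbatim with $(\eta^0,0)$ in place of the origin: continuity of $u$ (now unambiguous, since $z^N_{N-1}$ is bounded away from zero in a neighborhood) gives an open set $\mathcal U\subset\mathcal K_N$ around $(\eta^0,0)$, and one chooses $c<\inf_{\partial\mathcal U}V_G$ so that the component of $\mathcal L_c$ through $(\eta^0,0)$ stays in $\mathcal U$.
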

\begin{proof}[of Proposition \ref{pro3.13}]
We select an initial vector

$$
(\eta^0,\psi^0), \qquad \text{with } \eta^0=(\varepsilon,0,0\cdots,0), \;\; \psi^0=(0,0,0,\cdots,0),
$$

where $\varepsilon\neq0$ is a sufficiently small real number, chosen such that $
0<\varepsilon<\min\Big\{\ln\tfrac{\gamma_1}{C_1\lambda_1},\;\ln\tfrac{\gamma_2}{C_2\lambda_2},\;\ln\tfrac{\gamma_3}{C_3\lambda_3},\cdots,\ln\frac{\gamma_{N}}{C_{N}\lambda_{N}}\Big\}.$ Since $\phi_1$ is continuous with $\phi_1(0)=0$, one can further choose $\varepsilon>0$ small enough to guarantee $|c_{No}\phi_1(\varepsilon)|<\tfrac{u^*}{2}.$ Therefore,
$$
u^*+c_{No}\phi_1(\varepsilon)>\tfrac{u^*}{2}>0,
$$
which shows that $(\eta^0,\psi^0)\in \mathcal K_{N}$. In particular, this implies $\mathcal K_{N}\neq\varnothing$. Next, by continuity we have

$$
V_G(\eta^0,\psi^0)=V_{N}(0,\cdots,0,\varepsilon)+\sum_{i=1}^{N} \frac{\gamma_i}{\sigma_i}\, h(G_i(\psi_i^0)).
$$

Since $V_{N}(0,\cdots,0,\varepsilon)\to 0$ as $\varepsilon\to 0$, and $h(0)=0$ whenever $G_i(\psi_i^0)=0$, it follows that $V_G(\eta^0,\psi^0)$ can be made arbitrarily small.

Hence, for any given $c>0$, there exists $\varepsilon>0$ sufficiently small such that $V_G(\eta^0,\psi^0)\leq c.$ Consequently, $(\eta^0,\psi^0)\in \mathcal L_c\cap \mathcal K_{N},$ which proves that $\mathcal L_c\cap \mathcal K_{N} \neq \varnothing.$
\end{proof}

\begin{remark}
In the study of the stability of the non-transitive competition model with three species, four species, and its generalization, the state $\eta_2$ is used as a reference in the backstepping stabilization approach, depending on the localization of the control. For instance, in the case of three species, the control $u$ is applied to species $x_1$, while the newborns $x_2^*(0)$ in \eqref{e3.6} are affected by the control $u^*$.  For example, when the control $u$ is applied directly to $x_2$, the signal $x_3$ is chosen as the reference state (or tracking reference) and is used to define the fictitious/intermediate control in the backstepping procedure.
\end{remark}

After the general analysis presented above, we turn our attention to a particular case: mosquito dynamics. While the traditional literature predominantly favors unstructured models, the explicit inclusion of the age variable in mosquito models remains relatively understudied, particularly with respect to control strategies applied to aquatic and adult stages.

\subsection{Control of malaria‑vector mosquitoes}
Many insect species, particularly mosquitoes, serve as vectors for numerous life-threatening diseases, including malaria, Zika virus, dengue fever, chikungunya, schistosomiasis, human African trypanosomiasis (sleeping sickness), yellow fever, and onchocerciasis. Globally, several thousand mosquito species have been identified, with a subset implicated in disease transmission. Notably, Aedes aegypti is a primary vector of chikungunya, Aedes albopictus of dengue and yellow fever, and Anopheles gambiae of malaria.

According to the World Health Organization's 2022 report, approximately 247 million cases of malaria were recorded worldwide, with 96\% of malaria, related deaths occurring in Africa, predominantly among children under the age of five. Vector-borne diseases exert a substantial burden on both human and animal health, and significantly affect socioeconomic development. As such, the implementation of effective vector control and disease management strategies remains a global health priority.

\subsubsection{\bf Control strategy}\label{s31}
Mosquitoes require access to water, typically stagnant or slow‐flowing habitats, to complete their holometabolous life cycle. Following oviposition at the water’s edge, eggs hatch into larvae, which undergo four instars before pupating and emerging as winged adults. Consequently, the vector population is naturally partitioned into an aquatic stage (eggs, larvae, pupae) and an aerial adult stage (males and females). Within the aquatic compartment, mortality comprises a density‐independent component (e.g., predation, adverse climatic conditions) and a density‐dependent component, reflecting competition among larvae for limited breeding sites. Upon emergence, females require mating and a blood meal, typically within 3–4 days, before initiating gonotrophic cycles of approximately 4–5 days, each yielding 100–150 eggs deposited in 10–15 distinct microhabitats. In \cite{ref41}, a four‐compartment model was proposed, tracking the dynamics of the aquatic population, nulliparous females, gravid egg‐laying females, and males, and incorporating both forms of larval mortality and the transition delays associated with mating and first blood meal.

Traditional vector‐control strategies combine indoor adulticiding, egg‐destruction measures, and larval habitat management; chemical insecticides have dominated these efforts for decades but face challenges from resistance and ecological impact. To address these limitations, complementary genetic approaches have been developed. The Sterile Insect Technique (SIT), pioneered by E. Knipling and collaborators and famously used to eradicate screwworms in 1950s Florida, involves mass‐releases of radiation‐sterilized males to suppress wild populations. Building on this concept, the Target Malaria project in West Africa has trialed releases of genetically modified sterile Anopheles males. The Incompatible Insect Technique (IIT) exploits Wolbachia, a maternally transmitted endosymbiont, to induce cytoplasmic incompatibility: released infected males render eggs inviable when mating with uninfected females, and releases that include infected females can replace wild populations with Wolbachia‐carrying lines that also exhibit reduced competence for dengue, Zika, and chikungunya viruses. Such environmentally benign, species‐specific methods aim to drive mosquito densities below the critical threshold for disease transmission, a concept first articulated by Ronald Ross, thereby achieving sustainable malaria control.
  
\subsubsection{\bf Modeling}\label{s32}
We develop an age‑structured dynamic model that simultaneously tracks wild and genetically modified mosquitoes with the goal of reducing disease transmission risk. Building on the framework of prior age‑structured studies, particularly the analysis of blood‑feeding plasticity in natural environments presented in \cite{ref44}, our formulation distinguishes four population compartments:
\begin{itemize}
\item $I$, the aquatic (immature) stage;
\item $F_j$, newly emerged (nulliparous) females;
\item $F_a$, fertilized (egg‑laying) adult females;
\item $M$, adult males.
\end{itemize}

We then introduce two vector‑control strategies. The first deploys a predator targeting the aquatic larvae, while the second releases genetically modified male mosquitoes to suppress wild populations. Through numerical simulations, we will assess how predation at the larval stage alters overall mosquito dynamics and, separately, how releases of modified males impact adult population structure. Ultimately, this study aims to elucidate the interactions between wild and engineered mosquitoes and to quantify the consequent reduction in vector‑borne disease transmission.

Although our compartmental models draw on the mosquito dynamics frameworks of \cite{ref38, ref42, ref44, ref43, ref41}, we extend these formulations by incorporating explicit age structure and a logistic term that captures environmental carrying capacity.

\section*{\bf Introduction of a predator}\label{s321}
Biological control exploits the deliberate introduction of natural enemies to suppress pest populations, especially when such pests expand unchecked in the absence of their usual predators (the ecological‐release paradigm). One classic example is the use of the mosquitofish  Gambusia affinis, introduced into Algeria in 1928 (and earlier in Europe, circa 1921) to prey upon anopheline larvae and curb malaria transmission. Native to Central America and Florida,  G. affinis thrives in diverse freshwater habitats and remains one of the most effective biological control agents against mosquitoes, readily integrating with existing vector‐management strategies.

In our age‐structured logistic model, we therefore include a predator compartment $P(t)$ that feeds exclusively on the aquatic mosquito cohort. Specifically, in system \eqref{eq3.2}, the term $
\mathcal{P}(t)
=\frac{\displaystyle\int_{0}^{A}M(a,t)\,da}{1+\displaystyle\int_{0}^{A}M(a,t)\,da+\displaystyle\int_{0}^{A}M_s(a,t)\,da}$ captures an Allee‐type effect, representing the probability of male–female encounters in the adult population. The aquatic stage spans ages $a\in(0,\tau)$. Upon emergence, adults are allocated to females and males according to a fixed sex ratio $r\in(0,1)$.  To account for the emergence of the aquatic population over the interval $(0,\tau)$, we introduce the function $w(a),$ which represents the age‑dependent emergence rate.

 System $\eqref{eq3.1}$ describes the dynamics of an Anopheles mosquito population by distinguishing three age‑structured cohorts $a$: the aquatic population $I(a,t)$, adult females $F(a,t)$, and adult males $M(a,t)$.

 In the aquatic phase, each cohort experiences natural mortality $\mu\bigl(a,p(t)\bigr)\,I(a,t),$
 where $p(t)=\int_{0}^{\tau}I(a,t)\,\mathrm{d}a$ represents the concentration of the aquatic population subject to predation or other stressors. The logistic growth term $ \Gamma(t)\,I(a,t)\Bigl(1-\tfrac{\gamma(t)}{K(t)}\!\int_{0}^{A}I(a,t)\,\mathrm{d}a\Bigr)$
 limits aquatic population development as the total cohort $p(t)$ approaches the environmental carrying capacity $K(t)$.

 To capture the impact of aquatic control campaigns, we include an exogenous mortality term $
 -\,I(a,t)\,P(t),$
 where $P(t)$ aggregates human interventions (drainage, introduction of larvivorous fish, environmental management, etc.) applied uniformly across the aquatic cohort.

 The boundary conditions then link the aquatic and adult stages. Newly hatched individuals ($a=0$) derive from eggs laid by adult females: $
I(0,t)=\int_{0}^{A}\beta\bigl(a,m(t)\bigr)\,F(a,t)\,\mathrm{d}a,$  where $\beta(a,m)$ is the fecundity rate, potentially modulated by male availability $m(t)=\displaystyle\int_{0}^A\lambda(a)M(a,t)da$.
 Upon maturation, the aquatic cohort gives rise to adult females or males according to $
 F(0,t)=r\displaystyle\int_{0}^{A}w(a)\,I(a,t)\,\mathrm{d}a,
 \quad
 M(0,t)=(1-r)\displaystyle\int_{0}^{A}w(a)\,I(a,t)\,\mathrm{d}a$ with $A=\max\lbrace\tau,A^*\rbrace.$

 Finally, the adult equations incorporate intra‑sex competition for food and shelter via terms of the form $
 -\,\gamma(t)\,F(a,t)\int_{0}^{A}F(a,t)\,\mathrm{d}a
 \quad\text{and}\quad
 -\,\gamma(t)\,M(a,t)\int_{0}^{A}M(a,t)\,\mathrm{d}a,$ which respectively constrain female and male densities.

 System \eqref{eq3.1} with control $P(t)$ can thus be studied for global asymptotic stability, while a distributed control acting across all ages of the aquatic cohort, akin to the framework in \cite{ref61}, raises natural questions of controllability under predation pressure. 

\begin{equation}\label{eq3.1}
\left\lbrace 
\begin{array}{ll}
\partial_tI(a,t)+\partial_aI(a,t)+\mu(a,p(t))I(a,t)=\Gamma(t) I(a,t)\left(1-\dfrac{\gamma(t)}{K(t)}\displaystyle\int_{0}^AI(a,t)da\right)-I(a,t)P(t) &\text{ in }Q,  
\\\partial_tF(a,t)+\partial_aF(a,t)+\mu_{F}(a)F(a,t)=-\gamma(t)F(a,t)\displaystyle\int_{0}^A  F(a,t)da &\text{ in }Q,
\\\partial_tM(a,t)+\partial_aM(a,t)+\mu_M(a)M(a,t)=-\gamma(t) M(a,t)\displaystyle\int_{0}^A M(a,t)da &\text{ in }Q,
\\I(0,t)=\displaystyle\int_{0}^A\beta(a,m)F(a,t)da,\; F(0,t)=r\displaystyle\int_0^Aw(a)I(a,t)da,\; M(0,t)=(1-r)\displaystyle\int_0^Aw(a)I(a,t)da &\text{ in }Q_T,
\\I(a,0)\geq  0,\quad F(a,0)\geq  0,\quad M(a,0)\geq  0, &\text{ in }Q_A,\\
\\P(t)\ge 0,\;K(t)\ge \epsilon>0,\;\Gamma(t)\ge 0,\gamma(t)\ge 0, &\text{ in }Q_T.
\end{array}
\right.
\end{equation} 
   In our framework, the time‐dependent function $P(t)$ is interpreted not as the intrinsic dynamics of a Gambusia affinis population, but as a unified control parameter representing all human‐driven interventions against the aquatic mosquito stage, whether by fish releases, habitat drainage, larviciding, or other larval‐reduction measures.  In practice, these activities are planned and scheduled by antimalarial programs according to predetermined frequencies, dosages, and target areas; accordingly, $P(t)$ appears in the model as an exogenous mortality rate term, $-I(a,t)\,P(t)$, applied uniformly across the aquatic cohort.  This aggregation of disparate control actions into a single, time‐varying parameter greatly simplifies the system by obviating the need for an extra differential equation for the predator, while still capturing the combined ecological and operational constraints of vector‐control campaigns.

Furthermore, although one could introduce interspecific terms in the adult mortality rates to reflect resource competition (e.g. for nectar or resting sites), we assume that adult female and male death rates depend solely on age.  This assumption aligns with the natural separation of feeding niches (blood meals for females versus nectar for males) and allows us to concentrate the mathematical analysis on the stability effects of the aquatic‐stage control $P(t)$.  In particular, the multiplicative form of the control enables a clear investigation of global asymptotic stability in Section \ref{s34}. 

Since model \eqref{eq3.1} is a non-autonomous logistic model, to carry out its qualitative analysis we may likewise replace the time-dependent functions $K(t)$, $\Gamma(t)$, and $\gamma(t)$ (see Section \ref{s34}) by their respective mean values $K^*$, $\Gamma^*$, and $\gamma^*$. In the dynamic case, the functions $K(t)$, $\Gamma(t)$, and $\gamma(t)$ are assumed to be continuous and bounded on the interval $(0,T),$ namely assuming hypothesis

\begin{align*}
(HH) : \begin{cases}
K(t),\; \Gamma(t),\; \gamma(t) \in L^{\infty}(0,T),\\
\\K(t)\ge \epsilon>0,\;\Gamma(t)\ge 0,\gamma(t)\ge 0, &\text{ in } (0,T).
\end{cases}
\end{align*}
Moreover, we adopt the following standing hypotheses (unless otherwise stated):
\begin{align*}
\textbf{(H33)}\begin{cases}
\mu_i(a),\; \mu(a,p)\geq 0 \quad \text{a.e. on }(0,A), \\[0.3em]
\mu_i \in L^1_{\rm loc}(0,A), \quad \displaystyle\int_0^{A}\mu_i(a)\,da = +\infty,
\end{cases}
\qquad
\textbf{(H44)}\begin{cases}
\beta(.,m),\; w(.) \in L^\infty(0,A),  \\[0.3em]
\beta(a,m),\; w(a)\ge 0 \ \text{a.e. on } (0,A).
\end{cases}
\end{align*}
\section*{\bf Genetic Control}\label{s322} 
Introduce sterile male mosquitoes \( M_S \) into the adult population. This genetic control strategy disrupts reproduction, effectively limiting population growth.  In this model, we incorporate an interspecific interaction between fertile males $M$ and genetically modified sterile males $M_S$, represented through cross terms in their respective equations. This interaction reflects a competitive dynamic, whereby each male type disrupts the reproductive contribution of the other, especially via indirect effects on survival and mating outcomes. In addition, we define the mating probability $\mathcal{P}(t)$, which expresses the likelihood that a female encounters a fertile male and thus produces viable offspring. Although sterile males $M_s$ do not contribute to reproduction, they compete for mating opportunities. As their density increases, $\mathcal{P}(t)$ declines, reducing the effective recruitment of adult females, impeding the renewal of the aquatic population, and ultimately limiting overall population growth. To quantify the impact of sterile males on fecundity, we introduce a modulated fertility rate $\beta(a, m, m_s)$ of the form $ \beta(a, m, m_s) = \beta_0(a)\, \frac{m}{m + \iota}\, e^{-\delta\, m_s},$ where $\beta_0(a)$ denotes the age-dependent baseline fertility,  $e^{-\delta\, m_s}$, ($\delta > 0$), models the global inhibitory effect of sterile males on oviposition, $\frac{m}{m + \iota}$, ($\iota > 0$), reflects the proportion of fertile males among the total male population. Finally, the control intervention is modeled by an impulsive function $\Lambda(t)=\displaystyle\sum_{k=1}^n\alpha_k\delta_{\lbrace t_k\rbrace}(t),\quad t_0=0<t_1<...<t_n<T,$ which represents the periodic release of sterile male cohorts at discrete times $t_k$.

\begin{equation}\label{eq3.2}
\left\lbrace 
\begin{array}{ll}
\partial_tI(a,t)+\partial_aI(a,t)+\mu(a,p(t))I(a,t)=\Gamma(t) I(a,t)\left(1-\dfrac{\displaystyle\int_{0}^A\beta(a,m,m_s) F_a(a,t)da}{K(t)}\right), &\text{ in }Q, 
\\\partial_tF_j(a,t)+\partial_aF_j(a,t)+\mu_{F_j}(a)F_j(a,t)=-F_j(a,t)\;\displaystyle\int_{0}^A\gamma(t) F_j(a,t)da &\text{ in }Q, 
\\\partial_tF_a(a,t)+\partial_aF(a,t)+\mu_{F_a}(a)F_a(a,t)=-F_a(a,t)\displaystyle\int_{0}^A\gamma (t)F_a(a,t)da, &\text{ in }Q,
\\\partial_tM(a,t)+\partial_aM(a,t)+\mu_M(a)M(a,t)=-M(a,t)\displaystyle\int_{0}^A\gamma(t) M_S(a,t)da &\text{ in }Q,
\\\partial_tM_s(a,t)+\partial_aM_s(a,t)+\mu_{M_s}(a)M_s(a,t)=-M_s(a,t)\displaystyle\int_{0}^A\gamma(t) M(a,t)da +M_s(a,t)\Lambda(t) &\text{ in }Q,
\\I(0,t)=\displaystyle\int_{0}^A\beta(a,m,m_s)F_a(a,t)da,\; F_j(0,t)=r\displaystyle\int_{0}^Aw(a)I(a,t)da,&\text{ in }Q_T,
\\M(0,t)=(1-r)\displaystyle\int_{0}^Aw(a)I(a,t)da,\;F_a(0,t)=\mathcal{P}(t)\displaystyle\int_0^AF_j(a,t)da,\;M_s(0,t)=0 &\text{ in }Q_T,
\end{array}
\right.
\end{equation}

Another model has already been studied in \cite{ref38,ref42,ref43}, without considering age, through genetically modified mosquitoes, a Sterile Insect Technique control strategies with constant or variable number of sterile males to be released that drive the wild population of mosquitoes towards elimination. The mortality of the sterile males is usually larger than that of wild males \cite{ref73}, i.e. $\mu_{M_S}\geq\mu_M.$  The description of the parameters is given in Table \ref{t1} below.
\begin{table}[ht]
\centering
\begin{tabular}{c|p{10cm}}
\hline
Parameter & Description \\
\hline
$\lambda$ & Fertility function of male individuals. \\
$r \in (0,1)$ & Primary sex ratio in offspring. \\
$\beta$ & Mean number of eggs that a single female can deposit on average per day. \\
$w$ & the age‑dependent emergence rate. \\
$\mu(a,p(t)),\,\mu_{F_j},\,\mu_{F_a},\,\mu_M,\,\mu_{M_s}$ & Mean death rates of immature individuals (density‐dependent and independent), young females, fertilized females, males and sterile males, respectively. \\
$\gamma(t)$ (or $\gamma^*$) & Competition parameter. \\
$K(t)$ (or $K^*$) & Carrying capacity related to the amount of available nutrients and space. \\
$\Gamma(t)$ (or $\Gamma^*$) & Growth rate. \\
\hline
\end{tabular}
\caption{Description of the parameters}
\label{t1}
\end{table}
\subsubsection*{\bf Well-posedness}
  We establish the well‑posedness of the time‑evolution problem by means of the semigroup approach.  To this end, let $\mathcal H_2^5 = \bigl(L^2(0,A)\bigr)^5,$
  and define the linear operator

 $$
   \mathcal A_m : D(\mathcal A_m)\subset\mathcal H_2^5\;\longrightarrow\;\mathcal H_2^5,
   \quad
   \mathcal A_m\varphi
   = -\partial_a\varphi \;-\; D(a,p)\,\varphi,\quad\text{where}\; \varphi=(\varphi_I,\varphi_{F_j},\varphi_{F_a},\varphi_M,\varphi_{M_s})
 $$
 with

\begin{align*}
D(\mathcal A_m)
= \Bigl\{\varphi\in\mathcal H_2^5 :\;
\varphi\text{ is a.c. on }[0,A],\;
&\varphi_I(0)=\int_{0}^{A}\beta(a,m,m_s)\,\varphi_{F_a}(a)\,da,
&\varphi_{F_j}(0)=r\int_{0}^{A}w(a)\,\varphi_I(a)\,da,\\
&\varphi_M(0)=(1-r)\int_{0}^{A}w(a)\,\varphi_I(a)\,da,
&\varphi_{F_a}(0)=\mathcal P\,\int_{0}^{A}\varphi_{F_j}(a)\,da,\\
&\varphi_{M_s}(0)=\Lambda\,\int_{0}^{A}\varphi_{M_s}(a)\,da,
&-\partial_a\varphi - D(a,p)\,\varphi\in\mathcal H_2^5
\Bigr\}.
\end{align*}

 In block‑diagonal notation,

 $$
   \mathcal A_m
   = \mathrm{diag}\bigl(-\partial_a-\mu_I,\;-\partial_a-\mu_{F_j},\;
   -\partial_a-\mu_{F_a},\;-\partial_a-\mu_M,\;-\partial_a-\mu_{M_s}\bigr).
 $$



 Finally, the nonlinear fonction $f:\mathcal H_2^5\to\mathcal H_2^5$ is defined component‑wise by

 $$
  f\bigl(I,F_j,F_a,M,M_s\bigr)
  = \bigl(I\,f_1,\,F_j\,f_2,\,\,F_a\,f_3,\,M\,f_4,\,M_s\,f_5\bigr)^\top
$$
with 
\begin{align}\label{e3.40}
f_1=\Gamma(t)\left(1-\dfrac{\displaystyle\int_{0}^A\beta(a,m,m_s) F_a(a,t)da}{K(t)}\right),\; f_2=-\displaystyle\int_{0}^A\gamma(t) F_j(a,t)da
\end{align}
\begin{align}\label{e3.41}
f_3=-\displaystyle\int_{0}^A\gamma(t)F_a(a,t)da,\;f_4=-\displaystyle\int_{0}^A\gamma(t) M_S(a,t)da,\; f_5=-\displaystyle\int_{0}^A\gamma(t) M(a,t)da.
\end{align}
Let 
\begin{align}\label{e3.42}
    Y(t)=\left( I(a,t), F_j(a,t), F_a(a,t),  M(a,t), M_s(a,t)\right)\in D(\mathcal{A}_m)
\end{align}
thus, we can rewrite the system \eqref{eq3.2} as an abstract Cauchy problem
\begin{equation}\label{e3.43}
\left\lbrace 
\begin{array}{ll}
\partial_tY(t)=\mathcal{A}_mY(t)+f(Y(t)),&\text{ in}\;Q_T\\
Y(0)=Y_0
\end{array}
\right.
\end{equation} 
where
\begin{align}
    Y_0=\left(I(a,0), F_j(a,0), F_a(a,0),  M(a,0), M_S(a,0)\right).
\end{align}
\begin{remark}
The mortality $\mu_i,$ fertility $\beta$ functions  and $w$ the age‑dependent emergence rate  satisfy hypotheses $(H33)$ and $(H44)$, and the function $f$ meets condition $(H3)$.
\end{remark}
Thus, investigating the well-posedness of system \eqref{eq3.2} reduces to studying equation \eqref{e3.43} along with its initial.  Hence, by applying Theorem \ref{th2.7}, we obtain well‑posedness in \( \mathcal{H}^5_2 \). By applying the same strategy, we easily show that system \eqref{eq3.1} is well-posed.
\begin{remark}
By applying the method of characteristics to the system \eqref{eq3.1}, one finds that, for every $(a,t)\;\in Q$, the solutions of \eqref{eq3.1} can be written as follows:

\begin{align}\label{e3.45}
\begin{cases}
I=I\bigl(0,\,t - a\bigr)\,e^{-\!\displaystyle\int_{0}^{\,a}\mu_{I}\bigl(\alpha,\,p\bigl(\alpha - (a - t)\bigr)\bigr)\,d\alpha 
\;+\;\int_{\,t - a}^{\,t}R_{I}(s)\,ds},
&  R_{I}(s)
=
\Gamma(t)\left(1-\dfrac{\gamma(t)}{K(t)}\displaystyle\int_{0}^AI(x,s)dx\right)
\;-\;P(s),\\
 F=  F(0,t-a)\,e^{-\!\displaystyle\int_{0}^{\,a}\mu_{F}(\alpha)\,d\alpha 
   \;+\;\int_{\,t - a}^{\,t}R_{F}(s)\,ds},
   &   R_{F}(s)=-\displaystyle\int_{0}^{A}\gamma(t)\,F(x,s)\,dx,\\
M= M(0,t-a)\,e^{-\!\displaystyle\int_{0}^{\,a}\mu_{M}(\alpha)\,d\alpha 
   \;+\;\int_{\,t - a}^{\,t}R_{M}(s)\,ds},
   &   R_{M}(s)
   =
   -\,\int_{0}^{A}\gamma(t)\,M(x,s)\,dx.
\end{cases}
\end{align}
\end{remark}


\subsubsection{\bf Stability analysis}\label{s34}
 This step focuses on the mathematical analysis of the stability of the model \eqref{eq3.1}. The objective is to examine how the biological control $P$, when applied to the aquatic population, influences the overall dynamics of the system. A steady‐state formulation of \eqref{eq3.1} takes the form

\begin{equation}\label{eq3.11}
\left\lbrace 
\begin{array}{ll}
\partial_a\,I^*(a)\;+\;\bigl(\mu\bigl(a,p^*\bigr)\;+\;\zeta_I\bigr)\,I^*(a)\;=\;0, 
& \text{ in}\;Q_A,\\[1em]
\partial_a\,F^*(a)\;+\;\bigl(\mu_F(a)\;+\;\zeta_F\bigr)\,F^*(a)\;=\;0, 
& \text{ in}\;Q_A,\\[1em]
\partial_a\,M^*(a)\;+\;\bigl(\mu_M(a)\;+\;\zeta_M\bigr)\,M^*(a)\;=\;0, 
& \text{ in}\;Q_A,\\[1em]
I^*(0)=\displaystyle \int_{0}^{A} \beta\bigl(a,m^*\bigr)F^*(a)da,\;F^*(0)=r\displaystyle\int_0^Aw(a)I^*(a)da,\;M^*(0)=(1-r)\,\displaystyle\int_0^Aw(a)I^*(a)da.
\end{array}
\right.
\end{equation}
where

\begin{align}\label{e3.47}
    \zeta_I 
\;=\; \frac{\Gamma^*\gamma^*}{K^*}\,\displaystyle \int_{0}^{A} \,I^*(a)\,da \;+\; P^*-\Gamma^*,
\quad
\zeta_F 
\;=\gamma^*\;\displaystyle \int_{0}^{A} \,F^*(a)\,da,
\quad
\zeta_M 
\;=\gamma^*\; \displaystyle \int_{0}^{A} \,M^*(a)\,da.
\end{align}

The corresponding solutions are given by

\begin{align}\label{e3.48}
    I^*(a)= I^*(0)\,\underbrace{e^{-\displaystyle\int_{0}^{a}\bigl[\mu_I(s,p^*) + \zeta_I\bigr]\,ds}}_{\Tilde{I}^*(a)},\;F^*(a) 
= F^*(0)\,\underbrace{e^{-\displaystyle\int_{0}^{a}\bigl[\mu_F(s) + \zeta_F\bigr]\,ds}}_{\Tilde{F}^*(a)},\;M^*(a) 
&= M^*(0)\,\underbrace{e^{-\displaystyle\int_{0}^{a}\bigl[\mu_M(s) + \zeta_M\bigr]\,ds}}_{\Tilde{M}^*(a)},
\end{align}
where $\zeta_I$ and $\zeta_F$ are solutions of 
\begin{align}\label{e3.49}
r\displaystyle\int_0^Aw(a)\tilde{I}^*(a)da\displaystyle\int_{0}^A\beta(a,m)\tilde{F}^*(a)da=1.
\end{align}
We rewrite these solutions of the form
\begin{align}\label{e3.50}
    F^*(a)=rI^*(0)\displaystyle\int_0^Aw(a)\tilde{I}^*(a)da\tilde{F}^*(a),\;M^*(a)=(1-r)I^*(0)\displaystyle\int_0^Aw(a)\tilde{I}^*(a)da\tilde{M}^*(a).
\end{align}
\begin{remark}
 Ensuring the stability of $I$ automatically ensures the stability of both $M$ and $F$. Indeed, we have from \eqref{eq3.11} 
\begin{align}\label{e3.51}
  I^*(0)=\dfrac{K^*}{\Gamma^*\gamma^*}\dfrac{(\left[\zeta_I+\Gamma^*\right]-P^*)}{\displaystyle\int_0^A\tilde{I}^*(a)da}>0,\quad\; P^*\in (0,\zeta_I+\Gamma^*).
\end{align}
It is noteworthy that, according to this expression, increasing the equilibrium control $P^*$ leads to a pronounced reduction in the steady-state abundance of both male and female mosquitoes. In other words, bolstering the predator population has a directly dampening effect on mosquito dynamics, underscoring the decisive influence of controlling the aquatic phase on the system’s overall behavior. Thus, our analysis of the stability of system \eqref{eq3.1} reduces to the stabilization of the aquatic population.
\end{remark}

\begin{lemma}\label{le3.4}
Consider the following transformation
\begin{align}\label{e3.52}
 \left[\begin{array}{c}
\eta_I(t) \\ 
\psi_I(t-a)\\
\psi_F(t-a)
\end{array}  \right]=\left[\begin{array}{c}
\ln[\Pi_I(I(t))] \\ 
\dfrac{I(a,t)}{I^*(a)\Pi_I(I(t))}-1\\
\dfrac{F(a,t)}{F^*(a)\Pi_I(I(t))}-1
\end{array} \right],
\end{align}
where
\begin{align}\label{e3.53}
\Pi_I(I(t))=\dfrac{\langle \pi_{0,I}, I(t)\rangle_{L^2(0,A)}}{\langle\pi_{0,I}, I^*\rangle_{L^2(0,A)}},
\end{align}

with  $\pi_{0,I},\; \pi_{0,j}$ are continuous functions of the form
\begin{align}\label{e3.54}
   \pi_{0,I}(a)= \displaystyle\int_a^{A}\beta(s,m)e^{\int_s^a(\zeta_I+\mu_I(l,p)dl}ds.
\end{align}
Moreover, the variables $\psi_i$ and $\eta_I$ satisfy:

\begin{align}\label{e3.55}
\begin{cases}
    \partial_t\eta_I(t)=\zeta_I-P(t)+\Gamma(t)-\frac{\Gamma(t)\gamma(t)}{K(t)}e^{\eta_I}\displaystyle\int_{0}^A (1+\psi_I(t-a))I^*(a)da,\\
\eta_{I}(0)
= \ln\!\bigl(\Pi[I_{0}]\bigr)
= \eta_{I,0},
\end{cases}
\end{align}


\begin{align}\label{e3.56}
\begin{cases}
\psi_I(t)=\displaystyle\int_0^Ag_F(a)\psi_F(t-a)F^*(a)da,\\
\psi_F(t)=\displaystyle\int_0^Ag_I(a)\psi_I(t-a)I^*(a)da,\\
 \psi_{i}(-a)
=\frac{i_0(a)}{i^{*}(a)\,\Pi[i_0]}\;-\;1
= \psi_{i,0}(a).
\end{cases}
\end{align}

with 
\begin{align}\label{e3.57}
g_F(a)
=\frac{\beta(a,m)F^{*}(a)}
{\displaystyle\int_{0}^{A}\beta(a,m)F^{*}(a)\,\mathrm{d}a},\; g_I(a)
=\frac{w(a)I^{*}(a)}
{\displaystyle\int_{0}^{A}w(a)I^{*}(a)\,\mathrm{d}a},
\;\text{and}
\;\int_{0}^{A}g_F(a)\,\mathrm{d}a=1,\;\int_{0}^{A}g_I(a)\,\mathrm{d}a=1.
\end{align}
The unique solutions are then given by:

\begin{align}\label{e3.58}
I(a,t)
={I}^{*}(a)\,\bigl(1+\psi_{I}(t-a)\bigr)\,e^{\eta_{I}(t)},\; F(a,t)
={F}^{*}(a)\,\bigl(1+\psi_{F}(t-a)\bigr)\,e^{\eta_{I}(t)},\; M(a,t)
={M}^{*}(a)\,\bigl(1+\psi_{F}(t-a)\bigr)\,e^{\eta_{I}(t)}.
\end{align}
\end{lemma}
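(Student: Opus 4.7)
The plan is to verify the three representation formulas in \eqref{e3.58} by direct substitution and then reconcile the projection definition $\Pi_I(I(t))$ with $e^{\eta_I(t)}$ by a dual-testing argument adapted from Lemma \ref{lee3.1} and \cite{ref69}. First I would differentiate the defining integral \eqref{e3.54} in $a$, obtaining the adjoint-type ODE $\partial_a\pi_{0,I}(a)-(\zeta_I+\mu_I(a,p))\pi_{0,I}(a)=-\beta(a,m)$ with terminal condition $\pi_{0,I}(A)=0$ and normalization $\pi_{0,I}(0)=\int_0^A\beta(s,m)\tilde I^*(s)\,ds>0$, whose compatibility with the characteristic relation \eqref{e3.49} will be used later.

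Next, I would substitute the ansatz $I(a,t)=I^*(a)(1+\psi_I(t-a))e^{\eta_I(t)}$ into the first equation of \eqref{eq3.1}. Because $\psi_I$ depends only on $t-a$, the chain rule gives $\partial_t\psi_I+\partial_a\psi_I=0$, so the substitution collapses the PDE to $I(a,t)[\dot\eta_I-(\mu_I(a,p)+\zeta_I)]=I(a,t)[-\mu_I(a,p)+\Gamma(t)-\tfrac{\Gamma(t)\gamma(t)}{K(t)}p(t)-P(t)]$ after invoking the steady-state ODE $(I^*)'(a)=-(\mu_I(a,p)+\zeta_I)I^*(a)$ from \eqref{eq3.11}. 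Using $p(t)=e^{\eta_I}\int_0^A I^*(a)(1+\psi_I(t-a))\,da$, this reduces exactly to \eqref{e3.55}; the analogous substitutions for $F$ and $M$ yield the same $\eta_I$ dynamics because their mortalities are age-only and the quadratic competition terms reduce to the stationary balances encoded by $\zeta_F,\zeta_M$ from \eqref{e3.47}.

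Then I would read off the renewal identities \eqref{e3.56} from the boundary values at $a=0$. Plugging the ansatz into $I(0,t)=\int_0^A\beta(a,m)F(a,t)\,da$ and dividing by $e^{\eta_I}I^*(0)$ reduces, after using $I^*(0)=\int\beta F^*\,da$ and $\int g_F\,da=1$ from \eqref{e3.57}, to $\psi_I(t)=\int_0^A g_F(a)\psi_F(t-a)\,da$. A parallel manipulation of $F(0,t)=r\int_0^A w(a)I(a,t)\,da$, using $F^*(0)=r\int wI^*\,da$ together with $\int g_I\,da=1$, yields the companion identity $\psi_F(t)=\int_0^A g_I(a)\psi_I(t-a)\,da$.

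The main obstacle will be proving the consistency identity $\Pi_I(I(t))=e^{\eta_I(t)}$, equivalently the preserved orthogonality $\int_0^A\pi_{0,I}(a)I^*(a)\psi_I(t-a)\,da=0$ for every $t\ge 0$. At $t=0$ this follows by direct computation from the definitions $\eta_{I,0}=\ln\Pi_I[I_0]$ and $\psi_{I,0}(a)=I_0(a)/(I^*(a)\Pi_I[I_0])-1$. To propagate it for $t>0$ I would test the first PDE of \eqref{eq3.1} against $\pi_{0,I}$ and integrate by parts on $(0,A)$; the coupling through $I(0,t)=\int\beta F\,da$ does not close on $I$ alone as it did in Lemma \ref{lee3.1}, so one must additionally use the renewal identity for $\psi_I$ just derived together with the adjoint equation for $\pi_{0,I}$ to convert the $F$ boundary contribution back into an $I$-weighted integral. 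The resulting closed identity for $\int\pi_{0,I}I^*\psi_I\,da$ admits only the zero solution consistent with the initial orthogonality, confirming $\eta_I(t)=\ln\Pi_I(I(t))$ and the uniqueness of the triple $(\eta_I,\psi_I,\psi_F)$, which completes the verification of \eqref{e3.55}, \eqref{e3.56} and \eqref{e3.58}.
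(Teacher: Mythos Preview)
Your approach differs from the paper's: you verify the ansatz \eqref{e3.58} by direct substitution into the PDEs, whereas the paper derives the reduced dynamics by testing each equation of \eqref{eq3.1} against a separate adjoint weight $\pi_{0,I},\pi_{0,F},\pi_{0,M}$, summing the three projected identities (so that the coupled boundary contributions from $I(0,t),F(0,t),M(0,t)$ cancel against the inhomogeneities of the coupled adjoint system \eqref{e3.63}), and only then reading off \eqref{e3.55}.

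The substantive gap in your argument is the sentence ``the analogous substitutions for $F$ and $M$ yield the same $\eta_I$ dynamics because \ldots\ the quadratic competition terms reduce to the stationary balances encoded by $\zeta_F,\zeta_M$.'' This is not true away from equilibrium. Plugging $F(a,t)=F^*(a)(1+\psi_F(t-a))e^{\eta_I(t)}$ into the $F$-equation and using $(F^*)'/F^*=-(\mu_F+\zeta_F)$ gives
\[
(\partial_t\psi_F+\partial_a\psi_F)\,e^{\eta_I}F^*(a)=F(a,t)\Bigl[-\dot\eta_I+\zeta_F-\gamma(t)\!\int_0^A F(x,t)\,dx\Bigr],
\]
so $\psi_F$ is a function of $t-a$ alone only if $\dot\eta_I=\zeta_F-\gamma(t)\int F\,da$, which is \emph{not} the formula \eqref{e3.55} coming from the $I$-equation. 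The two expressions agree at steady state but not for general trajectories, so the single-equation verification does not close. This is precisely why the paper introduces the full adjoint triple \eqref{e3.63} and works with the summed identity \eqref{e3.62}: the coupling of the boundary conditions forces one to project the three equations simultaneously rather than one at a time. Your Step~4 hints at this (``the coupling through $I(0,t)=\int\beta F\,da$ does not close on $I$ alone''), but you still need the companion weights $\pi_{0,F},\pi_{0,M}$, not just the renewal identity for $\psi_I$, to absorb the $F$- and $M$-boundary terms; the adjoint ODE you wrote in Step~1, with inhomogeneity $-\beta(a,m)$, is the uncoupled one and does not match \eqref{e3.63}.
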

\begin{remark}
The densities $I, F, M$ adopt here the form given in \eqref{e3.58}, which distinguishes them from the expressions used in \cite{ref45}, notably due to the relationships established in \eqref{e3.50}-\eqref{e3.51}. Indeed, only the aquatic population is subject to control in this model, which implies that any modification of its dynamics has a significant impact on the adult population. The diagram below illustrates the structure of our global stability proof.

\begin{center}
\resizebox{\textwidth}{!}{
\begin{tikzpicture}[node distance=2.5cm, >=Stealth,
  block/.style={draw, rectangle, rounded corners, align=center, minimum width=3cm, minimum height=1cm}]
  
  \node[block] (dynamics) {Resulting dynamics:\\
    \(I(a,t),\,F(a,t),\,M(a,t)\)};
    
  \node[block, right=of dynamics] (composition) {Compose:$j\in \lbrace I, F \rbrace,\; X^*\in \lbrace  I^*,F^*,M^*\rbrace$\\
    \(\bigl(1+\psi_j(t-a)\bigr)\,X^*(a)\,e^{\eta_I(t)}\)};

     \node[block, right=of composition] (condition) {stability of $\eta_I,\;\psi_j$:\\
    \(\eta_I\to 0,\;\psi_j\to 0\)};
    
  \node[block, right=of condition] (equilibrium) {Equilibrium :\\
    \(I^*(a),\;F^*(a),\;M^*(a)\)};

  \draw[->] (dynamics) -- (composition);
  \draw[->] (composition) -- (condition);
  \draw[->] (condition) -- (equilibrium);

\end{tikzpicture}
}
\end{center}
\end{remark}
\begin{proof}[of Lemma \ref{le3.4}]




By multiplying the equations of system \eqref{eq3.1} respectively by the functions $\pi_{0,I}$, $\pi_{0,F}$, and $\pi_{0,M}$, and then integrating by parts over the interval $(0, A)$, we obtain 
\begin{align}\label{e3.59}
     \left\langle \pi_{0,F}(a), \partial_ tF(a,t)\right\rangle= r\left\langle \pi_{0,F}(0)w(a),I(a,t)\right\rangle+\left\langle\partial_a\pi_{0,F}(a)-\pi_{0,F}(a)(\mu_F(a)+\zeta_F), F(a,t)\right\rangle
\end{align}
\begin{align*}
    +\left\langle\pi_{0,F}(a),(\zeta_F-\displaystyle\int_{0}^A \gamma(t)F(a,t)da)F(a,t)\right\rangle,
\end{align*}

\begin{align}\label{e3.60}
     \left\langle \pi_{0,M}(a), \partial_ tM(a,t)\right\rangle= (1-r)\left\langle \pi_{0,M}(0)w(a),I(a,t)\right\rangle+\left\langle\partial_a\pi_{0,M}(a)-\pi_{0,M}(a)(\mu_M(a)+\zeta_M), M(a,t)\right\rangle
\end{align}
\begin{align*}
    +\left\langle\pi_{0,M}(a),(\zeta_M-\displaystyle\int_{0}^A \gamma (t)M(a,t)da)M(a,t)\right\rangle,
\end{align*}

\begin{align}\label{e3.61}
    \left\langle \pi_{0,I}(a),\partial_ tI(a,t)\right\rangle=\left\langle \pi_{0,I}(0)\beta(a,m),F(a,t)\right\rangle+\left\langle \partial_a\pi_{0,I}(a)-\pi_{0,I}(a)(\mu(a,p(t))+\zeta_I), I(a,t)\right\rangle+\left\langle\pi_{0,I}(a), (\zeta_I-P(t))I(a,t)\right\rangle
\end{align}

\begin{align*}
    +\left\langle\pi_{0,I}(a),(\Gamma(t)-\dfrac{\Gamma(t)\gamma(t)}{K(t)}\displaystyle\int_{0}^AI(a,t)da)I(a,t)\right\rangle.
\end{align*}
By summing them, we obtain

\begin{align}\label{e3.62}
    \left\langle \pi_{0,F}(a), \partial_ tF(a,t)-(\zeta_F-\displaystyle\int_{0}^A \gamma (t)F(a,t)da)F(a,t)\right\rangle+ \left\langle \pi_{0,I}(a),\partial_ tI(a,t)-(\zeta_I-P(t)+\Gamma(t)-\dfrac{\Gamma(t)\gamma(t)}{K(t)}\displaystyle\int_{0}^AI(a,t)da)I(a,t)\right\rangle+
\end{align}
\begin{align*}
    \left\langle \pi_{0,M}(a), \partial_ tM(a,t)-(\zeta_M-\displaystyle\int_{0}^A \gamma(t) M(a,t)da)M(a,t)\right\rangle=0,
\end{align*}
with 

\begin{align}\label{e3.63}
\begin{cases}
\mathcal{D}^*\pi_{0,I}(a)=\partial_a\pi_{0,I}(a)-\pi_{0,I}(a)(\mu(a,p(t))+\zeta_I)+r\pi_{0,F}(0)w(a)+(1-r)\pi_{0,M}(0)w(a),\quad \pi_{0,I}(A)=0,\\
\\\mathcal{D}^*\pi_{0,F}(a)=\partial_a\pi_{0,F}(a)-\pi_{0,F}(a)(\mu(a)+\zeta_F)+\pi_{0,I}(0) \beta(a,m),\qquad    \pi_{0,F}(A)=0,\\
\\\mathcal{D}^*\pi_{0,M}(a)=\partial_a\pi_{0,M}(a)-\pi_{0,M}(a)(\mu_M(a)+\zeta_M),\qquad \pi_{0,M}(A)=0.
\end{cases}
\end{align}

For all  functions $\pi_{0,F},\;\pi_{0,I},\; \pi_{0,M}$ in $L^2(0,A)$ implies that

\begin{align}\label{e3.64}
    \partial_t F(a,t)\;=\;\Bigl(\zeta_F-\!\!\int_{0}^{A}\gamma(t)\,F(a,t)\,da\Bigr)F(a,t),\;\quad
\partial_t M(a,t)\;=\;\Bigl(\zeta_M-\!\!\int_{0}^{A}\gamma(t)\,M(a,t)\,da\Bigr)M(a,t),
\end{align}
and 
\begin{align}\label{e3.65}
\partial_t I(a,t)\;=\;\Bigl(\zeta_I - P(t) + \Gamma(t) - \tfrac{\Gamma(t)\gamma(t)}{K(t)}\!\!\int_{0}^{A}I(a,t)\,da\Bigr)I(a,t),
\end{align}

almost everywhere. Consequently
\begin{align*}
    \partial_t\eta_I(t)=\zeta_I-P(t)+\Gamma(t)-\dfrac{\Gamma(t)\gamma(t)}{K(t)}\displaystyle\int_{0}^AI(a,t)da=\zeta_I+R_I(t),
\end{align*}
and from transformation \eqref{e3.52}, we get 
\begin{align}\label{e3.66}
\partial_t\eta_I(t)=\zeta_I-P(t)+\Gamma(t)-\dfrac{\Gamma(t)\gamma(t)}{K(t)}e^{\eta_I}\displaystyle\int_{0}^A (1+\psi_I(t-a))I^*(a)da.
\end{align}
On the other hand, by definition
\begin{align}\label{e3.67}
    \psi_I(t)
 = \frac{I(0,t)\,e^{-\eta_I(t)}}{I^*(0)} \;-\; 1\Longrightarrow\psi_I(t)=\displaystyle\int_0^Ag_F(a)\psi_F(t-a)F^*(a)da.
\end{align}
By analogy, we obtain
\begin{align}\label{e3.68}
\begin{cases}
\psi_F(t)=\displaystyle\int_0^Ag_I(a)\psi_I(t-a)I^*(a)da,\\
\psi_M(t)= \displaystyle\int_0^Ag_I(a)\psi_I(t-a)I^*(a)da.
\end{cases}
\end{align}

 By applying transformation \eqref{e3.52}, we obtain equation \eqref{e3.58}.


\end{proof}
 
 For the stability analysis of Theorems \ref{th3.6} and \ref{th3.8}, we consider the following assumptions:
 \begin{enumerate}
     \item[]  {\bf Assumption H7:  $\psi_I\equiv 0$}
     \item[]  {\bf Assumption H8:  $\psi_I\neq 0.$}
 \end{enumerate}
 \subsubsection*{\bf 3.4.4.1. Stability in the absence of a delay term}\label{s341}
We arrive, under {\bf Assumption H7}, at the following system from \eqref{e3.55} :
\begin{align}\label{e3.69}
   \partial_t\eta_I(t)=\zeta_I-P(t)+\Gamma(t)-\dfrac{\Gamma(t)\gamma(t)}{K(t)}e^{\eta_I}\displaystyle\int_{0}^A I^*(a)da.
\end{align}

From equation \eqref{e3.47}, by setting 
\begin{align}\label{e3.70}
    k_I=\displaystyle\int_{0}^A I^*(a)da,\;\phi_I(\eta_I)=k_I(e^{\eta_I}-1),\; 
\end{align}

we obtain
\begin{align}\label{e3.71}
  \partial_t\eta_I=P^*-P(t)-k_I\Gamma(t)(\frac{\gamma(t)}{K(t)}-\frac{1}{k_I})+k_I\Gamma^*(\frac{\gamma^*}{K^*}-\frac{1}{k_I})-\frac{\Gamma(t)\gamma(t)}{K(t)}\phi_I(\eta_I),
\end{align}

In the stability analysis of $\eta_I,$ we introduce the following Lyapunov candidate function:

 \begin{align}\label{e3.72}
     V_{I}(\eta_{I})=\;\int_{0}^{\eta_{I}}\phi_{I}(\alpha)\,\mathrm{d}\alpha
 \;
 \end{align}
 \begin{align*}
     =\;k_I\bigl(e^{\eta_{I}}-\eta_{I}-1\bigr)
 \end{align*}
 \begin{align*}
     =\phi_I(\eta_I)-k_I\eta_I.
 \end{align*}
 
 This function satisfies the Lyapunov conditions:
\begin{itemize}
 \item  $V_{I}(0)=0$,
 \item   for all $\alpha\neq0,\;V_{I}(\alpha)>0$ and $\lim_{\alpha\to\infty}V_{I}(\alpha)=+\infty$.
\end{itemize}
\begin{theorem}\label{th3.6}
 Under {\bf Assumption H7}, the system \eqref{eq3.1} is globally asymptotically stabilizable.
\end{theorem}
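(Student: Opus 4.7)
The plan is to combine the Lyapunov candidate $V_I$ from \eqref{e3.72} with a feedforward--feedback design of $P(t)$. Under \textbf{Assumption H7} the closed-loop dynamics reduce to the scalar ODE \eqref{e3.71}, so the problem reduces to driving $\eta_I(t)\to 0$; once this is achieved, Lemma \ref{le3.4} together with the structural relations \eqref{e3.50}--\eqref{e3.51} will automatically yield $I(a,t)\to I^{*}(a)$, $F(a,t)\to F^{*}(a)$ and $M(a,t)\to M^{*}(a)$.

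First, I would differentiate $V_I$ along trajectories. Since $V_I'(\eta_I)=\phi_I(\eta_I)$, we have $\dot V_I(\eta_I)=\phi_I(\eta_I)\,\dot\eta_I$, and substituting \eqref{e3.71} gives
\begin{align*}
\dot V_I(\eta_I) = \phi_I(\eta_I)\Bigl[P^{*}-P(t) - k_I\Gamma(t)\Bigl(\tfrac{\gamma(t)}{K(t)}-\tfrac{1}{k_I}\Bigr) + k_I\Gamma^{*}\Bigl(\tfrac{\gamma^{*}}{K^{*}}-\tfrac{1}{k_I}\Bigr)\Bigr] - \tfrac{\Gamma(t)\gamma(t)}{K(t)}\phi_I(\eta_I)^{2}.
\end{align*}
A natural choice is to design $P(t)$ as a feedforward term cancelling the bracketed time-varying drift, together with a dissipative feedback proportional to $\phi_I$:
\begin{align*}
P(t) = P^{*} - k_I\Gamma(t)\Bigl(\tfrac{\gamma(t)}{K(t)}-\tfrac{1}{k_I}\Bigr) + k_I\Gamma^{*}\Bigl(\tfrac{\gamma^{*}}{K^{*}}-\tfrac{1}{k_I}\Bigr) + \kappa\,\phi_I(\eta_I),
\end{align*}
for a tunable gain $\kappa>0$. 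Substituting this choice produces
\begin{align*}
\dot V_I(\eta_I) = -\Bigl(\kappa + \tfrac{\Gamma(t)\gamma(t)}{K(t)}\Bigr)\phi_I(\eta_I)^{2} \leq -\kappa\,\phi_I(\eta_I)^{2}.
\end{align*}

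Because $V_I$ is positive definite and radially unbounded, and $\phi_I(\eta_I)=0$ if and only if $\eta_I=0$, Lyapunov's direct method (or a LaSalle invariance argument combined with hypothesis (HH), which bounds $\Gamma(t)\gamma(t)/K(t)$ uniformly) delivers global asymptotic stability of the equilibrium $\eta_I=0$. Translating back through \eqref{e3.58} and \eqref{e3.50} then transfers this convergence to the original $(I,F,M)$ variables, which completes the stabilization of \eqref{eq3.1}.

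The main obstacle I foresee is enforcing the biological positivity constraint $P(t)\geq 0$: the feedforward correction depends on the sign and magnitude of the fluctuations of $\Gamma(t),\gamma(t),K(t)$ around their averages $\Gamma^{*},\gamma^{*},K^{*}$, while the feedback term $\kappa\,\phi_I(\eta_I)$ is negative whenever $\eta_I<0$ (i.e.\ when the aquatic compartment lies below equilibrium). To guarantee admissibility one should either restrict the initial condition to a sublevel set of $V_I$ on which the synthesized $P(t)$ is automatically nonnegative (using the uniform bounds provided by (HH) and a lower bound $K(t)\geq\varepsilon>0$), or saturate $P$ at zero and verify by a comparison argument that saturation events are transient and do not destroy the strict negativity of $\dot V_I$ outside any neighbourhood of the origin.
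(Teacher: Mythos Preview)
Your approach is essentially the same as the paper's: both use the Lyapunov function $V_I$ of \eqref{e3.72}, compute $\dot V_I=\phi_I(\eta_I)\dot\eta_I$ via \eqref{e3.71}, and choose $P(t)$ so that the time-varying drift in the bracket cancels exactly, leaving a nonpositive quadratic remainder. The only difference is that the paper takes $\kappa=0$ (pure feedforward, control \eqref{e3.75}) and relies on the intrinsic dissipation $-\tfrac{\Gamma(t)\gamma(t)}{K(t)}\phi_I^2$, whereas you add the extra feedback $\kappa\phi_I(\eta_I)$; your version buys a uniform decay rate independent of the (possibly vanishing) coefficient $\Gamma(t)\gamma(t)/K(t)$, at the cost of making the positivity constraint $P(t)\ge 0$ slightly harder to enforce when $\eta_I<0$.
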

\begin{proof}[of Theorem \ref{th3.6}]
Using the Lyapunov candidate $V_{I}$, we get

\begin{align}\label{e3.73}
    \dot V_{I}
=\phi_{I}(\eta_{I}(t))\;\dot\eta_{I}(t).
\end{align}

From the equation \eqref{e3.71}, we substitute:

\begin{align}\label{e3.74}
    \dot V_{I}
= \phi_{I}(\eta_{I})\,\bigl(P^*-P(t)-k_I\Gamma(t)(\frac{\gamma(t)}{K(t)}-\frac{1}{k_I})+k_I\Gamma^*(\frac{\gamma^*}{K^*}-\frac{1}{k_I}) - \frac{\Gamma(t)\gamma(t)}{K(t)}\phi_{I}(\eta_{I})\bigr).
\end{align}
By choosing a control of the form
\begin{align}\label{e3.75}
    P(t)=P^*+k_I\left[\frac{\Gamma^*\gamma^*}{K^*}-\frac{\Gamma(t)\gamma(t)}{K(t)}\right]-k_I\left[\frac{\Gamma^*}{k_I}-\frac{\Gamma(t)}{k_I}\right],
\end{align}
and thanks to {\bf Assumption (HH)}, it follows that
\begin{align}\label{e3.76}
    \dot V_{I}
= -\frac{\Gamma(t)\gamma(t)}{K(t)}\phi_{I}(\eta_{I})^{2}
\;\le\;0,
\end{align}
holds; consequently, system \eqref{eq3.1}  is asymptotically stable.
\end{proof}

\begin{remark}
The time derivative of the Lyapunov function $V_I$ can be written equivalently in quadratic form as

 \begin{align}\label{e3.77}
     \dot V_I(\eta)
 \;=\;
 -\,\bigl[\phi_{I}\;\;\phi_{I}\bigr]
  \;Q(t)\;
  \begin{bmatrix}\phi_{I} \\[4pt]\phi_{I}\end{bmatrix},\;\text{where}\;  Q(t) \;=
 \begin{pmatrix}
 \frac{\Gamma(t)^2\gamma(t)}{K(t)(\Gamma(t)+\gamma(t)-2K(t))} & -\; \frac{\Gamma(t)\gamma(t)}{\Gamma(t)+\gamma(t)-2K(t)}\\[4pt]
 -\; \frac{\Gamma(t)\gamma(t)}{\Gamma(t)+\gamma(t)-2K(t)}                & \frac{\Gamma(t)\gamma(t)^2}{K(t)(\Gamma(t)+\gamma(t)-2K(t))}
 \end{pmatrix}.
 \end{align}

 For $Q(t)$ to be positive definite, one requires

 \begin{align}\label{e3.78}
\frac{K(t)^2}{\gamma(t)}<\Gamma(t),\quad\text{a.e.}\;t\in(0,T).
\end{align}
Consequently, its smallest eigenvalue is
\begin{align}\label{e3.79}
    \lambda_{\min}(Q(t))
 =\frac{2\Gamma(t)\gamma(t)}{K(t)}\frac{\Gamma(t)\gamma(t)-K(t)^2}{(\Gamma(t)+\gamma(t)-2K(t))(\Gamma(t)+\gamma(t))+\sqrt{(\Gamma(t)+\gamma(t)-2K(t))^2\left[(\Gamma(t)-\gamma(t))^2+4K(t)^2\right]}}.
\end{align}
\end{remark}

\subsubsection*{\bf 3.4.4.2 Stability in the presence of a delay term}\label{s342}
In the context of mosquito dynamics, we analyze the system’s stability when the delay kernel $\psi_I$, associated with the aquatic population $I$, is nonzero. This assumption accounts for the developmental delays in the larval and pupal stages and requires a dedicated investigation of their influence on convergence to the global equilibrium. Under {\bf Assumption H8}, we have from \eqref{e3.47}-\eqref{e3.55} 
\begin{align}\label{3.80}
   \partial_t\eta_I(t)=P^*-P(t)-k_I\Gamma(t)(\frac{\gamma(t)}{K(t)}-\frac{1}{k_I})+k_I\Gamma^*(\frac{\gamma^*}{K^*}-\frac{1}{k_I})
\;-\frac{\Gamma(t)\gamma(t)}{K(t)}\;k_I\Biggl(\frac{e^{\eta_{I}}}{k_I}
\int_{0}^{A}I^{*}(a)\bigl(1+\psi_{I}(t-a)\bigr)\,\mathrm{d}a
-1\Biggr)
\end{align}

Define the normalized kernel

\begin{align}\label{e3.81}
g(a)
=\frac{I^{*}(a)}
{\displaystyle\int_{0}^{A}I^{*}(a)\,\mathrm{d}a},\; 
\quad
\int_{0}^{A}g(a)\,\mathrm{d}a=1,
\end{align}

Then
\begin{align}\label{e3.82}
    \frac{1}{k_I}\int_{0}^{A}I^{*}(a)\bigl(1+\psi_{I}(t-a)\bigr)\,\mathrm{d}a
=(1+\int_{0}^{A}g(a)\,\psi_{I}(t-a)\,\mathrm{d}a),
\end{align}
substituting gives

\begin{align}\label{e3.83}
    \partial_{t}\eta_{I}(t)
= P^*-P(t)-k_I\Gamma(t)(\frac{\gamma(t)}{K(t)}-\frac{1}{k_I})+k_I\Gamma^*(\frac{\gamma^*}{K^*}-\frac{1}{k_I})
\;-\;\frac{\Gamma(t)\gamma(t)}{K(t)}k_I\Bigl(e^{\eta_{I}}
\bigl[1+\!\int_{0}^{A}g(a)\psi_{I}(t-a)\,\mathrm{d}a\bigr]
-1\Bigr).
\end{align}
We therefore introduce the function
\begin{align}\label{e3.84}
\hat{\phi}_1= k\Bigl(
e^{\displaystyle\eta_{I}+\displaystyle\ln\bigl(1+\displaystyle\int_{0}^{A}g(a)\,\psi_{I}(t-a)\,\mathrm{d}a\bigr)}
-1\Bigr),
\end{align}
so that

\begin{align}\label{e3.85}
    \partial_{t}\eta_{I}
= P^*-P(t)-k_I\Gamma(t)(\frac{\gamma(t)}{K(t)}-\frac{1}{k_I})+k_I\Gamma^*(\frac{\gamma^*}{K^*}-\frac{1}{k_I}) \;-\frac{\Gamma(t)\gamma(t)}{K(t)}\;\hat\phi_{1}.
\end{align}

Finally, choosing the control \eqref{e3.75} yields the simple form

\begin{align}\label{e3.86}
    \partial_{t}\eta_{I}= \;-\frac{\Gamma(t)\gamma(t)}{K(t)}\;\hat\phi_{1},
\end{align}
with 
\begin{align}\label{e3.87}
\begin{cases}
\psi_I(t)=\displaystyle\int_{0}^Ag_F(a)\psi_F(t-a)da,\\
\psi_F(t)=\displaystyle\int_{0}^Ag_I(a)\psi_I(t-a)da.   
\end{cases}
\end{align}

Thanks to \eqref{e3.57}, 
we then make the following hypothesis \cite{ref69} :
\begin{enumerate}
     \item[]  {\bf Assumption H9: }
    There exist constants $\kappa_I,\;\kappa_F>0$ such that%

$\int_{0}^{A}\Bigl|\,
 g_F(a)\,
 \;-\;z_I\kappa_I\!\int_a^A g_F(s)\,\,ds\Bigr|\;da
 \;<\;1,\; \int_{0}^{A}\Bigl|\,g_I(a)\,\;-\;z_F\kappa_F\!\int_a^Ag_I(s)\,ds\Bigr|\;da\;<\;1$

 where $ z_I = \Bigl(\int_{0}^{A}a\,g_F(a)\,da\Bigr)^{-1},\; z_F = \Bigl(\int_{0}^{A}a\,g_I(a)\,da\Bigr)^{-1}.$  Let $\sigma>0$  be a sufficiently small constant that satisfies the inequality 
 $\int_{0}^{A}\Bigl|\,g_I(a)\,\;-\;z_F\kappa_F\!\int_a^A g_I(s)\,ds\Bigr|\;e^{\sigma a}da<\;1,\; 
 \int_{0}^{A}\Bigl|\,
  g_F(a)\,
 \;-\;z_I\kappa_I\!\int_a^A  g_F(s)\,ds\Bigr|
 \;e^{\sigma a}da<\;1.$
 \end{enumerate}
 \begin{remark}
It was proved in \cite{ref69} that the state $\psi_I$ of the internal
dynamics are restricted to the sets 
\begin{align*}
\mathcal{S}=\left\lbrace \psi_i\in\; C^0((-A,0);(-1,\infty)) : P(\psi_i)=0\wedge\psi_I(0)=\displaystyle\int_0^Ag_F(a)\psi_I(-a)da\right\rbrace,
\end{align*}
where
\begin{align*}
P(\psi_I)=\dfrac{\displaystyle\int_0^A\psi_I(-a)\displaystyle\int_a^Ag_F(s)dsda}{\displaystyle\int_a^Aag_F(a)da},
\end{align*}
and that the state $\psi_I$ is globally
exponentially stable in $\mathcal{L}^{\infty}$ norm, which means that there exist $M_i>1,\sigma_i>0$ such that 
\begin{align*}
    \Vert\psi_i(t-a) \Vert \leq M_ie^{-\sigma_i t} \Vert\psi_{i,0} \Vert_{\infty},\; i\in \lbrace I, F\rbrace.
\end{align*}
\end{remark}

Before stating the main result of this section, we define the following functions. Let the functional
\begin{align}\label{e3.89}
    G_I(\psi_I)
= \dfrac{\max_{a\in(0,A)}\bigl|\psi_I(t-a)\bigr|\,e^{-a\sigma}}{1+\max(0,\min_{a\in(0,A)}\psi_I(t-a))},
\end{align}
whose Dini derivative satisfies (see \cite{ref69})
\begin{align}\label{e3.90}
    D^+\bigl(G_I(\psi_{I,t})\bigr)
\;\le\;
-\sigma\,G_I(\psi_{I,t})
\end{align}

We then define the following Lyapunov function
\begin{align}\label{e3.91}
    V(\eta_I,\psi_I)
= V_I\;+\;\frac{\gamma_1}{\sigma}h(G_I(\psi_I)).
\end{align}
with
\begin{align}\label{e3.92}
    h(p)=\displaystyle\int_0^p\frac{1}{z}(e^z-1)^2dz.
\end{align}

\begin{theorem}\label{th3.8}
Under Assumption H9, system \eqref{eq3.1} is globally asymptotically stabilizable, and the control remains uniformly bounded. Moreover,  the control satisfies \(P(t)>0\) for all \(t>0\), for every initial condition \(\eta_I(0)\) belonging to the largest level set of $V(\eta_I,\psi_I)$ within the set

 \begin{align}\label{e3.98}
\mathcal{A} &= 
\left\{ (\eta,\psi)\in\mathbb{R}\times\mathcal{S}\ \middle|\ 
\begin{aligned}
&\eta \le \ln\!\left(\sqrt{\frac{2\gamma_1 K(t)}{\Gamma(t)\gamma(t)+K(t)\,\lambda_{\min}(Q(t))}}\right),\\[4pt]
&\gamma_1 > \frac{\Gamma(t)\gamma(t)+K(t)\,\lambda_{\min}(Q(t))}{2K(t)},\\[4pt]
& P^*+k_I\!\left[\frac{\Gamma^*\gamma^*}{K^*}-\frac{\Gamma(t)\gamma(t)}{K(t)}\right]
 -k_I\!\left[\frac{\Gamma^*}{k_I}-\frac{\Gamma(t)}{k_I}\right]>0
\end{aligned}
\right\}.
\end{align}
\end{theorem}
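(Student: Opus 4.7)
The plan is to extend the Lyapunov argument of Theorem \ref{th3.6} to the delayed case by treating $(\eta_I,\psi_I)$ as a cascade: the $\eta_I$-dynamics are stabilised through $V_I$ together with the control \eqref{e3.75}, while the internal variable $\psi_I$ is handled through the functional $\frac{\gamma_1}{\sigma}h(G_I(\psi_I))$, which acts as an ISS-type supply capturing the exponential decay guaranteed by Assumption H9. The composite Lyapunov function $V(\eta_I,\psi_I)$ in \eqref{e3.91} is then shown to be strictly decreasing on the admissible set $\mathcal{A}$, which yields global asymptotic stabilisation of \eqref{eq3.1}.

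First, I would substitute \eqref{e3.75} into \eqref{e3.85} to reduce the $\eta_I$-equation to
$\partial_t\eta_I=-\tfrac{\Gamma(t)\gamma(t)}{K(t)}\,\hat\phi_1$, and then use the algebraic identity
$\hat\phi_1=\phi_I(\eta_I)+k_I e^{\eta_I}\bigl(e^{v_I}-1\bigr),
\quad v_I=\ln\!\bigl(1+\!\int_0^A g(a)\psi_I(t-a)\,\mathrm da\bigr),$
to split
$\dot V_I=-\tfrac{\Gamma(t)\gamma(t)}{K(t)}\phi_I^2(\eta_I)-\tfrac{\Gamma(t)\gamma(t)}{K(t)}\,\phi_I(\eta_I)\,k_I e^{\eta_I}(e^{v_I}-1).$
Next, applying the Dini-derivative estimate \eqref{e3.90} to the second piece of $V$, together with $h'(p)=(e^p-1)^2/p$, I would obtain
$D^{+}\!\Bigl[\tfrac{\gamma_1}{\sigma}h(G_I(\psi_I))\Bigr]\le-\gamma_1\bigl(e^{G_I}-1\bigr)^2,$
so that the good dissipation terms available to me are $-\frac{\Gamma\gamma}{K}\phi_I^2(\eta_I)-\gamma_1(e^{G_I}-1)^2$.

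The hard part will be absorbing the indefinite cross term $-\tfrac{\Gamma\gamma}{K}\phi_I(\eta_I)\,k_I e^{\eta_I}(e^{v_I}-1)$ into those two dissipation terms. The strategy is: (i) use the elementary bound $|e^{v_I}-1|\le e^{G_I(\psi_I)}-1$ inherited from Assumption H9 and the normalisation $\int_0^A g(a)\mathrm da=1$, giving the pointwise estimate
$\bigl|\phi_I(\eta_I)\,k_I e^{\eta_I}(e^{v_I}-1)\bigr|\le k_I\,e^{\eta_I}\,|\phi_I(\eta_I)|\,\bigl(e^{G_I}-1\bigr);$
(ii) apply Young's inequality in the form
$|ab|\le\tfrac{\lambda_{\min}(Q(t))}{2}\,a^2+\tfrac{1}{2\lambda_{\min}(Q(t))}\,b^2$
with $a=\phi_I(\eta_I)$ and $b=k_I e^{\eta_I}(e^{G_I}-1)$, which, together with the quadratic representation of $\dot V_I$ from \eqref{e3.77}--\eqref{e3.79}, produces a residual bound of the form
$\dot V\le-\tfrac{\Gamma\gamma}{2K}\phi_I^2(\eta_I)-\Bigl[\gamma_1-\tfrac{\Gamma(t)\gamma(t)+K(t)\lambda_{\min}(Q(t))}{2K(t)}\,e^{2\eta_I}\Bigr]\bigl(e^{G_I}-1\bigr)^2.$
Requiring the bracket to stay strictly positive is precisely the bound
$e^{2\eta_I}\le \tfrac{2\gamma_1 K(t)}{\Gamma(t)\gamma(t)+K(t)\lambda_{\min}(Q(t))}$,
which is the first constraint defining $\mathcal{A}$ in \eqref{e3.98}, while the lower bound on $\gamma_1$ ensures the bracket is nonempty.

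Once $\dot V\le 0$ is established on $\mathcal{A}$, I would conclude by the following standard steps: the radially unbounded function $V$ has bounded sublevel sets in $(\eta_I,\psi_I)$, so the largest level set $\mathcal L_c\subset\mathcal{A}$ (nonempty by Proposition \ref{pro3.13}-type argument applied to $V$ near the origin) is positively invariant; on this set LaSalle's invariance principle applied to the delay system forces $\phi_I(\eta_I)\equiv 0$ and $G_I(\psi_I)\equiv 0$, hence $\eta_I\to 0$ and $\psi_I\to 0$ in $L^\infty$; inverting the transformation \eqref{e3.52} yields $(I,F,M)\to (I^{*},F^{*},M^{*})$. Finally, the third inequality in $\mathcal{A}$ is exactly the condition $P(t)>0$ for \eqref{e3.75}, which, being built into the admissible set and preserved by invariance, guarantees that the feedback remains biologically admissible along every closed-loop trajectory; boundedness of $P$ is immediate from boundedness of $\Gamma$, $\gamma$, $K$ under Assumption (HH) and boundedness of $\eta_I$ on $\mathcal L_c$.
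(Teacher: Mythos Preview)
Your proposal is correct and follows essentially the same route as the paper: the same composite Lyapunov function $V=V_I+\tfrac{\gamma_1}{\sigma}h(G_I)$, the same control \eqref{e3.75}, the same Dini estimate $D^+h(G_I)\le -\sigma(e^{G_I}-1)^2$, the bound $|e^{v_I}-1|\le e^{G_I}-1$, and a Young-type absorption of the cross term that produces precisely the constraint on $\eta$ defining $\mathcal A$. The only cosmetic difference is that the paper first rewrites $\dot V_I$ through the polarization identity $-\tfrac{\Gamma\gamma}{K}\phi_I\hat\phi_1=-\tfrac12[\phi_I,\phi_I]Q[\phi_I,\phi_I]^\top-\tfrac12[\hat\phi_1,\hat\phi_1]Q[\hat\phi_1,\hat\phi_1]^\top+\tfrac{\Gamma\gamma}{2K}|\hat\phi_1-\phi_I|^2$ before invoking $\lambda_{\min}(Q(t))$, whereas you split $\hat\phi_1=\phi_I+k_Ie^{\eta_I}(e^{v_I}-1)$ directly; both arrive at the same bracket $\bigl[\gamma_1-\tfrac{\Gamma\gamma+K\lambda_{\min}}{2K}e^{2\eta_I}\bigr]$ and hence the same admissible set.
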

 \begin{proof}[of Theorem \ref{th3.8}]
Recall that

\begin{align*}
\dot V_{I}(\eta)
= -\frac{1}{2}\left(\bigl[\phi_{I}\;\;\phi_{I}\bigr]
  \;Q(t)\;
  \begin{bmatrix}\phi_{I} \\[4pt]\phi_{I}\end{bmatrix}+\bigl[\hat{\phi}_{1}\;\;\hat{\phi}_{1}\bigr]
  \;Q(t)\;
  \begin{bmatrix}\hat{\phi}_{1} \\[4pt]\hat{\phi}_{1}\end{bmatrix}\right)+\dfrac{\Gamma(t)\gamma(t)}{2K(t)}\Vert \hat{\phi}_{1}-\phi_I\Vert^2.
\end{align*}


Since $Q(t)$ is symmetric and positive semi-definite with strictly positive smallest eigenvalue $\lambda_{\min}(Q(t))>0$, it follows immediately that
\begin{align}\label{e93}
    \dot V_{I}(\eta)\;\le\; -\,\frac{\lambda_{\min}(Q(t))}{2}\,(\|\phi_I\|^{2}+\|\hat{\phi}_1\|^{2})+\dfrac{\Gamma(t)\gamma(t)}{2K(t)}\Vert \hat{\phi}_{1}-\phi_I\Vert^2.
\end{align}
and 
\begin{align}\label{e3.94}
    \Vert \hat{\phi}_{1}-\phi_I\Vert^2=(\phi_I+1)^2(e^{v_I}-1)^2,\;v_I=\displaystyle\ln\bigl(1+\displaystyle\int_{0}^{A}g(a)\,\psi_{I}(t-a)\,\mathrm{d}a\bigr)
\end{align}
then 
\begin{align}\label{e3.95}
\dot V_{1}(\eta)\;\le\; -\,\frac{\lambda_{\min}(Q(t))}{2}\,(\|\phi_I\|^{2}+\|\hat{\phi}_1\|^{2})+\dfrac{\Gamma(t)\gamma(t)}{2K(t)}(\phi_I+1)^2(e^{v_I}-1)^2.
\end{align}
For the second term $h(G_{I}(\psi_{I}))$, the Dini-derivative estimate \eqref{e3.90} implies
\begin{align}\label{e3.96}
    D^+h(G_I)=\frac{e^{G_I}-1}{G_I}\,D^+G_I.\leq -\sigma (e^{G_I}-1).
\end{align}



By applying Young’s inequality  thanks to $\vert v_I\vert\leq G_I(\psi_I)$, we obtain





\begin{align}\label{e3.97}
    D^{+}V(\eta,\psi)
\le\;
 -\,\frac{3\lambda_{\min}(Q(t))}{4}\,\|\phi_I\|^{2}+\left[\frac{K(t)\lambda_{\min}(Q(t))+\Gamma(t)\gamma(t)}{2K(t)}(\phi_I+1)^2-\gamma_1\right](e^{G_I}-1)
\end{align}
 Finally, with $(\eta_I,\psi_I)\in\mathcal{A}$, and thanks to assumption, we obtain the required estimate, and hence the equilibrium is globally asymptotically stable. 
\end{proof}
\begin{remark}
Since the original system is nonlinear, the stability analysis of the linearized system \eqref{eq3.11}, whose zero eigenvalue corresponds to the equilibrium profiles $(I^*,F^*, M^*)$, is not sufficient to guarantee overall stability 
  (it only guarantees stability in the local case, as in {\bf Section} \ref{s2.4}). We therefore employ a nonlinear method based on the study of an adjoint mode. To this end, we introduce an adjoint eigenfunction ($\pi_{0,F},\pi_{0,I}, \pi_{0,M}$) associated with the zero eigenvalue of the linearized operator. This eigenfunction acts as a filter: it allows us to project the nonlinear perturbations onto the critical age–time direction corresponding to the neutral spectral subspace. By projecting in $L^2(0,A)$, we exactly isolate the neutral mode of the dynamics \eqref{e3.63}, rather than applying an arbitrary projection. Projecting the full nonlinear dynamics onto this mode reduces the problem to a single ordinary differential equation (ODE) \eqref{e3.55} for the perturbation amplitude. Analyzing this ODE, then determines the asymptotic stability of the equilibrium. Hence, the adjoint eigenfunction is essential for completing the nonlinear analysis beyond what a mere linear spectral study can reveal. 
\end{remark}

\begin{remark}
By replacing the time-varying functions $K(t)$, $\Gamma(t)$, and $\gamma(t)$ with their average values $K^*$, $\Gamma^*$, and $\gamma^*$, in \eqref{e3.75} we recover identical global asymptotic stability results under the static control strategy 
\begin{align}\label{e3.99}
P(t)=P^*.
\end{align}
In the autonomous formulation, where all parameters are held constant, these fixed values provide a baseline for model analysis and equilibrium evaluation. When the parameters vary periodically, the model explicitly accounts for seasonal drivers such as temperature and precipitation. In the stochastic framework, the incorporation of random variability captures environmental uncertainty.
\end{remark}
\subsection*{\bf Discussion on the control strategy \texorpdfstring{$P(t)$}{P(t)}}
In the control law \eqref{e3.75},
\begin{align}
P(t)=P^*+(\Gamma(t)-\Gamma^*)+k_I\Big(\frac{\Gamma^*\gamma^*}{K^*}-\frac{\Gamma(t)\gamma(t)}{K(t)}\Big),
\end{align}
the term $\Gamma(t)-\Gamma^*$ plays the role of a direct feedforward correction : it instantaneously compensates any variation in the growth rate $\Gamma$ and follows its phase shifts. The proportional term is explicitly
\begin{align}
\,k_I\Big(\dfrac{\Gamma^*\gamma^*}{K^*}-\dfrac{\Gamma(t)\gamma(t)}{K(t)}\Big)\,,
\end{align}
which constitutes a feedback mechanism on the normalized demographic pressure $\dfrac{\Gamma\gamma}{K}$. This term acts as a sensor of "demographic energy" and tends to drive the current pressure back toward its nominal value. The parameter $k_I$, representing the static aquatic total population, scales the feedback response : the larger $k_I$, the stronger the control reacts to deviations in demographic pressure. Hence, $k_I$ determines the intensity of the control effort applied. Under periodic forcing, $P(t)$ both tracks and attenuates parametric variations rather than allowing them to amplify the aquatic population; this results in a reduction of the oscillation amplitude of $I(a,t)$ and in a re-centering of the dynamics around a periodic equilibrium of smaller amplitude. By construction, $P(t)$ aims to restore the normalized pressure and promotes global asymptotic stability around the target state $I^*(a)$. The rigorous proof of this stabilization relies on the invariance of an appropriate attraction region (e.g., $\mathcal A$ defined in \eqref{e3.98}) and on the proper choice of $k_I$ and $P^*$, which guarantee the positivity and effectiveness of the control.

\begin{remark}
The condition \eqref{e3.78} defines a critical threshold below which the equilibrium $(I^*,F^*,M^*)$ is globally asymptotically stable. Mathematically, it implies that the carrying capacity $K(t)$ is too small to offset the intrinsic growth rate $\Gamma(t)$ once intra‑aquatic competition $\gamma(t)$ is taken into account: the quadratic regulation term dominates, preventing sustained growth. Biologically, this means that even with a high growth rate $\Gamma(t)$, a limited number of aquatic habitats (low $K(t)$) cannot sustain the population: mortality driven by intra‑aquatic competition outpaces cohort expansion. This inequality \eqref{e3.78} thus provides a clear operational criterion: reducing the carrying capacity $K(t)$ amplifies the effect of intra‑aquatic competition ($\gamma(t)$), thereby shifting the system into the regime where the trivial equilibrium is attractive. Moreover, it captures the effectiveness of control measures modeled by $-I(a,t)\,P(t)$: any intervention that increases aquatic mortality (via $P(t)$) is equivalent to decreasing $K(t)$ or enhancing $\gamma(t)$, thereby facilitating the condition \eqref{e3.78}. In practice, maintaining a constant level of control ensures this threshold is met, guaranteeing that the trivial equilibrium remains the sole attractor and thereby effectively guiding aquatic control strategies.
\end{remark}

\begin{remark}
The global stability analysis carried out in this section rigorously confirms that biological control targeting the aquatic stages of mosquito populations can, under specific structural conditions on the system’s parameters, lead to asymptotic stabilization of the equilibrium, thereby reflecting a sustained reduction in vector dynamics.

Beyond the theoretical results, several historical and contemporary examples support the effectiveness of such control strategies. A notable illustration is the case of Mandatory Palestine in the 1920s \cite{ref72}, where malaria was eliminated not through insecticides or vaccination, but primarily via the continuous destruction of larval breeding sites through systematic management of aquatic habitats, supported by community education and involvement.

Similar outcomes have been observed in regions such as Zanzibar, southern Tanzania, and rural India, where environmental sanitation, drainage, and the introduction of natural predators such as larvivorous fish have significantly reduced malaria transmission.

These observations, combined with our mathematical framework, suggest that biological control of mosquito aquatic populations is not only ecologically sustainable but also structurally effective in reducing malaria endemicity. This strategy, often complementary to chemical or genetic approaches, offers a relevant and efficient lever in malaria control policies, particularly in rural or semi-urban settings where continuous deployment of conventional interventions may be more challenging.
\end{remark}
\section*{\bf Numerical simulation}\label{s35}
To solve system \eqref{eq3.1}, the age discretization is performed with finite difference method on $(0,A).$ For more details on the discretization, the reader is invited to consult \cite{ref58, ref57}.  
\begin{figure}[H]
  \centering
  \begin{subfigure}[b]{0.3\textwidth}
    \centering
\includegraphics[width=\textwidth]{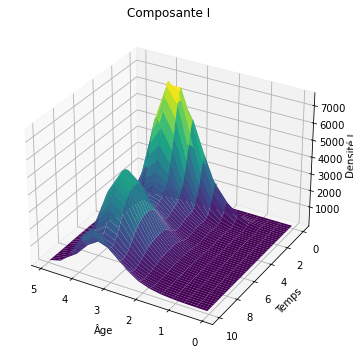}
    \caption{Cuve I}
  \end{subfigure}
  \hfill
  \begin{subfigure}[b]{0.3\textwidth}
    \centering
\includegraphics[width=\textwidth]{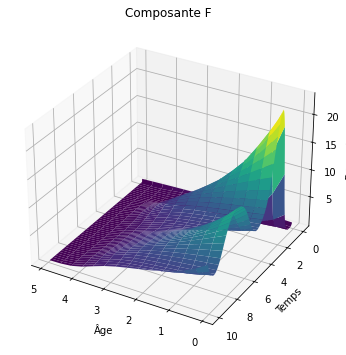}
    \caption{Cuve F}
  \end{subfigure}
  \hfill
  \begin{subfigure}[b]{0.3\textwidth}
    \centering   \includegraphics[width=\textwidth]{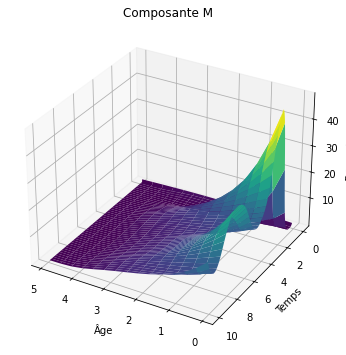}
    \caption{Cuve M}
  \end{subfigure}
  \caption{Uncontrolled density ($k(t),\gamma(t),\Gamma(t)$) : The parameters vary periodically; this corresponds to the non-autonomous logistic case, describing, from left to right, the respective dynamics of the aquatic mosquito population, adult females, and wild males over time.}
\end{figure}

\begin{figure}[H]
  \centering
  \begin{subfigure}[b]{0.3\textwidth}
    \centering
\includegraphics[width=\textwidth]{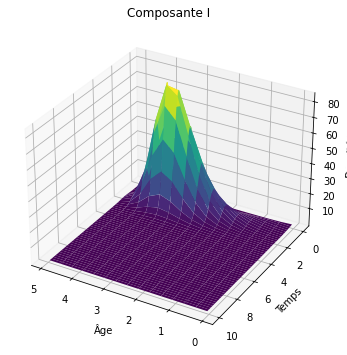}
    \caption{Cuve I}
  \end{subfigure}
  \hfill
  \begin{subfigure}[b]{0.3\textwidth}
    \centering
\includegraphics[width=\textwidth]{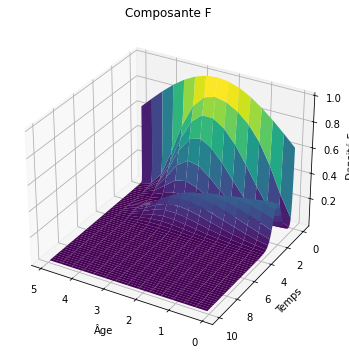}
    \caption{Cuve F}
  \end{subfigure}
  \hfill
  \begin{subfigure}[b]{0.3\textwidth}
    \centering   \includegraphics[width=\textwidth]{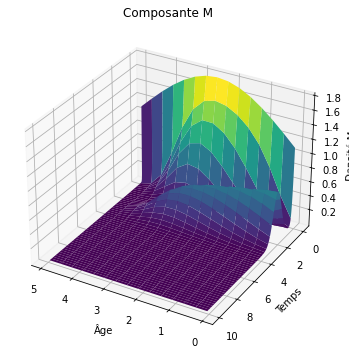}
    \caption{Cuve M}
  \end{subfigure}
  \caption{Controlled density with $k(t),\gamma(t),\Gamma(t)$ : These figures depict the temporal evolution of the system’s dynamics under the application of the control $P$ in \eqref{e3.75}.}
\end{figure}

\begin{figure}[H]
  \centering
  \begin{subfigure}[b]{0.3\textwidth}
    \centering
\includegraphics[width=\textwidth]{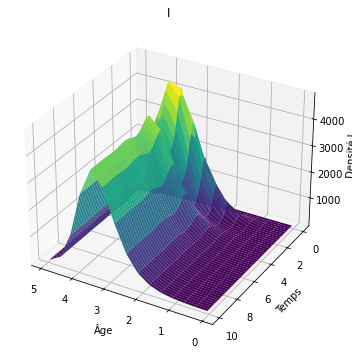}
    \caption{Cuve I}
  \end{subfigure}
  \hfill
  \begin{subfigure}[b]{0.3\textwidth}
    \centering
\includegraphics[width=\textwidth]{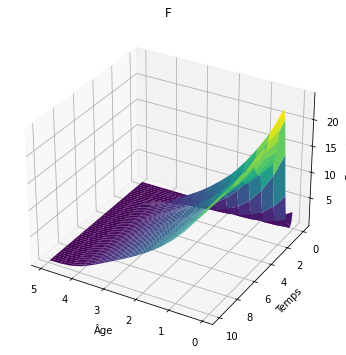}
    \caption{Cuve F}
  \end{subfigure}
  \hfill
  \begin{subfigure}[b]{0.3\textwidth}
    \centering   \includegraphics[width=\textwidth]{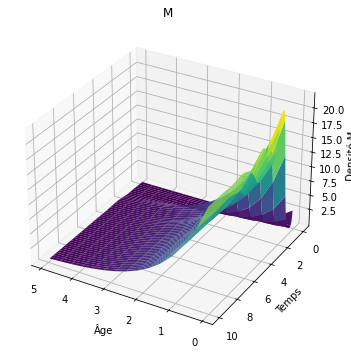}
    \caption{Cuve M}
  \end{subfigure}
  \caption{Uncontrolled density ($K^*,\Gamma^*,\gamma^*$) : These figures correspond to the classical logistic case, where the parameters $(k, \gamma, \Gamma)$ are constant.}
\end{figure}

\begin{figure}[H]
  \centering
  \begin{subfigure}[b]{0.3\textwidth}
    \centering
\includegraphics[width=\textwidth]{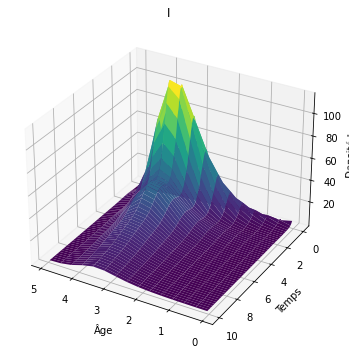}
    \caption{Cuve I}
  \end{subfigure}
  \hfill
  \begin{subfigure}[b]{0.3\textwidth}
    \centering
\includegraphics[width=\textwidth]{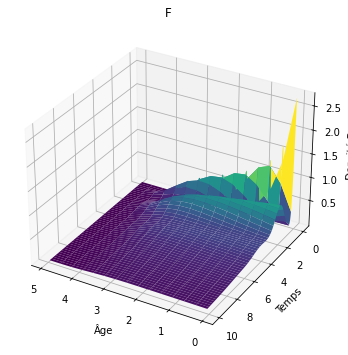}
    \caption{Cuve F}
  \end{subfigure}
  \hfill
  \begin{subfigure}[b]{0.3\textwidth}
    \centering   \includegraphics[width=\textwidth]{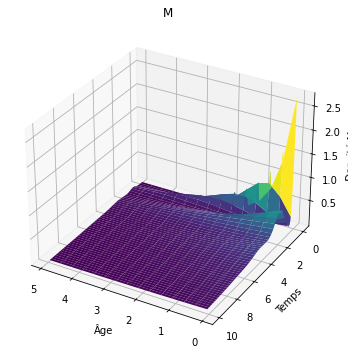}
    \caption{Cuve M}
  \end{subfigure}
  \caption{Controlled density ($K^*,\Gamma^*,\gamma^*$) :These figures depict the temporal evolution of the system’s dynamics under the application of the control $P=P^*$ in \eqref{e3.99}.}
\end{figure}
\subsubsection*{\bf Conclusion on the numerical results}
The numerical results demonstrate the impact of control $P$ on the overall mosquito population dynamics, through its effect on the aquatic stage. The control defined in $\eqref{e3.75}$, which accounts for the temporal variation of resource availability $K(t)$, shows a significant effect and robust effectiveness despite temporal variability. The control $P(t)$ of $\eqref{e3.75}$ combines an immediate correction of the growth rate with a proportional feedback on the normalized demographic pressure. By adapting the control effort to the reference aquatic population size $k_I$, it compensates for the periodic variations of the parameters and tends to stabilize the aquatic dynamics by limiting the amplitude of the forced oscillations. By contrast, in the second case the static control $P^{*}$ applied to an autonomous logistic system, a simple model typically used for analysis, is also effective but less realistic.
\section{\bf Conclusion and outlook}\label{s4}
Despite numerous advances, the study of population dynamics remains a vast field to explore. Existing predator–prey models undoubtedly have strengths, but they also exhibit significant limitations. To better understand biodiversity and investigate species persistence, we propose introducing multi‐species models that incorporate a key factor: age. For example, in epidemiological models, some diseases primarily affect the young while others manifest in older age. Ignoring age in such cases can lead to inaccurate predictions. 

Furthermore, the study of general multi-species non-transitive competition models, and in particular the three- and four-species cases addressed here, opens up vast perspectives, such as controllability, the determination of necessary and sufficient conditions for stabilization, the stabilization of multi-species systems in a non-homogeneous setting; the study of stabilization within a more general framework, extending beyond cyclic cases, and the turnpike property in optimal control, which we cite as examples among many others.  For instance, in order to generalize, the strong connectivity of the interaction graph together with the stability results obtained in the non-transitive case are valuable tools for studying global stability. In particular, if the matrix $A$ defined in \eqref{equation3.3} is strongly connected, the interaction graph contains cycles, which naturally guides the qualitative analysis of the system \eqref{a3.9}. The analysis of the stability of general multi-species competition and/or predator–prey models remains a major challenge for mastering, predicting, and understanding biodiversity

Beyond these applications, employing multi‐species models in forest dynamics is also of clear interest.
\appendix
\section{\bf Stabilization proof for four-species non-transitive competition}\label{annexe:A}
\begin{proof}[of Proposition \ref{proposition3.17}]
From \eqref{equation3.100}, we get 
\begin{align}
\dot \omega_1(\eta_2)=-\phi_2(\eta_2)\phi_3(\eta_3).
\end{align}
We set the fictitious control $\eta_3$ with $c_1>0$ so that 
\begin{align}
\phi_3(\eta_2)=c_1\phi_2(\eta_2).
\end{align}
We have 
\begin{align}\label{a3.89}
\lambda_3(e^{z^4_1+\eta_2}-1))= \phi_3(z^4_1)e^{\eta_2}+\phi_3(\eta_2),
\end{align}
 and thus 
\begin{align}
\dot \omega^1_1(\eta_2)=-c_1\phi^2_2(\eta_2)- \phi_2(\eta_2)\phi_3(z^4_1)-\frac{c_1}{\lambda_3}\phi^2_2(\eta_2)\phi_3(z^4_1)
\end{align} 
Consider the second Lyapunov function for $(z^4_1,\eta_2)$ 
\begin{align}
\omega^1_2(\eta_2,z^4_1)=\omega^1_1(\eta_2)+\lambda_3(e^{z^4_1}-1-z^4_1),
\end{align}
 and 
 \begin{align}
 \dot \omega^1_2(\eta_2,z^4_1)=-c_1\phi^2_2(\eta_2)- \phi_2(\eta_2)\phi_3(z^4_1)-\frac{c_1}{\lambda_3}\phi^2_2(\eta_2)\phi_3(z^4_1)+\phi_3(z^4_1)\left(-\phi_4(\eta_4)+\phi_3(\eta_3)\right)   
 \end{align}
 Next, in the dynamics of $z^4_1$ we consider the fictitious control $\eta_4$ with a constant $c_2$ such that 
 \begin{align}
\phi_4(z^4_1)=c_2\phi_3(z^4_1),
\end{align}
and 
\begin{align}\label{a3.95}
\lambda_4(e^{z^4_2+z^4_1}-1))= \phi_4(z^4_2)e^{z^4_1}+\phi_4(z^4_1).
\end{align}
Then,

 \begin{align}
\dot \omega^1_2(\eta_2,z^4_1)=-c_1\phi^2_2(\eta_2)-c_{2}\phi^2_3(z^4_1)+\phi^2_3(z^4_1)- \phi_2(\eta_2)\phi_3(z^4_1)-\phi_3(z^4_1)\phi_4(z^4_2)-\frac{c_1}{\lambda_3}\phi^2_2(\eta_2)\phi_3(z^4_1)-\frac{c_2}{\lambda_4}\phi^2_3(z^4_1)\phi_4(z^4_2)+\frac{c_1}{\lambda_3}\phi^2_3(z^4_1)\phi_2(\eta_2)
 \end{align}
 \begin{align*}
+c_1\phi_2(\eta_2)\phi_3(z_1).
 \end{align*}
 Let us consider the third Lyapunov function given by 
 \begin{align}\label{aa3.95}
\omega^1_3(\eta_2,z^4_1,z^4_2)=\omega^1_2(\eta_2,z^4_1)+\lambda_4(e^{z^4_2}-1-z^4_2)
 \end{align}
and and we introduce the fictitious control $\eta_1$ to stabilize the dynamics of $z^4_2,$ choosing $c_3>0$ so that condition 
\begin{align}
\phi_1(z^4_2)=c_3\phi_4(z^4_2),
\end{align}
holds.
 It follows from \eqref{aa3.45} that
\begin{align}
\dot \omega^1_3(\eta_2,z^4_1,z^4_2)=-c_1\phi^2_2(\eta_2)-c_{2}\phi^2_3(z^4_1)- \phi_2(\eta_2)\phi_3(z^4_1)-\phi_3(z^4_1)\phi_4(z^4_2)-\frac{c_1}{\lambda_3}\phi^2_2(\eta_2)\phi_3(z^4_1)-\frac{c_2}{\lambda_4}\phi^2_3(z^4_1)\phi_4(z^4_2)+\frac{c_1}{\lambda_3}\phi^2_3(z^4_1)\phi_2(\eta_2)
\end{align}
 \begin{align*}
+\phi^2_3(z^4_1)+c_1\phi_2(\eta_2)\phi_3(z^4_1)+\phi_4(z^4_2)\dot z^4_2.
 \end{align*}
We compute $\dot z^4_2$ in the form 

 \begin{align}
\dot z^4_2=-c_3\phi_4(z^4_2)-\phi_1(z^4_3)-\frac{c_3}{\lambda_1}\phi_4(z^4_2)\phi_1(z^4_3)+c_2\phi_3(z^4_1)+\phi_4(z^4_2)+\frac{c_2}{\lambda_4}\phi_3(z^4_1)\phi_4(z^4_2)-c_1\phi_2(\eta_2)-\phi_3(z^4_1) 
 -\frac{c_1}{\lambda_3}\phi_2(\eta_2)\phi_3(z^4_1)
 \end{align}
 thanks to 
 \begin{align}\label{a3.102}
\phi_1(\eta_1)=c_3\phi_4(z^4_2)+\phi_1(z^4_3)+\frac{c_3}{\lambda_1}\phi_4(z^4_2)\phi_1(z^4_3).
\end{align}
Finally, $\dot \omega^1_3(\eta_2,z^4_1,z^4_2)$ is given by 
\begin{align}
\dot \omega^1_3(\eta_2,z^4_1,z^4_2)=-c_1\phi^2_2(\eta_2)-c_{2}\phi^2_3(z^4_1)-c_{3}\phi^2_4(z^4_2)+\phi^2_3(z^4_1)+\phi^2_4(z^4_2)- \phi_2(\eta_2)\phi_3(z^4_1)-\phi_3(z^4_1)\phi_4(z^4_2)-\frac{c_1}{\lambda_3}\phi^2_2(\eta_2)\phi_3(z^4_1)
\end{align}

\begin{align*}
-\frac{c_2}{\lambda_4}\phi^2_3(z^4_1)\phi_4(z^4_2)+\frac{c_1}{\lambda_3}\phi^2_3(z^4_1)\phi_2(\eta_2)+c_1\phi_2(\eta_2)\phi_3(z^4_1)-\phi_1(z^4_3)\phi_4(z^4_2)-\frac{c_3}{\lambda_1}\phi^2_4(z^4_2)\phi_1(z^4_3)+c_2\phi_3(z^4_1)\phi_4(z^4_2)+\frac{c_2}{\lambda_4}\phi_3(z^4_1)\phi^2_4(z^4_2)
 \end{align*}
 \begin{align*}
-c_1\phi_2(\eta_2)\phi_4(z^4_2)  -\phi_3(z^4_1)\phi_4(z^4_2)-\frac{c_1}{\lambda_3}\phi_2(\eta_2)\phi_3(z^4_1)\phi_4(z^4_2).
 \end{align*}
Let the last Lyapunov control function be defined by
\begin{align}\label{a3.106}
V_4(\eta_2,z^4_1,z^4_2,z^4_3)=\theta \omega^1_3(\eta_2,z^4_1,z^4_2)+\lambda_1(e^{z^4_3}-1-z^4_3),
\end{align}
and 
\begin{align}
\dot V_4(\eta_2,z^4_1,z^4_2,z^4_3)=\theta\dot \omega^1_3(\eta_2,z^4_1,z^4_2)+\phi_1(z^4_3)\dot z^4_3.
\end{align}
We shown that  
\begin{align}
\dot z^4_3=u^*-u-\phi_2(\eta_2)+c_3\phi_4(z^4_2)+\phi_1(z^4_3)+\frac{c_3}{\lambda_1}\phi_4(z^4_2)\phi_1(z^4_3)-c_2\phi_3(z^4_1)-\phi_4(z^4_2)-\frac{c_2}{\lambda_4}\phi_3(z^4_1)\phi_4(z^4_2)+c_1\phi_2(\eta_2)+\phi_3(z^4_1) 
\end{align}
\begin{align*}
+\frac{c_1}{\lambda_3}\phi_2(\eta_2)\phi_3(z^4_1),
\end{align*}
and therefore
\begin{align}
\dot V_4(\eta_2,z^4_1,z^4_2,z^4_3)= -\theta c_1\phi^2_2(\eta_2)-\theta c_{2}\phi^2_3(z^4_1)-\theta c_{3}\phi^2_4(z^4_2)  
\end{align}
\begin{align*}
+\theta[- \phi_2(\eta_2)\phi_3(z^4_1)-\phi_3(z^4_1)\phi_4(z^4_2)-\frac{c_1}{\lambda_3}\phi^2_2(\eta_2)\phi_3(z^4_1)-\frac{c_2}{\lambda_4}\phi^2_3(z^4_1)\phi_4(z^4_2)+\frac{c_1}{\lambda_3}\phi^2_3(z^4_1)\phi_2(\eta_2)+c_1\phi_2(\eta_2)\phi_3(z^4_1)+\phi^2_3(z^4_1)+\phi^2_4(z^4_2)
\end{align*}
\begin{align*}
+c_2\phi_3(z^4_1)\phi_4(z^4_2)+\frac{c_2}{\lambda_4}\phi_3(z^4_1)\phi^2_4(z^4_2)-c_1\phi_2(\eta_2)\phi_4(z^4_2)-\phi_3(z^4_1)\phi_4(z^4_2)-\frac{c_1}{\lambda_3}\phi_2(\eta_2)\phi_3(z^4_1)\phi_4(z^4_2)]
 \end{align*}
 \begin{align*}
\phi_1(z^4_3)[(c_1-1)\phi_2(\eta_2)+(c_3-1-\theta)\phi_4(z^4_2)-(c_2-1)\phi_3(z^4_1)+\phi_1(z^4_3)+\frac{c_3}{\lambda_1}\phi_4(z^4_2)\phi_1(z^4_3)-\frac{c_2}{\lambda_4}\phi_3(z^4_1)\phi_4(z^4_2)+\frac{c_1}{\lambda_3}\phi_2(\eta_2)\phi_3(z^4_1)]
 \end{align*}
 \begin{align*}
+\phi_1(z^4_3)[u^*-u-\theta\frac{c_3}{\lambda_1}\phi^2_4(z^4_2)].
 \end{align*}
 
Then, with a control of the form 

\begin{align}\label{a3.112}
\begin{aligned}
u &=u^*+c_{40}\phi_1(z^4_3)-\theta\frac{c_3}{\lambda_1}\phi^2_4(z^4_2)+(c_3-1-\theta)\phi_4(z^4_2)+(c_1-1)\phi_2(\eta_2)+\frac{c_3}{\lambda_1}\phi_4(z^4_2)\phi_1(z^4_3)-\frac{c_2}{\lambda_4}\phi_3(z^4_1)\phi_4(z^4_2)-(c_2-1)\phi_3(z^4_1)\\
&+\frac{c_1}{\lambda_3}\phi_2(\eta_2)\phi_3(z^4_1)+\frac{\theta }{\phi_1(z^4_3)}\left(\phi^2_4(z^4_2)-c_1\phi_2(\eta_2)\phi_4(z^4_2)+\phi^2_3(z^4_1)\right)\\
&+\frac{\theta\phi_3(z^4_1)}{\phi_1(z^4_3)}\left(\frac{c_1}{\lambda_3}\phi_3(z^4_1)\phi_2(\eta_2)-\frac{c_1}{\lambda_3}\phi^2_2(\eta_2)-\frac{c_2}{\lambda_4}\phi_3(z^4_1)\phi_4(z^4_2)+(c_1-1)\phi_2(\eta_2)+ (c_2-2)\phi_4(z^4_2)+\frac{c_2}{\lambda_4}\phi^2_4(z^4_2)-\frac{c_1}{\lambda_3}\phi_2(\eta_2)\phi_4(z^4_2)\right),
\end{aligned}%
\end{align}

we finally obtain 
\begin{align}
\dot V_4(\eta_2,z^4_1,z^4_2,z^4_3)= -\theta c_1\phi^2_2(\eta_2)-\theta c_{2}\phi^2_3(z^4_1)-\theta c_{3}\phi^2_4(z^4_2)-c_{4}\phi^2_1(z^4_3).
\end{align}
\end{proof}
\begin{proof}[of Proposition \ref{proposition3.18}]
Considering the Lyapunov control function \eqref{a3.106}, and by following the approach of Section \ref{s3.1.2.2} with control \eqref{a3.112} applied to system \eqref{a3.115}, we obtain 

 \begin{align}
\dot V_4(\eta_2,z^4_1,z^4_2,z^4_3)= -\theta c_1\phi^2_2(\eta_2)-\theta c_{2}\phi^2_3(z^4_1)-\theta c_{3}\phi^2_4(z^4_2)-c_{4}\phi^2_1(z^4_3)
\end{align}
\begin{align*}
\underbrace{\left(\phi_1(z^4_3)-\theta\phi_4(z^4_2)\right)}_{A_1}\left(\hat{\phi}_1-\phi_1(\eta_1)\right) \underbrace{-\phi_1(z^4_3)}_{A_2}\left(\hat{\phi}_2-\phi_2(\eta_2)\right)+\underbrace{\left(\phi_1(z^4_3)-\theta\phi_4(z^4_2)+\theta\phi_3(z^4_1)-\theta\phi_2(\eta_2)\right)}_{A_3}\left(\hat{\phi}_3-\phi_3(\eta_3)\right)\\+\underbrace{\left(\theta\phi_4(z^4_2)-\phi_1(z^4_3)-\theta\phi_3(z^4_1)\right)}_{A_4}\left(\hat{\phi}_4-\phi_4(\eta_4)\right)+\mathcal{R}_1
\end{align*}
where 
\begin{align}
    \mathcal{R}_1=\phi_1(z^4_3)(-\phi_1(z^4_3)+\theta\frac{c_3}{\lambda_1}\phi^2_4(z^4_2)-(c_3-1-\theta)\phi_4(z^4_2)-(c_1-1)\phi_2(\eta_2)-\frac{c_3}{\lambda_1}\phi_4(z^4_2)\phi_1(z^4_3)+\frac{c_2}{\lambda_4}\phi_3(z^4_1)\phi_4(z^4_2)+(c_2-1)\phi_3(z^4_1)) 
\end{align}
\begin{align*}
-\frac{c_1}{\lambda_3}\phi_2(\eta_2)\phi_3(z^4_1))+\theta c_1\phi_2(\eta_2)\phi_4(z^4_2) +\theta c_1\phi^2_2(\eta_2)+\theta c_{2}\phi^2_3(z^4_1)-\theta\phi^2_3(z^4_1)+\theta c_{3}\phi^2_4(z^4_2)-\theta\phi^2_4(z^4_2)
\end{align*}
\begin{align*}
-\theta\phi_3(z^4_1)\left(\frac{c_1}{\lambda_3}\phi_3(z^4_1)\phi_2(\eta_2)-\frac{c_1}{\lambda_3}\phi^2_2(\eta_2)-\frac{c_2}{\lambda_4}\phi_3(z^4_1)\phi_4(z^4_2)+(c_1-1)\phi_2(\eta_2)+ (c_2-2)\phi_4(z^4_2)+\frac{c_2}{\lambda_4}\phi^2_4(z^4_2)-\frac{c_1}{\lambda_3}\phi_2(\eta_2)\phi_4(z^4_2)\right)
\end{align*}

\begin{align*}
+\left(\phi_1(z^4_3)-\theta\phi_4(z^4_2)\right)\phi_1(\eta_1) -\phi_1(z^4_3)\phi_2(\eta_2)+\left(\phi_1(z^4_3)-\theta\phi_4(z^4_2)+\theta\phi_3(z^4_1)-\theta\phi_2(\eta_2)\right)\phi_3(\eta_3)+\left(\theta\phi_4(z^4_2)-\phi_1(z^4_3)-\theta\phi_3(z^4_1)\right)\phi_4(\eta_4).
\end{align*}
With 
\begin{align}\label{ea3.130}
\begin{cases}
\phi_1(\eta_1)=c_3\phi_4(z^4_2)+\phi_1(z^4_3)+\frac{c_3}{\lambda_1}\phi_4(z^4_2)\phi_1(z^4_3),\\
\phi_3(\eta_3)=c_1\phi_2(\eta_2)+\phi_3(z^4_1)+\frac{c_1}{\lambda_3}\phi_3(z^4_1)\phi_2(\eta_2),\\
\phi_4(\eta_4)=c_2\phi_3(z^4_1)+\phi_4(z^4_2)+\frac{c_2}{\lambda_4}\phi_3(z^4_1)\phi_4(z^4_2),
\end{cases}
\end{align}
from \eqref{a3.89}-\eqref{a3.95}-\eqref{a3.102}, we get
\begin{align}
\mathcal{R}_1=0.
\end{align}
Then 
 \begin{align}
\dot V_4(\eta_2,z^4_1,z^4_2,z^4_3)= -\theta c_1\phi^2_2(\eta_2)-\theta c_{2}\phi^2_3(z^4_1)-\theta c_{3}\phi^2_4(z^4_2)-c_{4}\phi^2_1(z^4_3)
\end{align}
\begin{align*}
+A_1\left(\hat{\phi}_1-\phi_1(\eta_1)\right) +A_2\left(\hat{\phi}_2-\phi_2(\eta_2)\right)+A_3\left(\hat{\phi}_3-\phi_3(\eta_3)\right)+A_4\left(\hat{\phi}_4-\phi_4(\eta_4)\right).
\end{align*}

Thanks to \eqref{a3.63} and from Lemma \ref{lem3.11}, the derivative of the general Lyapunov function \eqref{a3.114} is given by

\begin{align}
\dot V_G(\eta,\psi)
\leq-\theta c_1\phi^2_2(\eta_2)-\theta c_{2}\phi^2_3(z^4_1)-\theta c_{3}\phi^2_4(z^4_2)-c_{4}\phi^2_1(z^4_3)+\left(C_1\vert\phi_1(\eta_1)+\lambda_1\vert-\gamma_1\right)(e^{G_1}-1)
\end{align}
\begin{align*}
+\left(C_2\vert\phi_2(\eta_2)+\lambda_2\vert-\gamma_2\right)(e^{G_2}-1)+\left(C_3\vert\phi_3(\eta_3)+\lambda_3\vert-\gamma_3\right)(e^{G_3}-1)+\left(C_4\vert\phi_4(\eta_4)+\lambda_4\vert-\gamma_4\right)(e^{G_4}-1).
\end{align*}

Selecting  $\eta$ as follows
\begin{align}
\begin{cases}
\eta_1\leq \ln\left(\frac{\gamma_1}{C_1\lambda_1}\right),\\
\\\eta_2\leq \ln\left(\frac{\gamma_2}{C_2\lambda_2}\right),\\
\\\eta_3\leq \ln\left(\frac{\gamma_3}{C_3\lambda_3}\right),\\
\\\eta_4\leq \ln\left(\frac{\gamma_4}{C_4\lambda_4}\right),
\end{cases}\qquad\text{with}\; \begin{cases}
\gamma_1>C_1\lambda_1,\\
\\\gamma_2>C_2\lambda_2,\\
\\\gamma_3>C_3\lambda_3,\\
\\\gamma_4>C_4\lambda_4,
\end{cases}
\end{align}
yields the desired solution. 
\end{proof}
\section{\bf Explicit form of the control-Lyapunov function for \texorpdfstring{$N$}{N} species}\label{annexe:B}
\section*{Control}
Using \eqref{equation3.123}-\eqref{ea3.130}, equation
\begin{align}\label{ea3.163}
\begin{cases}
\phi_1(\eta_1)=c_N\phi_N(z^N_{N-2})+\phi_1(z^N_{N-1})+\frac{c_N}{\lambda_1}\phi_1(z^N_{N-1})\phi_N(z^N_{N-2}),\\
\\\phi_3(\eta_3)=c_1\phi_2(\eta_2)+\phi_3(z^N_1)+\frac{c_1}{\lambda_3}\phi_3(z^N_1)\phi_2(\eta_2),\\
\\\phi_4(\eta_4)=c_2\phi_3(z^N_1)+\phi_4(z^N_2)+\frac{c_2}{\lambda_4}\phi_3(z^N_1)\phi_4(z^N_2)\\
\\\phi_5(\eta_5)=c_3\phi_4(z^N_2)+\phi_5(z^N_3)+\frac{c_3}{\lambda_5}\phi_5(z^N_3)\phi_4(z^N_2)\\
\vdots\\
\vdots
\\\phi_N(\eta_N)=c_{N-2}\phi_{N-1}(z^N_{N-3})+\phi_N(z^N_{N-2})+\frac{c_{N-2}}{\lambda_N}\phi_N(z^N_{N-2})\phi_{N-1}(z^N_{N-3}),
\end{cases}
\end{align}
follows at the $N$ step by induction. Substituting $\eqref{ea3.163}$ into $u$ yields 
\begin{align}
\begin{aligned}
u \;=&\; u^*
+\frac{1}{\phi_1(z^N_{N-1})}\Bigg(
c_{N}\,\phi_1^2(z^N_{N-1})
+\theta\,c_1\,\phi_2^2(\eta_2)
+\theta \sum_{i=1}^{N-2} c_{i+1}\,\phi_{i+2}^2(z^N_i)\Bigg) \\
&+\frac{\overbrace{\phi_1(z^N_{N-1})-\theta\,\phi_{N}(z^N_{N-2})}^{A_1}}{\phi_1(z^N_{N-1})}\,(c_N\phi_N(z^N_{N-2})+\phi_1(z^N_{N-1})+\frac{c_N}{\lambda_1}\phi_1(z^N_{N-1})\phi_N(z^N_{N-2}))
-\frac{\overbrace{\phi_1(z^N_{N-1})}^{A_2}}{\phi_1(z^N_{N-1})}\phi_2(\eta_2) \\
&+
\frac{\overbrace{\displaystyle
\theta \sum_{i=\max(1,k-3)}^{N-2} [(-1)^{\,i+\varepsilon_k}\,\phi_{i+2}(z^N_i)
+(-1)^{\,N+1-\varepsilon_k}\,\phi_1(z^N_{N-1})-\theta\phi_2(\eta_2)]}^{A_3}
}{\phi_1(z^N_{N-1})}\,(c_1\phi_2(\eta_2)+\phi_3(z^N_1)+\frac{c_1}{\lambda_3}\phi_3(z^N_1)\phi_2(\eta_2))\\
&+\sum_{k=4}^{N}
\frac{\overbrace{\displaystyle
\theta \sum_{i=\max(1,k-3)}^{N-2} [(-1)^{\,i+\varepsilon_k}\,\phi_{i+2}(z^N_i)
+(-1)^{\,N+1-\varepsilon_k}\,\phi_1(z^N_{N-1})]}^{A_k}
}{\phi_1(z^N_{N-1})}\,(c_{k-2}\phi_{k-1}(z_{k-3})+\phi_k(z^N_{k-2})+\frac{c_{k-2}}{\lambda_k}\phi_k(z^N_{k-2})\phi_{k-1}(z^N_{k-3})).
\end{aligned}
\end{align}
\section*{Lyapunov function} 
We set, by definition
\begin{align}
\begin{aligned}
V_N(\eta_2,z^N_1,\cdots,z^N_{N-1})&= \theta \lambda_2(e^{\eta_{2}}-1-\eta_{2})+\theta \sum_{i=1}^{N-2}\lambda_{i+2}(e^{z^N_{i}}-1-z^N_{i})+\lambda_1(e^{z^N_{N-1}}-1-z^N_{N-1}).
\end{aligned}
\end{align}
For every $N>4$, the following form holds: 
\begin{align}
\begin{aligned}
V_N(\eta_2,z^{N}_1,\cdots,z^{N}_{N-1})=&V_{N-1}(\eta_2,z^{N-1}_1,\cdots,z^{N-1}_{N-2})-\theta\lambda_{N-1}(e^{z^{N-1}_{N-3}}-1-z^{N-1}_{N-3})+\theta\lambda_{N-1}(e^{z^{N}_{N-3}}-1-z^{N}_{N-3})\\
&-\lambda_{1}(e^{z^{N-1}_{N-2}}-1-z^{N-1}_{N-2})+\lambda_{1}(e^{z^{N}_{N-1}}-1-z^{N}_{N-1})+\theta\lambda_{N}(e^{z^{N}_{N-2}}-1-z^{N}_{N-2}).
\end{aligned}
\end{align}
We define the general explicit form of the derivative of the Lyapunov function for any  $N$:
\begin{itemize}
\item[Case 1:] if $N>3$ is odd then
\begin{align}
\begin{aligned}
\dot V_N(\eta_2, z^N_1,\cdots,z^N_{N-1})=
&\left(\phi_1(z^N_{N-1})-\theta\phi_N(z^N_{N-2})\right)\phi_1(\eta_1)-\phi_2(\eta_2)+\left(-\theta\phi_2(\eta_{2})+\theta\displaystyle\sum_{i=1}^{N-2}(-1)^{i+1}\phi_{i+2}(z^N_i)-\phi_1(z^N_{N-1})\right)\phi_3(\eta_3)\\
&+\left(\theta\displaystyle\sum_{i=1}^{N-2}(-1)^{i}\phi_{i+2}(z^N_i)+\phi_1(z^N_{N-1})\right)\phi_4(\eta_4)+\left(\theta\displaystyle\sum_{i=2}^{N-2}(-1)^{i+1}\phi_{i+2}(z^N_i)-\phi_1(z^N_{N-1})\right)\phi_5(\eta_5)+\cdots\cdots\\
&+\left(\theta\displaystyle\sum_{i=N-4}^{N-2}(-1)^{i}\phi_{i+2}(z_i)+\phi_1(z^N_{N-1})\right)\phi_{N-1}(\eta_{N-1})+\left(\theta\displaystyle\sum_{i=N-3}^{N-2}(-1)^{i+1}\phi_{i+2}(z^N_i)-\phi_1(z^N_{N-1})\right)\phi_{N}(\eta_{N})\\
&+\phi_1(z^N_{N-1})(u-u^*).
\end{aligned}
\end{align}
\item[Case 2:] if $N>4$ is even then
\begin{align}
\begin{aligned}
\dot V_N(\eta_2, z^N_1,\cdots,z^N_{N-1})=
&\left(\phi_1(z^N_{N-1})-\theta\phi_N(z^N_{N-2})\right)\phi_1(\eta_1)-\phi_2(\eta_2)+\left(-\theta\phi_2(\eta_{2})+\theta\displaystyle\sum_{i=1}^{N-2}(-1)^{i+1}\phi_{i+2}(z^N_i)+\phi_1(z^N_{N-1})\right)\phi_3(\eta_3)\\
&+\left(\theta\displaystyle\sum_{i=1}^{N-2}(-1)^{i}\phi_{i+2}(z^N_i)-\phi_1(z^N_{N-1})\right)\phi_4(\eta_4)+\left(\theta\displaystyle\sum_{i=2}^{N-2}(-1)^{i+1}\phi_{i+2}(z^N_i)+\phi_1(z^N_{N-1})\right)\phi_5(\eta_5)+\cdots\cdots\\
&+\left(\theta\displaystyle\sum_{i=N-4}^{N-2}(-1)^{i+1}\phi_{i+2}(z^N_i)+\phi_1(z_{N-1})\right)\phi_{N-1}(\eta_{N-1})+\left(\theta\displaystyle\sum_{i=N-3}^{N-2}(-1)^{i}\phi_{i+2}(z^N_i)-\phi_1(z^N_{N-1})\right)\phi_{N}(\eta_{N})\\
&+\phi_1(z^N_{N-1})(u-u^*).
\end{aligned}
\end{align}   
\end{itemize}
\paragraph{\bf Acknowledgement}
\textbf{The authors wish to thank Prof. Enrique Zuazua  for his comments, suggestions and for fruitful discussions.}\\
\textbf{The author Yacouba Simpore is supported by the Alexander von Humboldt Foundation through an Alexander von Humboldt research fellowship.}
\bibliographystyle{plain}
 \bibliography{biblio_1}
 \vfill 
\end{document}